\documentclass[11pt]{amsart}

\usepackage{amsmath, amssymb, amsthm}
\usepackage{comment}
\usepackage[top=1in, bottom=1in, left=1in, right=1in, marginparwidth=1in, marginparsep=0.1in]{geometry}
\usepackage[unicode,breaklinks=true,colorlinks=true]{hyperref}
\usepackage{graphicx}
\usepackage{pgfplots}
\usepackage{subcaption}
\usepackage[dvipsnames]{xcolor}

\usepackage{etoolbox}

\pgfplotsset{compat=1.18}
\usepgfplotslibrary{fillbetween}

\definecolor{LandauBlue}{RGB}{40, 120, 190}
\definecolor{ParamPink}{RGB}{250, 230, 230}
\definecolor{BoundaryRed}{RGB}{215, 60, 50}
\definecolor{CurveGreen}{RGB}{50, 130, 70}
\definecolor{CurveOrange}{RGB}{245, 110, 25}
\definecolor{CurvePurple}{RGB}{100, 70, 170}

\pgfplotsset{
    Landau/.style = {
        very thick, 
        smooth, 
        samples=260, 
        no markers},
    Boundary/.style = {
        BoundaryRed, 
        dashed, 
        thick, 
        samples=200}
}

\tikzset{
    LabelStyle/.style={
        font = \small,
        fill = white,
        fill opacity = .82,
        text opacity = 1,
        inner sep = 1.5pt,
        rounded corners=2pt
    }
}

\numberwithin{equation}{section}
\newtheorem{theorem}{Theorem}[section]
\newtheorem{proposition}[theorem]{Proposition}
\newtheorem{lemma}[theorem]{Lemma}
\newtheorem{definition}[theorem]{Definition}

\newtheorem{assumption}[theorem]{Assumption}

\theoremstyle{remark}
\newtheorem{remark}[theorem]{Remark}

\let\OLDthebibliography\thebibliography
\renewcommand\thebibliography[1]{
    \OLDthebibliography{#1}
    \setlength{\parskip}{1pt}
    \setlength{\itemsep}{1pt plus 0.3ex}
}

\def\R{\mathbb R}

\def\cF{\mathcal F}
\def\cL{\mathcal L}
\def\cO{\mathcal O}

\def\cS{\mathcal S}

\newcommand{\onote}[1]{\textcolor{Orange}{#1}}

\newbool{showDetails}
\setbool{showDetails}{false}
\newcommand{\detail}[1]{\ifbool{showDetails}{\onote{#1}}{}}
\ifbool{showDetails}{
    \specialcomment{details}{\begingroup\color{Orange}}{\endgroup}
}{
    \excludecomment{details}
}

\title[Singular stationary Navier-Stokes flows]{Singular stationary Navier-Stokes flows: examples and stability}

\author{Zachary Bradshaw}
\address{Zachary Bradshaw, Department of Mathematical Sciences, University of Arkansas, Fayetteville, AR. Email: \url{zb002@uark.edu}.}
\author{Dakota Palmer}
\address{Dakota Palmer, Department of Mathematical Sciences, University of Arkansas, Fayetteville, AR. Email: \url{djp007@uark.edu}.}

\date{\today}

\begin{document}

\begin{abstract}
    Landau solutions, when oriented along the vertical axis, represent a one parameter family of exact, self-similar, axisymmetric, swirl-free solutions to the 3D stationary Navier-Stokes equations forced by an upward facing point-source of momentum at the origin. 
    They have an isolated singularity at the origin. 
    Other singular steady state solutions can be derived from a similar framework. 
    Some of these variants have been proposed as models for physical scenarios. 
    For example, a solution first found by Squire has been proposed as a model of a fluid entrained to a radially discharging surface layer of oil.
    Another, Serrin's swirling vortex, exhibits qualitative features shared with some tornadoes, like a two-cell structure consisting of a central downdraft and peripheral updraft as well as swirl.
    The first objective of this paper is to provide a detailed analysis of these and other examples, especially when boundaries are present.
    In this direction we find several new, physically motivated classes of solutions and identify new connections between the physics literature and the mathematics literature.
    Many of these examples are formulated on the half-space, but much of the mathematical literature on singular steady-state solutions is for $\R^3$.
    The second objective of this paper is to establish asymptotic stability for many of these solutions by formulating the problem in domains with boundaries.
    Our most general result requires a new approach to asymptotic stability that is based on eventual regularity.
    As a special case, we prove that a class of steady-state solutions motivated by Serrin's swirling vortex are stable under axisymmetric perturbations.
    This requires a novel observation because these solutions are too singular---they are called ``Type III'' in the literature---to be directly amenable to existing approaches.
\end{abstract}

\maketitle

\tableofcontents

\section{Introduction}
\label{sec::Introduction}

The stationary (incompressible) Navier-Stokes equations in a domain $\Omega\subseteq\R^3$ are 
\begin{equation}
    \tag{SNS}
    \label{sns}
    \left\{\begin{aligned}
        -\Delta V+(V\cdot\nabla)V+\nabla q&=f & &\mathrm{in}\;\Omega \\
        \nabla\cdot V & = 0                   & &\mathrm{in}\;\Omega \\
        V|_{\partial\Omega}(x)&=V_*(x)        & &\mathrm{on}\;\partial\Omega
    \end{aligned}\right.,
\end{equation}
where $V:\Omega\to\R^3$ is the velocity of the fluid, 
$q:\Omega\to\R$ is the corresponding pressure, 
$f:\Omega\to\R^3$ is the external force, 
and $V_*:\partial\Omega\to\R^3$ is the given boundary value.\par 

The stationary Navier-Stokes equations have a non-trivial scaling symmetry. 
Suppose the pair $(V,q)$ solves \eqref{sns} with force $f$ in $\Omega=\R^3$. 
For all $\lambda>0$ the pair $(V_\lambda,q_\lambda)$ defined by 
\begin{align*}
    V_\lambda(x) = \lambda V(\lambda x) 
    \qquad 
    \text{and} 
    \qquad 
    q_\lambda(x) = \lambda^2 q(\lambda x),
\end{align*}
also solves \eqref{sns} with force $f_\lambda=\lambda^3 f(\lambda x)$. 
It is natural to study solutions which are invariant under this scaling symmetry. 
A pair $(V,q)$ is called \textit{self-similar} when $(V_\lambda,q_\lambda)=(V,q)$ for all $\lambda>0$.
In this case, $V$ is $(-1)$-homogeneous and $q$ is $(-2)$-homogeneous. 
In spherical coordinates $(r,\theta,\phi)$, $V$ is $(-1)$-homogeneous if and only if $\partial_r V=-\frac{1}{r}V$.
We call a solution \textit{axisymmetric} if $\partial_\phi V=0$ and \textit{no-swirl} if $V_\phi=0$ where 
\begin{align*}
    V=V_r e_r + V_\theta e_\theta + V_\phi e_\phi.
\end{align*}
Many $(-1)$-homogeneous solutions naturally live in the space $L^{3,\infty}(\Omega)$.

When considering a stationary solution, it is natural to ask if it is asymptotically stable. 
In other words, are there conditions under which the evolutionary system drives perturbations of a steady-state toward the steady-state? 
There are a number of special solutions to the Navier-Stokes equations which are known to be asymptotically stable, e.g., the Burgers vortex \cite{GaMa} or Landau solutions under a smallness condition \cite{KarchPil}.
In this paper we will analyze a number of examples of singular stationary solutions which are related to the Landau solutions and will then explore the asymptotic stability of classes of solutions including these examples.
The following is a list of the examples which are of interest to us and which we explore further in Section \ref{sec::Examples}:

\begin{itemize}
    \item \textbf{Solutions driven by point sources of momentum.} 
    Landau's original solutions are driven by a point-source of momentum at the origin and were featured in the first paper on $L^2$-asymptotic stability by Karch and Pilarczyk \cite{KarchPil}.
    In Section \ref{subsec::Examples::LandauSolutions}, we generalize this class of solutions under a smallness condition by constructing solutions which can have   multiple isolated singularities within a large class of domains including, e.g., $\R^3$ and $\R^3_+$.
    
    \item \textbf{The solutions of Li, Li and Yan.}
    In \cite{LiLiYanI, LiLiYanII, LiLiYanIII}, Li, Li and Yan study axisymmetric self-similar solutions to the Navier-Stokes equations which are defined away from either a half-axis or a full axis, along which they are singular---see also \cite{LiLiYanIV, LiLiYanV,PaullPillowI,PaullPillowII,PaullPillowIII}.
    In addition to describing some of their results, in Section \ref{subsec::Examples::LiLiYanTypeII} we observe that their class of Type II-singular solutions belongs to $L^{3,\infty}(\R^3)$---see Lemma \ref{lem::TypeIISolutionsAreInWeakL3}. 
    This observation  allows us to generalize this class of solutions by constructing solutions which are driven by a finite number of line sources under a smallness condition---see Section \ref{subsec::Examples::SolutionsWithMultipleLineSources}. 
    We will prove this generalization of Type II-solutions are asymptotically stable.
    
    \item \textbf{Squire's surface discharge flows.} 
    In \cite{Squire2, Wang}, solutions are derived in $\R^3_+$ which are self-similar, axisymmetric, no-swirl and which have a no-penetration boundary condition which consists of a radial, self-similar vector field on the boundary. 
    These solutions are motivated by a physical scenario in which a fluid in a volume is entrained to a spreading layer of, e.g., oil on its surface.
    We make two new observations about these solutions.
    First, we show that they exist within the Li, Li and Yan framework and correspond to a flow in $\R^3$ which has a singularity on the half-axis of symmetry outside of the half-space where the solution converges (the singularity is Type III, which is more severe than a Type II-singularity).
    Second, by appealing to results of Li, Li and Yan, we show that these solutions can be perturbed to \textit{swirling}, self-similar, axisymmetric solutions. 
    We hypothesize that these might model flows entrained to a spreading layer of oil that is being discharged from a \textit{rotating} pipe and, as such, constitute a newly identified class of physically motivated solutions within the framework of Li, Li and Yan.
    See Sections \ref{subsec::Examples::SquireSurfaceDischargeFlows} and \ref{subsec::Examples::SquireSurfaceDischargeFlowsWithSwirl}. 
    We additionally identify examples within the Li, Li and Yan framework that have a mild (Type II) singularity along the vertical axis and satisfy the same no-penetration boundary conditions as Squire's surface discharge flows---see Section \ref{subsec::Examples::LiLiYanTypeII}. 
    We will prove that all of these solutions are asymptotically stable under a smallness condition.

    \item \textbf{Serrin's swirling vortex and Type II.5 solutions.}
    There are no nontrivial self-similar, axisymmetric solutions on $\R^3_+$ which converge within $\R^3_+$ and are no-slip on the boundary \cite{KMT}. 
    This is why Squire's solutions have non-zero boundary conditions.
    If a no-slip boundary is enforced, then such a solution becomes singular on an interior ray.
    In \cite{Serrin}, Serrin constructed axisymmetric, self-similar solutions via a different method.
    This built upon earlier work by Goldshtik \cite{Goldshtik}.
    Notably, these solutions can possess swirl and multiple cells and were consequently proposed by Serrin as a tornado model.
    Serrin's solutions are not in $L^{3,\infty}(\R^3_+)$---for this reason they are referred to as Type III solutions in \cite{LiYan}---and are therefore excluded from previous asymptotic stability results.
    Interestingly, Serrin's vortex is only Type III in its horizontal components with the vertical component being less singular.
    The same is true of some solutions on $\R^3$ \cite{LiLiYanI}.
    Based on these examples, we will introduce an intermediate class of \textit{Type II.5 solutions}---see Definition \ref{def::TypeII.5} and Section \ref{subsec::Examples::TypeII.5Solutions}---and prove that Type II.5 solutions are $L^2(\R^3)$-asymptotically stable under \textit{axisymmetric} perturbations---see Section \ref{sec::TypeII.5Stability}.
    We emphasize that this is the first result proving asymptotic stability for profiles that have a singularity on a line which is strictly $O(|\rho|^{-1})$ where $\rho$ is the distance to the singular line.
\end{itemize}

\bigskip 

\noindent\textbf{The perturbed Navier-Stokes equations.} 
Asymptotic stability asserts that a perturbation of a steady-state approaches the steady-state as $t\to\infty$.
We presently formalize this concept.
Let $V$ be a solution of \eqref{sns} in $\Omega$ with boundary condition $V_*$ and external force $f$.
Suppose $w$ is a solution to the (evolutionary) Navier-Stokes equations in $\Omega$ with boundary data $V_*$, external force $f$ and initial data $w_0$.
Specifically, $w$ solves 
\begin{equation}
    \tag{NS}
    \label{ns}
    \left\{\begin{aligned}
        \partial_t w-\Delta w+(w\cdot\nabla)w+\nabla p&=f & &\mathrm{in}\;\Omega\times(0,\infty)\\
        \nabla\cdot w&=0                                  & &\mathrm{in}\;\Omega\times(0,\infty)\\
        w|_{t=0}&=w_0                                     & &\mathrm{in}\;\Omega\times\{0\}\\
        w|_{\partial\Omega}(x)&=V_*(x)                    & &\mathrm{on}\;\partial\Omega\times(0,\infty).
    \end{aligned}\right.
\end{equation}
We say the solution $V$ is asymptotically stable in a Banach space $X$ if there exists $r>0$ such that for all $w_0-V\in X$ with 
\begin{align*}
    \|w_0-V\|_X<r,
\end{align*}
we have that a corresponding solution $w$ of \eqref{ns} satisfies 
\begin{align*}
    \lim_{t\to\infty}\|w(t)-V\|_X=0.
\end{align*}
We say $V$ is globally asymptotically stable if $r=\infty$.

Let 
\begin{align*}
    u(t)=w(t)-V.
\end{align*}
Then $u(t)$ solves the perturbed Navier-Stokes equations
\begin{equation}
    \tag{PNS}
    \label{pns}
    \left\{\begin{aligned}
        \partial_t u-\Delta u+(u\cdot\nabla)u+(u\cdot\nabla)V+(V\cdot\nabla)u+\nabla \pi&=0 & &\mathrm{in}\;\Omega\times(0,\infty)\\
        \nabla\cdot u&=0                                                                    & &\mathrm{in}\;\Omega\times(0,\infty)\\
        u|_{t=0}&=u_0                                                                       & &\mathrm{in}\;\Omega\times\{0\}\\
        u|_{\partial\Omega}(x)&=0                                                           & &\mathrm{on}\;\partial\Omega\times(0,\infty).
    \end{aligned}\right.
\end{equation}
where $u_0=w_0-V$. 
Plainly, to prove $V$ is asymptotically stable with respect to some norm, it suffices to show that $u$ decays in this norm as $t\to \infty$.
We take $X=L^2(\Omega)$ in this paper and work in a class of finite-energy weak solutions to \eqref{pns} which are analogues of the solutions constructed by Leray in \cite{Leray}.

Let $\dot{H}^1_{0,\sigma}(\Omega)$ be the homogeneous Sobolev spaces whose elements are the closure of the  divergence free vectors of test functions in $H^1$.

\begin{definition}
    \label{def::WeakSolution}
    \cite[Definition 2.6]{KarchPilSchonbek}.
    Fix initial data $u_0\in L^2_\sigma(\Omega)$. 
    We say $u=u(x,t)$ is a \textbf{weak solution} to \eqref{pns} in $\Omega\times[0,T)$  where $T>0$ can be taken to be $\infty$ if and only if 
    \begin{itemize}
        \item (Energy class)
        \begin{equation}
            u\in C_w([0,T),L^2_\sigma(\Omega))\cap L^2([0,T),\dot{H}^1_{0,\sigma}(\Omega)).
        \end{equation}
        
        \item (Satisfies \eqref{pns} weakly) For all $\varphi\in C([0,T),H^1_{0,\sigma}(\Omega))\cap C^1([0,T),L^2_\sigma(\Omega))$,
        \begin{equation}
            \label{eq::pnsDistributionalSolution}
            \begin{aligned}
                \int_\Omega u(t)\cdot\varphi(t)\,dx &+\int_s^t\int_\Omega \nabla u:\nabla\varphi \,dx\,d\tau+\int_s^t \int_\Omega (u\cdot\nabla)u\cdot\varphi \,dx\,d\tau-\int_s^t \int_\Omega (u\cdot\nabla)\varphi\cdot V \,dx\,d\tau\\
                &+\int_s^t\int_\Omega (V\cdot\nabla)u\cdot\varphi \,dx\,d\tau=\int_\Omega u(s)\cdot\varphi(s)\,dx+\int_s^t\int_\Omega u\cdot\partial_\tau \varphi \,dx\,d\tau,
            \end{aligned}
        \end{equation}
        for all $0\leq s\leq t<T$.
        
        \item (Satisfies the energy inequality). 
        We have that,
        \begin{equation}
            \label{eq::EnergyInequality}
            \|u(t)\|_{L^2(\Omega)}^2+2\int_s^t\|\nabla u(\tau)\|_{L^2(\Omega)}^2 \,d\tau\leq \|u(s)\|_{L^2(\Omega)}^2+2\int_s^t \int_\Omega (u\cdot\nabla)u\cdot V\,dx\,d\tau,
        \end{equation}
        for almost every $s\in [0,T)$ (including $s=0$) and every $t\in (s,T)$.
    \end{itemize}
\end{definition}

When $V$ is small, $(u\cdot\nabla)V$ behaves like a linear term which can be absorbed into the diffusion. 
This means that, under a smallness condition, the same existence theory for weak solutions to the Navier-Stokes equations is available for the perturbed Navier-Stokes equations. 
The following theorem, which for $\R^3$ appears in \cite{KarchPilSchonbek}, makes this assertion precise.

\begin{theorem}[Existence of weak solutions to perturbed SNS]
    \label{existenceThm}
    Suppose $\Omega\subseteq\R^3$ is open and connected. 
    Suppose additionally that $\Omega$ is $\R^3$, $\R^3_+$, or is a bounded or exterior domain with Lipschitz boundary. 
    Suppose $V\in C_w([0,\infty);L^{3,\infty}(\Omega))$ is divergence free. 
    There exists $\epsilon_*>0$ such that if 
    \begin{align*}
        \|V\|_{L^\infty(0,\infty;L^{3,\infty}(\Omega))}<\epsilon_*,
    \end{align*}
    then for any initial data $u_0\in L^2_\sigma(\Omega)$ there exists a vector field $u:\Omega\times [0,\infty)\to\R^3$ such that $u$ is a weak solution to \eqref{pns} in $\Omega\times[0,T]$ for all $T>0$.
\end{theorem}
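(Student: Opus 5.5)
We follow the Galerkin/Leray scheme of \cite{KarchPilSchonbek}, adapted to $\Omega$. The only place the structure of $\Omega$ enters is through a Sobolev/Hardy-type estimate that absorbs the terms involving $V$ into the dissipation. The key estimate is
\begin{equation*}
\Big|\int_\Omega (u\cdot\nabla)u\cdot V\,dx\Big|\le \|V\|_{L^{3,\infty}(\Omega)}\|u\|_{L^{6,2}(\Omega)}\|\nabla u\|_{L^2(\Omega)}\le C_\Omega\|V\|_{L^{3,\infty}(\Omega)}\|\nabla u\|_{L^2(\Omega)}^2 .
\end{equation*}
The first inequality is H\"older's inequality in Lorentz spaces, $L^{3,\infty}\cdot L^{6,2}\cdot L^2\subset L^1$. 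The second is the Lorentz-refined Sobolev embedding $\dot H^1_0(\Omega)\hookrightarrow L^{6,2}(\Omega)$. We obtain this embedding by extending $u\in \dot H^1_0(\Omega)$ by zero to $\R^3$, which preserves $\|\nabla u\|_{L^2}$, and then using the sharp embedding $\dot H^1(\R^3)\hookrightarrow L^{6,2}(\R^3)$ (for instance via the Riesz potential and O'Neil's inequality). Since zero extension is available for every open set, $C_\Omega$ can be taken to be the universal constant $C$. The hypotheses on $\Omega$ (Lipschitz boundary, $\R^3_+$, and so on) are therefore used only to guarantee a good Helmholtz/Stokes theory, a basis of $L^2_\sigma(\Omega)$ contained in $H^1_{0,\sigma}(\Omega)$, and density of $C^\infty_{c,\sigma}(\Omega)$ in the relevant spaces. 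We set $\epsilon_*=1/(2C)$.

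\textbf{Approximation.} Choose an orthonormal basis $\{a_k\}$ of $L^2_\sigma(\Omega)$ made of elements of $C^\infty_{c,\sigma}(\Omega)$, or of eigenfunctions of the Stokes operator in the bounded case. In the unbounded cases we may instead mollify the nonlinearity, replacing $(u\cdot\nabla)u$ by $(J_\delta u\cdot\nabla)u$, and solve on increasing bounded Lipschitz subdomains. Let $u_n(t)=\sum_{k\le n}c_k(t)a_k$ solve the projected system with $u_n(0)=P_nu_0$. The ODE for the coefficients $c_k$ has measurable-in-time coefficients, since $V\in C_w(L^{3,\infty})$. Each $a_k$ lies in $L^\infty$ with compact support, so pairings such as $\int (a_j\cdot\nabla)a_k\cdot V$ are bounded and measurable, and Carath\'eodory theory gives a local solution. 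Testing with $u_n$ removes the cubic term and the term $(V\cdot\nabla)u_n\cdot u_n$, because both are divergence-free transport terms. The key estimate then gives
\begin{equation*}
\tfrac{d}{dt}\|u_n\|_2^2+2\|\nabla u_n\|_2^2\le 2C\epsilon_*\|\nabla u_n\|_2^2\le \|\nabla u_n\|_2^2 .
\end{equation*}
Hence the Galerkin solutions exist globally and are uniformly bounded in $L^\infty L^2\cap L^2\dot H^1$.

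\textbf{Compactness and passage to the limit.} From the equation, $\partial_tu_n$ is bounded in $L^{4/3}(0,T;(H^1_{0,\sigma})^*)$ for the Navier--Stokes term. For the linear $V$-terms it is bounded in $L^2(0,T;(\dot H^1_{0,\sigma})^*)$, again by the Lorentz estimate. Aubin--Lions on each bounded subdomain $\Omega\cap B_R$, combined with a diagonal argument, gives strong $L^2_{loc}$ convergence. This suffices to pass to the limit in the nonlinear term against compactly supported test functions. The linear terms pass weakly, since $V\varphi$ and $V\nabla\varphi$ are fixed and the $V$-terms are continuous on $L^2\dot H^1$. Density then extends the weak formulation \eqref{eq::pnsDistributionalSolution} to all admissible $\varphi$, and we also obtain $u\in C_w([0,T),L^2_\sigma)$.

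\textbf{Main obstacle: the energy inequality.} The difficulty is the term $\int(u\cdot\nabla)u\cdot V$, which is not weakly lower semicontinuous. Here $\nabla u_n\rightharpoonup\nabla u$ only weakly in $L^2L^2$, while $u_n\to u$ strongly in $L^2_{loc}L^2$. I would handle this with the same splitting as in \cite{KarchPilSchonbek}. Write $V=V\mathbf 1_{\{|V|\le M,\,|x|\le R\}}+V_{M,R}$. The bounded, compactly supported part pairs a strongly convergent factor $u_n$ with a weakly convergent factor $\nabla u_n$, so it converges. The remainder is bounded by $C\|V_{M,R}\|_{L^{3,\infty}}\|\nabla u_n\|^2_{L^2L^2}$. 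We cannot expect this to be small, since $L^{3,\infty}$ lacks absolute continuity of the norm. Instead we keep it on the dissipative side: combine it with $2\int\|\nabla u_n\|^2$ and use that the quadratic form
\begin{equation*}
Q(u)=\int\|\nabla u\|^2-\int (u\cdot\nabla)u\cdot V_{M,R}
\end{equation*}
is nonnegative and convex, because $\epsilon_*$ is small. Convexity makes $Q$ weakly lower semicontinuous. A Lebesgue-point argument in $s$ then gives \eqref{eq::EnergyInequality} for a.e.\ $s$, including $s=0$, where we use the strong convergence $P_nu_0\to u_0$.
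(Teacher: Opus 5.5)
Your route is the one the paper intends: it defers to \cite{KarchPilSchonbek} and to Leray--Hopf constructions that work on these domains, with the Lorentz--Sobolev absorption of the $V$-terms as the only new input. For bounded Lipschitz domains your argument closes. With a Stokes eigenbasis, Aubin--Lions gives $u_n\to u$ in $L^2(0,T;L^2(\Omega))$, hence $u_n(s)\to u(s)$ in $L^2(\Omega)$ for a.e.\ $s$.

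For $\R^3$, $\R^3_+$ and exterior domains, however, the last step fails as written. Definition \ref{def::WeakSolution} requires \eqref{eq::EnergyInequality} for almost every $s>0$, not only $s=0$. In the approximate energy identity on $[s,t]$, the term $\|u_n(s)\|_{L^2(\Omega)}^2$ sits on the right-hand side, where weak lower semicontinuity points the wrong way. You therefore need $\|u_n(s)\|_{L^2(\Omega)}\to\|u(s)\|_{L^2(\Omega)}$ for a.e.\ $s$, i.e.\ strong convergence in $L^2(0,T;L^2(\Omega))$ on all of $\Omega$. Your compactness step only gives $L^2_{\mathrm{loc}}$ convergence. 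A Lebesgue-point (or Helly-type) argument cannot upgrade this, because any averaging in $s$ still needs $\int\|u_n(s)\|_2^2\,ds\to\int\|u(s)\|_2^2\,ds$. That is exactly what fails if energy escapes to spatial infinity. This is a known delicate point even for $V=0$, which is why the strong energy inequality on unbounded domains has its own literature (Leray on $\R^3$, Borchers--Miyakawa on the half-space, Miyakawa--Sohr on exterior domains).

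What is missing is a uniform tightness bound, $\sup_n\int_0^T\int_{\Omega\setminus B_R}|u_n|^2\,dx\,dt\to0$ as $R\to\infty$. One way to get it is a localized energy estimate with a cutoff $1-\chi_R=\eta^2$. There the localized $V$-term can again be absorbed by smallness, applying Lorentz--Sobolev to $\eta u_n$. But $\eta^2u_n$ is not an admissible test function for a Galerkin system, and the estimate brings in the pressure. So you need an approximation scheme in which the approximate equation holds with a pressure, such as Leray's mollified nonlinearity solved on $\Omega$ itself. You also need pressure bounds on $\Omega$: Riesz transforms on $\R^3$, and Stokes estimates on $\R^3_+$ and on exterior domains.

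By contrast, the step you flag as the main obstacle is easy. Since $\|V(t)\|_{L^{3,\infty}}<\epsilon_*$, the full form $\int_s^t\|\nabla u\|_2^2\,d\tau-\int_s^t\int_\Omega(u\cdot\nabla)u\cdot V\,dx\,d\tau$ is already nonnegative. It is therefore convex and weakly lower semicontinuous on $L^2(s,t;\dot H^1_0)$, so your splitting of $V$ is correct but unnecessary.

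Two smaller points:
\begin{itemize}
\item With a generic $C^\infty_{c,\sigma}$ orthonormal basis, the projections $P_n$ are not uniformly bounded on $H^1_{0,\sigma}$. Your $L^{4/3}(0,T;(H^1_{0,\sigma})^*)$ bound on $\partial_tu_n$ then needs a spectral basis or Hopf's mode-by-mode equicontinuity argument.
\item Aubin--Lions on $\Omega\cap B_R$ must be adapted, since restrictions of $u_n$ do not lie in $L^2_\sigma(\Omega\cap B_R)$.
\end{itemize}
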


The proof of this result is identical to the proof in \cite{KarchPilSchonbek} which is done for $\Omega=\R^3$ except we are now possibly dealing with a boundary. 
Note that proofs of existence of weak solutions in the Leray sense are robust with respect to the domain.
For example, \cite[Theorem 3.10]{TsaiBook}, which asserts existence of Leray-Hopf weak solutions to the non-perturbed problem, accommodates $\R^3$, $\R^3_+$ and bounded or exterior domains with Lipschitz boundary.
For us, the crucial fact is that the estimate
\begin{align*}
    \int_\Omega (u\cdot\nabla)V\cdot u\,dx\leq C\epsilon_*\|\nabla u\|^2_{L^2(\Omega)},
\end{align*}
holds whenever $u\in H^1_0(\Omega)$ due to a Lorentz space version of the Sobolev inequality.
As this is the insight which reduces the construction of weak solutions to \eqref{pns} to the construction of weak solutions to \eqref{ns} in \cite{KarchPilSchonbek}, we are confident this theorem holds without including a formal proof which would be redundant to the existing literature.

\bigskip

\noindent\textbf{Asymptotic stability results.}
The $L^2(\R^3)$-asymptotic stability around a profile $V$ which is small in $L^{3,\infty}(\R^3)$ (or other spaces with the same scaling) has been studied extensively in \cite{LiYan, KarchPil, KarchPilSchonbek, CKPW}. 
All of the results obtained in those papers assert global asymptotic stability in $L^2(\R^3)$ in that there is no smallness condition on the initial data $u_0$. 
This paper establishes global $L^2(\Omega)$-asymptotic stability results provided $V$ in several new contexts provided $V$ is suitably small.

The papers \cite{KarchPil, KarchPilSchonbek} laid out two different approaches to asymptotic stability. 
In \cite{KarchPil}, Landau solutions were considered in isolation and asymptotic stability was proven using semigroup techniques.
In particular, a semigroup was constructed which included the terms in \eqref{pns} containing $V$ and then the nonlinear problem was viewed perturbatively  so that the nonlinear problem inherits the decay properties of the linear problem.
Because $V$ is built into the semigroup, this approach only works when $V$ is stationary.
As we will see, it is maximally flexible with respect to the domain in view and can be easily generalized beyond $\R^3$. 
In \cite{KarchPilSchonbek}, on the other hand, $V$ is handled directly using energy estimates and the Fourier splitting argument of Schonbek and coauthors to establish $L^2(\R^3)$-decay.
This approach allows $V$ to depend on time but, due to its use of Fourier splitting, is restricted to $\R^3$.
We will use both of these approaches in our results and, additionally, develop a third approach that will allow us to establish asymptotic stability of small non-stationary solutions in domains other than $\R^3$.

\medskip

Our first asymptotic stability result is formulated for stationary profiles. 
While this result is encompassed by \cite{KarchPilSchonbek} when $\Omega=\R^3$, it is new for other choices of $\Omega$, including $\Omega=\R^3_+$.
This result is therefore physically meaningful in that it implies, e.g., Squire's surface discharge flows and their swirling extensions are asymptotically stable under a smallness condition.

\begin{theorem}[Asymptotic stability of stationary solutions]
    \label{mainthm1}
    Fix a domain $\Omega\subseteq\R^3$.\footnote{Throughout this paper domain just means an open and connected set.}
    Suppose $V\in L^{3,\infty}(\Omega)$ is divergence free.
    There exists a positive value $\epsilon_*>0$ such that if $\|V\|_{L^{3,\infty}(\Omega)}<\epsilon_*$ and $u$ is a weak solution of \eqref{pns} in $\Omega$ with no-slip boundary conditions then 
    \begin{align*}
        \lim_{t\to\infty}\|u(t)\|_{L^2(\Omega)}=0.
    \end{align*}
\end{theorem}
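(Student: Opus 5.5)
We follow the semigroup strategy of Karch and Pilarczyk \cite{KarchPil}, which is insensitive to the domain. Consider the linearized operator
\begin{align*}
    \cL u = -P\Delta u + P\big((V\cdot\nabla)u+(u\cdot\nabla)V\big),
\end{align*}
where $P$ is the Leray (Helmholtz) projection on $L^2_\sigma(\Omega)$. Define $\cL$ through the bilinear form
\begin{align*}
    a(u,v)=\int_\Omega \nabla u:\nabla v\,dx+\int_\Omega (V\cdot\nabla)u\cdot v\,dx-\int_\Omega (u\cdot\nabla)v\cdot V\,dx
\end{align*}
on $H^1_{0,\sigma}(\Omega)$. The Lorentz--Sobolev/Hölder inequality gives $\|V\otimes u\|_{L^2}\le C\|V\|_{L^{3,\infty}}\|\nabla u\|_{L^2}$. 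Hence, for $\epsilon_*$ small,
\begin{align*}
    \tfrac12\|\nabla u\|_{L^2}^2\le \operatorname{Re} a(u,u), \qquad |a(u,v)|\le C\|\nabla u\|_{L^2}\|\nabla v\|_{L^2}.
\end{align*}
These bounds hold on an arbitrary domain, because $\dot H^1_0$ functions extend by zero to $\R^3$. Lumer--Phillips (or Kato's form method) then shows that $-\cL$ generates a contraction semigroup $S(t)$ on $L^2_\sigma(\Omega)$.

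\textbf{Step 1 (linear decay).} We show $\|S(t)u_0\|_{L^2}\to0$ for every $u_0\in L^2_\sigma(\Omega)$. Since $S(t)$ is a contraction, it suffices to prove this on a dense set. Coercivity gives
\begin{align*}
    \frac{d}{dt}\|S(t)u_0\|^2_{L^2}\le -\|\nabla S(t)u_0\|^2_{L^2},
\end{align*}
so $\int_0^\infty\|\nabla S u_0\|^2<\infty$. To turn this gradient integrability into $L^2$ decay without Fourier splitting, we use the adjoint form $a^*$ (coercive for the same reason) and argue by duality, via the mean ergodic theorem. If $\|S(t)u_0\|\not\to0$, a weak limit point along $t_n\to\infty$ produces a nonzero $\psi$ in the kernel of $\cL^*$ (or $\cL$). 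But $0=\operatorname{Re}a(\psi,\psi)\ge\frac12\|\nabla\psi\|^2$ together with $\psi\in L^2\cap\dot H^1_0$ forces $\psi=0$. This gives weak convergence to $0$. We then upgrade to strong convergence using $\|S(t)u_0\|^2$ monotone and $\int\|\nabla S\|^2<\infty$: the limit $\ell=\lim\|S(t)u_0\|$ exists, and Duhamel comparison with the heat semigroup controls the tail. Alternatively, and more cleanly, we use the generalized Dirichlet-form argument of \cite{KarchPil}: decompose $u_0=\cL^{1/2}$-range plus a small remainder, noting that the range of $\cL$ is dense because the kernel of $\cL^*$ is trivial. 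For $u_0=\cL g$ we have $S(t)u_0=\cL S(t)g$, which tends to $0$ by analyticity: $S$ is analytic since the form is sectorial, so $\|\cL S(t)g\|\le Ct^{-1}\|g\|$.

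\textbf{Step 2 (nonlinear problem).} Write the weak solution via Duhamel,
\begin{align*}
    u(t)=S(t-s)u(s)-\int_s^t S(t-\tau)P\nabla\cdot(u\otimes u)(\tau)\,d\tau.
\end{align*}
Justifying this formula for weak solutions (with test functions $S^*(t-\tau)\varphi$ in \eqref{eq::pnsDistributionalSolution}) is routine. From the energy inequality \eqref{eq::EnergyInequality} and coercivity, $u\in L^\infty L^2\cap L^2\dot H^1$, so $\|u(\tau)\|_{L^3}^2\le C\|u\|_{L^2}\|\nabla u\|_{L^2}$ is integrable in time, and the tails $\int_s^\infty\|\nabla u\|^2$ are small for large $s$. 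To estimate the nonlinear term without domain-dependent $L^p$ smoothing, we test the Duhamel formula weakly against $\varphi\in L^2_\sigma$ and use duality with the adjoint semigroup. The quadratic term is bounded by
\begin{align*}
    \int_s^t\|u\|_{L^3}\|\nabla u\|_{L^2}\,\|S^*(t-\tau)\varphi\|_{L^6}\,d\tau .
\end{align*}
Alternatively, an energy-inequality argument for $u-S(t-s)u(s)$ bounds it by $C\int_s^\infty\|u\|_{L^3}\|\nabla u\|_{L^2}\,\|\nabla u\|_{L^2}$-type quantities that are small for large $s$. Then $\limsup_t\|u(t)\|$ is at most the linear part, which tends to $0$ by Step 1, plus an arbitrarily small term.

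\textbf{Main obstacle.} The main difficulty is Step 2's control of the nonlinear Duhamel term on a general domain, where no $L^p$–$L^q$ smoothing for $S(t)$ is available. The first thing to try is the energy-type estimate for $v=u-S(t-s)u(s)$. This $v$ solves the linearized equation with forcing $-P(u\cdot\nabla)u$, so by coercivity
\begin{align*}
    \tfrac{d}{dt}\|v\|^2+\|\nabla v\|^2\le C\|u\|_{L^3}^2\|\nabla u\|_{L^2}^2/\|\nabla u\|_{L^2}^{0}
\end{align*}
up to an interpolation. Making this quantity integrable yet small for large $s$ is the delicate point. If this fails, the fallback is to use \eqref{eq::EnergyInequality} directly for $u$ and compare its energy with that of the linear flow.
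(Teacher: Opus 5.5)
\textbf{Verdict: there is a genuine gap, at the end of Step 2.}

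Your setup, the semigroup generation, and the second version of Step 1 are exactly the paper's argument. That version uses dense range of $\cL$ because $\ker\cL^*=\{0\}$, together with $\|\cL S(t)g\|_{L^2}\le Ct^{-1}\|g\|_{L^2}$ from analyticity. The weak-limit/ergodic version is unnecessary and, as written, not justified.

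In Step 2, test against $S^*(t-\tau)\varphi$ and use
\[
\|S^*(t-\tau)\varphi\|_{L^6}\le C\|\nabla S^*(t-\tau)\varphi\|_{L^2}\le C(t-\tau)^{-1/2}\|\varphi\|_{L^2}.
\]
You should derive this bound from analyticity plus coercivity, as in Lemma \ref{lem::L2-L2Estimate}. You then get
\[
\|u(t)\|_{L^2}\le\|S(t-s)u(s)\|_{L^2}+C\|u_0\|_{L^2}^{1/2}\int_s^t(t-\tau)^{-1/2}\|\nabla u(\tau)\|_{L^2}^{3/2}\,d\tau .
\]
You assert that the last term is arbitrarily small for large $s$, uniformly in $t$. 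That does not follow. The function $f=\|\nabla u\|_{L^2}^{3/2}$ is only known to lie in $L^{4/3}(0,\infty)$. Pairing it with $(t-\tau)^{-1/2}$ by H\"older would require $(t-\tau)^{-2}$ to be integrable near $\tau=t$. In fact, for general $f\in L^{4/3}$ the integral can be infinite at particular times $t$.

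Your fallback does not close either. In the energy estimate for $u-S(t-s)u(s)$, the forcing has size $\|u\|_{L^3}^2\|\nabla u\|_{L^2}^2\lesssim\|u\|_{L^2}\|\nabla u\|_{L^2}^3$. This is not integrable in time for a weak solution.

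\textbf{The missing idea: average in time, then recover pointwise decay from monotonicity.} By Fubini and H\"older,
\[
\frac1t\int_0^t\int_0^s(s-\tau)^{-1/2}f(\tau)\,d\tau\,ds=\frac2t\int_0^t(t-\tau)^{1/2}f(\tau)\,d\tau\le 2t^{-1/4}\Big(\int_0^\infty\|\nabla u\|_{L^2}^2\,d\tau\Big)^{3/4}.
\]
Combined with Step 1, this shows $\frac1t\int_0^t\|u(s)\|_{L^2}\,ds\to0$. This is how the paper concludes: it starts Duhamel at time $0$ and uses Young's inequality to get the $Ct^{-1/4}$ bound.

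Since $\|V\|_{L^{3,\infty}}<\epsilon_*$, the term $\int(u\cdot\nabla)u\cdot V$ can be absorbed into the dissipation. The energy inequality \eqref{eq::EnergyInequality} then makes $\|u(t)\|_{L^2}$ nonincreasing off a null set of initial times. Decay of the averages therefore gives $\|u(t)\|_{L^2}\to0$.

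An equivalent route: Hardy--Littlewood--Sobolev makes the convolution small in $L^4(s,\infty)$. Hence it is small at a sequence of times where $\|S(t-s)u(s)\|_{L^2}$ is also small, and monotonicity finishes the argument.

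\textbf{The Duhamel identity is not entirely routine.} Since $\|\partial_\tau S^*(t-\tau)\varphi\|_{L^2}\sim(t-\tau)^{-1}$, the term $\int u\cdot\partial_\tau\varphi$ may diverge. The paper therefore shifts time by $\epsilon$ and passes to the limit by dominated convergence, obtaining the identity only for a.e.\ $t$. That is enough precisely because the conclusion is drawn through averages and monotonicity.
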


The proof of this result is based on the approach taken in \cite{KarchPil} which encodes the terms in \eqref{pns} containing $V$ inside a semigroup. 
The main observation we are making is that this construction is very flexible with regard to the domain in view.
Beyond that, the proof is a straightforward reproduction of the argument in \cite{KarchPil}.

\medskip

In \cite{LiYan} some solutions which are singular on lines are shown to be asymptotically stable. 
The allowed singularities are, however, less severe than $\rho^{-1}$ where $\rho$ is the distance to the $x_3$-axis. 
These solutions are called Type II. 
Solutions where the magnitude along a singular ray is strictly $O(\rho^{-1})$ are shown to exist in \cite{LiLiYanI}. 
These solutions are called Type III. 
Previously, there were no results establishing asymptotic stability for perturbations of Type III solutions.
We will improve upon this by requiring that our perturbations are axisymmetric and exploiting the fact that, in certain examples of Type III solutions, the vertical component is less singular at the axis than the horizontal components. 
This motivates the following definition of an intermediate class of singular solution to \eqref{sns}.

For $x=(x_1,x_2,x_3)\in\R^3$, let $x_h=(x_1,x_2)$ be the horizontal part of $x$.

\begin{definition}[Type II.5 solutions]
    \label{def::TypeII.5}
    Assume $V$ is a self-similar, axisymmetric solution to \eqref{sns} which belongs to $L^\infty_{\mathrm{loc}}(\Omega\setminus\{x:x_h=0\})$ where $\Omega=\R^3$ or $\Omega=\R^3_+$.
    If $\Omega=\R^3$, then we say that $V$ is a Type II.5 solution (or has a Type II.5 singularity) if 
    \begin{align*}
        |V_{x_3}(x_h,\pm 1)|=O(|x_h|^{-2/3+})
    \end{align*}
    while $V_{x_i}(x_h,\pm 1)=O(|x_h|^{-1})$ for $i=1,2$ and satisfy in a neighborhood of $x_1=0$ the asymptotic expansions 
    \begin{align*}
        V_i(x_1,0,\pm 1)=\frac{c_{i,\pm}}{x_1}+O(|x_1|^{-2/3+}),
    \end{align*}
    where $c_{i,\pm}$ are constants and $x_1>0$. If $\Omega=\R^3_+$, then we require these conditions are satisfied at $x_3=1$ for some constants $c_i$.
\end{definition}

Note that we only defined the asymptotic expansion in the $x_1$-direction where $x_3=\pm 1$. 
By axisymmetry and self-similarity, however, this determines the asymptotics at $x_h=0$ within a conical region around the entire $x_3$ axis.\par 

The structure of Type II.5 solutions is motivated by Serrin's swirling vortex as well as some other examples in \cite{LiLiYanI}---see Section \ref{subsec::Examples::TypeII.5Solutions} where we show there are examples of Type II.5 solutions on $\R^3$ which are outside the asymptotic stability of \cite{LiYan}.

\begin{theorem}[Asymptotic stability of Type II.5 solutions]
    \label{mainthm2}
    Suppose that $V$ is a Type II.5 solution on $\R^3$. 
    Then, 
    \begin{align*}
        \lim_{t\to\infty}\|u(t)\|_{L^2(\Omega)}=0,
    \end{align*}
    for all axisymmetric weak solutions $u$ of \eqref{pns} which additionally satisfy the generalized energy inequality \eqref{eq::GeneralizedEnergyInequality}.
\end{theorem}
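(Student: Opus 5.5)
The strategy is to reduce to the Karch--Pilarczyk--Schonbek energy/Fourier-splitting scheme on $\R^3$, with the perturbation $V$ split as $V=V_{\mathrm{sing}}+V_{\mathrm{reg}}$. The piece $V_{\mathrm{sing}}$ isolates the strictly $O(\rho^{-1})$ horizontal behaviour at the axis, and $V_{\mathrm{reg}}$ is small in $L^{3,\infty}$ after a suitable localization. Here $\rho=|x_h|$.

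Concretely, I would use the asymptotic expansion in Definition~\ref{def::TypeII.5} together with self-similarity and axisymmetry to write, in a conical neighbourhood of the $x_3$-axis,
\begin{align*}
V_h(x)=\frac{c_{r,\pm}}{\rho}e_\rho+\frac{c_{\phi,\pm}}{\rho}e_\phi+O\bigl(|x|^{-1/3-}\rho^{-2/3+}\bigr),\qquad V_{x_3}=O\bigl(|x|^{-1/3-}\rho^{-2/3+}\bigr).
\end{align*}
The coefficients are homogeneous of degree zero in $|x|$ only through the sign of $x_3$. The remainder is $O(|x|^{-1/3}\rho^{-2/3+})$, which lies in $L^{3,\infty}$ near the axis because $\rho^{-2/3+}\in L^{3}_{\mathrm{loc}}$ in two dimensions. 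Away from the cone, $V$ is bounded on the unit sphere and hence $O(|x|^{-1})$. After scaling, however, $V$ is fixed rather than small, so I would follow the Karch--Pilarczyk--Schonbek idea of writing $V=V^{(1)}+V^{(2)}$. Here $V^{(1)}$ is small in $L^{3,\infty}$ and $V^{(2)}\in L^\infty$ is compactly supported away from the origin, or else one exploits the generalized energy inequality \eqref{eq::GeneralizedEnergyInequality}. This inequality is presumably the device that permits a cut-off at large times.

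The key structural point is the trilinear term $\int (u\cdot\nabla)u\cdot V\,dx=-\int (u\cdot\nabla)V\cdot u\,dx$ for axisymmetric $u$, evaluated on the $1/\rho$ part. For the swirl part $c_\phi\rho^{-1}e_\phi$, I would compute in cylindrical coordinates, with $u$ axisymmetric and $u=u_\rho e_\rho+u_\phi e_\phi+u_3e_3$. The term $\int (u\cdot\nabla)u\cdot (c\rho^{-1}e_\phi)$ reduces to expressions such as $\int \rho^{-1}(u_\rho\partial_\rho u_\phi+u_3\partial_3u_\phi+u_\rho u_\phi/\rho)$. Using $\rho^{-1}(\partial_\rho u_\phi+u_\phi/\rho)=\rho^{-2}\partial_\rho(\rho u_\phi)$ and integrating by parts against the divergence-free condition $\partial_\rho(\rho u_\rho)+\rho\partial_3 u_3=0$, most of this should cancel. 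The radial part $c_\rho\rho^{-1}e_\rho$ is a potential flow, $\nabla(c\log\rho)$, so $\int (u\cdot\nabla)u\cdot\nabla(c\log\rho)=-\int (u\cdot\nabla)\nabla\log\rho\cdot u$. This gives $c\int\rho^{-2}(u_\phi^2-u_\rho^2)$, which Hardy's inequality in two dimensions does not control in general. Axisymmetry rescues it: the components $u_\rho$ and $u_\phi$ of an axisymmetric $H^1$ field vanish on the axis, and $\int |u_\rho|^2\rho^{-2}\le\int|\nabla u|^2$, since $|\nabla u|^2\ge |\partial_\rho u_\rho|^2+\rho^{-2}u_\rho^2+\rho^{-2}u_\phi^2+\dots$. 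Hence these terms are bounded by $C(|c_{i,\pm}|)\|\nabla u\|_2^2$. I expect this step to be the main obstacle. The constants $c_{i,\pm}$ are not assumed small, so this may require that the signed combination (e.g. $c_\phi^2$-type terms cancelling, or $c_\rho$ having a good sign) be absorbed. In that case I would use the exact cancellation structure first and fall back on a smallness or sign condition hidden in the definition only if forced to. One must also handle the $x_3$-dependence of the coefficients, i.e. the jump between $\pm$ across the origin, through a smooth angular cutoff whose derivative contributes only $O(|x|^{-2})$ terms controlled by Hardy's inequality.

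With these estimates in hand, the energy inequality gives $\|u(t)\|_2^2+\delta\int_s^t\|\nabla u\|_2^2\le\|u(s)\|_2^2+(\text{lower-order terms})$, where the lower-order terms come from the remainder, which lies in $L^{3,\infty}$ and is treated as in the proof of Theorem~\ref{mainthm1} or via \cite{KarchPilSchonbek}. I would then run Fourier splitting on $\R^3$, using the generalized energy inequality with the multiplier $(1+t)^\alpha$. The low-frequency part $\int_{|\xi|\le g(t)}|\hat u|^2$ is controlled by Duhamel's formula, where the nonlinear and $V$-terms are written in divergence form $\nabla\cdot(u\otimes u+u\otimes V+V\otimes u)$. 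Their Fourier transforms are bounded by $|\xi|\,(\|u\|_2^2+\|u\|_{2}\|\nabla u\|_2\|\cdot\|)$, with axisymmetric Hardy bounds used for the $1/\rho$ pieces. This yields $\|u(t)\|_2\to0$.
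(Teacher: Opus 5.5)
\textbf{Verdict: there is a genuine gap, and it sits in your last paragraph, where the real difficulty of the theorem lies.}

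Your estimate of the trilinear term $\int (u\cdot\nabla)u\cdot V\,dx$ is essentially the paper's. It uses Lemma~\ref{lem::EndpointHardysInequality} for $u_h/\rho$, plus the $x_3$-independence of the leading $c/\rho$ term, so that $\partial_3 a=O(|x|^{-2})$ is handled by the 3D Hardy inequality. It must, however, be run bilinearly, for $\int (u\cdot\nabla)(\psi*\psi*u)\cdot V\,dx$. The Hardy bound survives this, since radial convolution preserves axisymmetry; your explicit cylindrical cancellations do not.

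Besides the trilinear term, \eqref{eq::GeneralizedEnergyInequality} contains $\int (V\cdot\nabla u)\cdot(\psi*\psi*u)\,dx$. Unlike in the plain energy inequality, this term does not vanish. Its bad part is $\int V_h\cdot\nabla_h u_3\,(K*u_3)\,dx$ with $K$ radial. Here the $c/\rho$ horizontal components meet the \emph{vertical} component $u_3$, which need not vanish on the axis. So the axisymmetric Hardy inequality gives nothing: for $f(0)\neq 0$ one has $\int_{\R^2}|f|^2\rho^{-2}\,dx_h=\infty$.

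Your Duhamel low-frequency bound breaks exactly here. In general $V_h u_3$ is neither in $L^1$ nor in $L^2$. Hence $\widehat{\nabla\cdot(V\otimes u)}(\xi)$ admits no bound by $|\xi|$ times quantities in $\|u\|_2,\|\nabla u\|_2$, whether pointwise or in $L^2$ over a low-frequency ball. The paper, following \cite{KarchPilSchonbek}, instead controls the low modes through \eqref{eq::GeneralizedEnergyInequality} with $E\equiv 1$ and a heat-smoothed multiplier.

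\textbf{The missing idea is parity.} For radial $K$ and axisymmetric $u$, $K*u_3$ is even in $x_j$, so $\partial_j(K*u_3)$ is odd in $x_j$. It therefore satisfies the half-line Hardy inequality of Remark~\ref{rem::OddEndpointHardy}, even though $u_3$ does not.

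For the low modes, the paper uses $\nabla\cdot V=0$ to move the derivative onto $e^{2(t-\tau)\Delta}\phi*\phi*u_3$ and applies the odd Hardy inequality. It pays $\|\partial_j e^{2(t-\tau)\Delta}\phi\|_{L^1}\lesssim (t-\tau)^{-1/2}$. Since $\int_s^t (t-\tau)^{-1/2}\|\nabla u\|_2\,d\tau$ need not tend to zero, this gives smallness of $\|\hat\phi\hat u(t)\|_2$ only after averaging in $t$, i.e.\ along a sequence $t_k$. The argument then closes by restarting the energy inequality at $t_k$.

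For the high modes, Proposition~\ref{prop::TypeII.5Vuphi*uEstimate} proceeds as follows:
\begin{enumerate}
\item write $u_3=\psi*u_3+\tilde\psi*u_3$;
\item use the cancellation $\int V\cdot\nabla(\psi*u_3)\,(\psi*u_3)\,dx=0$;
\item apply odd Hardy to $\partial_i(\psi*u_3)$;
\item use the Poincar\'e-type bounds $\|\tilde\psi*u_3\|_2\lesssim\|\nabla u_3\|_2$ and $\|\nabla\psi*\partial_i u_3\|_2\lesssim\|\partial_i u_3\|_2$, so that only $\|\nabla u\|_2$ appears.
\end{enumerate}

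\textbf{The obstacle you anticipated is not the real one.} In the Fourier-splitting scheme the $V$-terms only need time-integrable bounds such as $CM\int_s^t\|\nabla u\|_2^2\,d\tau$. Given the finite dissipation $\int_0^\infty\|\nabla u\|_2^2\,d\tau<\infty$ that the paper draws from the energy inequality, these are small for large $s$. Nothing has to be absorbed into the dissipation, and no sign condition on $c_{i,\pm}$ enters that step.

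Your proposed split $V=V^{(1)}+V^{(2)}$, with $V^{(1)}$ small in $L^{3,\infty}$ and $V^{(2)}$ bounded and compactly supported, is also unavailable. The $c/\rho$ part is not in $L^{3,\infty}$ at all. Moreover, for a $(-1)$-homogeneous field, truncating near $0$ does not reduce the $L^{3,\infty}$ quasinorm.
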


A boundedness property of Type II.5 solutions is formulated more carefully in Section \ref{sec::TypeII.5Stability}---see Definition \ref{def::TypeII.5Boundedness}---but roughly says that $V$ can be decomposed into vector fields $a$ and $b$ where $b$ is in $L^{3,\infty}(\Omega)$ and 
\begin{align*}
    |\partial_{x_3}a|(x)\leq \frac{M}{|x|^2},
\end{align*}
for all $x$ off of the vertical axis.
Essentially what is happening is that the horizontal components of $V$, which are built into $a$, are very singular but their leading order terms near the vertical axis are independent of $x_3$ indicating vertical derivatives are well-behaved. This is reminiscent of a key ingredient in the proof of asymptotic stability for the Burgers vortex \cite{GaMa}, although these may be unrelated.
In other directions where derivatives are not well-behaved, axisymmetry allows us to use an endpoint 2D Hardy inequality that fails generally.

Note that we have not asserted that $V$ is small in this theorem. 
Decay comes instead from being assumed to satisfy the global energy inequality as this implies $\int_t^\infty \|\nabla u\|_{L^2(\Omega)}^2 d\tau$ becomes small. 
However, a smallness assumption would be needed to prove the energy inequality in the first place and also to prove the existence of global axisymmetric solutions to \eqref{pns}.
While the existence proof in \cite{KarchPilSchonbek} goes through for weak solutions if $V$ is small in a sense made clear in Definition \ref{def::TypeII.5Boundedness}, there are some presumably benign technical obstacles that arise when proving the generalized energy inequality.
We plan to address this in future work.

\medskip

The preceding results only assert asymptotic stability around stationary flows. 
For non-stationary flows, we do not know how to prove this result in a fully domain independent way. 
Instead, we are able to prove it for a class of domains which we call \textbf{admissible}. We will define these carefully at the start of Section \ref{sec::NonStationaryStability} but, informally, 
these are  domains where a small-data global well-posedness result holds for \eqref{pns} in $L^{3,\infty}(\Omega)$. 
We will show this to be true in, e.g., $\R^3_+$.

\begin{theorem}[Asymptotic stability of non-stationary solutions]
    \label{mainthm3}
    Assume $\Omega\subseteq\R^3$ is admissible. 
    Suppose that $V\in L^\infty(0,\infty;L^{3,\infty}(\Omega))$ is divergence free. 
    There exists $\epsilon_*>0$ so that, if 
    \begin{align*}
        \|V\|_{L^\infty(0,\infty;L^{3,\infty}(\Omega))}<\epsilon_*,
    \end{align*}
    and $u$ is a weak solution to \eqref{pns} for $V$, then 
    \begin{align*}
        \lim_{t\to\infty}\|u(t)\|_{L^2(\Omega)}=0.
    \end{align*}
\end{theorem}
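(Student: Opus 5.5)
The proof follows the ``eventual regularity'' strategy announced in the abstract. The starting point is the energy inequality \eqref{eq::EnergyInequality} together with the Lorentz--Sobolev bound
\[
\Big|\int_\Omega (u\cdot\nabla)u\cdot V\,dx\Big|\le C\epsilon_*\|\nabla u\|_{L^2}^2 .
\]
For $\epsilon_*$ small this gives, for a.e. $s$ and all $t>s$,
\[
\|u(t)\|_{L^2}^2+\int_s^t\|\nabla u\|_{L^2}^2\,d\tau\le\|u(s)\|_{L^2}^2 .
\]
Hence $\|u(t)\|_{L^2}$ is essentially nonincreasing, and $\int_0^\infty\|\nabla u\|_{L^2}^2<\infty$. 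It therefore suffices to show that $\liminf_{t\to\infty}\|u(t)\|_{L^2}=0$. Monotonicity along a.e. starting times then upgrades this to a full limit.

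The second step is to produce smallness in the critical space at some late time. Since $\int_T^\infty\|\nabla u\|_{L^2}^2\,d\tau\to0$, we can choose times $t_0$ with $\|\nabla u(t_0)\|_{L^2}$ as small as we like. Interpolating $L^2$ (which is bounded) against $L^6$ (via Sobolev) gives
\[
\|u(t_0)\|_{L^{3,\infty}}\le\|u(t_0)\|_{L^3}\lesssim\|u(t_0)\|_{L^2}^{1/2}\|\nabla u(t_0)\|_{L^2}^{1/2},
\]
which is small. We also arrange that $t_0$ is a time at which the energy inequality restarts.

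Admissibility of $\Omega$ now supplies a small-data global mild solution $\tilde u$ of \eqref{pns} on $[t_0,\infty)$ with data $u(t_0)$. This solution is small in $L^\infty(t_0,\infty;L^{3,\infty})$, and also in the auxiliary time-weighted norms used in the fixed-point argument. Since the data also lie in $L^2$, I would propagate $L^2$ (energy) bounds for $\tilde u$ inside the same fixed point.

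The decisive point is a weak–strong uniqueness statement: $u=\tilde u$ on $[t_0,\infty)$. This is where both smallness conditions are used. Set $d=u-\tilde u$. Combine the energy inequality for $u$, the energy equality for $\tilde u$ (which is regular enough), and the weak formulation \eqref{eq::pnsDistributionalSolution} tested against $\tilde u$; justifying this test is the technical part. The result is
\[
\|d(t)\|_{L^2}^2+2\int_{t_0}^t\|\nabla d\|_{L^2}^2\le 2\int_{t_0}^t\int_\Omega (d\cdot\nabla)d\cdot(\tilde u+V)\,dx\,d\tau .
\]
The right-hand side is at most $C\big(\|\tilde u\|_{L^\infty L^{3,\infty}}+\epsilon_*\big)\int_{t_0}^t\|\nabla d\|_{L^2}^2$, which can be absorbed into the left-hand side, so $d\equiv0$. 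I expect this identification to be the main obstacle. The mild solution lives in a critical weak space, so justifying $\tilde u$ as a test function requires showing that $\tilde u\in L^2_tH^1_0\cap C_tL^2$ with enough time regularity. This should come from smoothing of the linearized semigroup on admissible domains, built into that definition.

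Finally, decay follows from the mild solution. On an admissible domain I expect the construction to yield $\|\tilde u(t)\|_{L^2}\to0$. The plan is to split $u(t_0)=u_{0,1}+u_{0,2}$, with $u_{0,1}$ small in $L^2$ and $u_{0,2}\in L^2\cap L^{p}$ for some $p<2$ (or $u_{0,2}$ compactly supported). The Stokes semigroup gives $L^p\to L^2$ decay for $u_{0,2}$, and the small-$L^{3,\infty}$ structure of $V$ and $\tilde u$ keeps the Duhamel terms perturbative in these decaying norms. This gives $\limsup_{t\to\infty}\|\tilde u(t)\|_{L^2}\lesssim\|u_{0,1}\|_{L^2}$, and the right-hand side is arbitrarily small. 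If that route fails, I would instead use the alternative argument: eventual regularity gives $u\in L^\infty_tL^{3,\infty}$ small. Combining this with $\int\|\nabla u\|_{L^2}^2<\infty$ and a Karch–Pilarczyk-type linear decay estimate for the semigroup applied to the Duhamel formula then yields $\|u(t)\|_{L^2}\to0$.
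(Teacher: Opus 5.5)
Your first four steps are the paper's proof. Smallness of $V$ in the energy inequality gives monotonicity of $\|u(t)\|_{L^2}$ and $\nabla u\in L^2_tL^2_x$. Interpolation then gives a good time $T_0$ with $\|u(T_0)\|_{L^{3,\infty}}$ small. Small-data global well-posedness, with energy-class bounds carried through the same Picard iteration (Theorem~\ref{thm::GlobalWellPosedness} via Lemma~\ref{lem::XTBilinearEstimates}), yields a mild solution that is also a weak solution. Weak--strong uniqueness, through exactly the inequality you wrote, identifies that solution with $u$. One correction: the regularity that lets you test against $\tilde u$ comes from these energy-class estimates for the Stokes problem with divergence-form forcing. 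It does not come from smoothing of a ``linearized semigroup''; for time-dependent $V$ no such semigroup exists.

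The gap is in your primary decay argument. Splitting $u(T_0)=u_{0,1}+u_{0,2}$ and closing the Duhamel formula in decaying $L^2$-based norms fails as stated. With $V$ only in $L^{3,\infty}$, the map $w\mapsto B(V,w)$ is scale invariant, and the semigroup bound gives
\[
\|\nabla e^{-(t-\tau)A}\cdot(V\otimes w)\|_{L^2}\lesssim (t-\tau)^{-1}\|V\|_{L^{3,\infty}}\|w(\tau)\|_{L^2},
\]
which is not integrable at $\tau=t$. No time weight repairs this, so ``perturbative in decaying norms'' is unsupported. Also, $L^p\to L^2$ decay is not part of admissibility.

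Your fallback is the right argument and is what the paper does (Proposition~\ref{prop::MildSolutionL2Decay}), but its key points must be explicit:
\begin{itemize}
\item Use the Stokes semigroup $e^{-tA}$ and put the Sobolev regularity on the solution: $\|V\otimes\tilde u\|_{L^2}+\|\tilde u\otimes\tilde u\|_{L^2}\lesssim(\|V\|_{L^\infty L^{3,\infty}}+\|\tilde u\|_{L^\infty L^{3,\infty}})\|\nabla\tilde u\|_{L^2}$. The Duhamel integrand then becomes $(s-\tau)^{-1/2}\|\nabla\tilde u(\tau)\|_{L^2}$.
\item Average in $s$ over $(T_0,t)$. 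Fubini gives $\frac{2}{t-T_0}\int_{T_0}^t(t-\tau)^{1/2}\|\nabla\tilde u\|_{L^2}\,d\tau\to0$, since $\nabla\tilde u\in L^2_tL^2_x$.
\item The average of $\|e^{-(s-T_0)A}\tilde u(T_0)\|_{L^2}$ tends to zero by strong stability of the Stokes semigroup on $L^2_\sigma$.
\end{itemize}
This yields only decay of time averages. Your monotonicity from the first step upgrades it to $\|u(t)\|_{L^2}\to0$. No data splitting or rates are needed, and this step does not use smallness at all.
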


In comparison to the previous literature, this result requires a new observation.
In particular, on $\R^3$ the asymptotic stability of non-stationary profiles is proven using Fourier splitting \cite{KarchPilSchonbek}.
This is not directly available on other domains like  $\R^3_+$.
Instead, we use eventual regularity to obtain asymptotic stability.
In particular, weak solutions to \eqref{pns} eventually become small in $L^{3,\infty}(\Omega)$. 
Once this occurs, they stay small in $L^{3,\infty}(\Omega)$ provided a small-data global well-posedness result---see Theorem \ref{thm::GlobalWellPosedness}---holds in $L^{3,\infty}(\Omega)$.
This means that the nonlinear term has the same smallness as the linear product terms involving $V$.
Intuitively, it is then possible to show everything goes to zero by absorbing all the product terms into the left-hand side of a mild solution estimate.

While the examples we consider in this paper are stationary, the non-stationary result can be applied to obtain a new asymptotic stability  result for small forward self-similar solutions on $\R^3_+$---see, e.g., \cite{JS,KT,BTrot}---as well as solutions where a point source of momentum moves with time as constructed by Karch and Zheng \cite{KZ}.  

\bigskip
\noindent{\bf Discussion of the approaches to asymptotic stability.} 
We conclude this section with summary remarks about the three different approaches taken to proving Theorems \ref{mainthm1}, \ref{mainthm2} and \ref{mainthm3}. 
There are two broad paradigms for generating estimates for Navier-Stokes flows: Energy estimates and fundamental solution/semigroup techniques. 
The proof of Theorem \ref{mainthm1} uses semigroup theory and is based on \cite{KarchPil}. 
The strength of this approach is that it is flexible with respect to the domain of the problem. 
The drawback is that it only works for perturbations around stationary solutions. 

The proof of Theorem \ref{mainthm2} is a hybrid approach in that it primarily uses energy estimates but also uses small-data global well-posedness (proven using semigroups) and weak-strong uniqueness for \eqref{pns} in $L^{3,\infty}$. 
It is restricted to domains where small-data global well-posedness can be proven.  
We identify sufficient conditions for this in Section \ref{sec::NonStationaryStability} but note this is not a topic that has been explored exhaustively. 

An additional drawback of the preceding approaches which  use semigroups is that the term $V\cdot \nabla u$ appears. 
In a purely energy method-based approach, this term vanishes. It presents an obstacle when bounding perturbations around Type II.5 solutions.
This led us to use the Fourier splitting approach of \cite{KarchPilSchonbek} to prove Theorem \ref{mainthm3}. 
A strength of this approach is that it can accommodate perturbations around time-dependent solutions to \eqref{ns}.  
The drawback of this approach is that, without major adaptations, it is limited to $\R^3$ due to its use of the Fourier transform. 
Furthermore, it only applies to the subset of weak solutions that  satisfy a generalized energy inequality. It is not known if this is a strict subset. 
Theorem \ref{mainthm2}, on the other hand, is flexible with respect to the domain of the problem and applies to any weak solution.  

These approaches arose from earlier work on the $L^2$-decay of Leray weak solutions to  the non-perturbed Navier-Stokes problem. 
A nice summary of this is contained in \cite{Meyer2}. 
The first development was due to Kato \cite{Kato} who proved $L^2$-decay for Leray weak solutions by exploiting the fact that the $L^3$-norm of a Leray weak solution eventually becomes small.
This is also the idea behind the proof of Theorem \ref{mainthm3} but we must work in $L^{3,\infty}$ to accommodate the singular terms in \eqref{pns}.  
Kato's approach gave stronger algebraic decay rates when $u_0$ is also in $L^p$ for $1\leq p<2$. 
The Fourier-splitting approach to $L^2$-decay was subsequently developed by M.E.~Schonbek \cite{Schon1,Schon2} and improved Kato's algebraic bounds. 
See also \cite{KaMi,MiSchon}. 
Brandolese later showed that symmetries could be used to achieve faster decay rates than are available for generic solutions \cite{Brandolese}.

Asymptotic stability has also been established for $L^p$-perturbations for some $p\neq 2$ by Li, Zhang and Zhang \cite{LZZ}---see also \cite{BW}.

\bigskip
\noindent{\bf Organization.} 
In Section \ref{sec::Preliminaries} we document some useful inequalities and define some function spaces. 
Section \ref{sec::Examples} includes a detailed analysis of existing and new classes of singular stationary solutions emphasizing what happens when boundaries are present. 
The proofs of Theorems \ref{mainthm1}, \ref{mainthm2} and \ref{mainthm3} are contained in Sections \ref{sec::StationaryStability}, \ref{sec::TypeII.5Stability} and \ref{sec::NonStationaryStability}.

 \bigskip
\noindent{\bf Acknowledgments.} The research of Z.~Bradshaw and D.~Palmer was supported in part by the  NSF via grant DMS-2307097.
We are grateful to Xukai Yan for discussions about Landau solutions and their generalizations. Z.~Bradshaw also thanks Dallas Albritton for introducing him to half-space analogues of Landau solutions.

\section{Preliminaries}
\label{sec::Preliminaries}

Fix a point $x\in\R^3$, denote its components in the standard basis by $(x_1,x_2,x_3)$. 
We let $x_h=(x_1,x_2)$.
A domain $\Omega\subseteq\R^3$ is any open and connected set.
Define the Hilbert space $H_0^1(\Omega)$ as the closure of $C_c^\infty(\Omega)$ in the $H^1$ norm. 
For domains where the trace map is well-defined, this corresponds to $f\in H^1(\Omega)$ with zero trace.

We use the subscript $\sigma$ on Banach spaces to mean divergence free, e.g., $C_{c,\sigma}^\infty(\Omega)$ denotes the divergence free test functions.
Here we are omitting the codomain in the more precise notation $C_{c,\sigma}^\infty(\Omega;\R^n)$. 
When context demands it for clarity, we will include the exponent in situations like this.
For any domain $\Omega$ we define 
\begin{align*}
    L^2_\sigma(\Omega):=\overline{C_{c,\sigma}^\infty(\Omega)}^{L^2(\Omega)}
    \qquad 
    \text{and}
    \qquad 
    H^1_{0,\sigma}(\Omega):=\overline{C_{c,\sigma}^\infty(\Omega)}^{H^1(\Omega)}.
\end{align*}

Fix $1\leq p<\infty$ and $1\leq q\leq\infty$. 
For a real (or vector) valued function $f$ on $\Omega\subseteq\R^3$, the $L^{p,q}(\Omega)$ Lorentz space \textit{quasinorm} is given by the formula 
\begin{equation}
    \label{eq::LorentzQuasinorm}
    \|f\|_{L^{p,q}(\Omega)}:=\begin{cases}
        p^{1/q}\left(\int_0^\infty \lambda^{q-1}\mu(\{x:|f(x)|\geq\lambda\})^\frac{q}{p}\right)^{1/q} & 1\leq q<\infty\\
        \sup_{\lambda>0}(\lambda^p\mu(\{x:|f(x)|\geq \lambda\}))^{1/p}                                   & q=\infty
    \end{cases},
\end{equation}
The Banach spaces $L^{p,q}(\Omega)$ is the space of measurable functions $f$ (up to equivalence almost everywhere) which satisfies 
\begin{align*}
    \|f\|_{L^{p,q}(\Omega)}<\infty.
\end{align*}
We usually denote $L^{p,q}(\Omega)$ by just $L^{p,q}$ if the domain is clear. 
The space $L^{p,\infty}(\Omega)$ corresponds to the usual weak-$L^p(\Omega)$ space.

The following theorem recalls useful inequalities in the Lorentz space setting.

\begin{theorem}
    \label{thm::LorentzInequalities}
    Fix a domain $\Omega\subseteq\R^3$ and real numbers $1\leq p,q,r<\infty$ and $1\leq h-1,h-2,h_3\leq\infty$.
    Suppose $f\in L^{p,h_1}(\Omega)$ and $g\in L^{q,h_2}(\Omega)$.
    \begin{enumerate}
        \item[(a)] (Lorentz-Holder) Suppose 
        \begin{align*}
            \frac{1}{p}+\frac{1}{q}=\frac{1}{r}
            \qquad 
            \text{and}
            \qquad 
            \frac{1}{h_1}+\frac{1}{h_1}=\frac{1}{h_3}.
        \end{align*}
        Then $fg\in L^{p,h_3}(\Omega)$ and satisfies
        \begin{equation}
            \label{eq::LorentzHolder}
            \|fg\|_{L^{r,h_3}(\Omega)}\leq \|f\|_{L^{p,h_1}(\Omega)} \|g\|_{L^{q,h_2}(\Omega)},
        \end{equation}

        \item[(b)] (Lorentz-Young) Suppose $\Omega=\R^3$,
        \begin{align*}
            \frac{1}{p}+\frac{1}{q}=1+\frac{1}{r}
            \qquad
            \text{and}
            \qquad 
            \frac{1}{h_1}+\frac{1}{h_2}=\frac{1}{h_3}.
        \end{align*}
        Then $f*g\in L^{r,h_3}(\R^3)$ and satisfies
        \begin{equation}
            \label{eq::LorentzYoung}
            \|f*g\|_{L^{r,h_3}(\R^3)}\leq C\|f\|_{L^{p,h_1}(\R^3)}\|g\|_{L^{q,h_2}(\R^3)},
        \end{equation}
        for some constant $C>0$ independent of $f$ and $g$.

        \item[(c)] (Lorentz-Sobolev) If $f\in H^1_0(\Omega)$ then $f\in L^{6,2}(\Omega)$ and satisfies
        \begin{equation}
            \label{eq::LorentzSobolev}
            \|f\|_{L^{6,2}(\Omega)}\leq C_S\|\nabla f\|_{L^2(\Omega)},
        \end{equation}
        for some constant $C_S>0$ independent of $f$.
    \end{enumerate}
\end{theorem}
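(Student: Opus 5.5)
These three inequalities are standard facts about Lorentz spaces (O'Neil's inequalities). The plan is to reduce each to a statement about decreasing rearrangements on $\R^3$. First I would fix the evident typos in the statement. The hypothesis in (a) should read $\frac{1}{h_1}+\frac{1}{h_2}=\frac{1}{h_3}$, and the conclusion should be $fg\in L^{r,h_3}(\Omega)$. In (b) one needs $1<p,q,r<\infty$, and it suffices to have $\frac{1}{h_1}+\frac{1}{h_2}\ge\frac{1}{h_3}$.

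Throughout, I would use the equivalent rearrangement form of \eqref{eq::LorentzQuasinorm},
\begin{align*}
    \|f\|_{L^{p,q}}\simeq\Big(\int_0^\infty \big(t^{1/p}f^*(t)\big)^q\,\frac{dt}{t}\Big)^{1/q},
\end{align*}
with the usual modification when $q=\infty$. Here $f^*$ is the decreasing rearrangement of $|f|$, and the equivalence follows from the layer-cake formula and a change of variables. Since the inequalities are only claimed up to constants (or with constant $1$ after normalisation), this equivalence is harmless.

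\textbf{(a) Lorentz--H\"older.} The key pointwise fact is
\begin{align*}
    (fg)^*(t)\le f^*(t/2)\,g^*(t/2).
\end{align*}
It holds because $\{|fg|>f^*(t/2)g^*(t/2)\}\subseteq\{|f|>f^*(t/2)\}\cup\{|g|>g^*(t/2)\}$, and each set on the right has measure at most $t/2$. Then I would write $t^{1/r}=t^{1/p}t^{1/q}$ and apply the ordinary H\"older inequality with exponents $h_1/h_3$ and $h_2/h_3$ in the measure $dt/t$. A dilation $t\mapsto 2t$ produces only a constant. This argument is domain-independent, since only distribution functions enter.

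\textbf{(b) Lorentz--Young.} I would follow O'Neil. The core estimate is the rearrangement inequality
\begin{align*}
    (f*g)^{**}(t)\le t\,f^{**}(t)\,g^{**}(t)+\int_t^\infty f^*(s)g^*(s)\,ds,
\end{align*}
where $h^{**}(t)=\frac1t\int_0^t h^*$. It is proved by splitting $f$ at height $f^*(t)$ and estimating each piece of the convolution by $L^1$--$L^\infty$ bounds. Next I would insert the weights $t^{1/r}$ and use Hardy's inequalities. These control $f^{**}$ by $f^*$ in $L^{p,h}$ for $p>1$, and they also handle the tail integral. The exponent relation $\frac1p+\frac1q=1+\frac1r$ makes the powers of $t$ match. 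The final step is H\"older in $dt/t$ with $\frac1{h_1}+\frac1{h_2}=\frac1{h_3}$. I expect this to be the main technical step, specifically checking that the Hardy inequalities apply in the endpoint cases $h_i=\infty$, which is the case needed below.

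\textbf{(c) Lorentz--Sobolev.} For $f\in H^1_0(\Omega)$, I would extend $f$ by zero to $\tilde f\in H^1(\R^3)$; this works for any open $\Omega$ by density of $C_c^\infty(\Omega)$. The extension satisfies $\|\tilde f\|_{L^{6,2}(\Omega)}=\|\tilde f\|_{L^{6,2}(\R^3)}$ and $\|\nabla\tilde f\|_{L^2(\R^3)}=\|\nabla f\|_{L^2(\Omega)}$. This makes the constant $C_S$ independent of $\Omega$. Then I would use the pointwise potential bound
\begin{align*}
    |\tilde f|\le C\,|x|^{-2}*|\nabla\tilde f|,
\end{align*}
valid for test functions and extended by density. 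Finally I would apply (b) with $|x|^{-2}\in L^{3/2,\infty}(\R^3)$ and $|\nabla\tilde f|\in L^{2,2}$. The exponents check: $\frac23+\frac12=1+\frac16$ gives $r=6$, and $\frac1\infty+\frac12=\frac12$ gives $h_3=2$.
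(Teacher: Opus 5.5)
Your proposal is correct and is essentially the paper's argument: the paper cites (a) and (b) as the standard Lorentz--H\"older and O'Neil inequalities, whose usual rearrangement proofs you sketch, and it proves (c) as you do, by extending by zero to $\R^3$, except that it cites \cite{MM} for the whole-space bound where you derive it from (b) and $|f|\le C|x|^{-2}*|\nabla f|$. One correction: drop the parenthetical about ``constant $1$ after normalisation'', because your proof of (a) gives the constant $2^{1/r}$, and the constant $1$ asserted in \eqref{eq::LorentzHolder} is in fact false for the quasinorm \eqref{eq::LorentzQuasinorm} (take disjoint $A,B$ with $|A|=|B|=1$, $f=a\chi_A+b\chi_B$, $g=b\chi_A+a\chi_B$, $p=q=2r$, $h_1=h_2=h_3=\infty$ and $a=2^{1/p}b$, which gives $\|fg\|_{L^{r,\infty}}=2^{1/(2r)}\|f\|_{L^{p,\infty}}\|g\|_{L^{q,\infty}}$), although this is harmless since the paper only uses (a) inside estimates with unspecified constants.
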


The Lorentz-Holder inequality is well known. The Lorentz-Young inequality is usually referred to as O'Neil's inequality. The Lorentz-Sobolev inequality is usually stated for $f\in H^1(\R^3)$---see \cite{MM}. For $f\in C_c^\infty(\Omega)$ we can extend by zero to obtain \eqref{eq::LorentzSobolev} for $C_c^\infty(\Omega)$ from \eqref{eq::LorentzSobolev} for $H^1(\R^3)$. Then \eqref{eq::LorentzSobolev} for $H^1_0(\Omega)$ follows by density.

\section{Examples of singular steady-state solutions}
\label{sec::Examples}

\subsection{Landau solutions and solutions with multiple point sources}
\label{subsec::Examples::LandauSolutions}

Landau discovered a well-known two-parameter family of $(-1)$-homogeneous solutions, known as the Landau solutions, in 1944 \cite{Landau}. 
In spherical coordinates, they are rotations of 
\begin{align*}
    V_r=\frac{2}{r}\left(\frac{\lambda^2-1}{(\lambda-\cos(\theta)^2}-1\right),
    \qquad 
    V_\theta=-\frac{2\sin(\theta)}{r(\lambda-\cos(\theta))},\qquad V_\phi=0, 
    \qquad 
    q=-\frac{4(\lambda\cos(\theta)-1)}{r^2(\lambda-\cos(\theta))^2},
\end{align*}
for $\lambda>1$.
Landau solutions are used to model a submerged jet.
They were independently discovered by H. Squire in 1951 \cite{Squire}.
In 2006, Vladim\`{\i}r \v{S}ver\`ak shows that Landau solutions are the only $(-1)$-homogeneous solutions to \eqref{sns} in $\R^3\setminus\{0\}$ \cite{Sverak}.
These solutions also capture the leading order behavior of small, stationary solutions to \eqref{ns} \cite{KoSv,MiTs}.
Higher order asymptotics as $|x|\to\infty$ have recently been worked out by Jia and \v{S}ver\`ak \cite{JiSv}.

In the classical literature these solutions are derived by assuming additional symmetries to reduce the momentum equation in \eqref{sns} to the single ODE
\begin{equation}
    \label{eq::RicattiODE}
    (1-x^2)v_\theta'+2xv_\theta+\frac{1}{2}v_\theta^2=c_1(1-x)+c_2(1+x)+c_3(1-x^2),
\end{equation}
with $c_1=c_2=c_3=0$.
Here, $x=\cos(\theta)$, $v_\theta=V_\theta\sin(\theta)$ and $'$ denotes differentiation in $x$.
This reduction was first performed by Slezkin in 1934 \cite{Slezkin}.
In 1950 Yatseyev classified the solutions to the ODE using hypergeometric functions \cite{Yatseyev}.
More recently, Li, Li and Yan study generalizations that have singularities on a single axis or a single half-axis \cite{LiLiYanI, LiLiYanII, LiLiYanIII}.
On $\R^3_+$, homogeneous solutions can be constructed which converge in $\R^3_+$ but which are non-zero on the boundary \cite{Squire2} or which have an internal singular axis but which vanish on the boundary \cite{Serrin}.
These extensions are discussed in Sections \ref{subsec::Examples::LiLiYanTypeII} and \ref{subsec::Examples::SquireSurfaceDischargeFlows} and correspond to certain choices of solutions to \eqref{eq::RicattiODE} where $c_i\neq 0$.

Solutions can typically be constructed for small terms generalizing those of Landau solutions.
For example, Decaster and Iftimie constructed generalizations of Landau solutions for any sources that are localized and $(-3)$-homogeneous \cite{DeIf}.
Karch and Zheng \cite{KZ} constructed an analogue to the Landau solutions which is time-dependent and forced by a moving point source of momentum under a smallness condition.
These solutions are singular on a sufficiently regular curve in space-time and stable \cite{KSS}.

Broman and Rudenko studied this problem and have a picture \cite[Figure 2]{BR} of a double sided submerged jet which they say has applications to studying acoustic waves.
They obtain this by reflecting a flow example found in \cite{Squire2}---see Section \ref{subsec::Examples::SquireSurfaceDischargeFlows}---across the planar boundary.
The ``solution'' they generate is $(-1)$-homogeneous but is not a Landau jet.
This contradicts \cite{Sverak}, indicating the field they obtain by reflection is not actually a solution to Navier-Stokes.
The issues seems to be that they did not check that the Navier-Stokes equations are satisfied across the $\theta=\pi/2$ plane.
We find this picture interesting, however, and are curious to see if it can be replicated by introducing two opposing facing point-sources of momentum at point spaced a non-zero distance apart---see Figure \ref{fig::OpposingPoint} for an image that recovers aspects of this.
A similar picture can be sought within the framework of \cite{LiLiYanIII} where the solutions are singular on the axis of symmetry and driven by opposing jets.

In the remainder of this section we construct small stationary solutions which are subjected to an arbitrary number of point sources of momentum for a variety of domains.
This includes the example mentioned above capturing some of the qualitative features of the \cite[Figure 2]{BR}. 
Small Landau solutions can be obtained from a fixed point argument starting with the fundamental solution to the Stokes equations \cite{DeIf}.
In particular, let $V^{(0)}$ be the fundamental solution to the Stokes equations for a fixed point source of momentum $\delta_0 a\vec{e}_3$ where $a\in\R$. 
Now consider in $L^{3,\infty}(\R^3)$. the problem 
\begin{align*}
    -\Delta V^{(1)}+(V^{(0)}\cdot \nabla)V^{(0)}+\nabla q_1=\delta_0 a\vec{e}_3,
    \qquad 
    \nabla\cdot V^{(1)}=0.
\end{align*}
We would like to solve this.
Instead of solving this directly, we subtract $V^{(0)}$ to obtain
\begin{align*}
    -\Delta(V^{(1)}-V^{(0)})+(V^{(0)}\cdot\nabla)V^{(0)}+\nabla(q_1-q_0)=0,
    \qquad 
    \nabla\cdot V^{(1)}=0.
\end{align*}
Then, $V^{(1)}-V^{(0)}$ can be constructed by solving the Stokes equation with force $-(V^{(0)}\cdot\nabla)V^{(0)}$. 
Let $G$ be the Green tensor for the linear Stokes operator in $\R^3$. 
Then formally we have
\begin{align*}
    V^{(1)}=-G *\left(\nabla\cdot (V^{(0)}\otimes V^{(0)})\right)+V^{(0)}.
\end{align*}
We repeat this letting $V_i$ satisfy 
\begin{align*}
    -\Delta V^{(k)}+(V^{(k-1)}\cdot\nabla)V^{(k-1)}+\nabla q_k=\delta_0 a\vec{e}_3,
    \qquad 
    \nabla\cdot V^{(k)}=0.
\end{align*}
One can than prove that the sequence $\{V^{(k)}\}_{k=1}^\infty$ eventually stabilizes using a fixed point argument and the following lemma.

\begin{lemma}
    \label{lem::GreenTensorConvolutionEstimate}
    For a domain $\Omega\subseteq\R^3$, let $(G_\Omega,\Pi_\Omega)$ denote the Green tensor for the Stokes equation with zero boundary conditions. 
    Fix $\Omega$ such that $G_\Omega(x,y)$ satisfies
    \begin{align*}
        |\nabla_y G_\Omega(x,y)|\leq \frac{C}{|x-y|^2},
    \end{align*}
    for some constant $C>0$ and all $x,y\in\Omega$. 
    Define the bilinear operator 
    \begin{align*}
        B_\Omega(u,v):=\int_\Omega \nabla_y G_\Omega(x,y):(u\otimes v)(y)dy.
    \end{align*}
    Then there exists some constant $C_\Omega>0$ such that 
    \begin{align*}
        \|B_\Omega(x,y)\|_{L^{3,\infty}(\Omega)}\leq C_\Omega\|u\|_{L^{3,\infty}(\Omega)}\|v\|_{L^{3,\infty}(\Omega)},
    \end{align*}
    for any vector fields $u,v\in L^{3,\infty}(\Omega)$.
\end{lemma}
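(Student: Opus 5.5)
The plan is to reduce the estimate to a weak-type fractional integration (Hardy--Littlewood--Sobolev) inequality in Lorentz spaces, combined with the Lorentz--H\"older inequality from Theorem \ref{thm::LorentzInequalities}(a).

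First, by Lorentz--H\"older with $\frac13+\frac13=\frac23$, the product $u\otimes v$ lies in $L^{3/2,\infty}(\Omega)$ with
\begin{align*}
    \|u\otimes v\|_{L^{3/2,\infty}(\Omega)}\leq C\|u\|_{L^{3,\infty}(\Omega)}\|v\|_{L^{3,\infty}(\Omega)}.
\end{align*}
Set $w=|u\otimes v|$ and extend it by zero to a function $\tilde w$ on $\R^3$; this extension preserves the $L^{3/2,\infty}$ quasinorm, since distribution functions do not change. The kernel hypothesis then gives the pointwise bound, for $x\in\Omega$,
\begin{align*}
    |B_\Omega(u,v)(x)|\leq C\int_\Omega\frac{w(y)}{|x-y|^2}\,dy\leq C\left(|\cdot|^{-2}*\tilde w\right)(x).
\end{align*}
This removes all dependence on the domain: we only need the convolution estimate on $\R^3$, and afterwards we restrict back to $\Omega$, which can only decrease the Lorentz quasinorm.

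Next, $|\cdot|^{-2}\in L^{3/2,\infty}(\R^3)$. The exponents satisfy $\frac23+\frac23=1+\frac13$, so O'Neil's inequality (Theorem \ref{thm::LorentzInequalities}(b)) would formally give $L^{3,\infty}$. However, its secondary-index condition $\frac1{h_1}+\frac1{h_2}=\frac1{h_3}$ with $h_1=h_2=\infty$ only yields $h_3=\infty$, and O'Neil's theorem in the general form requires a triple $(h_1,h_2,h_3)$ with $\frac1{h_3}\le\frac1{h_1}+\frac1{h_2}$. The weak--weak case sits at the boundary of what that theorem allows and is genuinely delicate. This is the main obstacle.

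I would handle it by real interpolation instead. The operator $T f=|\cdot|^{-2}*f$ maps $L^{p}\to L^{q,\infty}$ for every $1<p<3$ with $\frac1q=\frac1p-\frac13$, by the classical Hardy--Littlewood--Sobolev inequality. Choose $p_0<\frac32<p_1$ and interpolate with the functor $(\cdot,\cdot)_{\theta,\infty}$. The Marcinkiewicz/Hunt interpolation theorem for Lorentz spaces then gives $T:L^{3/2,\infty}\to L^{3,\infty}$, since $(L^{p_0},L^{p_1})_{\theta,\infty}=L^{3/2,\infty}$ and $(L^{q_0,\infty},L^{q_1,\infty})_{\theta,\infty}=L^{3,\infty}$.

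Alternatively, I can do a direct level-set argument. Split the kernel at radius $R$:
\begin{align*}
    |\cdot|^{-2}=|\cdot|^{-2}\mathbf 1_{|\cdot|<R}+|\cdot|^{-2}\mathbf 1_{|\cdot|\ge R}.
\end{align*}
Bound the far part in $L^\infty$ by H\"older-type duality between $L^{3/2,\infty}$ and $L^{3,1}$: the truncated kernel has $L^{3,1}$ norm $\lesssim R^{-1}$. Bound the near part in $L^{1}$ norms restricted to sets. Finally, optimize $R$ against the level $\lambda$.

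Either route gives
\begin{align*}
    \|B_\Omega(u,v)\|_{L^{3,\infty}(\Omega)}\le C\,\|u\|_{L^{3,\infty}(\Omega)}\|v\|_{L^{3,\infty}(\Omega)},
\end{align*}
with $C_\Omega$ depending only on the kernel constant $C$ in the hypothesis on $\nabla_y G_\Omega$. This completes the proof.
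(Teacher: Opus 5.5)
Your proof is correct and is essentially the paper's argument: dominate $|B_\Omega(u,v)|$ pointwise by $C\,|\cdot|^{-2}*F$, where $F$ is the zero extension of $|u||v|\in L^{3/2,\infty}$, and then use that convolution with $|\cdot|^{-2}\in L^{3/2,\infty}(\R^3)$ maps $L^{3/2,\infty}$ into $L^{3,\infty}$. The only difference is that the paper cites O'Neil's (Lorentz--Young) inequality for this last step, and your concern about it is unfounded: the condition $\frac{1}{h_3}\le\frac{1}{h_1}+\frac{1}{h_2}$ reads $0\le 0$ when $h_1=h_2=h_3=\infty$, so the weak--weak--weak case is covered directly, and your interpolation (or kernel-splitting) argument just reproves that special case.
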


We supply two examples of $\Omega$ beyond $\Omega=\R^3$ which satisfies the above condition. Firstly, the half-space satisfies $|\nabla_y G_{\R^3_+}(x,y)|\leq \frac{C}{|x-y|^2}$ by \cite[Theorem 2.5]{KMT}. Secondly, let $\Omega$ be a bounded domain with $C^{1,\mathrm{Dini}}$ boundary. In \cite[Theorem 7.3]{ChoiDong} and \cite[Remark 3.2]{ChoiDong} it is shown that $|\nabla_y G_\Omega(x,y)|\leq \frac{C}{|x-y|^2}$ whenever $|x-y|\leq 1$. By compactness of $\overline{\Omega}$ we conclude the global estimate.

\begin{details}
    The exact theorem from \cite[Theorem 2.5]{KMT} is:
    \begin{theorem}
        Fix $n\geq 2$. Let $x,y\in\R^n_+$ and $i,j\in\{1,...,n\}$. Let $\alpha$ and $\beta$ be multi-indices with $|\alpha|+|\beta|=m>0$. Then 
        \begin{align*}
            \left|\nabla_x^\alpha\nabla_y^\beta G_{ij}(x,y)\right|\leq \frac{C_m}{|x-y|^{n-2+m}}.
        \end{align*}
        If... (special cases).
        Above $C_m$ are independent of $x,y\in\R^n_+$.
    \end{theorem}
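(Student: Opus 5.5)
The plan is to prove the estimate from an explicit representation of the half-space Stokes Green tensor $(G_{ij},\Pi_j)$ with no-slip boundary condition. We combine two facts: $G$ is jointly homogeneous of degree $2-n$, and it is a sum of terms singular only at $x=y$ or $x=y^*$, where $y^*=(y',-y_n)$ is the reflection of $y$.

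\textbf{Step 1 (representation).} We would write down the classical Lorentz/Solonnikov formula:
\begin{align*}
    G_{ij}(x,y)=\Gamma_{ij}(x-y)-\Gamma_{ij}(x-y^*)+H_{ij}(x,y).
\end{align*}
Here $\Gamma$ is the Stokes fundamental solution (Oseen tensor). The correction $H$ is a finite sum of terms of the form
\begin{align*}
    x_n^a\,y_n^b\,\partial^\gamma E(x-y^*), \qquad a+b+2-n-|\gamma|=2-n,
\end{align*}
where $E$ is the Laplace fundamental solution. The sum also contains one or two nonlocal terms of the form $\int_0^\infty \partial^\gamma E(x-y^*+s e_n)\,ds$ or similar. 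We would verify directly that this $G$ solves the Stokes system in $x$, is divergence free, and vanishes at $x_n=0$. By uniqueness of the Green tensor in the class decaying at infinity, it is the Green tensor.

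\textbf{Step 2 (reduction by scaling).} Every piece satisfies $K(\lambda x,\lambda y)=\lambda^{2-n}K(x,y)$, up to an additive constant in $\log$ terms when $n=2$; those constants are killed because $m>0$. Hence $\nabla_x^\alpha\nabla_y^\beta K$ is homogeneous of degree $2-n-m$. Since the formula is also invariant under tangential translations, it suffices to bound each derivative uniformly over pairs with $|x-y|=1$. The free part $\Gamma(x-y)$ is immediate.

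\textbf{Step 3 (reflected terms).} Write $R=|x-y^*|$. For $x,y\in\R^n_+$ we have
\begin{align*}
    R\ge |x-y|, \qquad x_n\le R, \qquad y_n\le R .
\end{align*}
Differentiating $x_n^a y_n^b\,\partial^\gamma E(x-y^*)$ distributes derivatives either onto the polynomial factors, lowering $a$ or $b$, or onto $E$. Every resulting term is bounded by
\begin{align*}
    C\,x_n^{a'}\,y_n^{b'}\,R^{2-n-|\gamma|-k}, \qquad a'+b'-|\gamma|-k=-m .
\end{align*}
This in turn is $\le C R^{2-n-m}\le C|x-y|^{2-n-m}$.

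\textbf{Step 4 (nonlocal terms; main obstacle).} The step I expect to be hardest is the nonlocal pieces. Their integrands decay like $|x-y^*+se_n|^{1-n-|\gamma|}$, with $|\gamma|\ge 2$ after the $x$- or $y$-derivatives are applied. Using $|x-y^*+se_n|\ge \max(R,s)$, the $s$-integral of the differentiated integrand is bounded by $CR^{2-n-m}$. We must check that every derivative, including tangential ones and those hitting the lower limit, produces an integrable kernel. If one of these terms lacks enough decay at $m=1$, I would first try rewriting it via an integration by parts in $s$ to express it as a local term. Otherwise, I would use the Poisson-kernel formula in its place.

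Summing the estimates from Steps 2–4 gives the claimed bound $|\nabla_x^\alpha\nabla_y^\beta G_{ij}(x,y)|\le C_m|x-y|^{-(n-2+m)}$.
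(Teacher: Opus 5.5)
The paper does not prove this statement. It quotes it from \cite[Theorem 2.5]{KMT} and uses only the case $n=3$, $\alpha=0$, $|\beta|=1$, to get $|\nabla_y G_{\R^3_+}(x,y)|\le C|x-y|^{-2}$ for Lemma~\ref{lem::GreenTensorConvolutionEstimate}. Your strategy is the right one: an explicit image representation, then the elementary inequalities $y_n\le|x-y^*|$ and $|x-y|\le|x-y^*|$. Your Step 3 is also correct.

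The gap is at the foundation: you never actually write down the Green tensor, and the form you posit for it is off. The stationary half-space Stokes Green tensor contains no nonlocal terms $\int_0^\infty\partial^\gamma E(x-y^*+se_n)\,ds$; integrals of that kind arise in Solonnikov's formula for the time-dependent problem. So Step 4 is a contingency plan (``if one of these terms lacks enough decay \dots\ otherwise \dots'') for terms that do not exist, not an argument. Likewise, the verification in Step 1 is only promised.

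The missing ingredient is Lorentz's image system (Blake's formula when $n=3$). Let $\Gamma$ be the Stokeslet, $E$ the fundamental solution of $-\Delta$, $y^*=(y',-y_n)$, and set $\epsilon_j=1$ for $j<n$, $\epsilon_n=-1$. Then
\[
G_{ij}(x,y)=\Gamma_{ij}(x-y)-\Gamma_{ij}(x-y^*)-2\epsilon_j\,y_n\,\partial_{x_j}\Big[\tfrac12\,y_n\,(\partial_iE)(x-y^*)+\Gamma_{in}(x-y^*)\Big].
\]
This is justified in three steps.
\begin{enumerate}
\item The bracket is a Stokeslet plus a harmonic gradient, both centered at $y^*\notin\overline{\R^n_+}$. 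So the correction solves the homogeneous Stokes system in $\R^n_+$ and is divergence free.
\item A direct check over the index cases shows $G_{\cdot j}(\cdot,y)=0$ on $\{x_n=0\}$.
\item Uniqueness then identifies this expression with the Green tensor.
\end{enumerate}

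With this formula your Step 3 finishes the proof. Every term other than $\Gamma(x-y)$ has the form $y_n^bK(x-y^*)$, with $K$ smooth off the origin and homogeneous of degree $2-n-b$. Applying $\nabla_x^\alpha\nabla_y^\beta$ produces a sum of terms $y_n^{b'}K'(x-y^*)$ with $K'$ homogeneous of degree $2-n-m-b'$; a $\partial_{y_n}$ either lowers $b$ or differentiates $K$. Each such term is bounded by
\[
y_n^{b'}|x-y^*|^{2-n-m-b'}\le|x-y^*|^{2-n-m}\le|x-y|^{2-n-m}.
\]
For $n=2$ the logarithms sit only in $\Gamma(x-y)-\Gamma(x-y^*)$, and any derivative removes them since $m>0$.

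Step 4 then disappears, and Step 2 is superfluous. Step 2 would not suffice on its own anyway: $\{|x-y|=1\}$ is non-compact even modulo tangential translations, since $x_n$ and $y_n$ are unbounded there. It is the decay bound of Step 3 that does the work.

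Your argument does not address the elided special cases of \cite[Theorem 2.5]{KMT}, but the paper never uses them.
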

    We apply this theorem three times with $n=3$, $\alpha=(0,0,0)$, $\beta\in\{(1,0,0),(0,1,0),(0,0,1)\}$, each giving $m=0+1=1$. So we have 
    \begin{align*}
        \left|\nabla_y G_{ij}(x,y)\right|\leq \left|\nabla_y^{(1,0,0)}G_{ij}(x,y)\right|+\left|\nabla_y^{(0,1,0)}G_{ij}(x,y)\right|+\left|\nabla_y^{(0,0,1)}G_{ij}(x,y)\right| \leq \frac{3C_1}{|x-y|^2}
    \end{align*}
\end{details}

\begin{details}
    The exact theorem from \cite[Theorem 7.3]{ChoiDong} is:
    \begin{theorem}
        Let $d\geq 3$ and $\Omega$ be a domain in $\R^d$ having a $C^{1,\mathrm{Dini}}$ boundary as in Definition 2.2. 
        Suppose that the coefficients $A^{\alpha\beta}$ of $\cL$ are of Dini mean oscillation in $\Omega$ satisfying Definition 2.1 (b) with a Dini function $\omega=\omega_A$. 
        Then under Assumption 3.1, there exists a unique Green function $(G_\Omega,\Pi_\Omega)$ for the flow velocity of $\cL$ in $\Omega$ such that for any $y\in\Omega$, 
        \begin{align*}
            G_\Omega(\cdot,y)\;\text{ is continuously differentiable in }\;\bar{\Omega}\setminus\{y\}
        \end{align*}
        and 
        \begin{align*}
            \Pi_\Omega(\cdot,y)\;\text{ is continuous in }\;\bar{\Omega}\setminus\{y\}.
        \end{align*}
        Moreover, for any $x,y\in\Omega$ with $0<|x-y|\leq 1$, we have that 
        \begin{align*}
            |G_\Omega(x,y)|\leq C\min\{d_x,|x-y|\}\cdot\min\{d_y,|x-y|\}\cdot|x-y|^{-d},
        \end{align*}
        \begin{align*}
            |\nabla_xG_\Omega(x,y)|+|\Pi_\Omega(x,y)|\leq C|x-y|^{1-d},
        \end{align*}
        where $C=C(d,\lambda,\omega_A,K_0,R_0,\rho_0)$. 
        Furthermore, if $(G^*,\Pi^*)$ is the Green function for the flow velocity of $\cL^*$ in $\Omega$, then we have 
        \begin{align*}
            G_\Omega(x,y)=G_\Omega^*(y,x)^\top\qquad\text{for all }\;x,y\in\Omega,\;\; x\neq y.
        \end{align*}
    \end{theorem}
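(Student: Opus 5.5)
The plan is the standard approximation scheme for Green functions (Grüter–Widman, adapted to Stokes systems by Choi–Dong and Kim), with all pointwise information supplied by $C^1$-regularity of Stokes systems whose coefficients have Dini mean oscillation in $C^{1,\mathrm{Dini}}$ domains.

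\textbf{Construction.} Fix $y\in\Omega$, $\epsilon>0$ and a column index $k$. Let $(G_\epsilon(\cdot,y)e_k,\Pi_\epsilon(\cdot,y))$ be the $H^1_0$-weak solution of
\begin{align*}
    \cL u+\nabla p=\frac{1}{|B_\epsilon(y)\cap\Omega|}\mathbf 1_{B_\epsilon(y)\cap\Omega}e_k,
    \qquad
    \nabla\cdot u=0,
    \qquad
    u|_{\partial\Omega}=0,
\end{align*}
with $p$ normalized to have mean zero. Lax–Milgram on $H^1_{0,\sigma}$ gives $u$, using ellipticity of $A^{\alpha\beta}$. De Rham's theorem together with the Bogovskii operator (Assumption 3.1 supplies solvability of the divergence equation) recovers $p\in L^2$ with the usual bound.

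\textbf{Uniform bounds.} The key input is the interior and boundary $C^1$-estimate for Stokes systems with DMO coefficients in $C^{1,\mathrm{Dini}}$ domains. If $\cL u+\nabla p=f$ and $\nabla\cdot u=g$ in $\Omega\cap B_r(x_0)$, with $u=0$ on $\partial\Omega\cap B_r(x_0)$, then
\begin{align*}
    \|\nabla u\|_{L^\infty(B_{r/2})}+\|p\|_{L^\infty(B_{r/2})}
    \lesssim r^{-d/2}\bigl(\|\nabla u\|_{L^2(B_r)}+\|p\|_{L^2(B_r)}\bigr)
    +\text{(terms in } f,g\text{)}.
\end{align*}
This is obtained by Campanato iteration: freeze coefficients, and use Dini summability of $\omega_A$ and of the boundary modulus to make the iteration converge. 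The estimate is proved for $0<r\le R_0$, which is why the final bounds hold only for $|x-y|\le1$.

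Apply this to the adjoint problem with data supported away from $y$ and use the duality identity
\begin{align*}
    \int G_\epsilon(\cdot,y)^{\top}f=\fint_{B_\epsilon(y)} v,
    \qquad\text{where } \cL^*v+\nabla q=f .
\end{align*}
Combined with Sobolev embedding, this yields bounds uniform in $\epsilon$: weak-$L^{d/(d-2)}$ for $G_\epsilon$, weak-$L^{d/(d-1)}$ for $\nabla G_\epsilon$, and $|G_\epsilon(x,y)|\le C|x-y|^{2-d}$ for $|x-y|\ge 2\epsilon$. Then apply the $C^1$-estimate on $B_{|x-y|/4}(x)$, which for small $\epsilon$ does not contain $y$. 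This gives
\begin{align*}
    |\nabla_x G_\epsilon(x,y)|+|\Pi_\epsilon(x,y)|\le C|x-y|^{1-d}.
\end{align*}
The pressure bound needs an $L^2$ estimate of $\Pi_\epsilon$ on annuli, which again comes from Bogovskii.

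\textbf{Limit and refined bound.} By compactness, and a diagonal argument over $y$ in a countable dense set, extract $G_\epsilon\to G_\Omega$ locally uniformly in $C^1(\overline\Omega\setminus\{y\})$. The $C^1$-estimate supplies equicontinuity of the gradients via their Dini modulus. The pointwise bounds pass to the limit, and uniqueness follows from the representation formula.

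The factors $\min\{d_x,|x-y|\}$ and $\min\{d_y,|x-y|\}$ come from boundary vanishing. For instance, when $d_x<|x-y|$, the boundary Lipschitz (gradient) bound gives $|G(x,y)|\le d_x\sup|\nabla_x G|\lesssim d_x|x-y|^{1-d}$. Applying the same argument in $y$ via the symmetry below gives the $d_y$ factor.

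\textbf{Symmetry.} Build $G^*$ for $\cL^*$ in the same way. Test each approximating problem against the other to get
\begin{align*}
    \fint_{B_\rho(x)} G_\epsilon(\cdot,y)=\fint_{B_\epsilon(y)} G^*_\rho(\cdot,x)^{\top},
\end{align*}
then let $\epsilon,\rho\to0$ using continuity. This yields $G_\Omega(x,y)=G^*_\Omega(y,x)^\top$.

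\textbf{Main obstacle.} I expect the hardest step to be the boundary $C^1$-estimate for Stokes systems under only Dini-type (rather than Hölder) regularity of both the coefficients and $\partial\Omega$. The pressure is coupled to the velocity, so mean-oscillation decay of the pair $(\nabla u,p)$ has to be iterated jointly after flattening the boundary. Flattening a $C^{1,\mathrm{Dini}}$ boundary perturbs the coefficients by a Dini-continuous amount and destroys the divergence-free condition, so one must carry $g=\nabla\cdot u\neq0$ and control it in Dini mean oscillation. I would first treat the flat half-ball case with $g\ne0$, and then absorb the flattening errors as DMO perturbations.
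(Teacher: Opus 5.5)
The paper does not prove this statement. It is Theorem~7.3 of \cite{ChoiDong}, quoted in a hidden details block and used as a black box: only the gradient bound and the relation $G_\Omega(x,y)=G^*_\Omega(y,x)^\top$ are used, to get $|\nabla_yG_\Omega(x,y)|\le C|x-y|^{-2}$ on bounded $C^{1,\mathrm{Dini}}$ domains. So there is no internal argument to compare against. Your skeleton is the standard Gr\"uter--Widman-type route: approximate Green functions, uniform bounds by duality, compactness, pointwise bounds from the $C^1$ estimate, and symmetry by cross-testing. As a proof of the full statement, however, it has three gaps.

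\textbf{The two-sided boundary factor.} With $r=|x-y|$, your argument gives $|G(x,y)|\lesssim d_xr^{1-d}$ and, via symmetry, $|G(x,y)|\lesssim d_yr^{1-d}$. That is only $\min\{d_x,d_y,r\}\,r^{1-d}$. When both $d_x,d_y\ll r$ this is strictly weaker than the claimed $d_xd_yr^{-d}$, and one-sided bounds do not multiply. The product needs a bootstrap.
\begin{itemize}
\item Suppose $d_x,d_y<r/8$ (otherwise the one-sided bounds suffice), and let $\bar x\in\partial\Omega$ be nearest to $x$.
\item Apply the $y$-side bound at every $z\in\Omega\cap B_{r/4}(\bar x)$ to get $|G(z,y)|\lesssim d_yr^{1-d}$ there.
\item Apply the boundary $C^1$ estimate to $G(\cdot,y)$ in the size-controlled form $\|\nabla u\|_{L^\infty(\Omega\cap B_{\rho/2})}\lesssim\rho^{-1}\|u\|_{L^\infty(\Omega\cap B_\rho)}$. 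This comes from boundary Caccioppoli, with the pressure entering only through $p-c$. It gives $|\nabla_zG(z,y)|\lesssim d_yr^{-d}$ on $\Omega\cap B_{r/8}(\bar x)$.
\item Integrating from $\bar x$ to $x$ then gives $|G(x,y)|\lesssim d_xd_yr^{-d}$.
\end{itemize}

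\textbf{The pressure bound.} $|\Pi_\Omega(x,y)|\le C|x-y|^{1-d}$ is not a bound modulo constants. Your $C^1$ estimate controls $|p|$ only through $\|p\|_{L^2(B_r)}$, and Bogovskii on annuli controls $\Pi_\epsilon(\cdot,y)$ only up to an additive constant per annulus. The same defect prevents passing $\Pi_\epsilon$ to a limit. The absolute bound comes from duality with divergence data.
\begin{itemize}
\item For $h$ supported in $B_r(x)$, $r=|x-y|/4$, use Assumption~3.1 to solve $\cL^*v+\nabla q=0$, $\nabla\cdot v=h-(h)_\Omega$, $v|_{\partial\Omega}=0$. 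If $\Omega$ is unbounded, use $h$ itself.
\item Cross-testing and the normalization $\int_\Omega\Pi_\epsilon=0$ show that $\int\Pi_\epsilon(\cdot,y)h$ equals, up to sign, the average of $v_k$ over $B_\epsilon(y)$.
\item Local boundedness of $v$ near $y$, where $\nabla\cdot v$ is the small constant $-(h)_\Omega$, gives $\|\Pi_\epsilon(\cdot,y)\|_{L^2(\Omega\cap B_r(x))}\lesssim r^{1-d/2}$ uniformly in $\epsilon$.
\item The $C^1$ estimate then upgrades this to the pointwise bound.
\end{itemize}

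\textbf{The key $C^1$ input.} A Campanato iteration run in $L^2$ does not close under Dini \emph{mean} oscillation. The freezing error is controlled by the $L^2$ mean oscillation of $A$, which for bounded $A$ is only $\lesssim\omega_A(r)^{1/2}$. But $\omega_A^{1/2}$ need not be Dini; take $\omega_A(r)=(\log(1/r))^{-3/2}$ near $0$. The device Dong and Kim introduced for DMO coefficients avoids this. One measures the oscillation of $(\nabla u,p)$ in the $L^{1/2}$ quasi-norm and uses weak type-$(1,1)$ estimates for the frozen-coefficient system in balls and half-balls, so that $\omega_A(r)$ enters linearly. Your flattening scheme with $\nabla\cdot u=g\ne0$ has to be run inside that framework.
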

    We apply this result with $d=3$, $\Omega\subseteq\R^3$ bounded with $C^1{\mathrm{Dini}}$ boundary. 
    We are working with the classical Stokes system so $A^{\alpha,\beta}=[-\delta_{\alpha\beta}\delta_{ij}]^3_{i,j=1}$. 
    This is stated on \cite[page 2]{ChoiDong}, however we have a extra minus sign because they operate under the convention $A G(\cdot,y)+\nabla\Pi(\cdot,y)=\delta_yI$. 
    Since the coefficients are constant, any assumptions on their regularity is automatically satisfied. 
    To apply the theorem, we must check that Assumption 3.1 is satisfied.
    The exact statement of \cite[Assumption 3.1]{ChoiDong} is:
    \begin{assumption}
        There exists a constant $K_0>0$ such that the following holds: For any $g\in\tilde{L}^2(\Omega)$, there exists $u\in Y_0^{1,2}(\Omega)^d$ satisfying 
        \begin{align*}
            \nabla\cdot u=g\;\mathrm{in}\;\Omega,\qquad \|\nabla u\|_{L^2(\Omega)}\leq K_0\|g\|_{L^2(\Omega)}.
        \end{align*}
    \end{assumption}
    Assumption 3.1 is satisfied by the following excerpt of \cite[Remark 3.2]{ChoiDong}:
    \begin{remark}
        It is known that Assumption 3.1 holds in a bounded John domain; see [2, Theorem 4.1]. 
        Here and throughout the paper, a domain is said to be bounded if it has finite diameter.
        Note that a bounded domain $\Omega$ having a $C^{1,\mathrm{Dini}}$ boundary as in Definition 2.2 is a John domain as in [2, Definition 2.1] with respect to $(x_0,L)=(x_0,L)(d,R_0,\rho_0)$. 
        Thus by [2, Theorem 4.1], $\Omega$ satisfies Assumption 3.1 with $K_0=K_0(d,R_0,\rho_0,\mathrm{diam}(\Omega))$.
    \end{remark}
    So we are free to apply \cite[Theorem 7.3]{ChoiDong} to our situation. 
    On \cite[Page 5]{ChoiDong}, Choi and Dong define 
    \begin{align*}
        \cL u=D_\alpha(A^{\alpha\beta}D_\beta u)
    \end{align*}
    and
    \begin{align*}
        \cL^*u=D_\alpha((A^{\beta\alpha})^\top D_\beta u).
    \end{align*}
    In our situation, 
    \begin{align*}
        (A^{\beta\alpha})^\top=\left([-\delta_{\beta\alpha}\delta_{ij}]^3_{i,j=1}\right)^\top=[-\delta_{\alpha\beta}\delta_{ij}]^3_{i,j=1}=A^{\alpha\beta}.
    \end{align*}
    So $\cL^*$ also satisfies the condition of the theorem. 
    So (by \cite[7.5,7.6]{ChoiDong}) there exists a constant $C_1>0$ such that
    \begin{align*}
        |\nabla_y G_\Omega(x,y)|=|\nabla_y G_\Omega(y,x)^\top|\leq C_1|x-y|^{-2}.
    \end{align*}
    whenever $0<|x-y|\leq 1$. 
    Let $D\subset \bar{\Omega}\cap\bar{\Omega}$ denote the diagonal. 
    Take a open neighborhood $\cO$ of $D$ in $\bar{\Omega}\times\bar{\Omega}$ which satisfies $|x-y|<1$ for every $(x,y)\in\cO$.
    Then $\nabla_y G_\Omega(x,y)=\nabla_yG_\Omega(y,x)^\top$ is a continuous function (by \cite[7.2]{ChoiDong}) on the compact domain $(\bar{\Omega}\times\bar{\Omega})\setminus \cO$.
    By the extreme value theorem, there exists a constant $M>0$ such that
    \begin{align*}
        |\nabla_y G_\Omega(x,y)\leq M\leq \frac{M(\mathrm{diam}(\Omega))^2}{|x-y|^2}. 
    \end{align*}
    Let $C=\max\{C_1,M(\mathrm{diam}(\Omega))^2\}$. 
    Then $G_\Omega(x,y)$ satisfies
    \begin{align*}
        |\nabla_y G_\Omega(x,y)|\leq \frac{C}{|x-y|^2}
    \end{align*}
    for all $x,y\in\Omega$ with $x\neq y$.
\end{details}

\begin{proof}
    Since $|\nabla_y G_\Omega(x,y)|\leq \frac{C}{|x-y|^2}$, we may use Lorentz-Young to obtain
    \begin{align*}
        \|B_\Omega(u,v)\|_{L^{3,\infty}(\Omega)}&\leq C\left\|\int_\Omega \frac{|u(y)||v(y)|}{|x-y|^2}dy\right\|_{L^{3,\infty}(\Omega)}\\
        &\leq C\||\cdot|^{-2}*F\|_{L^{3,\infty}(\R^3)}\\
        &\leq C\||\cdot|^{-2}\|_{L^{3/2,\infty}(\R^3)}\|F\|_{L^{3/2,\infty}(\R^3)},
    \end{align*}
    where $F$ is the extension by zero of $|u(y)||v(y)|$. 
    Since $|\cdot|^{-2}\in L^{3/2,\infty}(\R^3)$ we obtain 
    \begin{align*}
        \|B_\Omega(u,v)\|_{L^{3,\infty}(\Omega)}\leq C\|F\|_{L^{3/2,\infty}(\Omega)}\leq C\|u\|_{L^{3,\infty}(\Omega)}\|v\|_{L^{3,\infty}(\Omega)},
    \end{align*}
    which proves the lemma.
\end{proof}

\begin{theorem}
    \label{thm::ExistenceOfSolutionsWithFiniteNumberOfPointSingularities}
    For $\Omega\subseteq\R^3$, let $(G_\Omega,\Pi_\Omega)$ be the Green tensor for the Stokes equation with zero boundary conditions.
    Let $\Omega\subseteq\R^3$ be a domain with $|G_\Omega(x,y)|\leq \frac{C}{|x-y|^2}$.
    Let $x_1,...,x_N\in\Omega$ be a finite collection of interior points and $b_1,...,b_N\in\R^3$ be the force vectors at each $x_i$.
    Let $\delta_{x_i}$ denote the Dirac delta distribution supported at $x_i$ and define 
    \begin{align*}
        V^{(0)}(x):=\sum_{i=1}^N G_\Omega(x,x_i)b_i
        \qquad 
        q^{(0)}:=\sum_{i=1}^N \Pi_\Omega(x,x_i)b_i. 
    \end{align*}
    There exists $\epsilon>0$ such that there exists a distributional solution $(V,q)\in L^{3,\infty}(\Omega)\times L^{3/2,\infty}_{\mathrm{loc}}(\Omega)$ to 
    \begin{equation}
        \label{eq::StationaryNavierStokesEquations}
        -\Delta V+(V\cdot\nabla)V+\nabla q=\sum_{i=1}^N b_i\delta_{x_i}=:f,
    \end{equation}
    in $L^{3,\infty}(\Omega)$ satisfying zero boundary conditions and $\|V\|_{L^{3,\infty}(\Omega)}<2\epsilon$ whenever $\|V^{(0)}\|_{L^{3,\infty}(\Omega)}<\epsilon$.
\end{theorem}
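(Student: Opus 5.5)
We look for $V$ as a fixed point of
\begin{align*}
    \Phi(V):=V^{(0)}-B_\Omega(V,V),
\end{align*}
where $B_\Omega$ is the bilinear operator of Lemma \ref{lem::GreenTensorConvolutionEstimate}. Formally, the Stokes problem with zero boundary data and force $f-(V\cdot\nabla)V=f-\nabla\cdot(V\otimes V)$ is solved by $V^{(0)}$ plus $G_\Omega$ applied to $-\nabla\cdot(V\otimes V)$. Integrating by parts in $y$ moves the divergence onto $G_\Omega$, and no boundary terms arise because $G_\Omega(x,\cdot)$ vanishes on $\partial\Omega$. This produces exactly $-B_\Omega(V,V)$, with the sign convention absorbed into the definition. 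We read the hypothesis of the theorem as the gradient bound $|\nabla_y G_\Omega(x,y)|\le C|x-y|^{-2}$ required by the lemma.

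\textbf{Contraction step.} The lemma gives
\begin{align*}
    \|\Phi(V)\|_{L^{3,\infty}}&\le \|V^{(0)}\|_{L^{3,\infty}}+C_\Omega\|V\|_{L^{3,\infty}}^2,\\
    \|\Phi(V)-\Phi(W)\|_{L^{3,\infty}}&\le C_\Omega\bigl(\|V\|_{L^{3,\infty}}+\|W\|_{L^{3,\infty}}\bigr)\|V-W\|_{L^{3,\infty}}.
\end{align*}
The second estimate uses bilinearity: $B(V,V)-B(W,W)=B(V-W,V)+B(W,V-W)$. On the closed ball $X=\{V\in L^{3,\infty}_\sigma(\Omega):\|V\|_{L^{3,\infty}}\le 2\epsilon\}$ we choose $\epsilon$ with $4C_\Omega\epsilon<1/2$. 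Then $\|V^{(0)}\|<\epsilon$ gives $\|\Phi(V)\|\le \epsilon+4C_\Omega\epsilon^2<2\epsilon$, and the Lipschitz constant is at most $4C_\Omega\epsilon<1$. The space $L^{3,\infty}$ is a Banach space under an equivalent norm, and the constants only change by a fixed factor. Banach's fixed point theorem then gives a unique $V\in X$, and the bound $\|V\|_{L^{3,\infty}}<2\epsilon$ follows from the strict inequality above.

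\textbf{Verifying that $V$ solves the equation.} We must check that the fixed point is a distributional solution. Divergence-freeness and the zero boundary condition are inherited from $G_\Omega$. For the pressure we set
\begin{align*}
    q=q^{(0)}-\int_\Omega\nabla_y\Pi_\Omega(x,y):(V\otimes V)(y)\,dy,
\end{align*}
understood in the appropriate principal-value or distributional sense. Since $V\otimes V\in L^{3/2,\infty}$, Calderón–Zygmund-type bounds, or the pressure part of the Green function estimates, place this term in $L^{3/2,\infty}_{\rm loc}$. The term $q^{(0)}$ behaves like $|x-x_i|^{-2}$ near each source and also lies in $L^{3/2,\infty}_{\rm loc}$.

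The identity
\begin{align*}
    -\Delta V+\nabla q=f-\nabla\cdot(V\otimes V)
\end{align*}
is then checked by testing against $\varphi\in C^\infty_c(\Omega)$ and using the defining property of the Green tensor. The nonlinear term is meaningful as a distribution because $V\otimes V\in L^{3/2,\infty}\subset L^1_{\rm loc}$.

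\textbf{Main obstacle.} We expect this last step to be the hardest: justifying the Green representation and the pressure regularity for merely $L^{3/2,\infty}$ data on a general domain, which the paper's stated estimates do not directly address. We would handle it by density. Approximate $V\otimes V$ by smooth compactly supported tensors $F_n$, use the classical Stokes theory for each $F_n$, and pass to the limit in distributions using the $L^{3,\infty}$ continuity from the lemma.
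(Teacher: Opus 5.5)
Your fixed-point step is the paper's argument. It uses the same ball of radius $2\epsilon$, the same smallness $\epsilon\lesssim C_\Omega^{-1}$, the same bilinear splitting, and the same reading of the hypothesis as $|\nabla_yG_\Omega(x,y)|\le C|x-y|^{-2}$.

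\textbf{The gap is the pressure.} Your formula $q=q^{(0)}-\int_\Omega\nabla_y\Pi_\Omega(x,y):(V\otimes V)(y)\,dy$ has a kernel of size $|x-y|^{-3}$, so it is a genuine singular integral on $\Omega$. Making sense of it, including the local $\delta$-type term that already appears in $\R^3$, and bounding it in $L^{3/2,\infty}_{\mathrm{loc}}$ requires two things: a pressure representation formula, and Calder\'on--Zygmund theory for $\nabla_y\Pi_\Omega$ on the domain. The only hypothesis is a pointwise bound on $\nabla_yG_\Omega$, which gives neither.

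Your density fallback does not repair this, for two reasons.
\begin{enumerate}
\item $C^\infty_c$ is not dense in $L^{3/2,\infty}$, and the failure occurs exactly at your sources. Near $x_i$, $V^{(0)}\otimes V^{(0)}$ behaves like $|x-x_i|^{-2}$, and a function with this scale-invariant singularity stays a fixed positive $L^{3/2,\infty}$-distance from every bounded function. So you cannot pass identities from $F_n$ to $V\otimes V$ via the norm continuity of $B_\Omega$.
\item Even with the velocity identity in hand, passing to the limit in the approximating pressures needs uniform $L^{3/2,\infty}_{\mathrm{loc}}$ bounds on them. That is the same missing Calder\'on--Zygmund estimate.
\end{enumerate}

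\textbf{How the paper avoids any pressure kernel.} It tests only against $\varphi\in C^\infty_{c,\sigma}(\Omega)$ and verifies $\langle B_\Omega(V,V),\Delta\varphi\rangle=-\langle\nabla\cdot(V\otimes V),\varphi\rangle$ directly. The ingredients are:
\begin{itemize}
\item Fubini;
\item a Vitali-convergence argument to differentiate $\int_\Omega G_\Omega(x,y)\Delta\varphi(x)\,dx$ in $y$;
\item the Green identity $\int_\Omega G_\Omega(x,y)^{\top}(-\Delta\varphi(x))\,dx=\varphi(y)$.
\end{itemize}
Hence $f+\Delta V-\nabla\cdot(V\otimes V)$ annihilates divergence-free test functions, and de Rham's theorem produces a distributional $q$ abstractly.

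Local integrability then follows from a purely local argument. In $\Omega$ one has $-\Delta(q-q^{(0)})=\partial_i\partial_j(V_iV_j)$. Let $\Phi_{ij}$ be the zero extension of $V_iV_j$ and set $g=\sum_{i,j}R_iR_j\Phi_{ij}$. Then $g\in L^{3/2,\infty}(\R^3)$, using only whole-space Riesz transforms. The remainder $h=q-q^{(0)}-g$ is harmonic in $\Omega$, hence smooth by Weyl's lemma. So $q=h+g+q^{(0)}\in L^{3/2,\infty}_{\mathrm{loc}}(\Omega)$, and nothing about $\Pi_\Omega$ is used beyond $q^{(0)}\in L^{3/2,\infty}_{\mathrm{loc}}$. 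This de Rham plus harmonic-remainder step is the idea missing from your proposal.
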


\begin{figure}[ht]
    \centering
    \begin{subfigure}{0.48\textwidth}
        \centering
        \includegraphics[width=\linewidth]{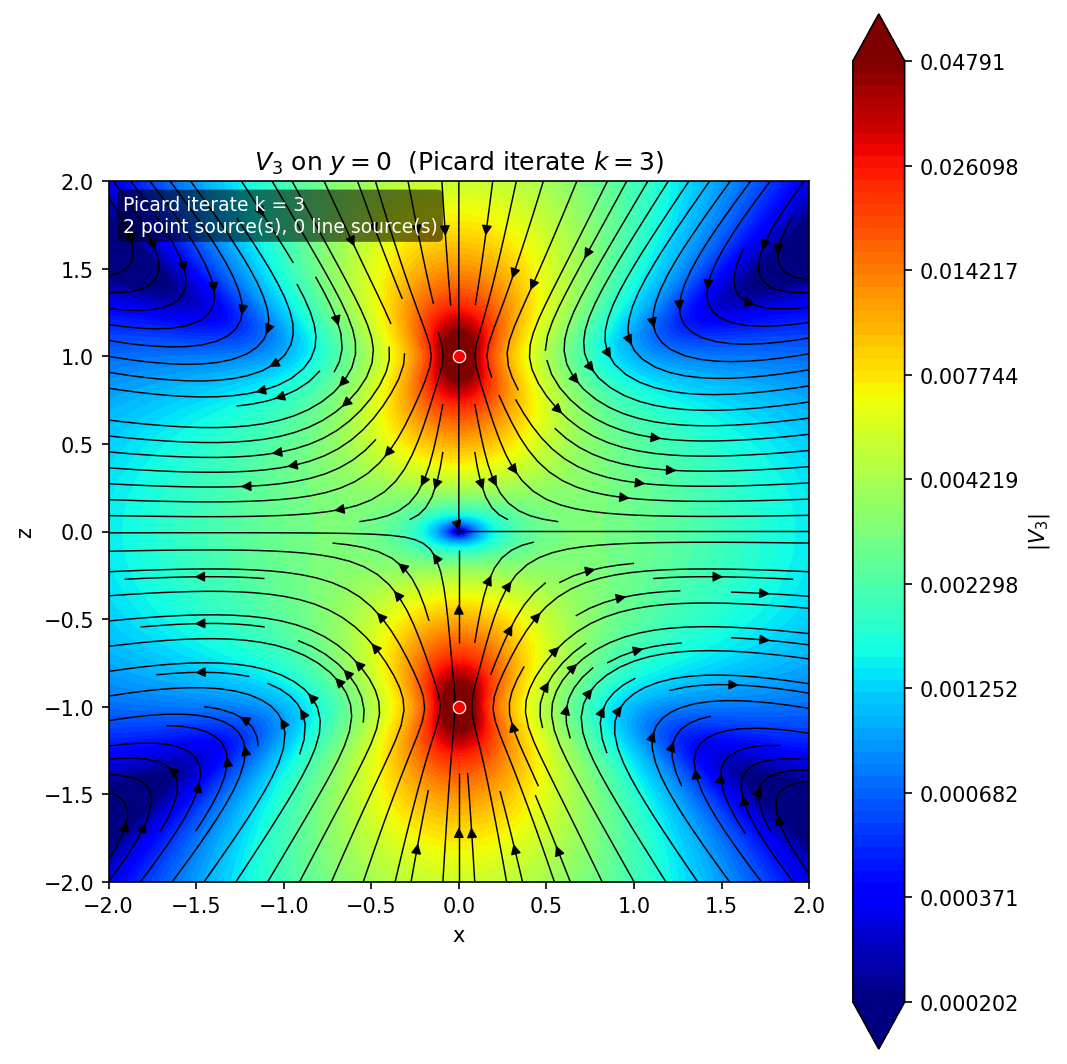}
        \caption{Opposing point sources.}
    \end{subfigure}
    \begin{subfigure}{0.48\textwidth}
        \centering
        \includegraphics[width=\linewidth]{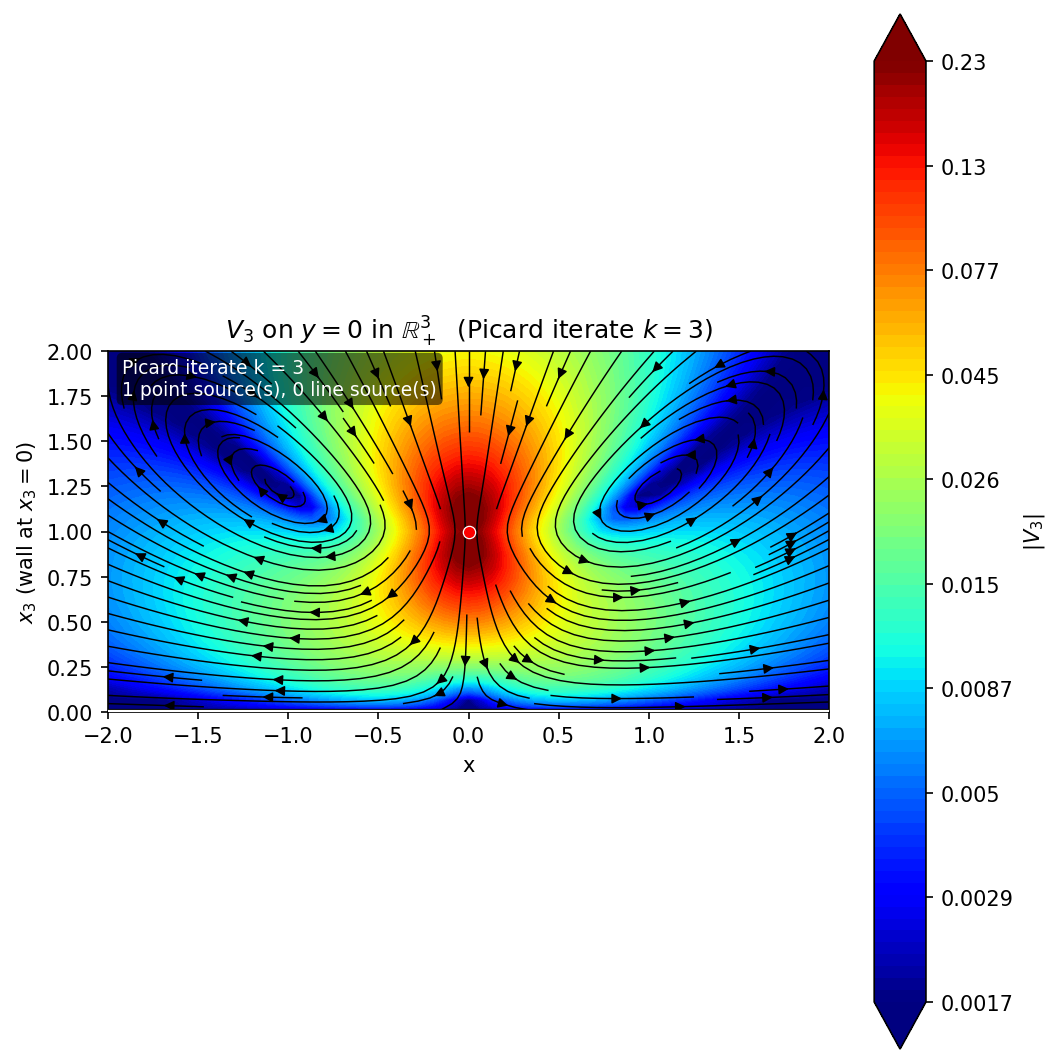}
        \caption{Half-space point source.}
    \end{subfigure}
    \caption{The opposing point-sources in (A) generate a flow satisfying the Euler boundary conditions at the $z=0$ plane whereas in (B) a no-slip boundary condition is enforced.}
    \label{fig::OpposingPoint}
\end{figure}

\begin{figure}[ht]
    \centering
    \begin{subfigure}{0.48\textwidth}
        \centering
        \includegraphics[width=\linewidth]{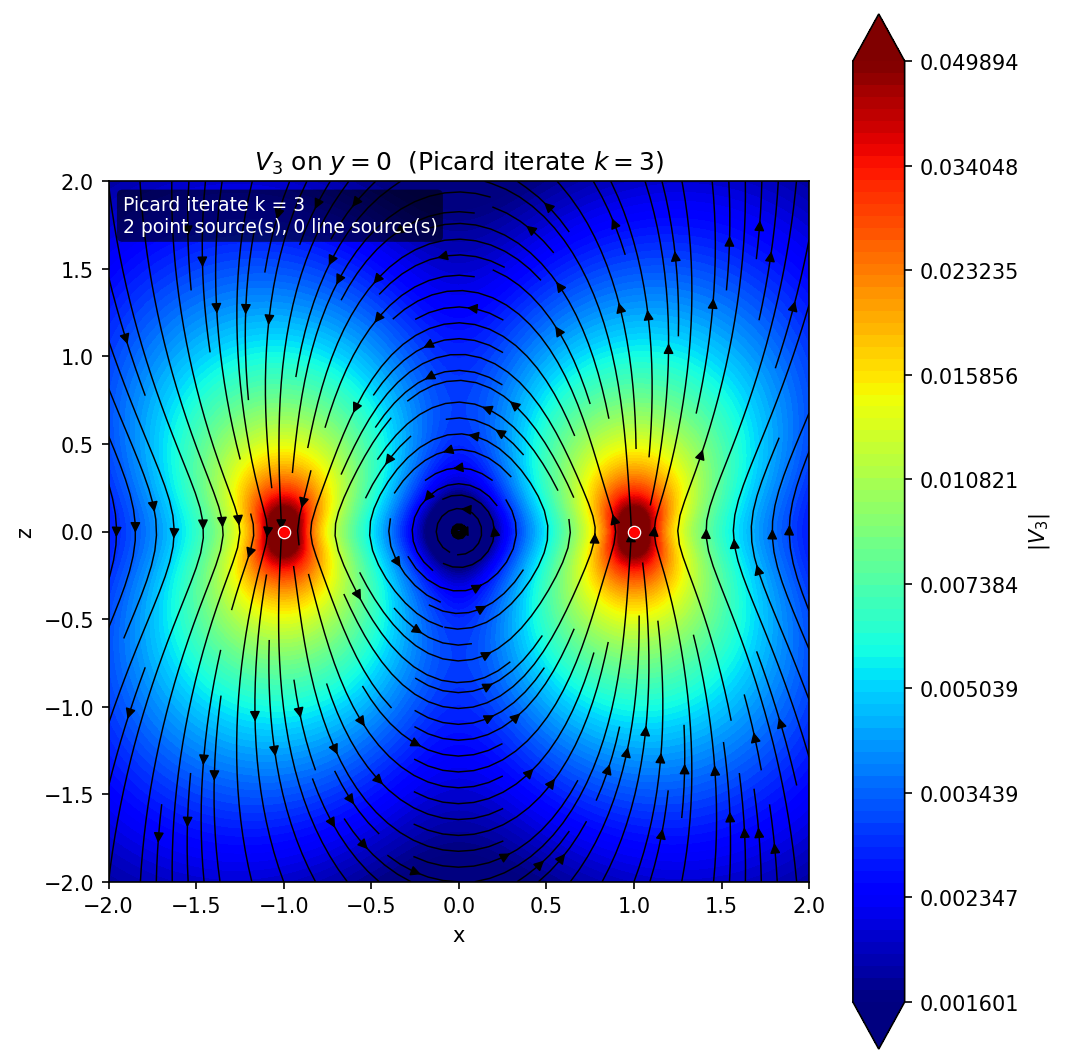}
        \caption{Swirl generated by two point-sources.}
    \end{subfigure}
    \begin{subfigure}{0.48\textwidth}
        \centering
        \includegraphics[width=\linewidth]{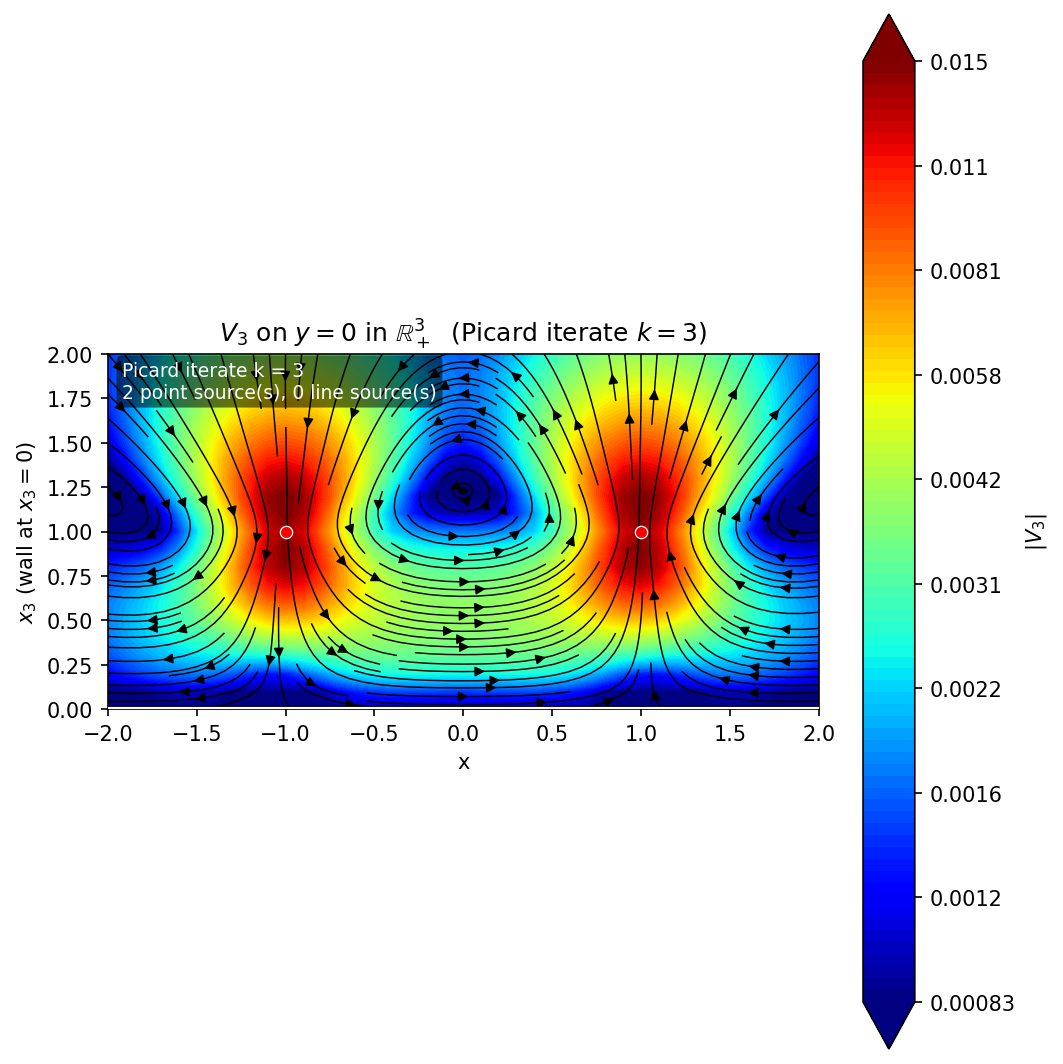}
        \caption{The same  point-sources  but near a boundary.}
    \end{subfigure}
    \caption{The point-sources in (A) generate a stable rotation. In (B) the effect of a boundary is to deform this rotation and push it away from the boundary.}
    \label{fig::SwirlGen2}
\end{figure}

\begin{figure}[ht]
    \centering
    \begin{subfigure}{0.48\textwidth}
        \centering
        \includegraphics[width=\linewidth]{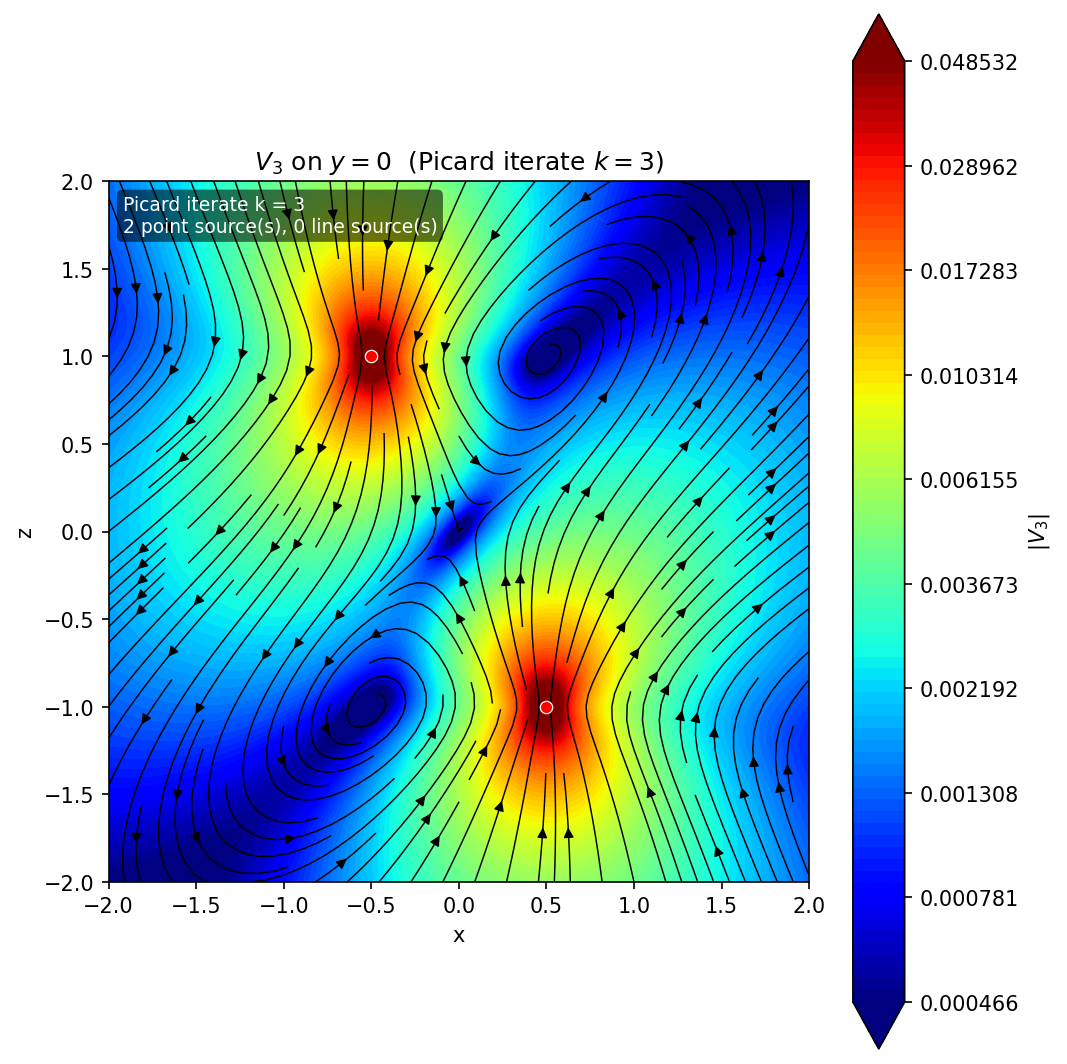}
        \caption{Swirls generated by two point-sources.}
    \end{subfigure}
    \begin{subfigure}{0.48\textwidth}
        \centering
        \includegraphics[width=\linewidth]{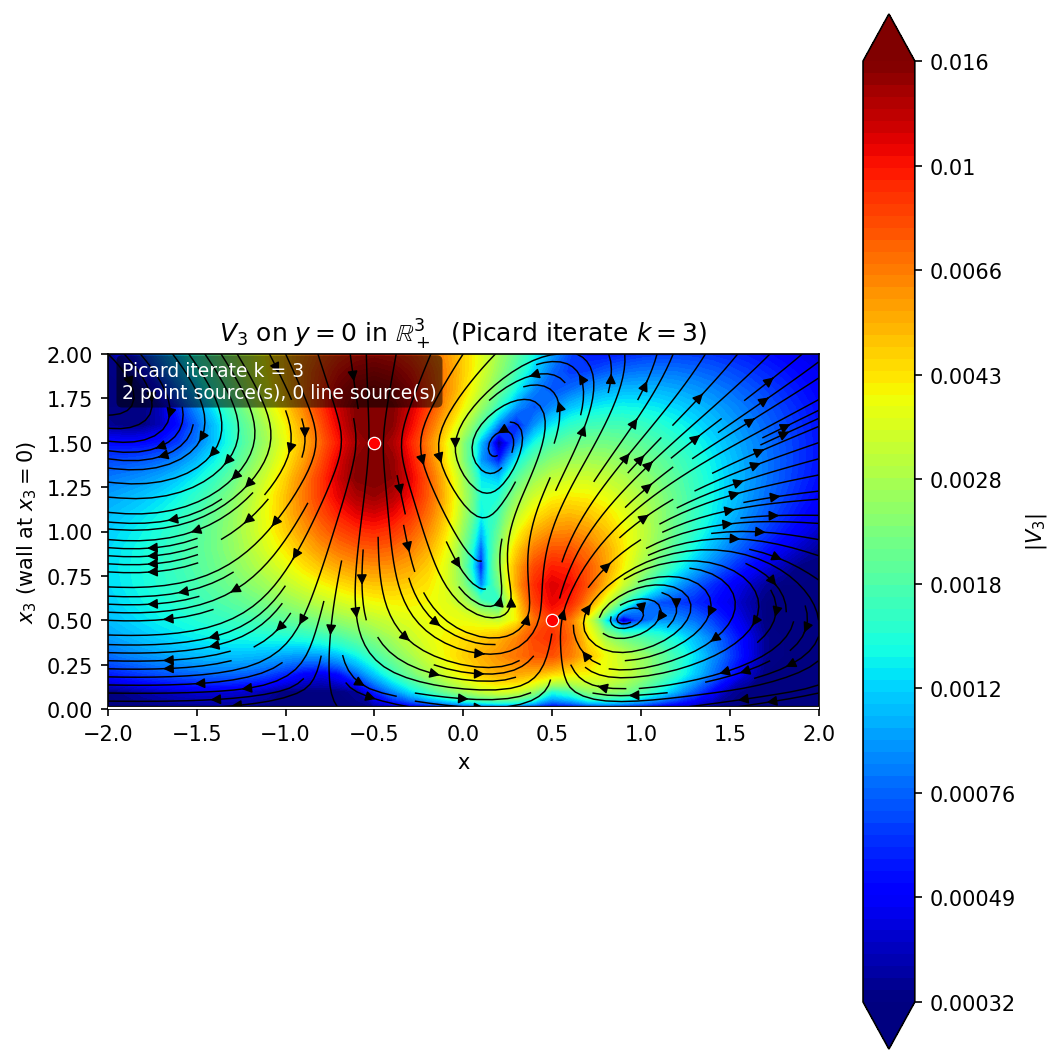}
        \caption{The same-point sources but near a boundary.}
    \end{subfigure}
    \caption{The point-sources in (A) generate multiple stable rotations. In (B), the boundary has destroyed one rotation but has generated another.}
    \label{fig::SwirlGen1}
\end{figure}

Figures \ref{fig::OpposingPoint}, \ref{fig::SwirlGen2} and \ref{fig::SwirlGen1} illustrate the effect of multiple point-sources on the flow as well as its interaction with a boundary. Note that in the small-regime, the interaction of a Stokeslet with the boundary \cite{StokesletNearBoundary} essentially describes these dynamics at the macroscale because the higher order contributions from the nonlinearity are subordinate to the leading linear behavior. In the figure we approximate the nonlinear contribution up to the third Picard iterate.  Our observation is that these features are inherited by the nonlinear problem for many geometries. This is also the case for the dynamics of linear combinations of Stokeslets. We observe that different arrangements of the point-sources can generate different structures such as vortexes. Our asymptotic stability results provide conditions under which these structures are asymptotically stable.

\begin{remark}
    \label{rem::ExistenceOfSolutionsWithFiniteNumberOfPointSingularitiesRemark}
    As will be evident in the following proof, this argument actually provides a construction of a solution to
    \begin{align*}
        -\Delta V+(V\cdot\nabla)V+\nabla q=f
        \qquad 
        \nabla\cdot V=0,
    \end{align*}
    where $f$ is such that there exists a solution $(V^{(0)},q^{(0)})$ to the Stokes equations forced by $f$ and satisfies
    \begin{align*}
        \|V^{(0)}\|_{L^{3,\infty}(\Omega)}\leq\epsilon,
    \end{align*}
    and $q^{(0)}\in L^{3/2,\infty}_{\operatorname{loc}}(\Omega).$
    We will make use of this fact later on when we prove Theorem \ref{thm::WholeSpaceExistenceOfSolutionsWithFiniteNumberOfLineSingularities}.

    Note that when $\Omega=\R^3$, Theorem \ref{thm::ExistenceOfSolutionsWithFiniteNumberOfPointSingularities} is almost contained in \cite{DeIf} which constructs solutions for small $(-3)$-homogeneous source terms that are localized in some sense.
    The proof in \cite{DeIf} constructs solutions in the $O(|x|^{-1})$ class and is therefore too restrictive to apply to Theorem \ref{thm::WholeSpaceExistenceOfSolutionsWithFiniteNumberOfLineSingularities}, which necessitates our work here.
    The relationship between \cite{DeIf} and this remark is that one can expect to generalize \cite{DeIf} by constructing solutions anytime the distributional force generates a solution to the Stokes equations which is small in $L^{3,\infty}(\R^3)$, regardless of its homogeneity.
\end{remark}

\begin{proof}
    By linearity of the Stokes equation, we know that $(V^{(0)},q^{(0)})$ satisfy
    \begin{align*}
        -\Delta V^{(0)}+\nabla q^{(0)}=f,
        \qquad
        \nabla\cdot V^{(0)}=0.
    \end{align*}
    We aim to solve 
    \begin{align*}
        -\Delta V+(V\cdot\nabla)V+\nabla q=f,
        \qquad 
        \nabla\cdot V=0,
    \end{align*}
    which is equivalent to
    \begin{align*}
        V=B_\Omega(V,V)+V^{(0)}.
    \end{align*}
    We will prove $\Phi(V):=B_\Omega(V,V)+V^{(0)}$ is a contraction on the ball $B(2\epsilon)$.

    Let $C$ be as in Lemma \ref{lem::GreenTensorConvolutionEstimate}. 
    Suppose $\epsilon<\frac{1}{4C}$. 
    If $\|V\|_{L^{3,\infty}(\Omega)}\leq 2\epsilon$ then
    \begin{align*}
        \|\Phi(V)\|_{L^{3,\infty}(\Omega)}
        \leq C\|V\|^2_{L^{3,\infty}(\Omega)}+\|V^{(0)}\|_{L^{3,\infty}(\Omega)}
        \leq 4C\epsilon^2+\epsilon
        \leq 2\epsilon.
    \end{align*}
    Now suppose that $U,V\in B(2\epsilon)$. 
    Then 
    \begin{align*}
        \|\Phi(U)-\Phi(V)\|_{L^{3,\infty}(\Omega)}&=\|B_\Omega(U,U)-B_\Omega(V,V)\|_{L^{3,\infty}(\Omega)}\\
                                                  &=\|B_\Omega(U,U-V)+B_\Omega(U-V,V)\|_{L^{3,\infty}(\Omega)}\\
                                                  &\leq C_\Omega(\|U\|_{L^{3,\infty}(\Omega)}+\|V\|_{L^{3,\infty}(\Omega)})\|U-V\|_{L^{3,\infty}(\Omega)},
    \end{align*}
    shows $\Phi$ is a contraction because $4C\epsilon<1$.
    Let $V_{k+1}=\Phi(V^{(k)})$.
    By the Banach fixed-point theorem there exists a $V\in L^{3,\infty}(\Omega)$ with $\|V\|_{L^{3,\infty}(\Omega)}<2\epsilon$.

    \detail{
        In components,
        \begin{align*}
            B_\Omega(V,V)_i(x)=\int_\Omega \partial_{y_k}G_{ij}(x,y)V_j(y)V_k(y)dy
        \end{align*}
        and hence
        \begin{align*}
            \langle B_\Omega(V,V),\Delta\varphi\rangle&=\int_\Omega\int_\Omega \partial_{y_k}G_{ij}(x,y)V_j(y)V_k(y)dy\Delta\varphi_i(x)dx\\
            &=\int_\Omega V_j(y)V_k(y)\int_\Omega \partial_{y_k}G_{ij}(x,y)\Delta\varphi_i(x)dxdy.
        \end{align*}
        Here we used the Fubini-Tonelli theorem which requires the integrand to be measurable in the product space and the iterated integral of its absolute value to be finite.
        Measurability is clear, to check  
        \begin{align*}
            \int_\Omega \int_\Omega |V_j(y)V_k(y)\partial_{y_k}G_{ij}(x,y)\Delta\varphi_i(x)|dxdy<\infty.
        \end{align*}
        which is true by the Lemma \eqref{lem::GreenTensorConvolutionEstimate} and the uniform bound $\|\Delta\varphi_i(x)\|_{L^\infty(\Omega)}<\infty$.}

    \detail{
        Now, we need to show
        \begin{align*}
            \int_\Omega \partial_{y_k}G_{ij}(x,y)\Delta\varphi_i(x)dx
            =\partial_{y_k}\int_\Omega G_{ij}(x,y)\Delta\varphi_i(x)dx.
        \end{align*}
        Fix $y_0\in\Omega$. 
        Fix $\delta_0<\mathrm{dist}(y_0,\partial\Omega)$.
        For a.e. $x\in\Omega$ the function $h\mapsto G_{ij}(x,y_0+he_k)\Delta\varphi_i(x)$ is continuous on $[-\delta_0,\delta_0]$ and $C^1$ on $(-\delta_0,\delta_0)$.
        So we can apply the fundamental theorem of calculus to obtain the bound
        \begin{align*}
            \left|\frac{G_{ij}(x,y_0+he_k)-G_{ij}(x,y_0)}{h}\right|
            =\left|\int_0^1 \partial_{y_k}G_{ij}(x,y_0+she_k)ds\right|\leq \int_0^1\frac{C}{|x-y_0-she_k|^2}ds
        \end{align*}
        For each $h\in (-\delta_0,\delta_0)$ let
        \begin{align*}
            f_h(x):=\frac{G_{ij}(x,y_0+he_k)-G_{ij}(x,y_0)}{h}\Delta\varphi_i(x).
        \end{align*}
        Let $K=\mathrm{supp}(\varphi)$.
        We now want to show $\cF=\{f_h:0<h<\delta_0\}$ is uniformly integrable in $|x|$.
        That is (see Royden page 93), we need to show for each $\epsilon>0$ there is a $\delta>0$ such that for each $f_h\in\cF$ if $A\subseteq K$ is measurable with $|A|<\delta$, then $\int_A |f|<\epsilon$.
        Fix $\epsilon>0$. 
        Let $\delta=C\|\Delta\varphi\|_{L^\infty(K)} 4\pi\left(\frac{3\epsilon}{4\pi}\right)^{1/3}$ and suppose $|A|<\delta$.
        Then
        \begin{align*}
            \int_A |f_h|dx
            \leq C\|\Delta\varphi\|_{L^\infty(K)} \int_A \int_0^1\frac{1}{|x-y_0-she_k|^2}dsdx
            \leq C\|\Delta\varphi\|_{L^\infty(K)}\sup_{y\in\Omega} \int_A \frac{1}{|x-y|^2}dx.
        \end{align*}
        Let $f=\chi_A$ denote the indicator function for $A$ and $g(x)=\frac{1}{|x-y|^2}$. 
        Then, by Wikipedia, the symmetric decreasing rearrangement of $f$, denoted $f^*$, is the indicator function for the ball $B_R(0)$ with $R=\left(\frac{3|A|}{4\pi}\right)^{1/3}$. 
        Furthermore, the symmetric decreasing arrangement of $g$ is defined by 
        \begin{align*}
            g^*(x)=\int_0^\infty \chi_{\{z:f(z)>t\}^*}(x)dt.
        \end{align*}
        To compute this, note that $\{z:|z-y|^{-2}>t\}=B_{t^{-1/2}}(y)$. 
        So $\{z:f(z)>t\}^*=B_{t^{-1/2}}(0)$ and hence, 
        \begin{align*} |x|<t^{-1/2} t<|x|^{-2}
            g^*(x)
            =\int_0^\infty \chi_{\{B_{t^{-1/2}(0)}}(x)dt
            =\int_0^{|x|^{-2}} dt
            =|x|^{-2}.
        \end{align*}
        By the Hardy-Littlewood inequality,
        \begin{align*}
            \sup_{y\in\Omega} \int_A \frac{1}{|x-y|^2}dx
            \leq \int_{B_R} \frac{1}{|x|^2}dx
            =4\pi\left(\frac{3|A|}{4\pi}\right)^{1/3}.
        \end{align*}
        Thus
        \begin{align*}
            \int_A |f_h|dx
            < C\|\Delta\varphi\|_{L^\infty(K)} 4\pi\left(\frac{3\epsilon}{4\pi}\right)^{1/3}
            =\delta
        \end{align*}
        By the Vitali convergence theorem
        \begin{align*}
            \partial_{y_k}\int_\Omega G_{ij}(x,y_0)\Delta\varphi_i(x)dx
            =\lim_{h\to 0}\int_\Omega f_h(x)dx
            =\int_\Omega \partial_{y_k}G_{ij}(x,y_0)\Delta\varphi_i(x)dx.
        \end{align*}
        Since $y_0$ was arbitrary, this verifies
        \begin{align*}
            \int_\Omega \partial_{y_k}G_{ij}(x,y)\Delta\varphi_i(x)dx
            =\partial_{y_k}\int_\Omega G_{ij}(x,y)\Delta\varphi_i(x)dx.
        \end{align*}
        }

    It remains to construct the pressure.
    Fix $\varphi\in C_{c,\sigma}^\infty(\Omega;\R^3)$. 
    Denote the components of $G_\Omega(x,y)$ by $G_{ij}(x,y)$.
    We first verify the identity
    \begin{equation}
    \label{eq::IdentityNeededForDistributionalPressureCalculation}
        \int_\Omega \partial_{y_k}G_{ij}(x,y)\Delta\varphi_i(x)\,dx
        =\partial_{y_k}\int_\Omega G_{ij}(x,y)\Delta\varphi_i(x)\,dx.
    \end{equation}
    Fix $y_0\in\Omega$. 
    Fix $\delta_0<\mathrm{dist}(y_0,\partial\Omega)$.
    For a.e. $x$ and $h\in(-\delta_0,\delta_0)$, the fundamental theorem of calculus gives 
    \begin{align*}
        \left|\frac{G_{ij}(x,y_0+he_k)-G_{ij}(x,y_0)}{h}\right|
        =\left|\int_0^1 \partial_{y_k}G_{ij}(x,y_0+she_k)\,ds\right|\leq \int_0^1\frac{C}{|x-y_0-she_k|^2}\,ds.
    \end{align*}
     For each $h\in (-\delta_0,\delta_0)$ let
    \begin{align*}
        f_h(x):=\frac{G_{ij}(x,y_0+he_k)-G_{ij}(x,y_0)}{h}\Delta\varphi_i(x),
    \end{align*}
    and set $\cF=\{f_h:0<h<\delta_0\}$.
    \begin{align*}
        \int_A |f_h|\,dx
        &\leq C\|\Delta\varphi\|_{L^\infty(K)} \int_A \int_0^1\frac{1}{|x-y_0-she_k|^2}\,ds\,dx\\
        &\leq C\|\Delta\varphi\|_{L^\infty(K)}\sup_{y\in\Omega} \int_A \frac{1}{|x-y|^2}\,dx.
    \end{align*}
    Let $f=\chi_A$ denote the indicator function for $A$ and $g(x)=\frac{1}{|x-y|^2}$. 
    Then, the symmetric decreasing rearrangements, of $f$ and $g$ are, respectively $f^*=\chi_{B(0,R)}$ where $R=(3|A|/4\pi)^{1/3}$ and $g^*=|x|^{-2}$.
    The Hardy-Littlewood inequality implies that 
    \begin{align*}
        \int_A \frac 1 {|x-y|^2}\,dx \leq \int_{B(0,R)} \frac 1 {|x|^2}\,dx\leq 4\pi R. 
    \end{align*}
    Hence
    \begin{align*}
        \int_A |f_h|\,dx
        &\leq  C\|\Delta\varphi\|_{L^\infty(K)} 4\pi\left(\frac{3|A|}{4\pi}\right)^{1/3}\to 0.
    \end{align*}
    as $|A|\to 0$.   
    So the family $\cF$ is uniformly integrable in $x$.
    By the Vitali convergence theorem \cite[Sec. 4.6]{Royden}
    \begin{align*}
        \partial_{y_k}\int_\Omega G_{ij}(x,y_0)\Delta\varphi_i(x)\,dx
        =\lim_{h\to 0}\int_\Omega f_h(x)\,dx
        =\int_\Omega \partial_{y_k}G_{ij}(x,y_0)\Delta\varphi_i(x)\,dx.
    \end{align*}
    Since $y_0$ was arbitrary, this verifies \eqref{eq::IdentityNeededForDistributionalPressureCalculation}.

    Now, for each $\varphi\in C_{c,\sigma}^\infty(\Omega;\R^3)$,
    \begin{align*}
        \langle B_\Omega(V,V),\Delta\varphi\rangle &=\int_\Omega \int_\Omega \nabla_y G_\Omega(x,y):(V\otimes V)(y)\,dy\cdot\Delta\varphi(x)\,dx\\
        &=\int_\Omega (V\otimes V)(y):\nabla_y \int_\Omega G_\Omega(x,y)^\top(-\Delta\varphi(x))\,dx\,dy\\
        &=\int_\Omega (V\otimes V)(y):\nabla_y \varphi(y)\, dy=-\langle \nabla\cdot(V\otimes V),\varphi\rangle.
    \end{align*}
    So
    \begin{align*}
        \langle f+\Delta V-\nabla\cdot(V\otimes V),\varphi\rangle&=\langle f,\varphi\rangle+\langle V,\Delta\varphi\rangle+\langle V\otimes V,\nabla\varphi\rangle\\
        &=\langle f,\varphi\rangle +\langle V_0,\Delta\varphi\rangle+\langle B_\Omega(V,V),\Delta\varphi\rangle+\langle V\otimes V,\nabla\varphi\rangle=0,
    \end{align*}
    for all $\varphi\in C_{c,\sigma}^\infty(\Omega;\R^3)$. 
    By \cite[Theorem 17']{DeRham} there exists a distribution $q\in (C_{c,\sigma}^\infty(\Omega;\R^3))'$ satisfying 
    \begin{equation}
        \label{eq::PressureEquation}
        \nabla q=f+\Delta V-\nabla\cdot(V\otimes V),
    \end{equation}
    as distributions.

    It remains to show $q\in L^{3/2,\infty}_{\mathrm{loc}}(\Omega)$. 
    As distributions, $(q-q^{(0)})$ satisfy
    \begin{align*}
        -\Delta (q-q^{(0)})=\partial_i\partial_j(V_iV_j),
        \qquad 
        \text{in }\;(C_c^\infty(\R^3))'.
    \end{align*}
    By Lorentz-Holder, $V_iV_j\in L^{3/2,\infty}(\Omega)$. 
    Let $\Phi_{ij}\in L^{3/2,\infty}(\R^3)$ be the extension by zero of $V_iV_j$.
    Define the tempered distribution $g=\sum_{i,j=1}^3 R_iR_j\Phi_{ij}$ where the Riesz transform $R_i$ are interpreted by duality.
    Then $g$ satisfies 
    \begin{align*}
        -\Delta g=\partial_i\partial_j\Phi_{ij}, 
        \qquad 
        \text{in }\;(C_c^\infty(\R^3))'
    \end{align*}
    and $\|g\|_{L^{3/2,\infty}(\Omega)}$ by boundedness of Riesz transforms on $L^{3/2,\infty}(\Omega)$.
    So $h:=(q-q^{(0)})-g$ satisfies 
    \begin{align*}
        -\Delta h=0, 
        \qquad 
        \text{in }\;(C_c^\infty(\Omega))'.
    \end{align*} 
    By the generalization of Weyl's lemma to distributions $h$ is a smooth function. So $h$ is locally bounded and more importantly, $h\in L^{3/2,\infty}(\Omega)_{\mathrm{loc}}$. 
    Therefore $q=h+g+q^{(0)}\in L^{3/2,\infty}_{\mathrm{loc}}(\Omega)$.
\end{proof}

\subsection{Li-Li-Yan Type II solutions}
\label{subsec::Examples::LiLiYanTypeII}

In \cite{LiLiYanII}, Li Li, Yan Yan Li and Xukai Yan classify solutions of \eqref{eq::RicattiODE} in the interval $(-1,1)$ which correspond to $(-1)$-homogeneous, axisymmetric, no-swirl solutions to \eqref{sns} which are smooth in $\R^3\setminus\{x_h=0\}$.
Compared to \cite{LiLiYanI}, which studies solutions that are singular on a half-axis, the solutions in \cite{LiLiYanII} may be $o(|x_h|^{-1})$. 
These solutions, which are called Type II, are asymptotically stable under a smallness condition \cite{LiYan}.
In this section, we make a remark about these solutions and different approaches to proving they are asymptotically stable.
We then identify a subset of solutions which satisfy a physically meaningful boundary condition when viewed as solutions in $\R^3_+$ or $\R^3_-$.

The parameter space in \cite{LiLiYanII} is four dimensional, three of which correspond to the triple $c=(c_1,c_2,c_3)$ which satisfies 
\begin{align*}
    c_1\geq -1,
    \qquad 
    c_2\geq -1,
    \qquad 
    c_3\geq \bar{c_3}(c_1,c_2):=-\frac{1}{2}\left(\sqrt{1+c_1}+\sqrt{1+c_2}\right)\left(\sqrt{1+c_1}+\sqrt{1+c_2}+2\right),
\end{align*}
and the fourth a closed interval $I(c)$ depending on $c$. Define
\begin{align*}
    \gamma^+(c):=v_\theta^+(c)(0),
    \qquad 
    \gamma^-(c):=v_\theta^-(c)(0),
\end{align*}
where $v_\theta^+$ and $v_\theta^-$ are the unique solutions of \eqref{eq::RicattiODE} attaining the largest and smallest values at $x=0$ respectively.
We have that $I(c):=[\gamma^-(c),\gamma^+(c)]$.

In \cite{LiYan}, Yan Yan Li and Xukai Yan prove asymptotic stability for the solutions $u^{c,\gamma}$ with 
\begin{align*}
    (c,\gamma)\in M:=\{(c,\gamma):c_1=c_2=0,c_3>-4,\gamma^-(c)<\gamma<\gamma^+(c)\},
\end{align*}
under a smallness condition.
For $(c,\gamma)\in M$, the associated solution $V^{c,\gamma}$ to \eqref{sns} satisfies the point-wise bound
\begin{align*}
    |V^{c,\gamma}(x)|\leq \frac{C(|c|,|\gamma|)}{\sqrt{|x||x_h|}},
\end{align*}
which follows from \cite[Corollary 2.1]{LiYan}.
To be more precise, \cite[Corollary 2.1]{LiYan} directly implies 
\begin{align*}
    |V^{c,\gamma}(x)|\leq \frac{C}{|x|}\left|\ln\left(\frac{|x_h|}{|x|}\right)\right|.
\end{align*}
Because $-1/(\sqrt{a})\leq\ln a\leq 1/(\sqrt{a})$ for $a\in (0,1)$ we infer that 
\begin{align*}
    \frac{1}{|x|}\left|\ln\left(\frac{|x_h|}{|x|}\right)\right|\leq \frac{1}{|x|}\frac{|x|^{1/2}}{|x_h|^{1/2}}.
\end{align*}
Asymptotic stability was proven directly in \cite{LiYan} using a new, anisotropic Caffarelli-Kohn-Nirenberg inequality which is of independent interest, see also \cite{LiYanII}.
We note that asymptotic stability also follows directly as a consequence of \cite{KarchPilSchonbek} using the fact that $|x|^{-1/2}|x_h|^{-1/2}\in L^{3,\infty}(\R^3)$. 
We were surprised to realize this, as well as the fact that slightly more singular profiles also belong to $L^{3,\infty}(\R^3)$, which is the content of the following lemma.

\begin{lemma}
    \label{lem::TypeIISolutionsAreInWeakL3}
    We have that $\frac{1}{|x|^a|x_h|^{1-a}}\in L^{3,\infty}(\R^3)$ for $a\in(1/3,\infty)$.
    Consequently, $V^{c,\gamma}\in L^{3,\infty}(\R^3)$ when $(c,\gamma)\in M$.
\end{lemma}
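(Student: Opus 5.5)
The plan is to use the fact that the function is $(-1)$-homogeneous, which reduces membership in $L^{3,\infty}(\R^3)$ to an integrability condition on the unit sphere; that condition is exactly $a>1/3$.

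Write $f(x)=|x|^{-a}|x_h|^{a-1}$. In spherical coordinates $(r,\theta,\phi)$ we have $|x_h|=r\sin\theta$, so
\begin{align*}
    f(x)=\frac{g(\theta)}{r},
    \qquad
    g(\theta):=(\sin\theta)^{a-1}.
\end{align*}
I will compute the distribution function in the definition \eqref{eq::LorentzQuasinorm} directly. For $\lambda>0$, the set $\{f\geq\lambda\}$ is $\{r\leq g(\theta)/\lambda\}$. Integrating in $r$ first gives
\begin{align*}
    \mu(\{f\geq\lambda\})
    =\int_0^{2\pi}\!\!\int_0^\pi\int_0^{g(\theta)/\lambda} r^2\sin\theta\,dr\,d\theta\,d\phi
    =\frac{2\pi}{3\lambda^3}\int_0^\pi (\sin\theta)^{3a-3}\sin\theta\,d\theta .
\end{align*}
Hence
\begin{align*}
    \lambda^3\mu(\{f\geq\lambda\})=\frac{2\pi}{3}\int_0^\pi(\sin\theta)^{3a-2}\,d\theta,
\end{align*}
which does not depend on $\lambda$. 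So $\|f\|_{L^{3,\infty}}$ is finite exactly when this angular integral converges. This is the usual principle that a $(-1)$-homogeneous function lies in $L^{3,\infty}(\R^3)$ iff its angular profile lies in $L^3(S^2)$.

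The remaining point, which is the only place the hypothesis on $a$ enters, is the convergence of $\int_0^\pi(\sin\theta)^{3a-2}\,d\theta$. Near $\theta=0$ and $\theta=\pi$ the integrand behaves like $\theta^{3a-2}$ and $(\pi-\theta)^{3a-2}$, respectively. These are integrable iff $3a-2>-1$, that is, $a>1/3$. When $a\geq 2/3$ the integrand is bounded, so every $a\in(1/3,\infty)$ is covered. This is the main (and only nontrivial) step; it also shows the threshold $1/3$ is sharp.

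For the consequence, I will use the pointwise bound derived above from \cite[Corollary 2.1]{LiYan}: for $(c,\gamma)\in M$,
\begin{align*}
    |V^{c,\gamma}(x)|\leq C(|c|,|\gamma|)\,|x|^{-1/2}|x_h|^{-1/2}.
\end{align*}
This is the case $a=1/2>1/3$. The quasinorm is monotone under pointwise domination, since $\{|V|\geq\lambda\}\subseteq\{Cf\geq\lambda\}$. Therefore $\|V^{c,\gamma}\|_{L^{3,\infty}(\R^3)}\leq C\|f\|_{L^{3,\infty}(\R^3)}<\infty$.
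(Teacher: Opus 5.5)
Your proof is correct and is essentially the paper's argument: the paper also computes the distribution function exactly and finds it equals $\lambda^{-3}$ times a constant that is finite iff $a>1/3$. The paper works in cylindrical coordinates, obtaining the constant $4\pi\int_0^1 t^{2-1/a}\sqrt{1-t^{2/a}}\,dt$, whereas your spherical coordinates make the $(-1)$-homogeneity explicit; the paper's alternative proof, via $(-1)$-homogeneity and $L^3_{\mathrm{loc}}(\R^3\setminus\{0\})$, is exactly the principle you invoke. The consequence for $V^{c,\gamma}$ follows, as in the paper, from the pointwise bound with $a=1/2$.
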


\begin{proof}
    In cylindrical coordinates $(\rho,\phi,z)$, $|x|=\sqrt{\rho^2+z^2}$ and $|x_h|=\rho$. 
    If $\rho>0$ then 
    \begin{align*}
        (\rho^2+z^2)^{-a/2}\rho^{a-1}>\lambda,
    \end{align*}
    is equivalent to\detail{ $(\rho^2+z^2)^{-a/2}\rho^{a-1}>\lambda$ iff $(\rho^2+z^2)^{-a/2}>\lambda \rho^{1-a}$ iff $\rho^2+z^2<(\lambda \rho^{1-a})^{-2/a}$ iff $|z|<\sqrt{(\lambda \rho^{1-a})^{-2/a}-\rho^2}$}
    \begin{align*}
        |z|<\sqrt{\lambda^{-2/a}\rho^{-2/a+2}-\rho^2}=:A(\rho).
    \end{align*}
    But $A(\rho)$ is a real number only when $\rho<\lambda^{-1}$.\detail{ $A(\rho)$ is real iff $\lambda^{-2/a}\rho^{-2/a+2}-\rho^2>0$ iff $\lambda^{-2/a}>\rho^{2/a}$ iff $\lambda^{-1}>\rho$.}
    Thus
    \begin{align*}
        |\{x\in\R^3:|x|^{-a}|x_h|^{a-1}>\lambda\}|
        &=2\int_0^{\lambda^{-1}}\int_0^{2\pi}\int_0^{A(\rho)}\rho dzd\phi d\rho\\
        &=4\pi\lambda^{-3}\int_0^1 t^{2-1/a}\sqrt{1-t^{2/a}}dt
        =:I\lambda^{-3}.
    \end{align*}
    The integral $I$ is finite exactly when $a>1/3$. 
    Thus, $\||x|^{-a}|x_h|^{a-1}\|_{L^{3,\infty}(\R^3)}=I^{1/3}$.
    \detail{
    The missing details in the above integral computation are:
    \begin{align*}
        &=4\pi\int_0^{\lambda^{-1}}\rho\sqrt{\lambda^{-2/a}\rho^{-2/a+2}-\rho^2} d\rho\\
        &=4\pi\int_0^1 \lambda^{-1}t\sqrt{\lambda^{-2/a}(t\lambda^{-1})^{-2/a+2}-(t\lambda^{-1})^2} \lambda^{-1}dt\\
        &=4\pi\lambda^{-2}\int_0^1 t\sqrt{\lambda^{-2}t^{-2/a+2}-t^2\lambda^{-2}}dt
    \end{align*}
    }

    It is perhaps interesting to give a second proof of this.
    It is implied by \cite[Lemma 3.1]{BT1} that if $u_0$ is $(-1)$-homogeneous, then $u_0\in L^{3,\infty}(\R^3)\iff u_0\in L^3_{\mathrm{loc}}(\R^3\setminus\{0\})$. 
    Let $K\Subset\R^3\setminus\{0\}$. 
    Let $x_K$ satisfy $|x_K|=\inf\{|x|:x\in K\}$.
    Then
    \begin{align*}
        \int_K \frac{1}{|x|^{3a}|x_h|^{3(1-a)}}\, dx
        \leq |x_K|^{-3a}\int_K \frac{1}{|x_h|^{3(1-a)}}\, dx.
    \end{align*}
    Viewing the above as an iterated integral and noting that, when $a>1/3$, we have $3(1-a)<2$ and, therefore, $\frac{1}{|x_h|^{3(1-a)}}\in L^2_{\mathrm{loc}}(\R^2)$. 
    This proves $u_0\in L^3_{\mathrm{loc}}(\R^3\setminus\{0\})$.
\end{proof}

We can also find solutions which satisfy a no-penetration boundary condition when restricted to $\R^3_+$ or $\R^3_-$.
For each $\gamma\in [\gamma^-(c),\gamma^+(c)]$ the corresponding solution $v_\theta^{c,\gamma}$ satisfies $v_\theta^{c,\gamma}(0)=\gamma$.
So if we want to find solutions in $M$ which restrict to solutions in the half-space with a no-penetration boundary condition, it suffices to show that $\gamma^-(c)\leq 0\leq \gamma^+(c)$ for each $c$ that shows up in $M$.

\begin{lemma}
    \label{lem::LemmaForHalfSpaceTypeII}
    Suppose $c_1=c_2>-1$ and $c_3>\bar{c}_3(c_1,c_2)$. Then $\gamma^-(c)=-\gamma^+(c)$.
\end{lemma}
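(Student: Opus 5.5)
The plan is to exploit the reflection symmetry $x\mapsto -x$ of the ODE \eqref{eq::RicattiODE}, which is available exactly when $c_1=c_2$. Given a solution $v$ of \eqref{eq::RicattiODE} on $(-1,1)$, I will set $\tilde v(x):=-v(-x)$. Then $\tilde v'(x)=v'(-x)$, and substituting gives
\begin{align*}
(1-x^2)\tilde v'(x)+2x\tilde v(x)+\tfrac12\tilde v(x)^2
=(1-y^2)v'(y)+2y v(y)+\tfrac12 v(y)^2\Big|_{y=-x},
\end{align*}
since $2x\tilde v(x)=2(-x)\cdot v(-x)\cdot 1$ with $y=-x$. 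The right-hand side equals $c_1(1+x)+c_2(1-x)+c_3(1-x^2)$. When $c_1=c_2$ this is the original right-hand side, so $\tilde v$ is again a solution for the same parameter $c$ and is defined on all of $(-1,1)$.

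Next I check that the map $v\mapsto\tilde v$ preserves the admissible class from \cite{LiLiYanII}, namely solutions of \eqref{eq::RicattiODE} on $(-1,1)$ that give solutions smooth off the axis. This relies on the boundary behaviour at $x=\pm1$. The reflection swaps the endpoints, and the conditions at $-1$ and $+1$ are governed by $c_2$ and $c_1$ respectively. With $c_1=c_2$ these conditions coincide, so the class is invariant. I expect this to be the main point needing care. I will cite the description in \cite{LiLiYanII} of the solution set as a family of solutions on $(-1,1)$ parametrized by $v(0)\in I(c)$, where the endpoint asymptotics are symmetric under $1\leftrightarrow 2$. I need the assumption $c_3>\bar c_3$ so that we are in the regime where $I(c)$ is a nondegenerate interval and the extremal solutions $v_\theta^\pm$ are well defined.

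Finally, reflection sends the value at $0$ to its negative: $\tilde v(0)=-v(0)$. So the set $\{v(0)\}=I(c)$ is invariant under negation. Hence $\sup I(c)=-\inf I(c)$, which is $\gamma^+(c)=-\gamma^-(c)$. Equivalently, $\tilde v^+$ is the solution attaining the smallest value at $0$, and by the uniqueness of the extremal solutions it equals $v_\theta^-$.
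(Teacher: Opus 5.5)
Your proof is correct. It differs from the paper's in how the reflected solution is identified.

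The paper reflects only $v_\theta^+(c)$ and identifies $U(x)=-v_\theta^+(c)(-x)$ with $v_\theta^-(c)$ through endpoint data. By \cite[Theorem 1.3]{LiLiYanII}, $U(1)=-v_\theta^+(c)(-1)=-2-2\sqrt{1+c_1}$, whereas every solution other than $v_\theta^-(c)$ takes the value $-2+2\sqrt{1+c_2}$ at $x=1$. This is where the hypotheses $c_2>-1$ (so the two roots differ) and $c_3>\bar c_3$ (so this endpoint dichotomy is available) enter.

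You instead reflect the whole solution class and use only the defining extremal property of $\gamma^\pm(c)$ as the max and min of $v(0)$. The class is invariant under $v\mapsto -v(-\cdot)$, and this map negates $v(0)$, so the set of values at $0$ is symmetric and $\gamma^+=-\gamma^-$. Your route is more elementary and avoids the endpoint classification entirely. It also works wherever $\gamma^\pm(c)$ are defined with $c_1=c_2$, so $c_3>\bar c_3$ is not really needed. The paper's route gives the identification $-v_\theta^+(-\cdot)=v_\theta^-$ directly from boundary behaviour; you recover it afterwards from uniqueness of the solution with prescribed value at $0$.

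Two small remarks:
\begin{itemize}
\item The step you flag as needing care is immediate. The reflection maps $C^\infty(-1,1)\cap C([-1,1])$ to itself and preserves \eqref{eq::RicattiODE} when $c_1=c_2$, so no comparison of endpoint conditions is required.
\item You have the endpoints reversed. From \eqref{eq::RicattiODE}, $v(-1)=2\pm2\sqrt{1+c_1}$ is governed by $c_1$, and $v(1)=-2\pm2\sqrt{1+c_2}$ is governed by $c_2$. This is harmless here since $c_1=c_2$.
\end{itemize}
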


\begin{proof}
    First note that if $c_1=c_2$, then \eqref{eq::RicattiODE} is invariant under the involution $v_\theta(x)\mapsto -v_\theta(-x)$. 
    So $U(x)=-v_\theta^+(c)(-x)$ solves \eqref{eq::RicattiODE} with constants $c$.
    For contradiction, assume $U\neq v_\theta^-(c)$.
    By \cite[Theorem 1.3]{LiLiYanII},
    \begin{align*}
        U(1)=-v_\theta^+(c)(-1)=-2-2\sqrt{1+c_1}=-2-2\sqrt{1+c_2}=v_\theta^-(c)(1).
    \end{align*}
    But, \cite[Theorem 1.3]{LiLiYanII} says $U\neq v_\theta^-(c)$ implies $U(1)=-2+2\sqrt{1+c_2}$.
    This is a contradiction since no $c_2>-1$ satisfies $-2+2\sqrt{1+c_2}=-2-2\sqrt{1+c_2}$.
\end{proof}

So for each $c_3>-4$, we have the pair $(c,0)\in M$.
This implies we have a curve of solutions $V^{c,\gamma}$ in the parameter space $M$ which satisfy a no-penetration boundary condition at $x_3=0$.
This represents two, one-parameter families of solutions which can be made small in $L^{3,\infty}(\R^3_+)$ which are associated to the restrictions of $V^{c,\gamma}$ to the upper and lower half-space respectively.

The preceding work justifies the following proposition.

\begin{proposition}
    \label{prop::HalfSpaceTypeII}
    There exist solutions to \eqref{sns} on $\R^3_+\setminus\{x_h=0\}$ satisfying no-penetration boundary conditions and belonging to $L^{3,\infty}(\R^3_+)$. 
    The set of solutions contains elements with arbitrarily small norm.
\end{proposition}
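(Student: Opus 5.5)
The plan is to take the solutions $V^{c,0}$ with $c=(0,0,c_3)$, $c_3>-4$, and restrict them to the upper half-space. Four things need checking: these parameters lie in $M$, the restriction solves \eqref{sns} on $\R^3_+\setminus\{x_h=0\}$, it has zero normal velocity on $\{x_3=0\}$, and it lies in $L^{3,\infty}(\R^3_+)$ with norm that can be made arbitrarily small.

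\textbf{Step 1: membership in $M$.} For $c_1=c_2=0$ we have $\bar c_3(0,0)=-\tfrac12\cdot2\cdot4=-4$, so $c_3>-4$ is exactly the hypothesis of Lemma \ref{lem::LemmaForHalfSpaceTypeII}. That lemma gives $\gamma^-(c)=-\gamma^+(c)$. We also need the interval $I(c)$ to be nondegenerate, i.e.\ $\gamma^+(c)>0$, so that $\gamma=0$ lies strictly between $\gamma^-(c)$ and $\gamma^+(c)$. I expect this to follow from the strict inequality $c_3>\bar c_3$ in the classification of \cite{LiLiYanII}, where $v_\theta^+\neq v_\theta^-$ because their values at $x=1$ differ (by \cite[Theorem 1.3]{LiLiYanII}, as in the proof of the lemma). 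Hence $(c,0)\in M$.

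\textbf{Step 2: equation and boundary condition.} The associated $V=V^{c,0}$ is a smooth solution of \eqref{sns} on $\R^3\setminus\{x_h=0\}$, so its restriction solves \eqref{sns} on $\R^3_+\setminus\{x_h=0\}$. On the plane $x_3=0$ we have $\theta=\pi/2$, i.e.\ $x=\cos\theta=0$, and there the normal component of velocity is $-V_\theta$. Since $V_\theta\sin\theta=v_\theta(0)=\gamma=0$ and $\sin\theta=1$, this gives $V_\theta=0$, which is the no-penetration condition.

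\textbf{Step 3: integrability and smallness.} The pointwise bound $|V^{c,\gamma}(x)|\le C|x|^{-1/2}|x_h|^{-1/2}$ for $(c,\gamma)\in M$, together with Lemma \ref{lem::TypeIISolutionsAreInWeakL3} (case $a=1/2$), gives $V\in L^{3,\infty}(\R^3)$ and hence $V\in L^{3,\infty}(\R^3_+)$.

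Smallness is the step I expect to be the main obstacle, because it requires controlling the constant $C(|c|,|\gamma|)$ as the parameters degenerate. I would argue as follows. At $c=0$, $\gamma=0$, the solution $v_\theta\equiv0$ solves \eqref{eq::RicattiODE}, so $V^{0,0}=0$. Restricting to $\gamma=0$ and letting $c_3\to0$, the constant $C(|c|,0)$ from \cite[Corollary 2.1]{LiYan} should tend to $0$, since the log-type bound scales with the size of the parameters. The $L^{3,\infty}$ norm then tends to $0$ as well.

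If that explicit dependence is not available, I would instead use continuous dependence of the ODE solutions $v_\theta^{c,\gamma}$ on $c$, uniformly on compact subsets of $(-1,1)$, combined with the boundary behaviour near $x=\pm1$ from \cite{LiLiYanII}. Through the $L^3_{\mathrm{loc}}(\R^3\setminus\{0\})$ characterization from \cite{BT1}, this should again give that $\|V^{c,0}\|_{L^{3,\infty}}$ tends to $0$ as $c\to0$.
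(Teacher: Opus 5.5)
Your proposal is correct and follows the paper's route: restrict $V^{c,0}$ with $c=(0,0,c_3)$ and $c_3>-4$, use Lemma \ref{lem::LemmaForHalfSpaceTypeII} to get $(c,0)\in M$, read off no-penetration from $v_\theta(0)=0$, and obtain $L^{3,\infty}$ membership from the Li--Yan pointwise bound together with Lemma \ref{lem::TypeIISolutionsAreInWeakL3}. You are more careful than the paper on two points it leaves implicit: the strict inequality $\gamma^-(c)<0<\gamma^+(c)$, which is part of the Li--Li--Yan classification for $c_3>\bar c_3$ (your endpoint-value justification for it is a little circular), and smallness as $c_3\to0$, which the paper only asserts; for smallness, $(-1)$-homogeneity gives $\|V\|_{L^{3,\infty}(\R^3_+)}^3=\tfrac13\|V\|_{L^3(S^2_+)}^3$, so it suffices that $V^{c,0}\to0$ in $L^3$ of the upper hemisphere, e.g.\ via the parameter dependence of the constant in the Li--Yan bound.
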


Under a smallness assumption, our asymptotic stability result Theorem \ref{mainthm1} implies these solutions are stable under perturbations which satisfy the same boundary condition and which possess the same interior singularity.
This singularity can be formulated as a source term comprised of a line of Dirac functions \cite{LiYan}.
Because the boundary vector field of the solutions in Proposition \ref{prop::HalfSpaceTypeII} is purely radial and homogeneous, it has a physical interpretation that is elaborated on in the next section which considers solutions with the same boundary condition but no interior singularity.

\subsection{Squire's surface discharge flows}
\label{subsec::Examples::SquireSurfaceDischargeFlows}

Squire discovered a one-parameter family of solutions which converge in $\R^3_+$ with no-penetration, radial boundary conditions \cite{Squire2}. 
Wang \cite{Wang} later proposes these solutions as a model for the "frequent accidental spillings of oil on the surface of water."
He goes on to describe the problem, stating that,
\begin{quote}
...due to the balance of gravity and surface tension, the oil spreads with more or less constant thickness covering a large area on the water surface. 
Since the oil is much more viscous than water, we assume the water has little dynamic effect on the layer of oil. 
Our problem thus considers only the effect of the spreading oil on the water underneath. \cite{Wang} 
\end{quote}
Based on this description we refer to these as \textbf{surface discharge flows}.
These solutions were obtained using the same reduction that led to Landau solutions but where the constants $c_i$ are chosen to ensure the solution converges in $\R^3_+$ and is purely tangential at the boundary.
If one pushes this further and attempts to satisfy a no-slip boundary condition, then a singularity necessarily develops along the axis of symmetry.
The necessity of this is proven in \cite[Appendix]{KMT}.

In this section we will show that every sufficiently regular, $(-1)$-homogeneous, axisymmetric, no-swirl solution to \eqref{sns} in the upper half space $\R^3_+$ (or other cone-domains) with radial, no-penetration boundary conditions is a surface discharge flow.
These solutions have purely radial, $(-1)$-homogeneous boundary conditions and belong to $L^{3,\infty}(\Omega)$.
Under a smallness hypothesis, it is reasonable to expect they are unique.
Our conclusion here implies that if a non-uniqueness generating bifurcation occurs as a smallness parameter is increased, then it must invoke some curl creation or symmetry breaking, either homogeneity or axisymmetry.
We will also see that the surface discharge flows trace out a path in a two-parameter space of solutions discovered by Li, Li, and Yan \cite{LiLiYanI}, thereby identifying a physically meaningful scenario within the framework of \cite{LiLiYanI} and making a connection between the mathematical literature and the physics literature.
Let us mention that in contrast to the Type II solutions of \cite{LiLiYanII} we discuss in Section \ref{subsec::Examples::LiLiYanTypeII}, the solutions in \cite{LiLiYanI} are singular on a half-axis and their singularities are more severe.

For generality we consider solutions on cones around the $x_3$ axis instead of just $\R^3_+$, which is the case discussed in \cite{Wang}.
For each angle $\theta_0\in (0,\pi)$ let $\Omega_{\theta_0}$ be the cone
\begin{align*}
    \Omega_{\theta_0}=\{(r,\theta,\phi)\in\R^3:\theta<\theta_0\}.
\end{align*}
The domain $\Omega_{\theta_0}$ is a cone-like domain and it makes sense to talk about $(-1)$-homogeneous solutions in this domain with no-penetration boundary conditions.

As seen in \cite{LiLiYanI} and \cite{Wang}, there is a one-to-one correspondence between smooth, $(-1)$-homogeneous, axisymmetric, no-swirl solutions to \eqref{sns} in $\Omega_{\theta_0}$ and solutions to 
\begin{equation}
    \label{eq::RicattiODECones}
    (1-x^2)v_\theta'+2xv_\theta+\frac{1}{2}v_\theta^2=\beta(1-x)^2,
\end{equation}
in $[x_0,1]$ with zero boundary conditions at $x_0=\cos(\theta_0)$ and satisfy $v_\theta=O(1-x^2)$ near $1$.
We can recover $V=(V_r,V_\theta,V_\phi)$ from $v_\theta$ by the change of coordinates
\begin{equation}
    \label{eq::ThetaXCoordinateChange}
    x=\cos(\theta),
    \qquad 
    V_r=\frac{v_\theta'(x)}{r},
    \qquad 
    V_\theta=\frac{v_\theta(x)}{r\sin(\theta)},
    \qquad 
    V_\phi\equiv 0,
\end{equation}
where $'$ denotes differentiation in $x$.
Solutions to \eqref{eq::RicattiODECones} together with their maximal interval of existence were classified in \cite[Theorem 3.1]{LiLiYanI}, which we presently recall.

\begin{theorem}
    \cite[Theorem 3.1]{LiLiYanI}. Let $\delta^*$ be as in \cite[Eq. 3.4]{LiLiYanI}. 
    For every $(\beta,\gamma)\in\R^2$, there exists a unique $v_\theta:=v_\theta(\beta,\gamma;\cdot)\in C^\infty(\delta^*,1)$ satisfying \eqref{eq::RicattiODECones} in $(\delta^*,1)$ and 
    \begin{align*}
        \lim_{x\to 1^-}v_\theta'(x)=\gamma.
    \end{align*}
    The interval $(\delta^*,1)$ is the maximal interval of existence for $v_\theta$ and in particular, 
    \begin{align*}
        \lim_{x\to\delta^{*+}}|v_\theta(x)|=\infty,
        \qquad 
        \text{if }\;\delta^*>-1.
    \end{align*}
    Moreover, $v_\theta$ is explicitly given by
    \begin{equation}
        \label{eq::LiLiYan3.1ThetaComponentFormula}
        v_\theta(x)=
        \begin{cases}
            (1-x)\left(1-c-\frac{2c(\gamma+1-c)}{(\gamma+1+c)\left(\frac{1+x}{2}\right)^{-c}-\gamma-1+c}\right), & \beta>-\frac{1}{2},\\
            (1-x)\left(1+\frac{2(\gamma+1)}{(\gamma+1)\ln\left(\frac{1+x}{2}\right)-2}\right), & \beta=-\frac{1}{2},\\
            (1-x)\left(1+\frac{c\left(c\tan\left(\frac{b(x)}{2}\right)+\gamma+1\right)}{(\gamma+1)\tan\left(\frac{b(x)}{2}\right)-c}\right), & \beta<-\frac{1}{2},
        \end{cases}
    \end{equation}
    where $c:=\sqrt{|1+2\beta|}$, and $b(x):=c\ln\frac{1+x}{2}$.
\end{theorem}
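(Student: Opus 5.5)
The plan is to linearize the Riccati structure of \eqref{eq::RicattiODECones} by factoring out the expected vanishing at $x=1$. I would set
\begin{align*}
    v_\theta(x)=(1-x)g(x).
\end{align*}
Substituting, the terms $2xv_\theta$ and $-(1-x^2)g$ combine to $(x-1)(1-x)g$. After dividing twice by $(1-x)$, a direct computation should reduce \eqref{eq::RicattiODECones} to
\begin{align*}
    (1+x)g'=\beta+g-\tfrac12 g^2=-\tfrac12\bigl((g-1)^2-(1+2\beta)\bigr).
\end{align*}
This equation is separable and autonomous in $s:=\ln\frac{1+x}{2}$, with $s=0$ corresponding to $x=1$.

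\textbf{Initial condition.} Since $v_\theta'=-g+(1-x)g'$, the condition $\lim_{x\to1^-}v_\theta'=\gamma$ becomes $g(1)=-\gamma$, provided $g$ is $C^1$ up to $x=1$. The reduced equation is regular at $x=1$, because the coefficient $1+x$ equals $2$ there. Hence Picard--Lindel\"of gives a unique smooth local solution with $g(1)=-\gamma$, and $v_\theta=(1-x)g$ is then smooth on a left neighborhood of $1$.

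\textbf{Uniqueness.} In the other direction, let $v_\theta$ be any solution on $(\delta,1)$ with $v_\theta'\to\gamma$. Then $v_\theta$ has a limit $v_\theta(1)$, and letting $x\to1$ in the ODE gives $2v_\theta(1)+\tfrac12 v_\theta(1)^2=0$. The admissible branch is $v_\theta(1)=0$; this is the branch encoded by the requirement $v_\theta=O(1-x^2)$ in the correspondence above, and I would state this normalization explicitly. The other root, $v_\theta(1)=-4$, is incompatible with that requirement. With $v_\theta(1)=0$, the quotient $g=v_\theta/(1-x)$ extends continuously with $g(1)=-\gamma$, so uniqueness follows from uniqueness for the regular $g$-equation.

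\textbf{Explicit formula.} Next I would integrate $dg/ds=-\tfrac12\bigl((g-1)^2-\kappa\bigr)$, where $\kappa=1+2\beta$, in three cases with $c=\sqrt{|\kappa|}$:
\begin{itemize}
    \item $\kappa>0$: the roots are $1\pm c$, partial fractions give $\frac{g-1+c}{g-1-c}=Ae^{-cs}$, and solving for $g$ with $g(0)=-\gamma$ yields the first line of \eqref{eq::LiLiYan3.1ThetaComponentFormula} after writing $e^{-cs}=\bigl(\frac{1+x}{2}\bigr)^{-c}$.
    \item $\kappa=0$: $\frac{d}{ds}(g-1)^{-1}=\tfrac12$, giving the logarithmic formula.
    \item $\kappa<0$: $g-1=c\tan(\cdot)$ after a shift, and the tangent addition formula produces the third line with $b(x)=c\ln\frac{1+x}{2}$.
\end{itemize}
In each case I would check directly that the formula satisfies $g(1)=-\gamma$, for instance $1-c-(\gamma+1-c)=-\gamma$ in the first case. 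Since $g$ and its formula solve the same regular initial value problem, they coincide on the common interval.

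\textbf{Maximal interval.} As $x$ decreases from $1$ to $-1$, $s$ runs over $(-\infty,0]$. The explicit $g$ is smooth except where its denominator vanishes, and $v_\theta=(1-x)g$ blows up exactly there because $1-x\neq0$ on $(-1,1)$. I would define $\delta^*$ as the largest zero in $(-1,1)$ of the relevant denominator,
\begin{align*}
    (\gamma+1+c)\Bigl(\tfrac{1+x}{2}\Bigr)^{-c}-\gamma-1+c,
    \qquad
    (\gamma+1)\ln\tfrac{1+x}{2}-2,
    \qquad
    (\gamma+1)\tan\tfrac{b(x)}{2}-c,
\end{align*}
in the three cases, and set $\delta^*=-1$ if there is no such zero; this should match \cite[Eq. 3.4]{LiLiYanI}. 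A sign analysis in each case, e.g. depending on whether $\gamma+1\pm c$ has a sign permitting a zero, shows that the denominator either vanishes at a simple zero, giving $|v_\theta|\to\infty$ as $x\to\delta^{*+}$, or never vanishes on $(-1,1)$. This yields both maximality and the blow-up statement.

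The main obstacle I expect is the bookkeeping in this last step, especially the oscillatory case $\beta<-\tfrac12$. There, as $s\to-\infty$ the tangent has infinitely many poles and the denominator infinitely many zeros, so one must select the correct first one and check that it is a true blow-up rather than a removable point. The cancellation between numerator and denominator zeros must be excluded using the nondegeneracy of the Riccati flow: distinct solution curves of a first-order regular ODE cannot meet.
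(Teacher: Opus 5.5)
The paper gives no proof of this statement; it imports it verbatim from Li--Li--Yan. Your argument therefore has to stand on its own, and its skeleton does. The substitution $v_\theta=(1-x)g$ turns \eqref{eq::RicattiODECones} into $(1+x)g'=\beta+g-\tfrac12 g^2$, the datum becomes $g(1)=-\gamma$, and the three displayed formulas are exactly the solutions of $dg/ds=-\tfrac12\bigl((g-1)^2-(1+2\beta)\bigr)$ with that datum. You are right to add the normalization $v_\theta(1)=0$, and it is not cosmetic: without it uniqueness is false. For $\beta=0$ and $\gamma=-2$, both $v_\theta=2(1-x)$ and $v_\theta=-2(1+x)$ solve the equation on $(-1,1)$ with $v_\theta'\to-2$. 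In general the root $v_\theta(1)=-4$ is exactly the case where $g$ has a pole at $x=1$, and it forces $\gamma=-2$. So present it as a correction to the hypotheses, not merely a convention.

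Two steps need repair.

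\emph{Sign slip in the partial fractions.} You have $\frac{d}{ds}\ln\bigl|\frac{g-1-c}{g-1+c}\bigr|=-c$, so $\frac{g-1+c}{g-1-c}=Ae^{cs}$, not $Ae^{-cs}$. Followed literally, your version produces $\bigl(\frac{1+x}{2}\bigr)^{+c}$ in place of $\bigl(\frac{1+x}{2}\bigr)^{-c}$. Your proposed check $g(1)=-\gamma$ would not catch this, since both versions satisfy it, so you must verify the ODE itself.

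\emph{Definition of $\delta^*$ in the oscillatory case.} Taking $\delta^*$ as the largest zero of $(\gamma+1)\tan\frac{b}{2}-c$ fails when $\gamma=-1$. There the denominator is the constant $-c$, yet $g=1-c\tan\frac{b(x)}{2}$ blows up at $b=-\pi$, i.e.\ at $x=2e^{-\pi/c}-1>-1$. Your recipe would give $\delta^*=-1$ and the wrong maximal interval. The clean fix is to write $g-1=c\tan\bigl(\psi_0-\frac{b(x)}{2}\bigr)$ with $\psi_0=\arctan\bigl(-\frac{\gamma+1}{c}\bigr)$. Then $\delta^*$ is the first point where $\psi_0-\frac{b}{2}=\frac{\pi}{2}$, which always lies in $(-1,1)$, and every singularity is visibly a simple pole. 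Equivalently, the formula is a M\"obius function of $\tan\frac b2$ with determinant $-c\bigl(c^2+(\gamma+1)^2\bigr)\neq0$; its only pole is at $\tan\frac b2=\frac{c}{\gamma+1}\in\R\cup\{\infty\}$, and the value $\infty$ is exactly the case $\gamma=-1$. This, not ``solution curves cannot meet,'' is what rules out cancellation. Blow-up also follows directly from $dg/ds\le -c^2/2$.

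If you want to avoid this bookkeeping altogether, use the Riccati linearization $v_\theta=2(1-x^2)w'/w$. It converts the equation into the Euler equation $2(1+x)^2w''=\beta w$, with exponents $\frac{1\pm\sqrt{1+2\beta}}{2}$. Then:
\begin{itemize}
\item $v_\theta(1)=0$ if and only if $w(1)\neq0$, and the spurious $-4$ branch is exactly $w(1)=0$;
\item $\gamma=-4w'(1)/w(1)$;
\item $\delta^*$ is the largest zero of $w$ in $(-1,1)$, where blow-up is automatic because zeros of a nontrivial solution of a regular second-order linear ODE are simple.
\end{itemize}
This handles all three cases and every edge case uniformly, at the cost of a less direct route to the exact displayed formulas.
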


Roughly, the following proposition asserts that the only axisymmetric, no-swirl, $(-1)$-homogeneous solutions to \eqref{sns} on $\R^3_+$ (or cones) with no-penetration boundary conditions are surface discharge flows (or generalizations of these solutions to cones).
The proof is essentially contained in \cite{LiLiYanI,Squire2,Wang}, although our observation connecting these is new.
The parameter space and the position of surface discharge flows are illustrated in Figure \ref{fig}.

\begin{proposition}
    \label{prop::SquireSolutionsClassification}
    Fix $\theta_0\in (0,\pi)$. 
    There exists $\beta_{\mathrm{min}}(\theta_0)<0$ such that every $(-1)$-homogeneous, axisymmetric, no-swirl solution to \eqref{sns} which belongs to $C^2(\overline{\Omega_{\theta_0}})\times C^1(\overline{\Omega_{\theta_0}})$ and which satisfies a no-penetration boundary condition must agree with a vector field $V^{\beta,\theta_0}$ for some $\beta\in(\beta_{min}(\theta_0),\infty)$ where
    \begin{align}
    \label{eq::WangsSolutionsTheta}
            V_\theta^{\beta,\theta_0}
            &=\begin{cases}
            \frac{1-\cos(\theta)}{r\sin(\theta)}\cdot\frac{2\beta(R(\theta)^c-1)}{(c+1)+R(\theta)^c (c-1)} & \beta>-\frac{1}{2}\\
            \frac{1-\cos(\theta)}{r\sin(\theta)}\cdot\left[1+\frac{2}{\ln(R(\theta))-2}\right] & \beta=-\frac{1}{2}\\
            \frac{1-\cos(\theta)}{r\sin(\theta)}\cdot \frac{-2\beta}{1-c\cot\left(\frac{c}{2}\ln(R(\theta))\right)} & \beta<-\frac{1}{2}
            \end{cases}\\
            V_r^{\beta,\theta_0}&=-\frac{dV^\beta_\theta}{d\theta}-\cot(\theta)V^\beta_\theta
     \end{align}
     with $R(\theta)=\frac{1+\cos(\theta)}{1+\cos(\theta_0)}$ and $c=\sqrt{|1+2\beta|}$. We also have that $V_r^{\beta,\theta_0}|_{\partial\Omega_{\theta_0}}=\frac{\beta}{r}\cdot\frac{1-\cos(\theta_0)}{1+\cos(\theta_0)}$
\end{proposition}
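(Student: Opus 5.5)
The plan is to reduce everything to the ODE \eqref{eq::RicattiODECones} and then read off the answer from the explicit classification in \cite[Theorem 3.1]{LiLiYanI}.

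\textbf{Step 1: reduction to a two-point problem.} By the stated one-to-one correspondence, a $(-1)$-homogeneous, axisymmetric, no-swirl solution in $C^2(\overline{\Omega_{\theta_0}})\times C^1(\overline{\Omega_{\theta_0}})$ corresponds to a solution $v_\theta$ of \eqref{eq::RicattiODECones} on $[x_0,1]$, where $x_0=\cos\theta_0$. This solution has two constraints:
\begin{itemize}
\item regularity on the axis $\theta=0$ gives $v_\theta=O(1-x^2)$ near $x=1$;
\item no-penetration, $V_\theta=0$ on $\theta=\theta_0$, gives $v_\theta(x_0)=0$ via \eqref{eq::ThetaXCoordinateChange}.
\end{itemize}
Regularity up to the boundary also means $v_\theta$ is finite on all of $[x_0,1]$. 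Hence the maximal interval $(\delta^*,1)$ from \cite[Theorem 3.1]{LiLiYanI} must satisfy $\delta^*<x_0$. That theorem says such a $v_\theta$ is $v_\theta(\beta,\gamma;\cdot)$ for a unique pair $(\beta,\gamma)$, given by \eqref{eq::LiLiYan3.1ThetaComponentFormula}. So the task is to determine which pairs $(\beta,\gamma)$ satisfy $v_\theta(x_0)=0$ and $\delta^*<x_0$.

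\textbf{Step 2: solving for $\gamma$.} For fixed $\beta$, the condition $v_\theta(x_0)=0$ is linear-fractional in $\gamma$, so it can be solved for $\gamma$ explicitly. Write $R_0=(1+x_0)/2$.
\begin{itemize}
\item If $\beta>-\tfrac12$, the condition is $(1-c)\big[(\gamma+1+c)R_0^{-c}-\gamma-1+c\big]=2c(\gamma+1-c)$. This determines $\gamma+1$ in terms of $c$ and $R_0^{-c}$.
\item If $\beta=-\tfrac12$ or $\beta<-\tfrac12$, one does the same with the logarithmic and $\tan$ formulas respectively.
\end{itemize}
Substituting this $\gamma$ back, I expect the $R_0$-dependence to combine with $\big(\tfrac{1+x}{2}\big)^{-c}$ into $R(\theta)^{\pm c}$, where $R(\theta)=(1+x)/(1+x_0)$. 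Using $1-c^2=-2\beta$, the prefactor should simplify to $2\beta(R^c-1)/\big((c+1)+R^c(c-1)\big)$. In the oscillatory case, the tangent addition formula should turn the $\tan$ expression into $-2\beta/\big(1-c\cot(\tfrac c2\ln R)\big)$. Dividing by $r\sin\theta$ then gives \eqref{eq::WangsSolutionsTheta}.

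For $V_r$, I use $V_r=v_\theta'/r$ together with $dx/d\theta=-\sin\theta$ and $v_\theta=r\sin\theta\,V_\theta$. This yields $V_r=-\partial_\theta V_\theta-\cot\theta\,V_\theta$, as written in the statement.

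\textbf{Step 3: the boundary value.} At $x=x_0$ we have $v_\theta=0$, so \eqref{eq::RicattiODECones} gives $(1-x_0^2)v_\theta'(x_0)=\beta(1-x_0)^2$. Hence $v_\theta'(x_0)=\beta\frac{1-x_0}{1+x_0}$. Dividing by $r$ gives the stated value $V_r|_{\partial\Omega_{\theta_0}}=\frac{\beta}{r}\cdot\frac{1-\cos\theta_0}{1+\cos\theta_0}$, with no need to differentiate the explicit formula.

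\textbf{Step 4: the range of $\beta$ (main obstacle).} The main obstacle is identifying exactly which $\beta$ make the resulting function finite on $[x_0,1]$, i.e. $\delta^*<x_0$; this is where $\beta_{\min}(\theta_0)$ comes from. Note that $R\in[1,2/(1+x_0)]$ on $[x_0,1]$.
\begin{itemize}
\item For $\beta>-\tfrac12$, the denominator $(c+1)+R^c(c-1)$ must not vanish. For $c\le1$ it is positive because $R\ge1$. For $c>1$ it is also positive, since $c+1>0$ and $c-1>0$ give a sum of positive terms. So all $\beta>-\tfrac12$ are allowed.
\item For $\beta=-\tfrac12$, the denominator $\ln R-2$ is negative throughout, since $\ln R\le\ln 2<2$. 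So this case is allowed.
\item For $\beta<-\tfrac12$, we need $1-c\cot(\tfrac c2\ln R)\neq0$, and $\cot$ must not pass through a pole, on $R\in(1,2/(1+x_0)]$. As $R\to1^+$ we have $c\cot(\tfrac c2\ln R)\to+\infty$, so the expression starts negative. Nonvanishing therefore persists exactly while $\tfrac c2\ln\tfrac{2}{1+x_0}$ stays below the first root of $c\cot(\tfrac c2 s)=1$. This is a monotone condition in $c$.
\end{itemize}
I will verify that monotonicity and define $\beta_{\min}(\theta_0)<-\tfrac12<0$ as the threshold where the first zero reaches $R=2/(1+x_0)$.

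Finally, I will check two remaining points. First, each such $V^{\beta,\theta_0}$ is genuinely $C^2$ up to the closed cone, using $v_\theta=O(1-x)$ at $x=1$ from the $(1-x)$ factor. Second, $\beta=0$ gives the trivial solution, which is consistent with the family.
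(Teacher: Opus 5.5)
Your Steps 1--3 follow the paper's route: reduce to \eqref{eq::RicattiODECones}, impose $v_\theta(x_0)=0$, solve the linear-fractional equation for $\gamma(\beta)$, and substitute back. Reading off $V_r|_{\partial\Omega_{\theta_0}}$ directly from the ODE at $x_0$ is a cleaner justification than the paper gives. The problem is in Step 4, in exactly the cases you treat as harmless. You correctly note $R\in[1,e^{L}]$ with $L:=\ln\frac{2}{1+\cos\theta_0}$, but two of your sign arguments do not hold. First, for $c<1$ you claim $(c+1)+R^c(c-1)=(c+1)-(1-c)R^c>0$ ``because $R\ge1$''. This is backwards: $R\ge 1$ only gives the upper bound $2c$. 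Positivity on the whole range is equivalent to $cL<\ln\frac{1+c}{1-c}$, and since $\frac1c\ln\frac{1+c}{1-c}\downarrow 2$ as $c\downarrow 0$, it fails for small $c$ whenever $L>2$. Second, for $\beta=-\frac12$ you use $\ln R\le\ln 2$, but the correct bound is $\ln R\le L$. This exceeds $\ln 2$ as soon as $\theta_0>\pi/2$. When $L>2$, $\ln R-2$ vanishes inside the cone; when $L=2$ it vanishes at $x=1$, which gives $v_\theta(1)=-4$ and violates $v_\theta=O(1-x^2)$.

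Consequently, for $1+\cos\theta_0\le 2e^{-2}$ (i.e.\ $L\ge 2$) your conclusions are false, in three ways. The $\beta=-\frac12$ profile, and those with $\beta$ slightly above $-\frac12$, blow up in $\Omega_{\theta_0}$. The oscillatory condition $\frac c2 L<\arctan c$ has no solution, because $\frac{2\arctan c}{c}<2\le L$, so the threshold you intend to call $\beta_{\min}(\theta_0)<-\frac12$ does not exist. The true $\beta_{\min}(\theta_0)$ lies in $[-\frac12,0)$; for $L>2$ it is given by the root $c\in(0,1)$ of $cL=\ln\frac{1+c}{1-c}$.

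The fix is to avoid branch-by-branch sign claims, and either of two routes works. One is to define $\beta_{\min}$ as the paper does, as the largest $\beta$ at which $\gamma(\beta)$ diverges; the blow-up point can only enter $[x_0,1]$ through $x=1$, since $v_\theta(x_0)=0$ is pinned. The other is a comparison argument. Write $v_\theta=(1-x)g$ and $s=\ln(1+x)$; then \eqref{eq::RicattiODECones} becomes the regular equation $\frac{dg}{ds}=\beta+g-\frac12 g^2$ with $g=0$ at $s=\ln(1+x_0)$. Its solutions can only blow up to $-\infty$, and increasing $\beta$ increases $g$ pointwise. Hence the admissible set (those $\beta$ with $g$ finite on $[\ln(1+x_0),\ln 2]$) is an open up-set. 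It contains $\beta=0$, where $g\equiv 0$. It is bounded below, since for $\beta\ll 0$ the explicit solution blows up at $\ln R=\frac{2\arctan c}{c}<L$. So it equals $(\beta_{\min},\infty)$ with $\beta_{\min}<0$ for every $\theta_0\in(0,\pi)$.
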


Under a smallness assumption, our asymptotic stability result Theorem \ref{mainthm1} implies these solutions are stable under perturbations which satisfy the same boundary condition.

\begin{proof}
    Fix $\theta_0\in (0,\pi)$.
    Let $x_0:=\cos(\theta_0)$.
    Then $x_0\in (-1,1)$.
    If $V$ has no-penetration boundary conditions on $\Omega_{\theta_0}$, then, due to uniqueness, it must correspond to a solution described in \cite[Theorem 3.1]{LiLiYanI} with $v_\theta(x_0)=0$.
    So it suffices to compute what $\gamma(\beta)$ must be for each $\beta\in\R$.
    Suppose $\beta>-\frac{1}{2}$. 
    Then $v_\theta(x_0)=0$ is equivalent to 
    \begin{align*}
        1-c=\frac{2c(\gamma+1-c)}{(\gamma+1+c)a^{-c}-\gamma-1+c}
    \end{align*}
    where $a=\frac{1+x_0}{2}$. 
    So
    \begin{align*}
        \gamma[(1-c)(a^{-c}-1)-2c]=2c(1-c)+2\beta(a^{-c}+1)+2(1-c)=2\beta(a^{-c}-1)
    \end{align*}
    and hence
    \begin{align*}
        \gamma 
        =\frac{2\beta(a^{-c}-1)}{(1-c)(a^{-c}-1)-2c}
        =\frac{2\beta(1-R(0)^c)}{R(0)^c(c-1)+c+1}.
    \end{align*}
    If $\beta=-\frac{1}{2}$ then, $v_\theta(x_0)=0$ if and only if 
    \begin{align*}
        \gamma(-\ln(a)-2)=\ln(a)
    \end{align*}
    and hence 
    \begin{align*}
        \gamma(\beta)=\frac{\ln(R(0))}{2-\ln(R(0))}.
    \end{align*}
    For the final case suppose $\beta<-\frac{1}{2}$.
    So
    \begin{align*}
        c-(\gamma+1)d=c(cd+\gamma+1)
    \end{align*}
    where $d=\tan\left(\frac{c\ln(a)}{2}\right)$.
    Since $c^2=-1-2\beta$ for $\beta<-\frac{1}{2}$,
    \begin{align*}
        \gamma(-d-c)=(c^2+1)d=-2\beta d
    \end{align*}
    and hence 
    \begin{align*}
        \gamma=\frac{2\beta}{1+cd^{-1}}=\frac{2\beta}{1-c\cot(\frac{c}{2}\ln(R(0)))},
    \end{align*}
    because $\cot(\theta)$ is an odd function.
    So the solutions described in \cite[Theorem 3.1]{LiLiYanI} which satisfy a zero boundary condtion at $x_0$ are the one-parameter subfamily given by $(\beta,\gamma(\beta))$ where
    \begin{align*}
        \gamma(\beta)
        =\begin{cases}
        \frac{2\beta(1-R(0)^c)}{R(0)^c(c-1)+c+1}       & \beta>-\frac{1}{2}\\
        \frac{\ln(R(0)}{2-\ln(R(0))}                  & \beta=-\frac{1}{2}\\
        \frac{2\beta}{1-c\cot(\frac{c}{2}\ln(R(0)))} & \beta<-\frac{1}{2}
        \end{cases}
    \end{align*}
    and $\beta\in(\beta_{\mathrm{min}},\infty)$ where $\beta_{\mathrm{min}}<0$ is the largest real number such that $\lim_{\beta\to\beta^+_{\mathrm{min}}}\gamma(\beta)$ diverges.
    
    Using a symbolic calculator, substituting $\gamma(\beta)$ for $\gamma$ into equation \eqref{eq::LiLiYan3.1ThetaComponentFormula} yields
    \begin{align*}
        v_\theta(x)
        =\begin{cases}
            (1-x)\cdot \frac{2\beta(r(x)^c-1)}{(c+1)+(c-1)r(x)^c}               & \beta>-\frac{1}{2}\\
            (1-x)\left(1+\frac{2}{\ln(r(x))-2}\right)                           & \beta=-\frac{1}{2}\\
            (1-x)\cdot \frac{-2\beta}{1-c\cot\left(\frac{c}{2}\ln(r(x))\right)} & \beta<-\frac{1}{2}
        \end{cases}
    \end{align*}
    where $r(x)=\frac{1+x}{1+x_0}$. 
    The explicit formulas \eqref{eq::WangsSolutionsTheta} for $V_\theta^{\beta,\theta_0}$ follow from \eqref{eq::ThetaXCoordinateChange}.
\end{proof}

\begin{figure}[ht]
    \centering
    \begin{tikzpicture}
        \begin{axis}[
            width = 14cm,
            height = 8.2cm,
            xmin = -8.5,
            xmax = 12,
            ymin = -6.4,
            ymax = 7.6,
            axis lines = middle,
            axis line style = {black!70, line width = .45},
            tick style = {black!70},
            xtick = {-8, -6, -4, -2, 0, 2, 4, 6, 8, 10, 12},
            ytick = {-6, -4, -2, 0, 2, 4, 6},
            xlabel = {$\beta$},
            ylabel = {$\gamma$},
            clip = true,
            tick label style = {font = \small},
            xticklabel style = {font = \small},
            yticklabel style = {font = \small, xshift = -5pt},
        ]

        \path[name path = top] (axis cs:-.5, 7.6) -- (axis cs:12, 7.6);

        \addplot[
            name path = lower,
            draw = none,
            domain = -.5:12,
            samples = 250
        ]
        {-1 - sqrt(1 + 2 * x)};

        \addplot[
            draw = none,
            fill = ParamPink,
            fill opacity = .75
        ]
        fill between[of = top and lower];

        \addplot[
            Boundary,
            domain = -.5:12
        ]
        {-1 - sqrt(1 + 2 * x)};

        \addplot[
            BoundaryRed,
            dashed,
            thick
        ]
        coordinates {(-.5,-1) (-.5,7.6)};


        \addplot[
            Landau,
            CurvePurple,
            domain = -0.5:12
        ]
        {2 * x * (1 - pow(2 / (1 + cos(45)), sqrt(1 + 2 * x))) / (pow(2 / (1 + cos(45)), sqrt(1 + 2 * x)) * (sqrt(1 + 2 * x) - 1) + sqrt ( 1 + 2 * x) + 1)};

        \addplot[
            Landau,
            CurvePurple,
            domain = -8.5: -0.505
        ]
        {2 * x / (1 - sqrt(abs(1 + 2 * x)) * cot(deg(sqrt(abs(1 + 2 * x))/2 * ln(2 /( 1 + cos(45))))))};


        \addplot[
            Landau,
            CurveGreen,
            domain = -0.5:12
        ]
        {2 * x * (1 - pow(2 / (1 + cos(90)), sqrt(1 + 2 * x))) / (pow(2 / (1 + cos(90)), sqrt(1 + 2 * x)) * (sqrt(1 + 2 * x) - 1) + sqrt ( 1 + 2 * x) + 1)};

        \addplot[
            Landau,
            CurveGreen,
            domain = -5.2: -0.505
        ]
        {2 * x / (1 - sqrt(abs(1 + 2 * x)) * cot(deg(sqrt(abs(1 + 2 * x))/2 * ln(2 /( 1 + cos(90))))))};


        \addplot[
            Landau,
            CurveOrange,
            domain = -0.485:12
        ]
        {2 * x * (1 - pow(2 / (1 + cos(135)), sqrt(1 + 2 * x))) / (pow(2 / (1 + cos(135)), sqrt(1 + 2 * x)) * (sqrt(1 + 2 * x) - 1) + sqrt ( 1 + 2 * x) + 1)};

        \addplot[
            LandauBlue,
            very thick
        ]
        coordinates {(0,-2) (0,7.6)};

        \node[LabelStyle, text = CurvePurple] at (axis cs:4, -1.45) {$\theta_0=\frac{\pi}{4}$};
        \node[LabelStyle, text = CurveGreen] at (axis cs:7, -3.1) {$\theta_0=\frac{\pi}{2}$};
        \node[LabelStyle, text = CurveOrange] at (axis cs:2, -4.35) {$\theta_0=\frac{3\pi}{4}$};
        \node[text = LandauBlue, align = left, font = \bfseries\small, anchor = west] at (axis cs:.1,1.55) {\,\,Landau\\Solutions};
        
        \end{axis}
    \end{tikzpicture}
    \caption{
    The curve in the $(\beta,\gamma)$-parameter space corresponding to surface-discharge solutions in cones of angles $\theta_0=\frac{\pi}{4}, \frac{\pi}{2}, \frac{3\pi}{4}$. 
    The Landau solutions correspond to $\beta=0$.
    The shaded region is the Li-Li-Yan parameter region in \cite{LiLiYanI}.
    }
    \label{fig}
\end{figure}

As a final remark, note that \cite{RW} discusses self-similar solutions with a centered, interior singular ray on conical domains.
We expect that these solutions agree with the solutions discussed here when viewed on the other side of the physical boundary.

\subsection{Squire's surface discharge flows with swirl}
\label{subsec::Examples::SquireSurfaceDischargeFlowsWithSwirl}

In \cite{LiLiYanI}, Li, Li and Yan classify all classical solutions to \eqref{sns} which are $(-1)$-homogeneous, axisymmetric, with no-swirl in $\R^3\setminus S$ where 
\begin{align*}
    S=\{(0,0,x_3):x_3\leq 0\}.
\end{align*}
They further prove existence results for $(-1)$-homogeneous, axisymmetric solutions with non-zero swirl near the solutions in their parameter space.
Upon examining their proof, we see they in fact prove existence of a path of solutions extending from each surface discharge flow consisting of axisymmetric, $(-1)$-homogeneous solutions to \eqref{sns} in $\R^3_+$ with non-zero swirl and a no penetration boundary condition.
If surface discharge flows model fluids entrained to an oil slick emerging from a pipe, then these rotating solutions may model fluids entrained to an oil slick that is emerging from a \textit{rotating} pipe. 
As such, they represent a new class of physically motivated solutions.
In this section we confirm the existence of solutions in $\R^3_+$ with non-zero swirl and which satisfy a no-penetration boundary condition.

Solutions to \eqref{sns} which are $(-1)$-homogeneous and axisymmetric but with possibly nonzero swirl are described by the system
\begin{equation}
    \label{eq::RicattiODENonzeroSwirl}
    \begin{aligned}
        &(1-x^2)v_\theta'+2xv_\theta+\frac{1}{2}v_\theta^2-\int_x^1\int_\ell^1\int_t^1\frac{2v_\phi(s)v_\phi'(s)}{1-s^2}\,ds\,dt\,d\ell=\hat{\beta}(1-x)^2,\\
        &(1-x^2)v_\phi''+v_\phi v_\phi'=0.
    \end{aligned}
\end{equation}
We aim to find solutions to \eqref{eq::RicattiODENonzeroSwirl} with $v_\phi\neq 0$ which are close to a surface discharge flow $V^\beta:=V^{\beta,\pi/2}$ for all $\beta\geq -\frac{1}{2}$.
The result for $\beta<-\frac{1}{2}$ is currently unknown.
This has already been done in the whole space in \cite{LiLiYanI}.
Our observation in this section is that the results in \cite{LiLiYanI} also imply existence in the half-space with no-penetration boundary conditions as well.
To explain further, we need to recall a theorem from \cite{LiLiYanI}.

For $\beta\geq -\frac{1}{2}$, define the function $U_\beta:(-1,1)\to\R^2$ by,
\begin{align*}
    (U_\beta)_\theta(x):=0,
    \qquad 
    (U_\beta)_\phi(x):= \int_x^1 e^{-\int_0^t \frac{v^\beta_\theta(s)}{1-s^2}ds}dt,
\end{align*}
and thus only contributes to the swirl component of $v$.

\begin{theorem}
    \cite[Theorem 4.1]{LiLiYanI}. 
    Let $v^\beta$ be a solution from Proposition \ref{prop::SquireSolutionsClassification} where $\beta>-1/2$ and $\theta_0=\pi/2$. 
    Fix $K\subseteq(-1/2,\infty)$ compact and let $\epsilon$ be such that $v_\theta^\beta(-1)<2\epsilon<2$ for every $\beta\in K$.
    Then there exists a $\delta(K)>0$ and $F\in C^\infty(K\times (-\delta,\delta),X_1(\epsilon))$ satisfying $F(\beta,0)=0$ and $\frac{\partial F}{\partial \zeta}=0$, such that
    \begin{align*}
        v=v^\beta+\zeta U_\beta+F(\beta,\zeta),
    \end{align*}
    satisfies \eqref{eq::RicattiODENonzeroSwirl} with
    \begin{align*}
        \hat{\beta}=\beta-\frac{1}{4}\int_{-1}^1\int_\ell^1\int_t^1\frac{2v_\phi(s)v_\phi'(s)}{1-s^2}\,ds\,dt\,d\ell.
    \end{align*}
\end{theorem}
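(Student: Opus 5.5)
The plan is to obtain the swirling family from the implicit function theorem in Banach spaces, with the no-swirl solution $v^\beta$ as base point and the swirl amplitude $\zeta$ as bifurcation parameter.

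\textbf{Setting up the map.} First I will eliminate $\hat\beta$. Evaluating the first equation of \eqref{eq::RicattiODENonzeroSwirl} at $x=-1$, where $(1-x^2)v_\theta'$ vanishes, fixes $\hat\beta$ in terms of $v_\theta(-1)$ and the triple integral over $(-1,1)$. Choosing the normalization $v_\theta(-1)=v^\beta_\theta(-1)$, compatible with the constraint defining $X_1(\epsilon)$, gives the stated formula $\hat\beta=\beta-\frac14\int_{-1}^1\int_\ell^1\int_t^1\cdots$. With $\hat\beta$ eliminated, define
\begin{align*}
    \mathcal G(\beta,\zeta,w):=\Big(\mathcal G_1,\ \mathcal G_2\Big)\big(v^\beta+\zeta U_\beta+w\big),
\end{align*}
where $\mathcal G_1$ is the first equation minus $\hat\beta(1-x)^2$, and $\mathcal G_2(v)=(1-x^2)v_\phi''+v_\phi v_\phi'$.

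\textbf{Function spaces.} Take $w$ in a weighted space $X_1(\epsilon)$ with the following features:
\begin{itemize}
    \item $w_\theta$ is $O(1-x)$ at $x=1$;
    \item $w_\phi$ vanishes at $x=1$;
    \item the weights at $x=-1$ are dictated by $v^\beta_\theta(-1)<2\epsilon<2$, which makes $e^{-\int_0^t v^\beta_\theta/(1-s^2)}$ integrable near $-1$, so that $U_\beta$ is well defined.
\end{itemize}
Take the target $Y$ to be a matching weighted space. I will check that $\mathcal G$ is $C^\infty$ from $K\times(-\delta,\delta)\times X_1\to Y$. The only delicate terms are the triple integral with the $(1-s^2)^{-1}$ weight and the singular coefficient $(1-x^2)$; these should be routine with Hardy-type estimates at $x=\pm1$.

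\textbf{Linearization.} The map satisfies $\mathcal G(\beta,0,0)=0$, since $v^\beta$ solves \eqref{eq::RicattiODECones}. At $(\zeta,w)=(0,0)$ the linearization decouples, because the swirl enters the first equation only quadratically.
\begin{itemize}
    \item \emph{$\theta$-component:} $(1-x^2)w_\theta'+(2x+v^\beta_\theta)w_\theta$. This is a first order linear ODE, solvable explicitly with the integrating factor $e^{\int (2x+v^\beta_\theta)/(1-x^2)}$.
    \item \emph{$\phi$-component:} $L_\beta w_\phi=(1-x^2)w_\phi''+v^\beta_\theta w_\phi'$. Its kernel, among functions vanishing at $1$ and lying in the weighted class, is spanned exactly by $U_\beta$, because $U_\beta'=-e^{-\int_0^x v^\beta_\theta/(1-s^2)}$ solves $(1-x^2)U''+v^\beta_\theta U'=0$.
\end{itemize}
So I restrict $w$ to a closed complement $X_1^\perp$ of $\mathrm{span}\,U_\beta$, for instance by imposing a normalization on $w_\phi'$ at a fixed point. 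Then I will show that $D_w\mathcal G(\beta,0,0):X_1^\perp\to Y$ is an isomorphism. This is done by explicit variation of parameters for both components, followed by weighted bounds on the resulting integral operators.

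\textbf{Conclusion and main obstacle.} The implicit function theorem then gives $F(\beta,\zeta)\in X_1^\perp$ that is smooth in $(\beta,\zeta)$, locally uniformly. Compactness of $K$ yields a uniform $\delta(K)$. Uniqueness gives $F(\beta,0)=0$, and differentiating $\mathcal G=0$ in $\zeta$ at $\zeta=0$ gives $\partial_\zeta F=-[D_w\mathcal G]^{-1}\partial_\zeta\mathcal G=0$, because $\partial_\zeta \mathcal G(\beta,0,0)=(0,L_\beta U_\beta)=0$.

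The main obstacle is the isomorphism claim near the endpoint $x=-1$. There $v^\beta_\theta(-1)$ may approach $2$, the integrating factors degenerate, and the weights in $X_1(\epsilon)$ must be chosen precisely so that the inverse is bounded. I would handle this first, by computing the inverse explicitly and estimating it with the constraint $v^\beta_\theta(-1)<2\epsilon<2$.
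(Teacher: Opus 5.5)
The paper gives no proof of this statement; it is quoted from \cite[Theorem 4.1]{LiLiYanI}, whose argument is an implicit function theorem of the same type as yours. Your outline nevertheless has a genuine gap: the claim that $D_w\mathcal G(\beta,0,0)$ is an isomorphism on $X_1^\perp$ is false. You correctly note that the linearization is block diagonal, but you remove a kernel only from the $\phi$-block. The $\theta$-block $L_\theta w=(1-x^2)w'+(2x+v^\beta_\theta)w$ also has a nontrivial kernel in your space, namely
\[
    w_0(x)=(1-x^2)\,e^{-\int_0^x \frac{v^\beta_\theta(s)}{1-s^2}\,ds}=-(1-x^2)\,U_\beta'(x).
\]
This function is $\sim c\,(1-x)$ at $x=1$ and $\sim c'(1+x)^{1-v^\beta_\theta(-1)/2}$ at $x=-1$. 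Since $v^\beta_\theta(-1)<2\epsilon<2$, it meets every condition you impose, including your normalization $w_\theta(-1)=0$.

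This is not a removable technicality. $w_0$ is proportional to $\partial_\gamma v_\theta(\beta,\gamma;\cdot)$, the tangent at fixed $\beta$ to the two-parameter no-swirl family of \cite[Theorem 3.1]{LiLiYanI}. The Squire solution lies in the interior of that family, since $v^\beta_\theta(-1)=2-2\sqrt{1+2\beta}$ is the smaller root. Hence every $(v_\theta(\beta,\gamma;\cdot)-v^\beta_\theta,\,0)$ with $\gamma$ near the Squire value is a zero of $\mathcal G(\beta,0,\cdot)$ in your space. The zero set at $\zeta=0$ is therefore a curve through the origin. The implicit function theorem cannot apply, and $F(\beta,0)=0$ cannot come from uniqueness.

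The repair is to normalize $w_\theta$ at an interior point. In $X_1(\epsilon)$ this is exactly the condition $F_\theta(0)=0$, since $w_0(0)\neq0$. Likewise $F_\phi(0)=0$ removes $U_\beta$ (because $U_\beta(0)>0$), replacing your condition on $w_\phi'$. With these normalizations, $L_\theta$ is inverted by
\[
    w=w_0\int_0^x \frac{f(s)}{(1-s^2)\,w_0(s)}\,ds .
\]
The weighted bounds at $x=\pm1$ then go as you planned, with the hypothesis $v^\beta_\theta(-1)<2\epsilon$ entering through the exponent of $w_0$ at $x=-1$.

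This particular normalization is also what the paper needs downstream. The no-penetration condition in Proposition \ref{prop::ExistenceOfSquireSolutionsWithNonzeroSwirl} rests on $F_\theta(0)=0$, which a normalization at any other point would not give.

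Finally, your $\mathcal G_2$ should be $(1-x^2)v_\phi''+v_\theta v_\phi'$; the displayed system \eqref{eq::RicattiODENonzeroSwirl} has a typo. Your operator $L_\beta$, and the fact that $U_\beta$ lies in its kernel, are correct only for this corrected version.
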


Here $X_1(\epsilon)$ is the Banach space 
\begin{align*}
    X_1(\epsilon):=\{(F_\theta,F_\phi):F_\theta\in M_1(\epsilon),F_\phi\in M_2(\epsilon),F_\theta(0)=F_\phi(0)=0\},
\end{align*}
where $M_1(\epsilon)$ and $M_2(\epsilon)$ are subsets of $C^1((-1,1];\R)$ with weighted $L^\infty$ norms.
The exact definition of $X_1(\epsilon)$ can be found in \cite[(4.14)]{LiLiYanI}, but it is quite complicated.
The important thing for us is that, $F_\theta(0)=0$. 
So $v_\theta(0)=0$ and $v_\phi\not\equiv 0$.

The next theorem represents a version of the prior theorem but for $\beta=-1/2$.

\begin{theorem}
    \cite[Theorem 4.2]{LiLiYanI}. 
    Let $\epsilon$ be such that $v_\theta^{-1/2}(-1)<2\epsilon<2$. 
    Then there exists a $\delta>0$ and $F\in C^\infty((-\delta,\delta),X_2(\epsilon))$ satisfying $F(0)=0$ and $\frac{dF}{d\zeta}=0$, such that 
    \begin{align*}
        v=v^{-1/2}+\zeta U_{-1/2}+F(\zeta),
    \end{align*}
    satisfies \eqref{eq::RicattiODENonzeroSwirl} with
    \begin{align*}
        \hat{\beta}=-\frac{1}{2}-\frac{1}{4}\int_{-1}^1\int_\ell^1\int_t^1\frac{2v_\phi(s)v_\phi'(s)}{1-s^2}\,ds\,dt\,d\ell.
    \end{align*}
\end{theorem}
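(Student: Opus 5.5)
The plan is to follow the proof of \cite[Theorem 4.1]{LiLiYanI} essentially verbatim and obtain the statement from the implicit function theorem in Banach spaces. The one genuine change is that at $\beta=-\frac{1}{2}$ we have $c=0$, so the base profile $v^{-1/2}$ carries logarithmic rather than power-type behaviour near $x=-1$. This is why the space $X_2(\epsilon)$ replaces $X_1(\epsilon)$, and it is where the real work lies.

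\textbf{Step 1: the nonlinear map.} Write $v=v^{-1/2}+\zeta U_{-1/2}+F$ with $F=(F_\theta,F_\phi)\in X_2(\epsilon)$. Define $\mathcal{G}(\zeta,F)=(\mathcal{G}_\theta,\mathcal{G}_\phi)$ as the residual of \eqref{eq::RicattiODENonzeroSwirl}, taken in the integrated form in which $\hat\beta$ is eliminated through the displayed formula. Concretely, $\hat\beta$ is fixed by $\hat\beta(1-x)^2|_{x=-1}$, and this produces the constant $-\frac14\int_{-1}^1\int_\ell^1\int_t^1\cdots$ in the statement.

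The map $\mathcal{G}$ takes values in a target space $Y_2$ with weights matched to those of $X_2(\epsilon)$. Since $v^{-1/2}$ solves the no-swirl equation \eqref{eq::RicattiODECones}, we have $\mathcal{G}(0,0)=0$.

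\textbf{Step 2: the linearization.} Compute $D_F\mathcal{G}(0,0)$. The map decouples:
\[
  L_\theta F_\theta=(1-x^2)F_\theta'+(2x+v^{-1/2}_\theta)F_\theta,
  \qquad
  L_\phi F_\phi=(1-x^2)F_\phi''+v^{-1/2}_\theta F_\phi'.
\]
Here $L_\phi$ does not act on the swirl triple integral, because that integral is quadratic in $v_\phi$ and so contributes nothing to the derivative at zero swirl.

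Both operators can be inverted explicitly by integrating factors. The integrating factor for $L_\phi$ is $e^{-\int_0^t v_\theta/(1-s^2)\,ds}$. The kernel of $L_\phi$ among functions vanishing at $x=1$ is spanned exactly by $U_{-1/2}$. The normalization $F_\theta(0)=F_\phi(0)=0$ built into $X_2(\epsilon)$ then removes the remaining freedom, so $D_F\mathcal{G}(0,0):X_2\to Y_2$ is a bijection.

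Bounded invertibility is the crux. The hypothesis $v^{-1/2}_\theta(-1)<2\epsilon<2$ controls the integrating factor near $x=-1$, giving behaviour like $(1+x)^{-v_\theta(-1)/2}$ corrected by powers of $\ln(1+x)$. This keeps the solution integrals finite with bounds in the log-weighted norms. I expect checking these weighted estimates near $x=-1$ to be the main obstacle, so it is where I would start, redoing the $c>0$ computations with $(\frac{1+x}{2})^{-c}$ replaced by $\ln\frac{1+x}{2}$.

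\textbf{Step 3: smoothness and the IFT.} Check that $\mathcal{G}$ is $C^\infty$ from a neighbourhood of $(0,0)$ in $\mathbb{R}\times X_2(\epsilon)$ into $Y_2$. The terms are polynomial in $F$ and $\zeta$, together with the triple integral of $2v_\phi v_\phi'/(1-s^2)$, whose convergence at $s=\pm1$ follows from the weights. The implicit function theorem then gives $\delta>0$ and a smooth $F(\zeta)$ with $F(0)=0$ and $\mathcal{G}(\zeta,F(\zeta))=0$.

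Differentiating at $\zeta=0$ gives $\frac{dF}{d\zeta}(0)=-D_F\mathcal{G}^{-1}\partial_\zeta\mathcal{G}(0,0)$. This vanishes because $\partial_\zeta\mathcal{G}(0,0)=(0,L_\phi U_{-1/2})=0$.

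For our half-space purpose we then note that $F_\theta(0)=0$ and $(U_{-1/2})_\theta=0$. Hence $v_\theta(0)=v^{-1/2}_\theta(0)=0$, so the no-penetration condition on $\R^3_+$ persists, while $v_\phi\not\equiv0$ for $\zeta\neq0$.
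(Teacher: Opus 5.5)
\textbf{Where the proposal stands.} The paper does not prove this statement; it quotes \cite[Theorem 4.2]{LiLiYanI}, whose argument is an implicit-function-theorem argument in weighted spaces. Most of your outline has the right architecture: the residual map with $\hat\beta$ eliminated by evaluating at $x=-1$, the decoupled linearization $\mathrm{diag}(L_\theta,L_\phi)$, the kernels removed by the normalizations at $x=0$, and $\partial_\zeta\mathcal G(0,0)=(0,L_\phi U_{-1/2})=0$ giving $\frac{dF}{d\zeta}(0)=0$.

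The gap is Step 2. This is the entire content of the $\beta=-\frac12$ case, and the mechanism you propose for it rests on a false premise. Evaluating \eqref{eq::RicattiODECones} at $x=-1$ gives $\frac12 v_\theta(-1)^2-2v_\theta(-1)=4\beta=-2$. Its only root is $v^{-1/2}_\theta(-1)=2$, a double root. Hence no $\epsilon$ satisfies $v^{-1/2}_\theta(-1)<2\epsilon<2$: the hypothesis as quoted is empty and cannot ``control the integrating factor.'' It is evidently carried over from the $\beta>-\frac12$ theorem. There, $v_\theta(-1)=2-2c$, and the power gap $2c$ is what makes the estimates work; at $\beta=-\frac12$ that gap is exactly zero.

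\textbf{What actually happens at $x=-1$.} For the $\theta_0=\pi/2$ profile $v^{-1/2}_\theta=(1-x)\bigl(1+\frac{2}{\ln(1+x)-2}\bigr)$ one computes
\[
e^{-\int_0^x \frac{v^{-1/2}_\theta(s)}{1-s^2}\,ds}=\frac{4}{(1+x)\bigl(2-\ln(1+x)\bigr)^2}.
\]
This has three consequences:
\begin{enumerate}
\item $U_{-1/2}(x)=\frac{4}{2-\ln 2}-\frac{4}{2-\ln(1+x)}$ is finite at $x=-1$ only because of the squared logarithm.
\item The kernel of $L_\theta$, namely $\frac{4(1-x)}{(2-\ln(1+x))^2}$, vanishes at $x=-1$ only like $|\ln(1+x)|^{-2}$.
\item The zeroth-order coefficient $2x+v^{-1/2}_\theta=(1+x)+\frac{2(1-x)}{\ln(1+x)-2}$ of $L_\theta$ tends to $0$, so the linearization is degenerate at the south pole.
\end{enumerate}
Consequently both $X_2$ and $Y_2$ need logarithmic weights at $x=-1$. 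The real work is to choose them so that two things hold: $L_\theta^{-1}$ and $L_\phi^{-1}$ are bounded, and the nonlinear terms $\frac12F_\theta^2$, $\zeta F_\theta U_{-1/2}'$, $F_\theta F_\phi'$ land back in $Y_2$. These terms now decay only like negative powers of $|\ln(1+x)|$. The argument closes only because of the exact logarithmic exponents, via integrability of $\frac{1}{(1+x)|\ln(1+x)|^{p}}$ for $p>1$. Your proposal does not carry out this verification.

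\textbf{A related error in Step 3.} The claim that the swirl integral converges at $s=-1$ ``by the weights'' is wrong as stated. The integrand $\frac{2v_\phi v_\phi'}{1-s^2}$ is of order $\zeta^2(1+s)^{-2}|\ln(1+s)|^{-2}$ there, which is not integrable. Only the iterated triple integral is finite, and this has to be built into the choice of $Y_2$ and into the smoothness of $\mathcal G$.
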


Similarly $X_2(\epsilon)$ is the Banach space, defined in \cite[(4.36)]{LiLiYanI},
\begin{align*}
    X_2(\epsilon):=\{(F_\theta,F_\phi):F_\theta\in M_3(\epsilon),F_\phi\in M_4(\epsilon),F_\theta(0)=F_\phi(0)=0\},
\end{align*}
where $M_3(\epsilon)$ and $M_4(\epsilon)$ are subsets of $C^1((-1,1];\R)$ with different weighted $L^\infty$ norms. 
The case with $\beta<-\frac{1}{2}$ is not explored in \cite{LiLiYanI}.

These theorems imply that there are swirling solutions around any non-swirling solution from \cite{Squire2}.
We formalize this as a proposition.

\begin{proposition}
    \label{prop::ExistenceOfSquireSolutionsWithNonzeroSwirl}
    For $\beta\in[-1/2,\infty)$ there exists a $\delta(\beta)>0$ such that, for every $\zeta\in (-\delta,\delta)$ there exists a $(-1)$-homogeneous, axisymmetric solution $V^{\beta,\zeta}$ to \eqref{sns} in $\R^3_+$ with no-penetration boundary conditions.
    If $\zeta=0$ then $V^{\beta,\zeta}$ is the Squire solution $V^{\beta,\pi/2}$.
    If $\zeta\neq 0$ then $V^{\beta,\zeta}$ has non-zero swirl.
\end{proposition}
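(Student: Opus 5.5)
The plan is to read the proposition off from \cite[Theorem 4.1]{LiLiYanI} and \cite[Theorem 4.2]{LiLiYanI}. The key observation is that the perturbation spaces $X_1(\epsilon)$ and $X_2(\epsilon)$ impose $F_\theta(0)=F_\phi(0)=0$, and $x=\cos\theta=0$ is precisely the plane $x_3=0$.

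First fix $\beta\geq -1/2$ and take $v^\beta$ to be the Squire profile from Proposition \ref{prop::SquireSolutionsClassification} with $\theta_0=\pi/2$. Viewed on all of $(-1,1]$, it is the Li--Li--Yan solution with parameters $(\beta,\gamma(\beta))$ and satisfies $v_\theta^\beta(0)=0$. For $\beta>-1/2$ apply Theorem 4.1 with the compact set $K=\{\beta\}$. For $\beta=-1/2$ apply Theorem 4.2. In each case check the hypothesis $v^\beta_\theta(-1)<2\epsilon<2$. Since $v_\theta^\beta$ is the explicit formula of \cite[Theorem 3.1]{LiLiYanI}, evaluating at $x=-1$ gives a finite value that is less than $2$ for these $\beta$ (this is exactly the region where the solution exists on $(-1,1]$), so an admissible $\epsilon$ exists. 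This step requires a short check of the explicit formula and is the only place where something could fail. If the inequality is not obviously satisfied, I would instead verify that $v^\beta_\theta(-1)=2-2\sqrt{1+2\beta}$ or a similar closed form is less than $2$.

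The theorems then produce $\delta>0$ and, for $|\zeta|<\delta$, a solution $v=v^\beta+\zeta U_\beta+F$ of \eqref{eq::RicattiODENonzeroSwirl}. Evaluate at $x=0$: $(U_\beta)_\theta\equiv0$ and $F_\theta(0)=0$, so $v_\theta(0)=v^\beta_\theta(0)=0$. Recovering $V^{\beta,\zeta}$ through \eqref{eq::ThetaXCoordinateChange}, extended with $V_\phi=v_\phi/(r\sin\theta)$, gives a $(-1)$-homogeneous axisymmetric solution that is smooth off the negative $x_3$-axis, hence smooth in $\R^3_+$ up to the boundary away from the origin. It has $V_\theta=0$ on $\theta=\pi/2$, which is the no-penetration condition; the boundary value is given by $V_r$ and $V_\phi$ there.

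For $\zeta=0$, the conditions $F(\beta,0)=0$ and $F(0)=0$ give $v=v^\beta$, which is the Squire solution. For $\zeta\neq0$, compute $v_\phi=\zeta(U_\beta)_\phi+F_\phi$. Since $\partial_\zeta F=0$ at $\zeta=0$, we have $F_\phi=o(\zeta)$ in the weighted norm, while $(U_\beta)_\phi(x)=\int_x^1 e^{-\int_0^t\cdots}dt>0$ for $x<1$ and is fixed. Hence $v_\phi\not\equiv0$ after shrinking $\delta$ if needed. The weighted norm controls pointwise values at some interior point, say $x=1/2$, so $\zeta(U_\beta)_\phi(1/2)$ dominates there. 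Therefore $V^{\beta,\zeta}$ has nonzero swirl.
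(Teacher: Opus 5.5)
Your route is the paper's: invoke \cite[Theorem 4.1]{LiLiYanI} for $\beta>-1/2$ and \cite[Theorem 4.2]{LiLiYanI} for $\beta=-1/2$, read off no-penetration from $v^\beta_\theta(0)=(U_\beta)_\theta(0)=F_\theta(0)=0$, and recover the Squire solution at $\zeta=0$ from $F=0$ there. However, the step you single out as the only possible failure point does fail, at the endpoint $\beta=-1/2$.

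For $\theta_0=\pi/2$ the profile is $v^{-1/2}_\theta(x)=(1-x)\bigl(1+\frac{2}{\ln(1+x)-2}\bigr)$, which tends to $2$ as $x\to-1$. In fact every $\beta=-1/2$ profile of \cite[Theorem 3.1]{LiLiYanI} that reaches $x=-1$ has $v_\theta(-1)=2$. Your closed form $2-2\sqrt{1+2\beta}$ is correct for $\beta>-1/2$, but it also equals $2$ at $\beta=-1/2$. So no $\epsilon$ with $v^{-1/2}_\theta(-1)<2\epsilon<2$ exists.

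Your parenthetical, that $v_\theta(-1)<2$ describes exactly the profiles living on $(-1,1]$, is also false. The Li--Li--Yan surface contains profiles with $v_\theta(-1)\ge 2$: the whole edge $\beta=-1/2$, and the curve $\gamma=-1-\sqrt{1+2\beta}$, where $v_\theta=(1+\sqrt{1+2\beta})(1-x)$.

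The paper's proof makes no such check. It simply invokes \cite[Theorem 4.2]{LiLiYanI}, which is the result devoted to this edge, with its own spaces $X_2(\epsilon)$. For $\beta=-1/2$ you should do the same and rely on that theorem's hypotheses as stated in the source. The inequality as transcribed cannot be met there.

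For the swirl you take a detour. You pass to $x=1/2$, use $\partial_\zeta F=0$ at $\zeta=0$ to get $F_\phi=o(\zeta)$, shrink $\delta$, and assume the weighted norm controls point values at $x=1/2$. This can be made to work, but the normalization you recorded at the outset already finishes the job. Since $F_\phi(0)=0$, you have $v_\phi(0)=\zeta\,(U_\beta)_\phi(0)$, where $(U_\beta)_\phi(0)=\int_0^1 e^{-\int_0^t v^\beta_\theta(s)/(1-s^2)\,ds}\,dt>0$.

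This is the paper's argument. It gives nonzero swirl for every $\zeta\neq 0$ in the interval supplied by Li--Li--Yan, with no further shrinking and no use of $\partial_\zeta F$. It also shows the swirl is nonzero on the boundary plane $x_3=0$ itself, which is what underlies the rotating-discharge interpretation.
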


Under a smallness assumption, our asymptotic stability result Theorem \ref{mainthm1} implies these solutions are stable under perturbations which satisfy the same boundary condition.

\begin{proof}
    Fix $\beta$.
    Let $v^\beta$ be the solution to \eqref{eq::RicattiODENonzeroSwirl} corresponding to the Squire solution $V^{\beta,\pi/2}$.
    If $\beta\neq-1/2$ we use \cite[Theorem 4.1]{LiLiYanI}, otherwise we use \cite[Theorem 4.2]{LiLiYanI}.
    In either case, we have that
    \begin{align*}
        v^{\beta,\zeta}=v^\beta+\zeta U_\beta+F(\zeta)
        \qquad
        \text{where}
        \qquad 
        (v^\beta)_\theta(0)=(U_\beta)_\theta(0)=(F(\zeta))_\theta(0)=0.
    \end{align*}
    If $\zeta=0$ then $F(\zeta)\equiv 0$ and hence $v^{\beta,\zeta}=v^\beta$.
    As $(U_\beta)_\phi(0)\neq 0$ but $(F(\zeta))_\phi(0)=0$, if $\zeta\neq 0$ then $v^{\beta,\zeta}$ has non-zero $\phi$ component on the boundary.
\end{proof}

The boundary conditions for these solutions are no penetration and have radial component identical to that of the Squire surface discharge flow that we are perturbing around as well as the new swirl component coming only from $U_\beta$, which is given explicitly.
Although we have found these solutions within the framework of \cite{LiLiYanI}, they do not seem to have been discussed in connection with a physical application before.
The non-swirling solutions do not exist for all parameters $\beta$.
We can view the extant solutions as vector fields on the boundary plane.
The parameter $\beta$ represents their radial components.
The theorem above guarantees existence for small angular components for $\beta\geq-1/2$, but the smallness is not quantified.
An interesting open problem is to quantify this smallness.
More comprehensively, because any $(-1)$-homogeneous, axisymmetric vector field in the plane can be determined as a two-parameter family representing radial flow and swirl, it would be interesting to classify which $(-1)$-homogeneous and axisymmetric vector fields on the punctured plane have associated $(-1)$-homogeneous, axisymmetric solutions in $\R^3_+$.

\subsection{Type II.5 solutions}
\label{subsec::Examples::TypeII.5Solutions}

In \cite{Serrin}, Serrin constructed $(-1)$-homogeneous, axisymmetric solutions to \eqref{sns} on $\R^3_+$ which vanish on the boundary, see also \cite{Gusarov}.
Compared to Squire's solution, the tradeoff is that Serrin's solutions are singular on the axis of symmetry.
Serrin's solutions can possess spin and also exhibit multi-cellular behavior and, therefore, were originally proposed as tornado models.

In spherical coordinates (where $r=|x|$ is the radial variable, $\phi$ is the azimuthal variable and $\theta$ is the angle around the $x_3$ axis),
these solutions have the following asymptotics on the sphere near the north pole,
\begin{align*}
    |u_\theta|=O(|x_h|\ln|x_h|),
    \qquad 
    |u_r|=O(\ln|x_h|),
    \qquad 
    0\leq c=\lim_{|x|=1,x\to (0,0,1)}|x_h||u_\phi|(x)<\infty,
\end{align*}
where $x_h$ is the projection of $x$ onto its first two components.
Within the language of \cite{LiLiYanI,LiLiYanII,LiLiYanIII}, this means the solutions have a Type III singularity at the axis.
To be more precise, a Type III singularity is one in which
\begin{align*}
    0<\limsup_{|x|=1,|x_h|\to 0}|x_h||u(x_h)|<\infty.
\end{align*}
Asymptotic stability has not been proven for Type III solutions to \eqref{sns}, as is discussed in \cite{LiYan}. 
Our main insight is that the Serrin vortex is not Type III in all directions.
This motivates the following definition which is copied from Definition \ref{def::TypeII.5} in Section \ref{sec::Introduction}.

\begin{definition}[Type II.5 solutions]
    Assume $V$ is a self-similar, axisymmetric solution to \eqref{sns} which belongs to $L^\infty_{\mathrm{loc}}(\Omega\setminus\{x:x_h=0\})$ where $\Omega=\R^3$ or $\Omega=\R^3_+$.
    If $\Omega=\R^3$, then we say that $V$ is a Type II.5 solution (or has a Type II.5 singularity) if 
    \begin{align*}
        |V_{3}(x_h,\pm 1)|=O(|x_h|^{-2/3+}),
    \end{align*}
    while $V_{i}(x_h,\pm 1)=O(|x_h|^{-1})$ for $i=1,2$ and satisfy in a neighborhood of $x_1=0$ the asymptotic expansions 
    \begin{align*}
        V_i(1,0,\pm 1)=\frac{c_{i,\pm}}{x_1}+O(|x_1|^{-2/3+}),
    \end{align*}
    where $c_{i,\pm}$ are constants and $x_1>0$. If $\Omega=\R^3_+$, then we require these conditions are satisfied at $x_3=1$ for some constants $c_i$.
\end{definition}

Above, $-2/3+$ indicates the statement is true for any value slightly larger than $-2/3$.
Note that by self-similarity and axisymmetry, the asymptotic expansions at $x_3=\pm 1$ and $x_2=0$ extend to complete knowledge of $V$ in a cone around the $x_3$-axis.

Serrin's vortex is interesting as it was proposed as a tornado model and motivated us to introduce Type II.5 solutions.
However, we have not confirmed that these solutions satisfy the asymptotic expansion exactly, although we expect them to.
We presently identify two classes of solutions which are known to satisfy the definition of Type II.5 solutions:

\begin{itemize}
    \item The first example is from \cite{LiLiYanI} and is defined everywhere except the negative $x_3$ axis.
    In particular, let $v_\theta$ be the non-swirl, angular component of a solution from \cite[Theorem 1.3]{LiLiYanI}.
    Let $V_\theta(x:=\cos(\theta))=v_\theta(\theta)\sin(\theta)$.
    Then, \cite[Theorem 1.3]{LiLiYanI} states that, for $0\leq V_\theta(-1)<2$ and $\alpha_0=1-\frac{V_\theta(-1)}{2}$,
    \begin{align*}
        V_\theta(x)=V_\theta(-1)+a_1(1+x)^{\alpha_0}+\text{lower order terms}.
    \end{align*}
    We therefore have 
    \begin{align*}
        v_r(\theta)=V_\theta'(x)=a_1\alpha_0(1+\cos(\theta))^{\alpha_0-1}+\text{lower order terms},
    \end{align*}
    where, in Cartesian coordinates, $v_r$ on the $x_3$-axis becomes $v_{x_3}$.
    Note that $1+\cos(\theta)$ is equivalent asymptotically speaking to $|x_h|$ near the south pole of the unit sphere, so this gets converted to $|x_h|^{\alpha_0-1}$.
    This gives us a Type II.5 solution whenever 
    \begin{align*}
        0\leq V_\theta(-1)<4/3,
    \end{align*}
    as this implies $\alpha_0-1>-2/3$.
    Note that $V_\phi$ has the same asymptotics as $V_\theta$, so the horizontal components of the velocity in Cartesian coordinates also satisfy the definition of Type II.5 solutions when $0\leq V_\theta(-1)<4/3$. 
    A summary of results from Li, Li and Yan's papers is \cite[Theorem 2.4]{LiYanSurvey}.

    \item The second example of Type II.5 self-similar solutions are those that also satisfy the Euler equations, namely, 
    \begin{align*}
        V_\theta=\frac{a\cos(\theta)+b}{\sin(\theta)},
        \qquad 
        V_r=a,
        \qquad 
        V_\phi=0,
    \end{align*}
    where $a,b\in\R$. We learned of these from \cite{LiYanSurvey}.
\end{itemize}

Under $(-1)$-homogeneity, the asymptotics asserted in this definition and Lemma \ref{lem::TypeIISolutionsAreInWeakL3} imply any Type II.5 solution $V$ satisfies 
\begin{align*}
    V=V_0+V_1,
\end{align*}
where $V_0\notin L^{3,\infty}(\Omega)$ but primarily depends on $x_h$ (its vertical derivatives are nice in some respect) while $V_1\in L^{3,\infty}(\Omega)$ and can depend on all variables.
These facts will be used to prove Theorem \ref{mainthm2} which states, provided we restrict our attention to \textbf{axisymmetric} perturbations, small Type II.5 solutions are asymptotically stable.

\subsection{Solutions with multiple line sources}
\label{subsec::Examples::SolutionsWithMultipleLineSources}

As seen in \cite[Proposition 1.1]{LiYan}, the solutions $u^{c,\gamma}$ for $(c,\gamma)\in M$ satisfy
\begin{align*}
    -\Delta u^{c,\gamma}+u^{c,\gamma}\cdot\nabla u^{c,\gamma}+\nabla p^{c,\gamma}=(4\pi c_3\ln |x_3|\partial_{x_3}\delta_{(0,0,x_3)}-b^{c,\gamma}\delta_0)\vec{e_3},
\end{align*} 
on $\R^3$ in the sense of distributions. 
As a distribution, 
\begin{align*}
    \langle 4\pi c_3\ln|x_3|\partial_{x_3}\delta_{0,0,x_3},\varphi\rangle
    =-4\pi c_3\int_\R \ln|t|\partial_t \varphi(0,0,t)dt,
\end{align*}
for any scalar field $\varphi\in C_c^\infty(\R^3;\R)$ \cite[(8)]{LiYan}. 
It is natural to ask if one can solve the Stokes equation with this force and then apply a contraction argument as in the point singularity case to generate Navier-Stokes flows with infinite sources along multiple lines. 
In this section we answer this question affirmatively for sources that span a full line but are unable to answer the question for half-line forces.
The forces can be imposed on any combination of lines, not just the vertical axis. 

This is related to a problem is posed in the recent survey paper \cite{LiYanSurvey}:
\textit{Does there exist a non-axisymmetric $(-1)$-homogeneous solution exhibiting Type $2$ behavior?}  
Technically our present result answers this question, although the context in which the question is asked in \cite{LiYanSurvey} suggests an interest in non-axisymmetric $(-1)$-homogeneous solutions with Type $2$ behavior that are singular only on the $x_3$-axis. 
Our solutions, on the other hand, are non-axisymmetric and $(-1)$-homogeneous but have multiple singular axes, along which the solutions exhibit a Type $2$ singularity. 

For real numbers $a,b\in\R$ define the distributions $F_{a,b}^{\mathrm{ax}}$ and $F_{a,b}^+$ by
\begin{align*}
    \langle F_{a,b}^{\mathrm{ax}},\varphi\rangle
    & :=-a\int_\R \ln|t|\partial_t \varphi_3(0,0,t)dt-b\varphi_3(0,0,0), \\
    \langle F_{a,b}^+,\varphi\rangle
    & :=-a\int_0^\infty \ln|t|\partial_t \varphi_3(0,0,t)dt-b\varphi_3(0,0,0),
\end{align*}
for all vector fields $\varphi=(\varphi_1,\varphi_2,\varphi_3)\in C_c^\infty(\R^3;\R^3)$. 
The force $F_{a,b}^{\mathrm{ax}}$ is the force satisfied by $u^{c,\gamma}$ when $a=4\pi c_3$ and $b=b^{c,\gamma}$. 
The distribution $F_{a,b}^+$ is the corresponding force for the half axis $x_3>0$. Our goal now is to solve the Stokes problems  
\begin{equation}
    \label{eq::StokesSingularOnAxis}
    \left\{
    \begin{aligned}
    -\Delta V_{a,b}^{\mathrm{ax}} + \nabla q_{a,b}^{\mathrm{ax}} &= F_{a,b}^{\mathrm{ax}} && \text{in } \R^3\\
    \nabla \cdot V_{a,b}^{\mathrm{ax}} &= 0 && \text{in } \R^3
    \end{aligned}
    \right.\qquad\text{and}\qquad \left\{
    \begin{aligned}
    -\Delta V_{a,b}^+ + \nabla q_{a,b}^+ &= F_{a,b}^+ && \text{in } \R^3\\
    \nabla \cdot V_{a,b}^+ &= 0 && \text{in } \R^3
    \end{aligned}.
    \right.
\end{equation}
For the contraction argument in Section \ref{subsec::Examples::LandauSolutions} to go through, we would need the solutions $V_{a,b}^{\mathrm{ax}}$ and $V_{a,b}^+$ to be in $L^{3,\infty}$. 
We will find this holds for the former but not the latter.\par

Recall that a distribution in $\R^n$ is $(a)$-homogeneous if  
\begin{align*}
    \lambda^{-n-a}\langle T,\phi(\cdot/\lambda)\rangle=\langle T,\phi\rangle.
\end{align*} 
This agrees with the definition of homogeneity of functions when $T$ is associated with a function.
In $\R^3$ the Dirac delta function is $(-3)$-homogeneous. 
On the other hand, $\delta_{(0,0,x_3)}$ is effectively a two dimensional Dirac delta function and so should be $(-2)$-homogeneous. 
Thus, $\partial_{x_3}\delta_{(0,0,x_3)}$ should be $(-3)$-homogeneous. 
The following lemma clarifies this heuristic argument, which is valid on the full axis but, interestingly, not on the half-axis.

\begin{lemma}
    \label{lem::ForceScalingSingularAxis} 
    The full axis force is $(-3)$-homogeneous while the half-axis force satisfies
    \begin{align*}
        \left\langle F_{a,b}^+ ,\varphi(\cdot/\lambda)\right\rangle
        =\langle F_{a,b}^+,\varphi\rangle + \langle a \ln(\lambda)\delta_0 \vec{e_3},\varphi \rangle,
    \end{align*}
    and, in particular, is not $(-3)$-homogeneous.
\end{lemma}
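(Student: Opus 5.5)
The plan is a direct change of variables in the defining integrals. With the paper's convention, $T$ is $(-3)$-homogeneous in $\R^3$ exactly when $\langle T,\varphi(\cdot/\lambda)\rangle=\langle T,\varphi\rangle$ for all $\lambda>0$ and all test fields $\varphi$. So I will compute $\langle F^{\mathrm{ax}}_{a,b},\varphi(\cdot/\lambda)\rangle$ and $\langle F^+_{a,b},\varphi(\cdot/\lambda)\rangle$ and compare each with the unscaled pairing.

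First I treat the point-mass part. It is $-b\varphi_3(0,0,0)$, and $\varphi_3(0/\lambda)=\varphi_3(0)$, so it is invariant and contributes nothing to the discrepancy. Next I treat the line part. Set $\psi(t):=\varphi_3(0,0,t)$, so the scaled field restricts to $\psi(t/\lambda)$ and $\partial_t[\psi(t/\lambda)]=\lambda^{-1}\psi'(t/\lambda)$. Substituting $t=\lambda s$ in $-a\int_I \ln|t|\,\lambda^{-1}\psi'(t/\lambda)\,dt$ gives
\begin{align*}
    -a\int_{I}\ln|s|\,\psi'(s)\,ds-a\ln\lambda\int_{I}\psi'(s)\,ds,
\end{align*}
because both $I=\R$ and $I=(0,\infty)$ are invariant under dilation and $\ln|\lambda s|=\ln\lambda+\ln|s|$. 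Justifying this is routine: $\ln|s|$ is locally integrable and $\psi'$ is compactly supported.

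The only remaining work is the boundary term $\int_I\psi'\,ds$. On the full axis it equals $\psi(+\infty)-\psi(-\infty)=0$ by compact support, which gives the $(-3)$-homogeneity of $F^{\mathrm{ax}}_{a,b}$. On the half axis it equals $-\psi(0)=-\varphi_3(0,0,0)$. The extra term is therefore $a\ln\lambda\,\varphi_3(0)=\langle a\ln(\lambda)\delta_0\vec e_3,\varphi\rangle$, which is exactly the stated identity.

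Finally, for non-homogeneity, when $a\neq0$ I pick any $\varphi$ with $\varphi_3(0)\neq0$ and any $\lambda\neq1$, so the correction is nonzero. I expect no real obstacle; the only care needed is the sign bookkeeping in the boundary term and the observation that the logarithmic singularity at $t=0$ causes no boundary contribution. When $a=0$ the force is homogeneous trivially, so the ``in particular'' claim is understood for $a\neq0$.
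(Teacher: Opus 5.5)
Your proposal is correct and is the same argument as the paper's. Both substitute $t=\lambda s$ and split $\ln|\lambda s|=\ln\lambda+\ln|s|$, then evaluate the boundary term $\int_I \partial_s\varphi_3(0,0,s)\,ds$, which is $0$ on $\R$ and $-\varphi_3(0,0,0)$ on $(0,\infty)$. Your explicit remark that the non-homogeneity claim requires $a\neq 0$ is a caveat the paper leaves implicit.
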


This lemma is not used in the construction. 
It is illuminating, however, as the failure of $F_{a,b}^+$ to be $(-3)$-homogeneous is the obstruction to our method of constructing Type $2$ solutions on a half-axis. 
Note that the solutions with a half-line source in \cite{LiLiYanI} are Type III whereas the solutions that we would be generating here would be Type II, which cannot exist by the classification in \cite{LiLiYanI}. 
This lemma provides an intuition for why (small) Type II self-similar, axisymmetric, no-swirl solutions with  a half-line source do not exist. 

\begin{proof}
    Fix a test function $\varphi=(\varphi_1,\varphi_2,\varphi_3)\in C_c^\infty(\Omega)$. 
    Then, because $-3-(-3)=0$, the $(-3)$-homogeneity of $F_{a,b}^{\mathrm{ax}}$ is follows from the following calculation,
    \begin{align*}
             \left\langle  F_{a,b}^{\mathrm{ax}} ,\varphi (\cdot/\lambda)\right\rangle
        & =- a\int_\R \ln|t|\partial_t \varphi_3(0,0,t/\lambda)dt- b\varphi_3(0,0,0)\\
        & = -a \int_\R \ln|\lambda s|\partial_s \varphi_3(0,0,s)ds- b\varphi_3(0,0,0)\\
        & = -a \left(\ln|\lambda|\int_\R \partial_s \varphi_3(0,0,s)ds+\int_\R \ln|s|\partial_s \varphi_3(0,0,s)ds\right)- b\varphi_3(0,0,0)\\
        & = \langle F_{a,b}^{\mathrm{ax}}, \varphi \rangle,
    \end{align*}
    because
    \begin{align*}
        \int_\R \partial_s \varphi_3(0,0,s)ds=\int_{-L}^L \partial_s\varphi_3(0,0,s)ds=\varphi_3(0,0,L)-\varphi_3(0,0,-L)=0,
    \end{align*}
    for some $L$ sufficiently large. 
    However,
    \begin{align*}
            \left\langle F_{a,b}^+ ,\varphi(\cdot/\lambda)\right\rangle
        & = -a \left(\ln|\lambda|\int_0^\infty \partial_s \varphi_3(0,0,s)ds
          + \int_0^\infty \ln|s|\partial_s \varphi_3(0,0,s)ds\right)- b\varphi_3(0,0,0) \\
        & = \langle F_{a,b}^+,\varphi\rangle  
          + \langle a \ln(\lambda)\delta_0 \vec{e_3},\varphi \rangle,
    \end{align*}
    because 
    \begin{align*}
        \int_0^\infty \partial_s \varphi_3(0,0,s)ds
        = \int_0^L\partial_s \varphi_3(0,0,s)
        = \varphi_3(0,0,L)-\varphi_3(0,0,0)
        = -\varphi_3(0,0,0),
    \end{align*}
    for some $L$ sufficiently large. 
\end{proof} 

Let $(G,\Pi)$ be the Oseen tensor. That is, 
\begin{align*}
    G_{ij}(x,y)=\frac{1}{8\pi}\left(\frac{\delta_{ij}}{|x-y|}
    +\frac{(x_i-y_i)(x_j-y_j)}{|x-y|^3}\right),\qquad \Pi_j(x,y)
    =\frac{(x_j-y_j)}{4\pi|x-y|^3}.
\end{align*} 

We now give formulas for the solutions to \eqref{eq::StokesSingularOnAxis}. 
We will show the functions $(V_{a,b}^{\mathrm{ax}},q_{a,b}^{\mathrm{ax}})$ with components 
\begin{align*}
      (V_{a,b}^{\mathrm{ax}})_i(x)
    & =-a\int_\R \ln|t|\partial_t G_{i3}(x,t\vec{e_3})dt-bG_{i3}(x), \\
      q_{a,b}^{\mathrm{ax}}(x)
    & =-a\int_\R \ln|t|\partial_t \Pi_3(x,t\vec{e_3})dt-b\Pi_3(x),
\end{align*}
for $x\neq s \vec{e_3}$ where $s\in \R$ with $t\in\R$ and $(V_{a,b}^+,q_{a,b}^+)$ with components 
\begin{align*}
    (V_{a,b}^+)_i(x)
    & =-a\int_0^\infty \ln|t|\partial_t G_{i3}(x,t\vec{e_3})dt-bG_{i3}(x),\\
    q_{a,b}^+(x)
    & =-a\int_0^\infty \ln|t|\partial_t \Pi_3(x,t\vec{e_3})dt-b\Pi_3(x),
\end{align*}
for $x\neq s \vec{e_3}$ with $s\in\R^+$ satisfy \eqref{eq::StokesSingularOnAxis} in the sense of distributions in Lemma \ref{lem::StokesSolutionSingularAxis}.
Formally, these are the convolutions $(G*F_{a,b}^{\mathrm{ax}},\Pi*F_{a,b}^{\mathrm{ax}})$ and $(G*F_{a,b}^+,\Pi*F_{a,b}^+)$. 
However, neither $G$, $\Pi$, $F_{a,b}^{\mathrm{ax}}$, or $F_{a,b}^+$ have compact support. 

We know the Stokeslet $G$ is $(-1)$-homogeneous. 
An argument similar to the one in Lemma \ref{lem::ForceScalingSingularAxis} proves the following.

\begin{lemma}
    \label{lem::StokesSolutionHomogeneity}
    The full-axis singularity solutions $(V_{a,b}^{\mathrm{ax}},q_{a,b}^{\mathrm{ax}})$ satisfy 
    \begin{align*}
        V_{a,b}^{\mathrm{ax}}(\lambda x)=\lambda^{-1}V_{a,b}^{\mathrm{ax}}(x),
        \qquad 
        q_{a,b}^{\mathrm{ax}}(\lambda x)=\lambda^{-2}q_{a,b}^{\mathrm{ax}}.
    \end{align*}
    However, the half-axis singularity solutions $(V_{a,b}^+,q_{a,b}^+)$ satisfy 
    \begin{align*}
        V_{a,b}^+(\lambda x) &= \lambda^{-1}V_{a,b}^+(x)+a\lambda^{-1}\ln(\lambda)G_{\cdot,3}(x),\\
        q_{a,b}^+(\lambda x) &= \lambda^{-2}q_{a,b}^+(x)+a\lambda^{-2}\ln(\lambda)\Pi_3(x).
    \end{align*}
\end{lemma}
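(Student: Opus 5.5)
The plan is a direct change of variables in the defining integrals, exploiting the homogeneity of the Oseen tensor exactly as in the proof of Lemma \ref{lem::ForceScalingSingularAxis}. Since $G(x,y)$ depends only on $x-y$ and is $(-1)$-homogeneous, we have $G_{i3}(\lambda x,\lambda s\vec{e_3})=\lambda^{-1}G_{i3}(x,s\vec{e_3})$. Likewise $\Pi_3(\lambda x,\lambda s\vec{e_3})=\lambda^{-2}\Pi_3(x,s\vec{e_3})$. Here $G_{i3}(x)$ and $\Pi_3(x)$ in the formulas mean $G_{i3}(x,0)$ and $\Pi_3(x,0)$, so the point-source terms $-bG_{i3}$ and $-b\Pi_3$ scale correctly at once. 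It therefore remains to treat the line integrals.

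For the full axis, fix $x$ off the axis and $\lambda>0$, and substitute $t=\lambda s$ in
\begin{align*}
    -a\int_\R \ln|t|\,\partial_t G_{i3}(\lambda x,t\vec{e_3})\,dt.
\end{align*}
By the chain rule, $\partial_t G_{i3}(\lambda x,t\vec e_3)\,dt=\partial_s[G_{i3}(\lambda x,\lambda s\vec e_3)]\,ds=\lambda^{-1}\partial_s G_{i3}(x,s\vec e_3)\,ds$. The integral therefore becomes
\begin{align*}
    -a\lambda^{-1}\int_\R(\ln\lambda+\ln|s|)\,\partial_sG_{i3}(x,s\vec e_3)\,ds.
\end{align*}
The $\ln\lambda$ piece is $-a\lambda^{-1}\ln\lambda\,[G_{i3}(x,s\vec e_3)]_{s=-\infty}^{s=\infty}=0$, because $G_{i3}(x,s\vec e_3)=O(|s|^{-1})\to0$ as $|s|\to\infty$. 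This is the analogue of the compact-support boundary term vanishing in Lemma \ref{lem::ForceScalingSingularAxis}. For the half axis the same computation gives $[G_{i3}(x,s\vec e_3)]_{0}^{\infty}=-G_{i3}(x,0)$. The extra term is then $-a\lambda^{-1}\ln\lambda\cdot(-G_{i3}(x))=a\lambda^{-1}\ln(\lambda)G_{i3}(x)$, which is the claimed correction. The pressure is handled identically: $\Pi_3$ decays like $|s|^{-2}$ and carries a factor $\lambda^{-2}$, which produces $a\lambda^{-2}\ln(\lambda)\Pi_3(x)$.

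The only real obstacle is justifying the manipulations, namely absolute convergence of the integrals and the splitting of $\ln|\lambda s|$. For $x$ off the axis, $\partial_sG_{i3}(x,s\vec e_3)$ is smooth and bounded in $s$ and is $O(|s|^{-2})$ at infinity. Hence $\ln|s|\,\partial_sG$ is integrable both near $s=0$ (a logarithmic singularity) and at infinity. The same holds with $\ln|s|$ replaced by a constant, so each split piece converges separately. I would verify these two decay facts from the explicit formulas for $G$ and $\Pi$ before doing the substitution. After that the argument is a line-by-line copy of the proof of Lemma \ref{lem::ForceScalingSingularAxis}.
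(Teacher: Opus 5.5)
Your proposal is correct and is essentially the argument the paper intends. The paper gives no separate proof; it only says the lemma follows from the homogeneity of $G$ (and $\Pi$) by an argument similar to Lemma \ref{lem::ForceScalingSingularAxis}, i.e.\ the substitution $t=\lambda s$ and the splitting $\ln|\lambda s|=\ln\lambda+\ln|s|$, where the $\ln\lambda$ piece vanishes on the full axis and produces $a\lambda^{-1}\ln(\lambda)G_{\cdot,3}(x)$ and $a\lambda^{-2}\ln(\lambda)\Pi_3(x)$ on the half-axis. Your signs are right, and your integrability checks for $x$ off the axis ($\partial_s G$ bounded with $O(|s|^{-2})$ decay, $\partial_s\Pi_3$ with $O(|s|^{-3})$ decay) supply the justification the paper leaves implicit.
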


Since $V_{a,b}^{\mathrm{ax}}$ is $(-1)$-homogeneous it is reasonable to postulate $V_{a,b}^{\mathrm{ax}}\in L^{3,\infty}$. Furthermore, the manner of the failure of $V_{a,b}^+$ to be $(-1)$-homogeneous indicates its failure to be in $L^{3,\infty}$ and is consistent with the non-existence of Type II self-similar, axisymmetric no-swirl solutions in \cite{LiLiYanI}.

\begin{lemma}
    We have that, $V_{a,b}^+\notin L^{3,\infty}$.
\end{lemma}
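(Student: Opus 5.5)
Presumably $a\neq 0$ is intended, since for $a=0$ we get $V_{a,b}^+=-bG_{\cdot 3}\in L^{3,\infty}$; we assume $a\neq0$ throughout. The plan is to argue by contradiction from the scaling identity in Lemma \ref{lem::StokesSolutionHomogeneity}, using that $L^{3,\infty}(\R^3)$ has an exactly scale-invariant quasinorm. For $f_\lambda(x):=\lambda f(\lambda x)$, a change of variables in the distribution function gives $\|f_\lambda\|_{L^{3,\infty}}=\|f\|_{L^{3,\infty}}$ for every $\lambda>0$. Rewrite Lemma \ref{lem::StokesSolutionHomogeneity} as
\begin{align*}
    \lambda V_{a,b}^+(\lambda x)=V_{a,b}^+(x)+a\ln(\lambda)\,G_{\cdot 3}(x),
\end{align*}
where $G_{\cdot3}(x)=G_{\cdot3}(x,0)$ is the Stokeslet. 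This $G_{\cdot3}$ is $(-1)$-homogeneous, smooth away from the origin, and not identically zero; hence it lies in $L^{3,\infty}(\R^3)$ with $0<\|G_{\cdot3}\|_{L^{3,\infty}}<\infty$.

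Now suppose $V_{a,b}^+\in L^{3,\infty}$. The left side has norm equal to $\|V_{a,b}^+\|_{L^{3,\infty}}$ for every $\lambda$. By the quasi-triangle inequality (or the equivalent Banach norm on $L^{3,\infty}$),
\begin{align*}
    |a|\,|\ln\lambda|\,\|G_{\cdot3}\|_{L^{3,\infty}}\le C\big(\|\lambda V_{a,b}^+(\lambda\cdot)\|_{L^{3,\infty}}+\|V_{a,b}^+\|_{L^{3,\infty}}\big)=2C\|V_{a,b}^+\|_{L^{3,\infty}}.
\end{align*}
The right side is bounded uniformly in $\lambda$. Letting $\lambda\to\infty$ forces $a=0$ or $G_{\cdot3}\equiv0$, and both are excluded. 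Hence $V_{a,b}^+\notin L^{3,\infty}$.

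The main point needing care is that Lemma \ref{lem::StokesSolutionHomogeneity} is stated rather than proven. I would verify it as follows. Substitute $t=\lambda s$ in the defining integral. Then use the $(-1)$-homogeneity of $G$, namely $G(\lambda x,\lambda s e_3)=\lambda^{-1}G(x,se_3)$, together with $\partial_t=\lambda^{-1}\partial_s$. This gives
\begin{align*}
    \int_0^\infty \ln(\lambda s)\,\partial_s G_{i3}(x,se_3)\,ds\,\lambda^{-1}.
\end{align*}
The $\ln\lambda$ part integrates to $-G_{i3}(x,0)$, because $G(x,se_3)\to0$ as $s\to\infty$. The $b$-term is exactly $(-1)$-homogeneous.

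One should also check that the defining integral converges absolutely for $x$ off the half-axis. Near $s=0$ the integrand is $\ln s\cdot O(1)$, which is integrable. As $s\to\infty$, $\partial_sG=O(s^{-2})$, so $\ln s\cdot s^{-2}$ is integrable. Both pieces are routine, so the scaling contradiction is the heart of the argument.
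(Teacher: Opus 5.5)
Your proof is correct and is essentially the paper's argument. You rewrite Lemma \ref{lem::StokesSolutionHomogeneity} as $a\ln(\lambda)G_{\cdot 3}=\lambda V_{a,b}^+(\lambda\cdot)-V_{a,b}^+$, use the scale invariance of the $L^{3,\infty}$ quasinorm together with the quasi-triangle inequality, and let $\lambda\to\infty$. Your explicit assumption $a\neq 0$ and your change-of-variables check of the scaling identity are both correct, and both fill in points the paper leaves implicit; the assumption is genuinely needed, since $V_{0,b}^+=-bG_{\cdot 3}\in L^{3,\infty}$.
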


\begin{proof}
    Fix $\lambda>0$. 
    For contradiction, assume $V_{a,b}^+\in L^{3,\infty}$. 
    By Lemma \ref{lem::StokesSolutionHomogeneity}, 
    \begin{align*}
        a\ln(\lambda) G_{\cdot,3}(x)=\lambda V_{a,b}^+(\lambda x)-V_{a,b}^+(x).
    \end{align*}
    Taking the $L^{3,\infty}$ quasi-norm on both sides gives 
    \begin{equation}
        \label{eq::WeakL3Contradiction}
        |a| |\ln(\lambda)| \| G_{\cdot,3} \|_{L^{3,\infty}}
        \leq C\left(\|\lambda V_{a,b}^+(\lambda x)\|_{L^{3,\infty}}
        +\|V_{a,b}^+(x)\|_{L^{3,\infty}}\right)
        \leq 2C\|V_{a,b}^+\|_{L^{3,\infty}},
    \end{equation}
    by the quasi-triangle inequality and criticality of $L^{3,\infty}$. 
    The inequality \eqref{eq::WeakL3Contradiction} holds for $\lambda>0$. 
    But, $|a| |\ln(\lambda)| \| G_{\cdot,3} \|_{L^{3,\infty}}\to\infty$ as $\lambda\to\infty$. 
    This is a contradiction since $2C\|V_{a,b}^+\|_{L^{3,\infty}}$ is a finite number independent of $\lambda$.
\end{proof}

A consequence of this lemma is that the argument in Section \ref{subsec::Examples::LandauSolutions} fails for solutions with force supported on a half-axis. This is consistent with the fact that the solutions studied in \cite{LiLiYanI} are all  Type III singular. For the full axis source, on the other hand, we have $V_{a,b}^{\mathrm{ax}}\in L^{3,\infty}$. 

\begin{lemma}\label{lem::VinLorentz}
    We have that, $V_{a,b}^{\mathrm{ax}}\in L^{3,\infty}$ and, furthermore, $\| V_{a,b}^{\mathrm{ax}} \|_{L^{3,\infty}}\leq C(|a|+|b|)$.
\end{lemma}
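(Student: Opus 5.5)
We need to show $V^{\mathrm{ax}}_{a,b}\in L^{3,\infty}$ with the bound $C(|a|+|b|)$. The plan is to reduce to a pointwise bound of the Type II form and then invoke Lemma \ref{lem::TypeIISolutionsAreInWeakL3}. By linearity, split $V^{\mathrm{ax}}_{a,b}=aW+bG_{\cdot 3}(x,0)$, where
\begin{align*}
    W_i(x)=-\int_\R \ln|t|\,\partial_tG_{i3}(x,t\vec{e_3})\,dt .
\end{align*}
The $b$-term is the Stokeslet, bounded by $C|x|^{-1}$. Since $|x|^{-1}\in L^{3,\infty}(\R^3)$, this term contributes $C|b|$. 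It therefore suffices to show $|W(x)|\le C|x|^{-a_0}|x_h|^{a_0-1}$ for some $a_0>1/3$.

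By Lemma \ref{lem::StokesSolutionHomogeneity}, $W$ is $(-1)$-homogeneous. One could also see this directly by substituting $t\mapsto\lambda s$ and using $\int_\R\partial_sG\,ds=0$, since $G(x,s\vec e_3)\to0$ as $|s|\to\infty$. Hence it is enough to bound $W$ on the unit sphere $|x|=1$ away from the poles, and then as $\rho=|x_h|\to0$.

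\textbf{Convergence and regularity.} For $|x|=1$ with $\rho>0$, we have $|\partial_tG(x,t\vec e_3)|\le C|x-t\vec e_3|^{-2}\le C(\rho^2+(x_3-t)^2)^{-1}$. The weight $\ln|t|$ is locally integrable near $t=0$ and grows only logarithmically, while the kernel decays like $t^{-2}$. So the integral converges absolutely, and $W$ is continuous, hence bounded, on compact subsets of the sphere minus the poles.

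\textbf{Main estimate near the axis.} Take $x$ near the north pole, so $x_3\approx1$ and $\rho$ is small. Split the $t$-integral into $|t-x_3|<1/2$ and the rest. On the far region, $\ln|t|$ is harmless: the integral is bounded by a constant uniform in $\rho$, since $\ln|t|\,|x-t\vec e_3|^{-2}$ is integrable both near $t=0$ and at infinity.

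On the near region, write $\ln|t|=\ln|x_3|+(\ln|t|-\ln|x_3|)$.
\begin{itemize}
    \item The constant piece integrates $\partial_tG$ exactly, giving boundary terms $G(x,(x_3\pm1/2)\vec e_3)=O(1)$.
    \item The remainder satisfies $|\ln|t|-\ln|x_3||\le C|t-x_3|$. Its integral is therefore bounded by $C\int_{|s|<1/2}|s|(\rho^2+s^2)^{-1}\,ds\le C(1+|\ln\rho|)$.
\end{itemize}
Altogether, $|W(x)|\le C(1+|\ln\rho|)$ on the sphere.

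By homogeneity this gives $|W(x)|\le C|x|^{-1}\big(1+|\ln(|x_h|/|x|)|\big)$. As in the discussion preceding Lemma \ref{lem::TypeIISolutionsAreInWeakL3}, this is $\le C|x|^{-1/2}|x_h|^{-1/2}$. Since $a_0=1/2>1/3$, Lemma \ref{lem::TypeIISolutionsAreInWeakL3} yields $W\in L^{3,\infty}$. Combined with the Stokeslet term and the quasi-triangle inequality, this gives $\|V^{\mathrm{ax}}_{a,b}\|_{L^{3,\infty}}\le C(|a|+|b|)$.

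The main obstacle is the near-axis estimate, specifically the cancellation step. Naively, $\int|\partial_tG|\,dt$ is of order $\rho^{-1}$, which would give a Type III bound and fail to lie in $L^{3,\infty}$. The subtraction of $\ln|x_3|$, which uses the exact derivative structure together with the Lipschitz bound on $\ln$ near $t=x_3\approx1$, is what reduces the bound to logarithmic.

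For this to work, the uniformity of constants as $x$ varies near the pole must be checked. Near the south pole the same argument applies with $x_3\approx-1$, since $\ln|t|$ is smooth there as well. Finally, the points $x$ on the sphere are never near $t=0$, so the logarithmic singularity of the weight at the origin never interacts with the kernel's singularity.
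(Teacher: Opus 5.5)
Your proof is correct, but it takes a different route from the paper's.

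\textbf{The paper's route.} The paper never reduces to the sphere. It integrates by parts to rewrite the $a$-term as $-\int_\R \frac{1}{t}\,G_{\cdot 3}(x-t\vec{e_3})\,dt$, understood as a principal value via the $\epsilon<|t|<1/\epsilon$ truncation. It then splits at $|t|=|x|/2$, that is, around the singularity of the weight at $t=0$ rather than the kernel's singularity at $t=x_3$. The near piece uses the oddness of $1/t$ (subtracting $G(x)$) and the mean value theorem, giving $C|x|^{-1}$. The far piece uses Cauchy--Schwarz with $\|1/t\|_{L^2(|x|/2,\infty)}\lesssim|x|^{-1/2}$ and $\|G(x-\cdot\,\vec{e_3})\|_{L^2(\R)}\lesssim|x_h|^{-1/2}$. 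This yields $C|x|^{-1/2}|x_h|^{-1/2}$ directly at every $x$.

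\textbf{Your route.} You keep the $\partial_t G$ form and reduce to the unit sphere by $(-1)$-homogeneity. You verify that homogeneity directly via $\int_\R\partial_s G\,ds=0$, rather than relying on the only-sketched Lemma \ref{lem::StokesSolutionHomogeneity}. You then extract the cancellation at the kernel's singularity by subtracting $\ln|x_3|$ and using the Lipschitz bound on $\ln$ near $|t|=1$.

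\textbf{Trade-offs.} The paper's argument is shorter and needs neither homogeneity nor a compactness argument away from the poles, but it depends on the principal-value integration by parts. Yours avoids that step and in exchange gives the sharper bound $|W(x)|\le C|x|^{-1}\bigl(1+|\ln(|x_h|/|x|)|\bigr)$. This is exactly the logarithmic Type II behavior the paper quotes for $V^{c,\gamma}$. You then deliberately weaken it to $|x|^{-1/2}|x_h|^{-1/2}$ so that Lemma \ref{lem::TypeIISolutionsAreInWeakL3} applies.
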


\begin{proof}
That $b G_{i3}\in L^{3,\infty}$ is obvious.

    For simplicity of notation, set $G(x-y):=G_{\cdot,3}(x,y)$.   Fix $x$ off of the vertical axis so that $|x_h|\neq 0$.  This means that $G(x-t\vec{e_3})$ is bounded in $t$.  We first confirm the  integration by parts formula, 
      \begin{align}\label{IBP}
        \int_\R \ln|t|\partial_t G (x-t\vec{e_3})\,dt =- \int_\R \frac 1 t G (x-t\vec e_3)\,dt,
    \end{align} 
    is valid.  
    For each $\epsilon>0$, 
    \begin{equation}
        \label{a::eq::IntegrationByParts}
        \begin{aligned}
            -\int_{\epsilon<|t|<\frac{1}{\epsilon}} \ln|t|\partial_t G(x-t\vec{e_3})\,dt&=\int_{\epsilon<|t|<\frac{1}{\epsilon}} \frac{G(x-t\vec{e_3})}{t}\,dt
            +\ln|\epsilon|\left(G(x-\epsilon \vec{e_3})-G(x+\epsilon \vec{e_3})\right)\\
            &\quad +\ln|\epsilon^{-1}|(\left(G(x+\epsilon^{-1}\vec{e_3})-G(x-\epsilon^{-1}\vec{e_3})\right).
        \end{aligned}
    \end{equation}
    The two boundary terms are $O(\ln|\epsilon|\epsilon)$ which can be seen using the mean value theorem and the fact that we are avoiding any singularities in $G$ or its derivatives because $|x_h|\neq 0$. Sending $\epsilon\to 0$ justifies  \eqref{IBP} .


    We consider the integrals over $|t|\leq |x|/2$ and $|t|\geq |x|/2$. We have
    \begin{align*}
        \int_{-|x|/2}^{|x|/2} \frac 1 t G(x-t\vec e_3) \,dt 
        = \int_{-|x|/2}^{|x|/2} \frac 1 t (G(x-t\vec e_3)-G(x)) \,dt,
    \end{align*}
    because $1/t$ is odd. Using the mean value theorem we have 
    \begin{align*}
        \bigg| \int_{-|x|/2}^{|x|/2} \frac 1 t G(x-t\vec e_3) \,dt \bigg|
        \leq \int_{-|x|/2}^{|x|/2} | \partial_{x_3}G(y)|\,dt,
    \end{align*}
    where $y = x-s\vec e_3$ for some $s\in [x_3-t,x_3]$ when $t>0$ and $s\in [x_3,x_3-t]$ when $t<0$. In either case, $|y|\sim |x|$ and so 
    \begin{align*}
    | \partial_{x_3}G(y)| \sim |x|^{-2}.
    \end{align*}
    As this no longer depends on $t$ we can move it outside the integral to obtain
    \begin{align*}
       \bigg| \int_{-|x|/2}^{|x|/2} \frac 1 t G(x-t\vec e_3) \,dt\bigg|\leq \frac C {|x|}\in L^{3,\infty}.
    \end{align*}
    For the far-field integral we have 
    \begin{align*}
    \bigg|    \int_{|x|/2}^\infty   \frac 1 t G(x-t\vec e_3) \,dt \bigg|
        \leq \| \frac 1 t \|_{L^2((|x|/2,\infty)} \| G(x-(\cdot) \vec e_3)\|_{L^2(\R)}
        \leq \frac 1 {|x|^{1/2}} \| G(x-(\cdot ) \vec e_3)\|_{L^2(\R)}.
    \end{align*}
    We have 
    \begin{align*}
        \int_\R |G(x-t \vec e_3)|^2\,dt
        \leq \int_\R \frac 1 {|x_h|^2 + s^2}\,ds.
    \end{align*}
    where $s=x_3-t$. We have 
    \begin{align*}
        \int_\R \frac 1 {|x_h|^2 + s^2}\,ds 
        = \frac 1 {|x_h|^2} \int_\R \frac 1 {1+ (s/|x_h|)^2}\,ds.
    \end{align*}
    Let $\tau = s/|x_h|$ so that 
    \begin{align*}
        \int_\R \frac 1 {|x_h|^2 + s^2}\,ds
        = \frac 1 {|x_h|} \int_\R \frac 1 {1+ (\tau)^2}\,d\tau,
    \end{align*}
    where the above integral is bounded independently of $x$.
    Altogether this implies
    \begin{align*}
     \bigg|   \int_{|x|/2}^\infty   \frac 1 t G(x-t\vec e_3) \,dt \bigg|
        \leq \frac C {|x|^{1/2}|x_h|^{1/2}}\in L^{3,\infty}.
    \end{align*}
    The integral over $(-\infty, -|x|/2 ]$ is bounded identically. 
    This proves the inclusion in $L^{3,\infty}$. The upper bound comes from multiplying by $|a|$ throughout the above inequalities.
\end{proof}

\begin{lemma}\label{lem::qinLorentz}
    We have $q_{a,b}^{\mathrm{ax}} \in L^{3/2,\infty}$ and, furthermore, $\|q_{a,b}^{\mathrm{ax}} \|_{L^{3/2,\infty}}\leq C (|a|+|b|)$.
\end{lemma}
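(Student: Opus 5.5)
We follow the proof of Lemma \ref{lem::VinLorentz} closely, replacing the Stokeslet $G_{\cdot,3}$ by the pressure kernel $\Pi_3$. The point-source term is immediate: $|b\Pi_3(x)|\leq C|b||x|^{-2}$ and $|x|^{-2}\in L^{3/2,\infty}(\R^3)$. For the line-source term, write $P(x-y):=\Pi_3(x,y)=\frac{x_3-y_3}{4\pi|x-y|^3}$. We fix $x$ with $x_h\neq 0$, so that $P(x-t\vec e_3)$ is smooth and bounded in $t$ and decays like $|t|^{-2}$. The first step is the integration by parts identity
\begin{align*}
    \int_\R \ln|t|\,\partial_t P(x-t\vec e_3)\,dt=-\int_\R \frac1t P(x-t\vec e_3)\,dt .
\end{align*}
This is justified exactly as in \eqref{a::eq::IntegrationByParts}. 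The boundary terms are $O(\epsilon\ln\epsilon)$ near $t=0$ by the mean value theorem, and $O(\epsilon^2\ln\epsilon)$ near $|t|=1/\epsilon$, even better than for $G$.

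Next we split the integral at $|t|=|x|/2$. On $|t|\leq |x|/2$ we use the oddness of $1/t$ to subtract $P(x)$, and then the mean value theorem. For intermediate points $y=x-s\vec e_3$ we have $|y|\sim|x|$, so $|\nabla P(y)|\lesssim |x|^{-3}$. This gives a contribution bounded by $C|x|^{-2}$, which lies in $L^{3/2,\infty}$. On $|t|\geq|x|/2$ we apply Cauchy--Schwarz:
\begin{align*}
    \Big|\int_{|x|/2}^\infty \frac1t P(x-t\vec e_3)\,dt\Big|\leq C|x|^{-1/2}\Big(\int_\R \frac{ds}{(|x_h|^2+s^2)^2}\Big)^{1/2}\leq \frac{C}{|x|^{1/2}|x_h|^{3/2}}.
\end{align*}
The last bound uses the same rescaling $s=|x_h|\tau$ as before, since $\int_\R(|x_h|^2+s^2)^{-2}ds=c|x_h|^{-3}$. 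The half-line $t\leq -|x|/2$ is handled identically.

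The main obstacle is the membership $|x|^{-1/2}|x_h|^{-3/2}\in L^{3/2,\infty}$. Writing $|x|^{-1/2}|x_h|^{-3/2}=\big(|x|^{-1/4}|x_h|^{-3/4}\big)^2$, this is equivalent to $|x|^{-1/4}|x_h|^{-3/4}\in L^{3,\infty}$. That corresponds to $a=1/4$ in Lemma \ref{lem::TypeIISolutionsAreInWeakL3}, which lies outside the admissible range $a>1/3$, so the Cauchy--Schwarz bound is too lossy.

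We therefore plan to estimate the far-field piece more carefully rather than by Cauchy--Schwarz. For $|t|\geq|x|/2$, split further according to $|x_3-t|\leq |x|/4$ or $|x_3-t|>|x|/4$. In the second region $|x-t\vec e_3|\gtrsim |t|$, so the integrand is bounded by $|t|^{-3}$ and integrates to $C|x|^{-2}$. In the first region we have $|t|\sim|x|$, so $1/t\sim 1/|x|$, and the integral reduces to
\begin{align*}
    \frac{C}{|x|}\int_{|x_3-t|\leq |x|/4}\frac{|x_3-t|}{(|x_h|^2+|x_3-t|^2)^{3/2}}\,dt\leq \frac{C}{|x|}\big(1+\ln(|x|/|x_h|)\big).
\end{align*}
This is a $(-2)$-homogeneous function, bounded by $C|x|^{-2}(|x|/|x_h|)^{\delta}=C|x|^{-2+\delta}|x_h|^{-\delta}$ for small $\delta>0$. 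A direct level-set computation, as in the proof of Lemma \ref{lem::TypeIISolutionsAreInWeakL3} or via the $(-2)$-homogeneous analogue of \cite[Lemma 3.1]{BT1} (membership in $L^{3/2}_{\mathrm{loc}}(\R^3\setminus\{0\})$), shows this is in $L^{3/2,\infty}$, since $|x_h|^{-3\delta/2}$ is locally integrable in $\R^2$. Collecting the three pieces and multiplying by $|a|$ and $|b|$ gives $\|q_{a,b}^{\mathrm{ax}}\|_{L^{3/2,\infty}}\leq C(|a|+|b|)$.
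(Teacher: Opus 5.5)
Your diagnosis that Cauchy--Schwarz is too lossy is correct, since indeed $|x|^{-1/2}|x_h|^{-3/2}\notin L^{3/2,\infty}$. The repair you propose, however, rests on a false estimate. On the window $|x_3-t|\le |x|/4$ the integral of $|P|$ is not logarithmic. With $s=x_3-t$,
\begin{align*}
\int_{|s|\le |x|/4}\frac{|s|}{(|x_h|^2+s^2)^{3/2}}\,ds=\frac{2}{|x_h|}-\frac{2}{\sqrt{|x_h|^2+|x|^2/16}},
\end{align*}
which is of size $1/|x_h|$ when $|x_h|\ll |x|$. Your displayed inequality also fails a scaling check. The left side is $(-2)$-homogeneous in $x$, while $\frac{C}{|x|}\big(1+\ln(|x|/|x_h|)\big)$ is $(-1)$-homogeneous, so the inequality breaks down along any ray as $|x|\to 0$. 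Moreover, a nonzero $(-1)$-homogeneous function is never in $L^{3/2,\infty}(\R^3)$. So your next sentence, calling this bound $(-2)$-homogeneous and placing it in $L^{3/2,\infty}$, does not follow. A genuinely $(-2)$-homogeneous logarithmic bound $C|x|^{-2}\big(1+\ln(|x|/|x_h|)\big)$ is obtainable only by using cancellation: $P$ is odd in $s$ on the symmetric window, so you could subtract $1/x_3$ from $1/t$. You do not do this, and it is unnecessary.

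The correct fix is simpler, and it is the paper's argument. You do not need to split the far field at all. Along the line, $\Pi_3$ decays like $|t|^{-2}$, unlike $G$, which decays like $|t|^{-1}$. So $\Pi_3$ is integrable there, and you should pair $L^\infty$ with $L^1$ instead of $L^2$ with $L^2$. On $|t|\ge |x|/2$, bound $1/|t|\le 2/|x|$ and use
\begin{align*}
\int_\R |P(x-t\vec e_3)|\,dt\le \int_\R \frac{C\,ds}{|x_h|^2+s^2}\le \frac{C}{|x_h|}.
\end{align*}
This gives the far-field bound $\frac{C}{|x||x_h|}$. That function lies in $L^{3/2,\infty}$ because its square root $(|x||x_h|)^{-1/2}$ is the case $a=1/2>1/3$ of Lemma \ref{lem::TypeIISolutionsAreInWeakL3}. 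Your own region-B estimate, computed correctly, yields exactly this bound, so your decomposition also closes once it is fixed. The rest of your proposal (the point-source term, the integration by parts, and the near field) matches the paper.
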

\begin{proof} The proof is similar to the proof of Lemma \ref{lem::VinLorentz} and we try to be brief.
Recall that $q_{a,b}^{\mathrm{ax}}(x)
     =-a\int_\R \ln|t|\partial_t \Pi_3(x,t\vec{e_3})dt-b\Pi_3(x)$. The $b\Pi_3(x)$ part is in $L^{3/2,\infty}$ because it has an explicit formula as the pressure associated with a Stokeslet. 
     We adopt the abbreviation $ \Pi(x-y)=\Pi_3(x,y)$. 
     For the integral term, we first observe that via an identical argument which supported \eqref{IBP}, we have 
     \[
        \int_\R \ln|t|\partial_t \Pi(x-t\vec{e_3})dt =- \int_\R \frac 1 {t} \Pi(x-t\vec{e_3})dt.
     \]
     We consider the integrals over $|t|\leq |x|/2$ and $|t|\geq |x|/2$. We have
    \begin{align*}
        \int_{-|x|/2}^{|x|/2} \frac 1 t \Pi (x-t\vec e_3) \,dt 
        = \int_{-|x|/2}^{|x|/2} \frac 1 t (\Pi(x-t\vec e_3)-\Pi(x)) \,dt.
    \end{align*}
    Using the mean value theorem and the fact that in this case $|x|\sim |x\pm t\vec e_3|$ implies 
    \begin{align*}
       \bigg| \int_{-|x|/2}^{|x|/2} \frac 1 t \Pi (x-t\vec e_3) \,dt \bigg|
        \leq C \int_{-|x|/2}^{|x|/2} | \partial_{x_3} \Pi(y)|\,dt\leq C|x|\frac 1 {|x|^3}\in L^{3/2,\infty},
    \end{align*}
    where above $y$ is some point with $|y|\sim |x|$. 

    For the far-field integral we have 
    \begin{align*}
        \bigg|\int_{|x|/2}^\infty   \frac 1 t \Pi (x-t\vec e_3) \,dt \bigg|
        \leq \frac 2 {|x|} \int_{|x|/2}^\infty |\Pi(x-(\cdot)\vec e_3)|\,dt,
    \end{align*}
    We have for $s=x_3-t$ that
    \begin{align*}
    \int_\R   |\Pi(x-t \vec e_3)|\,dt
        \leq \int_\R \frac C {|x_h|^{2} + s^2}\,ds\leq \frac C {|x_h|}.
    \end{align*}
    Hence,
    \begin{align*}
        \bigg|\int_{|x|/2}^\infty   \frac 1 t \Pi (x-t\vec e_3) \,dt \bigg|
        \leq \frac C {|x||x_h|}.
    \end{align*}
    Because $(|x_h||x|)^{-1/2}\in L^{3,\infty}$, we also have $(|x_h||x|)^{-1} \in L^{3/2,\infty}$. The integral over $(-\infty, -|x|/2)$ is bounded identically.
\end{proof}
 
\begin{lemma}
    \label{lem::StokesSolutionSingularAxis}
    The pair $(V_{a,b}^{\mathrm{ax}},q_{a,b}^{\mathrm{ax}})$ solves \eqref{eq::StokesSingularOnAxis} in the sense of distributions.
\end{lemma}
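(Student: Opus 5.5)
The plan is to verify the two distributional identities directly by testing against $\varphi\in C_c^\infty(\R^3;\R^3)$ and moving everything onto the Oseen tensor via Fubini. By linearity it suffices to treat the line term $-a\int_\R \ln|t|\,\partial_t G_{i3}(x,t\vec e_3)\,dt$ separately from the Stokeslet term $-bG_{i3}(x)$, which is classical: $-\Delta G_{\cdot 3}+\nabla\Pi_3=\delta_0\vec e_3$ and $\nabla\cdot G_{\cdot 3}=0$. Divergence-freeness of $V^{\mathrm{ax}}_{a,b}$ is then handled in the same way as the momentum equation, using $\partial_{x_i}G_{i3}(x,y)=0$.

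For the momentum equation, I will compute
\begin{align*}
\langle V^{\mathrm{ax}}_{a,b},-\Delta\varphi\rangle+\langle q^{\mathrm{ax}}_{a,b},-\nabla\cdot\varphi\rangle
\end{align*}
after first rewriting the line term in the integrated-by-parts form \eqref{IBP}, namely $a\int_\R t^{-1}G_{\cdot 3}(x-t\vec e_3)\,dt$, understood as a principal value near $t=0$, with the analogous form for the pressure. The cleanest route is to truncate and set
\begin{align*}
V_\epsilon(x)=-a\int_{\epsilon<|t|<1/\epsilon}\ln|t|\,\partial_tG_{\cdot3}(x-t\vec e_3)\,dt ,
\end{align*}
with $q_\epsilon$ defined in the same way. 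Since the integration is now over a compact $t$-range away from the singularity of $\ln|t|$, Fubini applies: $|G(x-t\vec e_3)|\lesssim |x-t\vec e_3|^{-1}$ is locally integrable in $(x,t)$ jointly against $\Delta\varphi$. Exchanging the integrals and using the fundamental solution property pointwise in $t$ gives
\begin{align*}
\langle -\Delta V_\epsilon+\nabla q_\epsilon,\varphi\rangle=-a\int_{\epsilon<|t|<1/\epsilon}\ln|t|\,\partial_t\bigl[\varphi_3(t\vec e_3)\bigr]\,dt ,
\end{align*}
where I use the translation-invariance identity $\partial_t[G(x-t\vec e_3)]$ and integrate by parts in $t$ twice, once into the test-function side and once back; equivalently, I move $\partial_t$ onto $\varphi_3$ via $\int G(x-t\vec e_3)\cdot(-\Delta\varphi)\,dx=\varphi_3(t\vec e_3)$ plus the pressure term. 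The boundary terms are $\ln\epsilon\,\bigl(\varphi_3(\pm\epsilon\vec e_3)\text{ differences}\bigr)=O(\epsilon\ln\epsilon)$ at the inner cut, and they vanish at the outer cut because $\varphi$ has compact support. As $\epsilon\to0$ the right-hand side tends to $\langle F^{\mathrm{ax}}_{a,b},\varphi\rangle$ minus the $b$ part, since $\ln|t|$ is locally integrable.

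It remains to show $V_\epsilon\to V^{\mathrm{ax}}_{a,b}$ and $q_\epsilon\to q^{\mathrm{ax}}_{a,b}$ in $L^1_{\mathrm{loc}}$, which is the \emph{main obstacle}, since the integrand is not absolutely integrable in $t$ uniformly near the axis. I will use the split from the proofs of Lemmas~\ref{lem::VinLorentz} and~\ref{lem::qinLorentz}. On $|t|\le|x|/2$, the odd-symmetry/mean-value bound gives a tail that is dominated by $C|x|^{-2}\cdot\epsilon$ for the inner cut. On $|t|\ge|x|/2$, the Cauchy--Schwarz bound shows that the tails beyond $1/\epsilon$ are controlled by $\epsilon^{1/2}|x_h|^{-1/2}$ (and by $\epsilon|x_h|^{-1}$ for the pressure). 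Both of these are locally integrable in $\R^3$, so dominated convergence yields the convergence in $L^1_{\mathrm{loc}}$. Passing to the limit in the identity above proves the momentum equation, and the divergence equation follows from the identical truncation argument using $\langle V_\epsilon,\nabla\psi\rangle=0$.
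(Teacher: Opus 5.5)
Your argument is correct in substance, but it takes a different route from the paper. The paper's proof does not truncate. It pairs the $\ln|t|\,\partial_t$ representation directly with the test function and exchanges the $t$- and $x$-integrals. It then writes $\partial_t G_{\cdot 3}(x,t\vec e_3)=-\partial_{x_3}G_{\cdot 3}(x,t\vec e_3)$, integrates by parts in $x$, and applies the Stokeslet identity to $\partial_{x_3}\varphi$ for each fixed $t$. Lemmas \ref{lem::VinLorentz} and \ref{lem::qinLorentz} only serve to make the pairings finite, and $\nabla\cdot V^{\mathrm{ax}}_{a,b}=0$ is not checked.

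You instead pass to the principal-value form \eqref{IBP}, truncate, and apply the Stokeslet identity to $\varphi$ itself. You then integrate by parts in $t$ and remove the truncation by dominated convergence, using the bounds from those lemmas.

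What each route buys:
\begin{itemize}
\item For the velocity, the paper's route is shorter and rigorous as it stands, since $|\ln|t||\,|x-t\vec e_3|^{-2}$ is integrable on $K\times\R$ for compact $K$. So for $V^{\mathrm{ax}}_{a,b}$ your ``main obstacle'' is an artifact of the $1/t$ form.
\item For the pressure, $\partial_t\Pi_3(x-t\vec e_3)=O(|x-t\vec e_3|^{-3})$ is not locally integrable, and distributionally $\partial_{x_3}\Pi_3$ contains $\tfrac13\delta_0$. So the ``similar calculation'' the paper invokes for $q^{\mathrm{ax}}_{a,b}$ is covered neither by Fubini nor by pointwise integration by parts in $x$.
\item Your scheme only ever integrates $G_{\cdot 3}$ and $\Pi_3$ themselves, which are $O(|y|^{-1})$ and $O(|y|^{-2})$. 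It therefore supplies that missing justification and confirms the identity. It also covers the divergence equation.
\end{itemize}

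Two repairs are needed.
\begin{itemize}
\item The displayed $V_\epsilon$ should be $a\int_{\epsilon<|t|<1/\epsilon}t^{-1}G_{\cdot 3}(x-t\vec e_3)\,dt$, with $q_\epsilon$ defined the same way using $\Pi_3$. Your Fubini justification, the $\varphi_3$ boundary terms and the tail estimates all refer to this form. For the $\ln|t|\,\partial_t$ truncation you actually wrote, the pressure exchange fails for the reason just given.
\item The inner-cut bound $C\epsilon|x|^{-2}$ holds only for $|x|\ge 2\epsilon$, so it is not a majorant. An $\epsilon$-uniform majorant comes from splitting the truncated integral at $|t|=|x|/2$, as in Lemma \ref{lem::VinLorentz}. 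This gives $|V_\epsilon(x)|\le C(|x|^{-1}+|x|^{-1/2}|x_h|^{-1/2})$ and likewise $|q_\epsilon(x)|\le C(|x|^{-2}+|x|^{-1}|x_h|^{-1})$, both locally integrable.
\end{itemize}
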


\begin{proof}
    Fix a test function $\varphi\in C_c^\infty(\R^3;\R^3)$.
    Let $V_{a,b}$ denote $V_{a,b}^\mathrm{ax}$ and adopt the same convention for $q_{a,b}$ and $F_{a,b}$. 
    For $(V_{a,b},q_{a,b})$ to satisfy \eqref{eq::StokesSingularOnAxis} in the sense of distributions we need to show that
    \begin{align}\label{identityForDistSol}
        -\langle V_{a,b},\Delta\varphi\rangle-\langle q_{a,b},\nabla\cdot\varphi\rangle=F_{a,b}.
    \end{align} 
    Implicit in this is that the left-hand sides of the above expressions are finite. This follows from Lemmas \ref{lem::VinLorentz} and \ref{lem::qinLorentz}.

    We can calculate the action of the vector field  
    \begin{align*}
        \int_\R \ln|t|\partial_t G_{\cdot 3}(x,t\vec{e}_3)\,dt,
    \end{align*} 
    on a test function $\varphi$ as follows,
    \begin{align*}
        \left\langle \int_I \ln|t|\partial_t G_{\cdot 3}(x,t\vec{e}_3)\,dt,\varphi\right\rangle
        &=\int_\R \ln|t| \int_{\R^3}\partial_t G_{\cdot 3}(x,t\vec{e}_3)\cdot \varphi(x)\,dx\,dt\\
        &=\int_\R \ln|t| \int_{\R^3}-\partial_{x_3} G_{\cdot 3}(x,t\vec{e}_3)\cdot \varphi(x)\,dx\,dt\\
        &=\int_\R \ln|t|\int_{\R^3}G_{\cdot 3}(x,t\vec{e}_3)\cdot \partial_{x_3}\varphi(x)\,dx\,dt\\
        &=\int_\R \ln|t|\langle G_{\cdot 3}(x,te_3),\partial_{x_3}\varphi\rangle \,dt,
    \end{align*}
    where we used the fact that $\partial_{y_3}G(x,y)=-\partial_{x_3}G(x,y)$.
    For the pressure and test functions $\psi\in C_c^\infty(\R^3;\R)$ we have by a similar calculation that
    \begin{align*}
        \left\langle\int_\R \ln|t|\partial_t \Pi_3(x,t\vec{e}_3)\,dt,\psi\right\rangle
        =\int_\R \ln|t|\langle \Pi_3 (x,t\vec{e}_3),\partial_{x_3}\psi\rangle \,dt.
    \end{align*}
    So, 
    \begin{align*}
          - \langle V_{a,b},\Delta\varphi\rangle-\langle q_{a,b},\nabla\cdot \varphi\rangle
        & = a\int_\R \ln|t|\left(\langle G_{\cdot 3}(x,te_3),\Delta\partial_{x_3}\varphi\rangle 
          + \langle \Pi_3 (x,t\vec{e}_3),\nabla\cdot \partial_{x_3}\varphi\rangle\right)\,dt \\
        & + b(\langle G_{\cdot 3},\Delta\varphi\rangle
          + \langle \Pi_3,\nabla\cdot \varphi\rangle)
          = -a\int_\R\ln|t|\partial_{x_3}\varphi_3(0,0,t)\,dt
          - b\varphi_3(0),
    \end{align*}
    because  
    \begin{align*}
          \langle G_{\cdot 3}(x,t\vec{e_3}),\Delta \varphi_i\rangle
        + \langle\Pi_3(x-t\vec{e_3}),\nabla\cdot \varphi\rangle
        = -\varphi_3(0,0,t),
    \end{align*}
    for each $t\in I$ and test function $\varphi$.
\end{proof}

Once one knows $V_{a,b}^{\mathrm{ax}}\in L^{3,\infty}$ can be made small in the parameter space and that $q_{a,b}^{\mathrm{ax}}\in L^{3/2,\infty}$, the argument which constructs solutions to \eqref{sns} whose forcing is made up of translations of rotations of $F_{a,b}^{\mathrm{ax}}$ is identical to the corresponding argument given in Section \ref{subsec::Examples::LandauSolutions}---see Remark \ref{rem::ExistenceOfSolutionsWithFiniteNumberOfPointSingularitiesRemark}. 
To make this precise we introduce the following notation. 
Let $R\in SO(3)$ be a rotation matrix and $z\in\R^3$ be a vector we will translate by. 
Define the distribution $F_{a,b}^{R,z}$ by 
\begin{align*}
    \langle F_{a,b}^{R,z},\varphi\rangle
    :=\langle F_{a,b}^{\mathrm{ax}},R^T\varphi(R\cdot+z)\rangle
    =-a\int_\R \ln|t|\partial_t Re_3\cdot \varphi(tRe_3+z)dt-b Re_3\cdot \varphi(z).
\end{align*}
This is the translation by $z$ of the rotation by $R$ of $F_{a,b}^{\mathrm{ax}}$. 
It is supported on the axis 
\begin{align*}
    \ell^{R,z}=\{z+tRe_3:t\in\R\}.
\end{align*} 
The pair $(V_{a,b}^{R,z},q_{a,b}^{R,z})$ defined by 
\begin{align*}
    V_{a,b}^{R,z}(y)=RV_{a,b}^{\mathrm{ax}}(R^T(y-z))
    \quad 
    \text{and} 
    \quad 
    q_{a,b}^{R,z}(y)=q_{a,b}^{\mathrm{ax}}(R^T(y-z)),
\end{align*} 
solves the stokes equation 
\begin{equation}
    \label{eq::RotatedStokes}
    - \Delta V_{a,b}^{R,z}+\nabla q_{a,b}^{R,z}
    = F_{a,b}^{R,z};\qquad \nabla\cdot V_{a,b}^{R,z}
    = 0,
\end{equation}
in the sense of distributions.

\begin{figure}[ht]
    \centering
    \begin{subfigure}{0.48\textwidth}
        \centering
        \includegraphics[width=\linewidth]{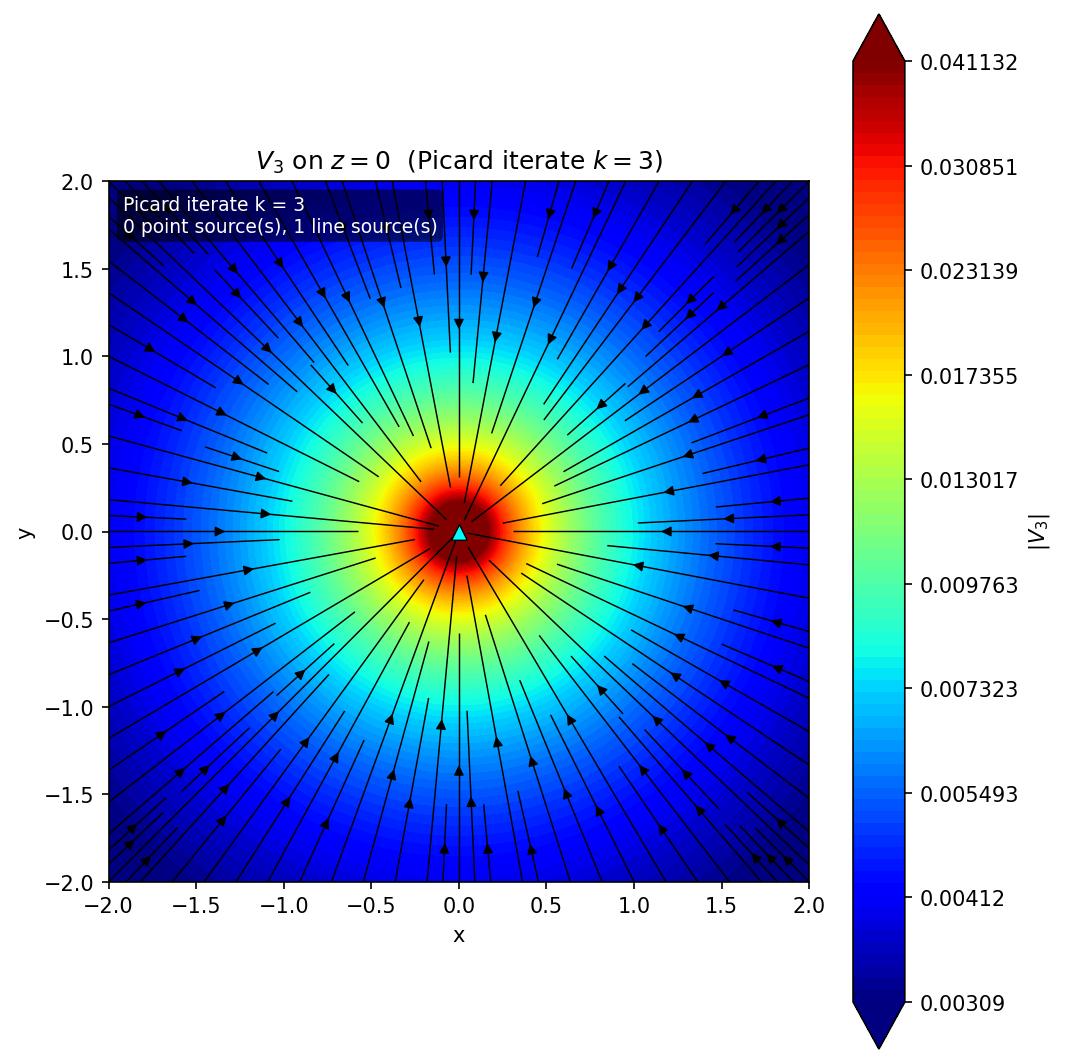}
        \caption{The $xy$-plane}
    \end{subfigure}
    \begin{subfigure}{0.48\textwidth}
        \centering
        \includegraphics[width=\linewidth]{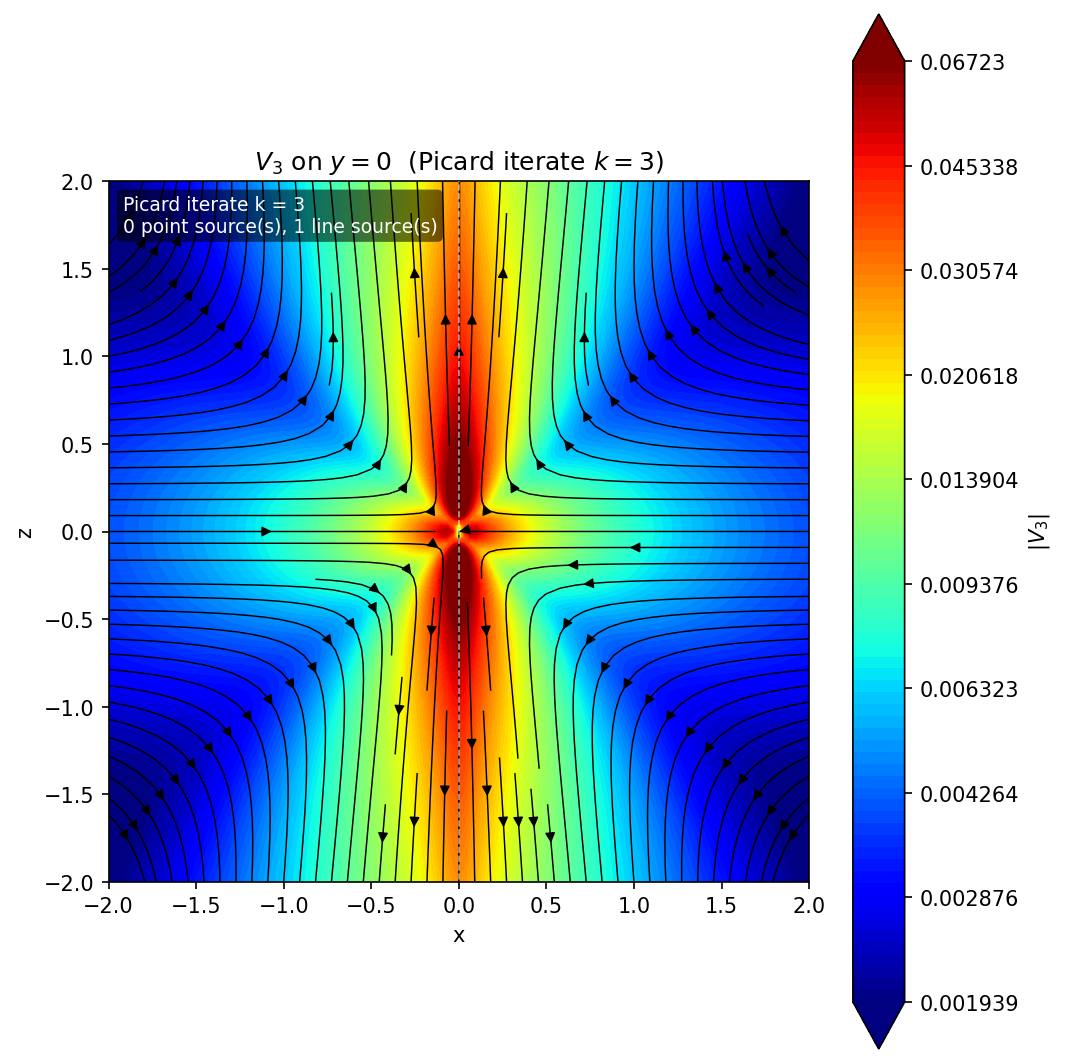}
        \caption{The $xz$-plane}
    \end{subfigure}
    \caption{Illustrations of flows driven by the Stokes seed $V_{.1,0}^{\mathrm{ax}}$. }
    \label{fig::1l}
\end{figure}
\begin{figure}[ht]
    \centering
    \begin{subfigure}{0.48\textwidth}
        \centering
        \includegraphics[width=\linewidth]{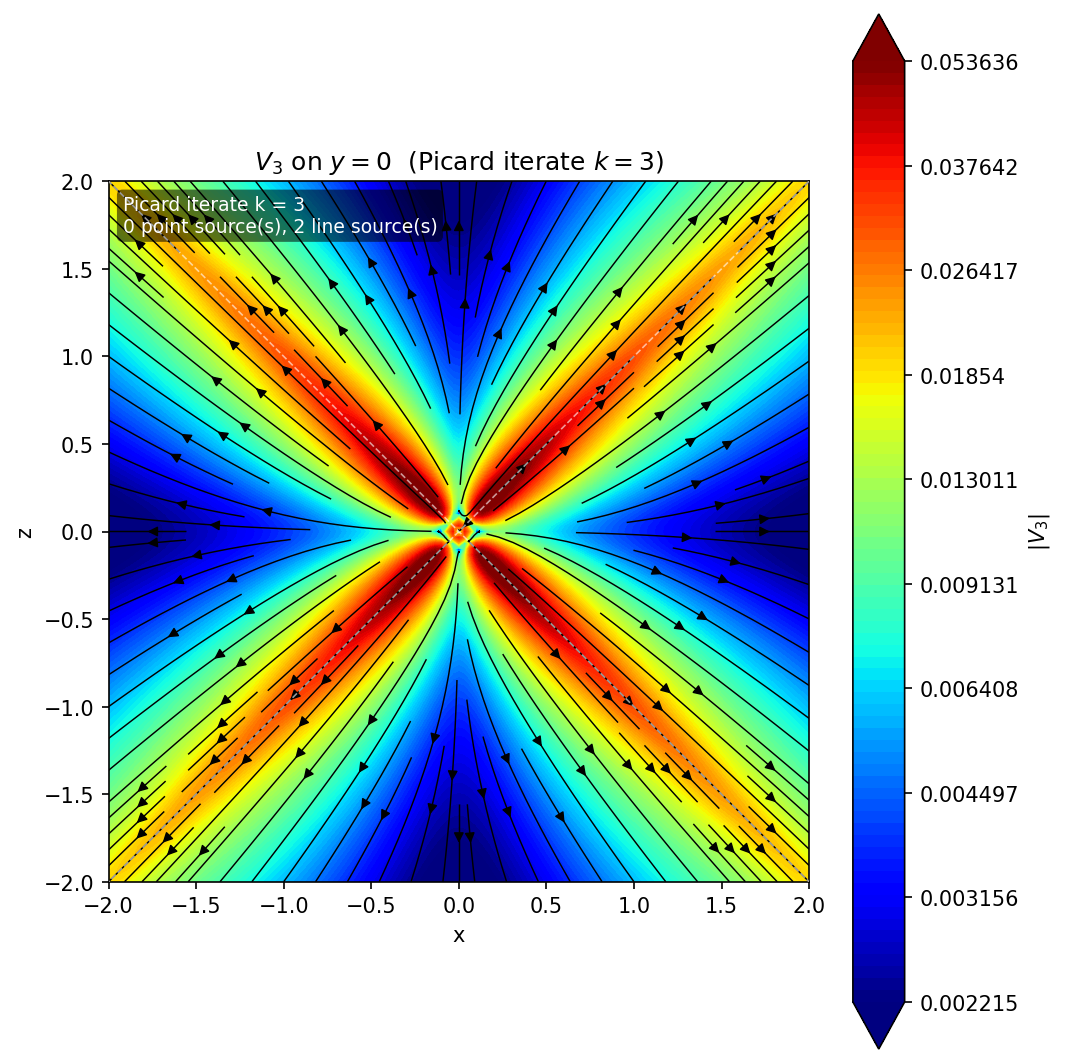}
        \caption{The $xy$-plane}
    \end{subfigure}
    \begin{subfigure}{0.48\textwidth}
        \centering
        \includegraphics[width=\linewidth]{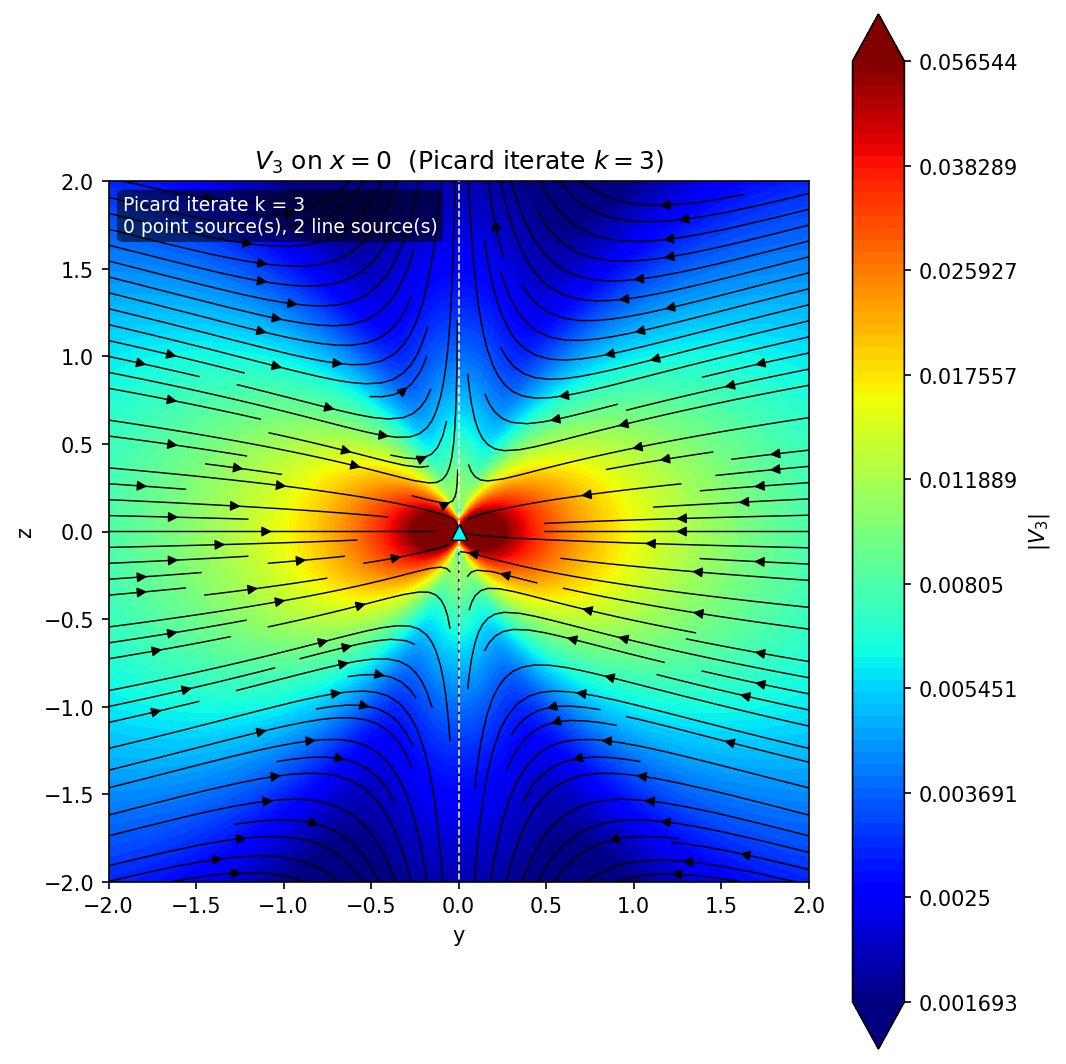}
        \caption{The $yz$-plane}
    \end{subfigure}
    \caption{Illustrations of flows driven by two perpendicular line-sources which overlap at their centers. }
    \label{fig::2l}
\end{figure}

\begin{theorem}
    \label{thm::WholeSpaceExistenceOfSolutionsWithFiniteNumberOfLineSingularities}
    Fix a positive integer $n$. 
    For $1\leq i\leq n$ fix real numbers $a_i,b_i\in\R$, rotation matrices $R_i\in SO(3)$, and vectors $z_i\in\R^3$ such that $\ell^{R_i,z_i}\cap \ell^{R_j,z_j}$ is at most one point for all $i\neq j$. 
    Let 
    \begin{align*}
        F_i = F_{a_i,b_i}^{R_i,z_i}
        \qquad
        \text
        {and}
        \qquad
        (V_i,P_i) = (V_{a_i,b_i}^{R_i,z_i},P_{a_i,b_i}^{R_i,z_i}),\end{align*} 
    be the corresponding line sources and solutions to \eqref{eq::RotatedStokes}. 
    Let 
    \begin{align*}
        F=\sum_{i=1}^n F_i, 
        \qquad V_0=\sum_{i=1}^n V_i,
        \qquad P_0=\sum_{i=1}^n P_i.
    \end{align*}
    For every $\epsilon>0$ there exists $\delta>0$ such that there exists a vector field $V\in L^{3,\infty}(\R^3)$ and a scalar field $q\in L^{3/2,\infty}_{\mathrm{loc}}(\R^3)$ satisfying
    \begin{equation}
        \label{eq::RotatedSNS}
        -\Delta V+(V\cdot\nabla)V+\nabla P=F\qquad \nabla\cdot V=0,
    \end{equation}
    and $\|V\|_{L^{3,\infty}}<\epsilon$ whenever $\|V_0\|_{L^{3,\infty}}<\delta$. 
    Furthermore, if $z_i=0$ for all $1\leq i\leq n$ then $V$ is $(-1)$-homogeneous.
\end{theorem}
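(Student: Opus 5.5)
The plan is to reduce the statement to the fixed-point construction of Theorem \ref{thm::ExistenceOfSolutionsWithFiniteNumberOfPointSingularities}, in the generalized form recorded in Remark \ref{rem::ExistenceOfSolutionsWithFiniteNumberOfPointSingularitiesRemark}, with $\Omega=\R^3$ and $G_\Omega$ the Oseen tensor.

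\emph{Step 1: $V_0$ is an admissible Stokes seed.} By Lemma \ref{lem::StokesSolutionSingularAxis}, the pair $(V_{a,b}^{\mathrm{ax}},q_{a,b}^{\mathrm{ax}})$ solves the Stokes problem with force $F_{a,b}^{\mathrm{ax}}$. Conjugating by the rigid motion $y\mapsto R^T(y-z)$ gives \eqref{eq::RotatedStokes}. This uses that $-\Delta$ and $\nabla\cdot$ commute with rotations, and that the test-function pairing transforms exactly as in the definition of $F_{a,b}^{R,z}$; it is a one-line change of variables. Linearity then shows that $(V_0,P_0)$ solves the Stokes system with force $F$ and $\nabla\cdot V_0=0$.

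Lemmas \ref{lem::VinLorentz} and \ref{lem::qinLorentz} give $V_i\in L^{3,\infty}$ and $P_i\in L^{3/2,\infty}$, since the $L^{p,\infty}$ quasinorms are invariant under rotations and translations. Hence $V_0\in L^{3,\infty}$ and $P_0\in L^{3/2,\infty}\subset L^{3/2,\infty}_{\mathrm{loc}}$, with constants depending on $n$ through the quasi-triangle inequality. The hypothesis that the lines meet in at most one point is not needed for these estimates. It only guarantees that $F$ is a genuine sum of distinct line sources.

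\emph{Step 2: contraction.} Given $\epsilon>0$, set $\delta=\min\{\epsilon/2,\,1/(4C)\}$, where $C$ is the constant from Lemma \ref{lem::GreenTensorConvolutionEstimate} on $\R^3$. Now run the contraction argument of Theorem \ref{thm::ExistenceOfSolutionsWithFiniteNumberOfPointSingularities} for $\Phi(V)=V_0+B_{\R^3}(V,V)$ on the ball of radius $2\delta$ in $L^{3,\infty}$. This produces a fixed point $V$ with $\|V\|_{L^{3,\infty}}<2\delta\le\epsilon$.

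The distributional identity $\langle B(V,V),\Delta\varphi\rangle=-\langle\nabla\cdot(V\otimes V),\varphi\rangle$ for $\varphi\in C^\infty_{c,\sigma}$ carries over verbatim. The de Rham argument then yields a pressure $P$. For the regularity of $P$, split $P-P_0$ into a Riesz-transform part $\sum R_iR_j(V_iV_j)\in L^{3/2,\infty}$ plus a harmonic remainder, as in the earlier proof. This gives $P\in L^{3/2,\infty}_{\mathrm{loc}}$.

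\emph{Step 3: homogeneity when all $z_i=0$.} Each $V_i$ is $(-1)$-homogeneous, by Lemma \ref{lem::StokesSolutionHomogeneity} composed with a rotation, so $V_0$ is too. The rescaling $V\mapsto V_\lambda=\lambda V(\lambda\cdot)$ is an isometry of $L^{3,\infty}(\R^3)$. It also commutes with $\Phi$, because $B(V_\lambda,V_\lambda)=B(V,V)_\lambda$ by the $(-2)$-homogeneity of $\nabla G$ and a change of variables. So $\Phi$ maps the set of $(-1)$-homogeneous fields in the ball to itself. That set is closed in $L^{3,\infty}$, so the unique fixed point lies in it; equivalently, $V_\lambda$ is also a fixed point in the ball and uniqueness forces $V_\lambda=V$.

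\emph{Main obstacle.} The main obstacle is bookkeeping rather than analysis. One must check that the distributional Stokes identity survives the rotation and translation, with correct signs in $F^{R,z}$. One must also check that the pressure and duality steps of Theorem \ref{thm::ExistenceOfSolutionsWithFiniteNumberOfPointSingularities}, originally written for Dirac forces, only used $V_0\in L^{3,\infty}$ and $P_0\in L^{3/2,\infty}_{\mathrm{loc}}$. I would verify this second point first by rereading that proof with $f$ replaced by $F$.
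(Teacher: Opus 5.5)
Your proposal is correct and follows the paper's proof. Like the paper, you reduce to the contraction argument of Theorem \ref{thm::ExistenceOfSolutionsWithFiniteNumberOfPointSingularities} via Remark \ref{rem::ExistenceOfSolutionsWithFiniteNumberOfPointSingularitiesRemark}, using Lemmas \ref{lem::VinLorentz} and \ref{lem::qinLorentz}, and obtain homogeneity because $\Phi$ preserves the closed set of $(-1)$-homogeneous fields; your alternative, that $V_\lambda$ is another fixed point in the same ball, is equally valid and slightly cleaner. The only nit is that you should take $\delta<1/(4C)$ strictly (e.g.\ $\delta=\min\{\epsilon/2,1/(8C)\}$), so that the Lipschitz constant $4C\delta$ is genuinely below $1$.
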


Figures \ref{fig::1l} and \ref{fig::2l} illustrate the behavior of two Type II solutions each viewed from two directions. The solution depicted in \ref{fig::1l} is generated by the Stokes seed $V^{\mathrm{ax}}_{.1,0}$. In \ref{fig::1l}a and \ref{fig::1l}b we depict the $xy$-plane and $xz$-plane respectively. The solution in \ref{fig::2l} consists of two perpendicular line singularities exerting the same magnitude of force. In \ref{fig::2l}a and \ref{fig::2l}b we depict the $xy$-plane and $yz$-plane respectively. The non-linear interactions are approximated up to the third Picard iterate.

\begin{proof} Once we note that $\|V_0\|_{L^{3,\infty}}$ can be made small by Lemma \ref{lem::VinLorentz} and that $P_0\in L^{3/2,\infty}$,
    the proof for the first part is identical to the proof of Theorem \ref{thm::ExistenceOfSolutionsWithFiniteNumberOfPointSingularities}---see Remark \ref{rem::ExistenceOfSolutionsWithFiniteNumberOfPointSingularitiesRemark}. 
    To see that $V$ is $(-1)$-homogeneous in the case that all forces $F_i$ are centered at the origin, define the subspace 
    \begin{align*}
        L^{3,\infty}_{ss} := \left\{V\in L^{3,\infty}:V(x)=\lambda V(\lambda x) \text{ for all }\lambda\in\R\right\}.
    \end{align*} For each fixed $\lambda>0$ the operator $\Lambda_\lambda:L^{3,\infty}\to L^{3,\infty}$ mapping $V$ to $\lambda V(\lambda \cdot)$ has operator norm one and is hence continuous. 
    So for any sequence $V_n\in L^{3,\infty}_{ss}$ with limit $V\in L^{3,\infty}$ satisfies 
    \begin{align*}
        \Lambda_\lambda V=\Lambda_\lambda\lim_{n\to\infty} V_n= \lim_{n\to\infty}\Lambda_\lambda V_n=\lim_{n\to\infty}V_n=V.
    \end{align*} 
    Thus $L^{3,\infty}_{ss}$ is a Banach space. 
    We know $V_0\in L^{3,\infty}$. 
    It remains to show the contraction, 
    \begin{align*}
        \Phi(V):=G*\nabla\cdot(V\otimes V)+V_0,
    \end{align*} 
    on $L^{3,\infty}$ maps $L^{3,\infty}_{ss}$ to $L^{3,\infty}_{ss}$. 
    Suppose $V$ is $(-1)$-homogeneous. 
    Then $\nabla \cdot (V\otimes V)$ is $(-3)$-homogeneous. 
    The Oseen tensor $G$ is $(-1)$-homogeneous and hence so is $G*\nabla\cdot (V\otimes V)$.
\end{proof}

\section{Asymptotic stability for stationary flows}
\label{sec::StationaryStability}

Fix a domain $\Omega\subseteq\R^3$ and a function $V\in L^{3,\infty}(\Omega)$.  
Recall that 
\begin{align*}
    L^2_\sigma(\Omega) = \overline{C_{c,\sigma}^\infty(\Omega)}^{L^2(\Omega)},
    \qquad 
    H^1_{0,\sigma}(\Omega) = \overline{C_{c,\sigma}^\infty(\Omega)}^{H^1(\Omega)}.
\end{align*} 
Define the sesquilinear form $a_{\cL}(\cdot,\cdot):H_{0,\sigma}^1(\Omega)\times H_{0,\sigma}^1(\Omega)\to\R$ by
\begin{align*}
    a_\cL(u,w) = \int_\Omega \nabla u\cdot\nabla w\,dx - \int_\Omega (u\cdot\nabla)w\cdot V\,dx + \int_\Omega (V\cdot\nabla)u\cdot w\,dx
\end{align*}
We will prove that $a_\cL$'s associated linear operator generates an analytic contraction semigroup on $L^2_\sigma(\Omega)$ using the following proposition.

\begin{proposition}\label{prop::analyticsemigroup}
    \cite[Propositions 1.51 and 1.52]{Ouhabaz}.
    Let $H_1$ and $H_2$ be real Hilbert spaces. 
    Let $a(\cdot,\cdot):H_1\times H_1\to \R$ be a sesquilinear form. 
    Suppose that the following conditions are met:
    \begin{itemize}
        \item $a$ is densely defined, i.e., $H_1$ is dense in $H_2$.
        \item $a$ is accretive, i.e., $a(u,u)\geq 0$ for all $u\in H_1$.
        \item $a$ is continuous, i.e., there exists a constant $M>0$ such that 
        \begin{align*}
            |a(u,w)|\leq M\|u\|_a\|w\|_a,
        \end{align*}
        for all $u,w\in H_1$ where $\|u\|_a:=\sqrt{a(u,u)+\|u\|_{H_1}^2}$.
        \item $a$ is closed, i.e., $(H_1,\|\cdot\|_a)$ is a Banach space.
    \end{itemize}
    Let $\cL$ be the operator associated to $a(\cdot,\cdot)$. 
    That is, $\cL:D(\cL)\to H_2$ where $D(\cL)$ is the set of $u\in H_1$ such that there exists a $v\in H_2$ satisfying $a(u,\varphi)=\langle v,\varphi\rangle_{H_2}$ for all $\varphi\in H_1$ and $\cL u=v$. Then $-\cL$ is the generator of an analytic contraction semigroup on $H_2$.
\end{proposition}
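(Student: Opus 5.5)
The plan is the standard form-method argument: build $\cL$ by Lax--Milgram, obtain contractivity from accretivity, and obtain analyticity from a sector estimate for the complexified form. I will read $\|u\|_a^2 = a(u,u) + \|u\|_{H_2}^2$, as in Ouhabaz; this is presumably what the $\|u\|_{H_1}^2$ in the statement is meant to denote. Since analytic semigroups are defined through complex sectors, I will first complexify. Set $H_j^{\mathbb C} = H_j + iH_j$ and extend $a$ sesquilinearly:
\begin{align*}
    a(u_1+iu_2, w_1+iw_2) = a(u_1,w_1) + a(u_2,w_2) + i\bigl(a(u_2,w_1) - a(u_1,w_2)\bigr).
\end{align*}
Density, accretivity ($\operatorname{Re} a(u,u) = a(u_1,u_1) + a(u_2,u_2) \geq 0$), continuity (with $2M$ in place of $M$) and closedness all pass directly to the complexification. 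The real semigroup will be the restriction of the complex one to $H_2$, which is invariant because the complexified operator commutes with complex conjugation.

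\textbf{Step 1 (m-accretivity).} For $\lambda > 0$, the form $a_\lambda(u,w) = a(u,w) + \lambda\langle u,w\rangle_{H_2}$ is bounded on the Hilbert space $(H_1,\|\cdot\|_a)$; completeness is exactly the closedness hypothesis. It is also coercive, since $\operatorname{Re} a_\lambda(u,u) \geq \min(1,\lambda)\|u\|_a^2$. Given $f \in H_2$, the functional $\varphi \mapsto \langle f,\varphi\rangle_{H_2}$ is bounded on $(H_1,\|\cdot\|_a)$, so Lax--Milgram yields $u \in H_1$ with $a_\lambda(u,\varphi) = \langle f,\varphi\rangle$ for all $\varphi$. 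Thus $u \in D(\cL)$ and $(\lambda + \cL)u = f$, so $\lambda + \cL$ is surjective. Accretivity gives $\operatorname{Re}\langle \cL u,u\rangle = \operatorname{Re} a(u,u) \geq 0$, hence $\|(\lambda+\cL)u\|\,\|u\| \geq \lambda\|u\|^2$. Density of $D(\cL)$ follows from density of $H_1$ in $H_2$ together with the standard argument: if $f \perp D(\cL)$, solve the adjoint problem with $a^*$ and conclude $f = 0$. Lumer--Phillips then says $-\cL$ generates a $C_0$ contraction semigroup.

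\textbf{Step 2 (sectoriality and analyticity).} By continuity,
\begin{align*}
    |\operatorname{Im} a(u,u)| = |a(u_2,u_1) - a(u_1,u_2)| \leq 2M\|u_1\|_a\|u_2\|_a \leq M\bigl(\operatorname{Re} a(u,u) + \|u\|_{H_2}^2\bigr).
\end{align*}
Hence the numerical range of $\cL + 1$ lies in the sector $\{|\arg z| \leq \arctan M\}$. For $\lambda$ outside a slightly larger sector, the distance from $-\lambda$ to this numerical range is at least $c|\lambda|$, so $\|(\lambda + \cL + 1)^{-1}\| \leq C/|\lambda|$. Surjectivity on this set follows from Step 1 and connectedness of the resolvent set's complement of the numerical range closure. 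This resolvent bound is the standard characterization showing that $-(\cL+1)$ generates a bounded analytic semigroup on a sector. Therefore $e^{-t\cL} = e^{t}e^{-t(\cL+1)}$ is analytic on a sector, and it is contractive on $[0,\infty)$ by Step 1.

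\textbf{Main obstacle.} The only delicate point is the bookkeeping in Step 2. The continuity bound controls $\operatorname{Im} a$ by $\operatorname{Re} a$ plus the $H_2$-norm, not by $\operatorname{Re} a$ alone. So one only gets a sector with shifted vertex, and must pass through $\cL + 1$ and then undo the shift; this costs a factor $e^t$ that does not affect analyticity, while contractivity comes separately from Step 1. In the application to $a_\cL$, the work then reduces to checking the four hypotheses, mainly accretivity and continuity via Lorentz--H\"older and Lorentz--Sobolev with $\|V\|_{L^{3,\infty}} < \epsilon_*$.
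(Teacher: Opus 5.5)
Your argument is correct, and it is essentially the standard proof from the source the paper cites (Ouhabaz, Props.~1.51--1.52); the paper itself gives no proof. Lax--Milgram plus Lumer--Phillips give the contraction semigroup, and the shifted sector bound $|\operatorname{Im} a(u,u)|\le M(\operatorname{Re} a(u,u)+\|u\|_{H_2}^2)$ gives analyticity of $e^{-t(\cL+1)}$. Your reading $\|u\|_a^2=a(u,u)+\|u\|_{H_2}^2$ is in fact forced. With $\|u\|_{H_1}$ taken literally, $a\equiv 0$ on $H^1\subsetneq L^2$ satisfies all four hypotheses, yet its associated operator is the non-closed operator $0$ on $H^1$, so the conclusion fails. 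In the paper's application to $a_\cL$, however, both readings give norms equivalent to the $H^1$ norm, so nothing downstream is affected.
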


This proposition is quite robust with respect to the domains allowed. 
Our setup is identical to the approach taken in \cite{KarchPil}. 
Because we are working with generic fields $V\in L^{3,\infty}(\Omega)$ our proof of the next theorem differs compared to \cite{KarchPil} where they use Hardy's inequality. 

\begin{theorem}
    Let $\Omega\subseteq\R^3$ be any domain. 
    Assume $V\in L^{3,\infty}(\Omega)$.
    There exists $\epsilon_*>0$ so that, if
    \begin{align*}
        \|V\|_{L^{3,\infty}(\Omega)} < \epsilon_*,
    \end{align*}
    then the sesquilinear form $a_\cL(u,w)$ and its adjoint $a^*_\cL(u,w)=a_\cL(w,u)$ are densely defined accretive continuous closed forms on $L^2_\sigma(\Omega)$. 
    Thus the associated linear operators $\cL$ and $\cL^*$ generate strongly continuous contractive semigroups on $L^2_\sigma(\Omega)$ which are analytic on the complexification of $L^2_\sigma(\Omega)$. 
    We will notate these as $e^{-t\cL}$ and $e^{-t\cL^*}$ respectively.
\end{theorem}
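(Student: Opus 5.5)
We take $H_1=H^1_{0,\sigma}(\Omega)$ with its $H^1$ norm and $H_2=L^2_\sigma(\Omega)$, and verify the four hypotheses of Proposition \ref{prop::analyticsemigroup} for $a_\cL$. The adjoint $a_\cL^*$ is handled identically, since every estimate below is symmetric in the two arguments. \emph{Density} is immediate: both spaces are closures of $C^\infty_{c,\sigma}(\Omega)$, and the $H^1$ topology is stronger than the $L^2$ topology.

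The basic tool is a single trilinear bound. By Lorentz--H\"older (Theorem \ref{thm::LorentzInequalities}(a)) with exponents $\tfrac13+\tfrac16+\tfrac12=1$ and second indices $\infty,2,2$, followed by Lorentz--Sobolev (Theorem \ref{thm::LorentzInequalities}(c)), we get for $u,w\in H^1_{0,\sigma}(\Omega)$
\begin{align*}
    \left|\int_\Omega (u\cdot\nabla)w\cdot V\,dx\right|+\left|\int_\Omega (V\cdot\nabla)u\cdot w\,dx\right|
    \le 2C\|V\|_{L^{3,\infty}}C_S\|\nabla u\|_{L^2}\|\nabla w\|_{L^2}.
\end{align*}
Here we use that the pairing $L^{3,\infty}\times L^{6,2}\times L^{2,2}\to L^1$ is bounded. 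This step needs no geometric property of $\Omega$, because the Sobolev inequality is obtained on $H^1_0(\Omega)$ by extension by zero. This is exactly why the theorem holds for arbitrary domains, with no Hardy inequality involved.

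With the trilinear bound in hand, the remaining hypotheses follow in turn.

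\textbf{Accretivity.} Apply the bound with $w=u$:
\[
a_\cL(u,u)\ge (1-2CC_S\epsilon_*)\|\nabla u\|_{L^2}^2 .
\]
Choosing $\epsilon_*$ so that $2CC_S\epsilon_*\le 1/2$ makes this at least $\tfrac12\|\nabla u\|_{L^2}^2\ge 0$. For smooth compactly supported divergence-free $u$ and $V$ the two $V$-terms cancel exactly, but the estimate above makes that cancellation unnecessary. If one wants to use it anyway, approximate $V$ by smooth fields, using density of $C^\infty_c$ functions in the closure, or simply work with the inequality.

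\textbf{Continuity.} The same bound gives
\[
|a_\cL(u,w)|\le (1+2CC_S\epsilon_*)\|\nabla u\|_{L^2}\|\nabla w\|_{L^2}\le M\|u\|_a\|w\|_a,
\]
since $\|u\|_a\ge\|u\|_{H^1}$.

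\textbf{Closedness.} We show $\|\cdot\|_a$ is equivalent to the $H^1$ norm. From the accretivity estimate and continuity,
\[
\|u\|_{H^1}^2\le \|u\|_a^2\le (2+2CC_S\epsilon_*)\|u\|_{H^1}^2 .
\]
Since $H^1_{0,\sigma}(\Omega)$ is complete in $H^1$ by definition as a closure, it is also complete in $\|\cdot\|_a$.

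Proposition \ref{prop::analyticsemigroup} now yields that $-\cL$ and $-\cL^*$ generate analytic contraction semigroups. Strong continuity on $L^2_\sigma(\Omega)$ is part of the standard conclusion for generators of $C_0$ semigroups. Analyticity requires passing to the complexification, where the sectorial estimate
\[
|\operatorname{Im} a(u,u)|\le C\operatorname{Re} a(u,u)+C\|u\|^2
\]
follows from the same trilinear bound applied to real and imaginary parts.

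The main subtlety I expect is not in the estimates but in the \emph{real-form} setting. Proposition \ref{prop::analyticsemigroup} is phrased for real Hilbert spaces, so one must check that the complexified form remains sectorial. The trilinear bound controls all cross terms by $\epsilon_*\|\nabla u\|^2$, so I would simply verify the sector condition directly with that bound.
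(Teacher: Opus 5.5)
Your proof is correct and is essentially the paper's argument: you apply Proposition \ref{prop::analyticsemigroup} with $H_1=H^1_{0,\sigma}(\Omega)$ and $H_2=L^2_\sigma(\Omega)$, use the same Lorentz--H\"older plus Lorentz--Sobolev trilinear bound, and get closedness from the equivalence of $\|\cdot\|_a$ with the $H^1$ norm; the only differences are that the paper first uses $\int_\Omega (V\cdot\nabla)u\cdot u\,dx=0$ and bounds just the remaining term (you bound both, at the cost of a factor $2$), and that you check the sector condition on the complexification explicitly, which the paper leaves implicit. One correction to your side remark: the two $V$-terms do not cancel each other---only $\int_\Omega (V\cdot\nabla)u\cdot u\,dx$ vanishes, while $\int_\Omega (u\cdot\nabla)u\cdot V\,dx$ generally does not, which is exactly why smallness of $V$ is needed (your argument never relies on that remark, so nothing breaks).
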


\begin{proof}
    We apply Proposition \ref{prop::analyticsemigroup} with $H_1=H_{0,\sigma}^1(\Omega)$ and $H_2=L^2_\sigma(\Omega)$. 
    Density is clear. 
    Since $V$ is divergence free and $u$ and $w$ are limits of compactly supported test functions,
    \begin{align*}
        \int_{\Omega} (V\cdot\nabla)u\cdot w\,dx
        =-\int_{\Omega}(V\cdot\nabla)w\cdot u\,dx.
    \end{align*}
    So if $u=w$, then, 
    \begin{align*}
        a_\cL(u,u) = \int_\Omega |\nabla u|^2\,dx - \int_\Omega (u\cdot\nabla)u\cdot V\,dx.
    \end{align*}
    We have the estimate
    \begin{equation}
        \label{eq::SesqulinearEstimate}
        \|(u\cdot \nabla)u\cdot V\|_{L^1(\Omega)}
        \leq  \|u\|_{L^{6,2}(\Omega)} \|V\|_{L^{3,\infty}(\Omega)} \|\nabla u\|_{L^2(\Omega)}
        \leq C\|\nabla u\|_{L^2(\Omega)}^2\|V\|_{L^{3,\infty}(\Omega)},
    \end{equation} 
    by the Lorentz-Sobolev embedding $H^1_{0}\hookrightarrow L^{6,2}$ stated in Theorem \ref{thm::LorentzInequalities}c. 
    So 
    \begin{align*}
        -\int_\Omega (u\cdot\nabla)u\cdot V\,dx
        \geq -\|(u\cdot\nabla)V\cdot u\|_{L^1(\Omega)}
        \geq - C\|\nabla u\|_{L^2(\Omega)}^2\|V\|_{L^{3,\infty}(\Omega)}.
    \end{align*}
    Let $\epsilon_*<\frac{1}{C}$. 
    Then 
    \begin{equation}
        \label{eq::SesquilinearBound}
        a_\cL(u,u) 
        = \|\nabla u\|_{L^2(\Omega)}^2-\int_\Omega (u\cdot\nabla)u\cdot V\,dx
        \geq (1-C\|V\|_{L^{3,\infty}(\Omega)})\|\nabla u\|_{L^2(\Omega)}^2.
    \end{equation}
    Assuming $\epsilon_*<\frac{1}{C}$ we have proven $a_\cL(u,w)$ is accretive. 
    Continuity follows from 
    \begin{align*}
        |a_\cL(u,w)|
        &\leq \|\nabla u\cdot \nabla w\|_{L^1(\Omega)} + \|(u\cdot \nabla)w\cdot V\|_{L^1(\Omega)} + \|(V\cdot\nabla)u\cdot w\|_{L^1(\Omega)}\\
        &\leq \|\nabla u\|_{L^2(\Omega)}\|\nabla w\|_{L^2(\Omega)} + 2C\|V\|_{L^{3,\infty}(\Omega)}\|\nabla u\|_{L^2(\Omega)}\|\nabla w\|_{L^2(\Omega)}
    \end{align*}
    by estimate \eqref{eq::SesqulinearEstimate} and an analogous estimate for the third term. 
    By \eqref{eq::SesquilinearBound} the sesquilinear norm $\|\cdot\|_a$ is equivalent to the usual norm on $H^1_{0,\sigma}$ and hence completeness is automatic.  
    Applying Proposition \ref{prop::analyticsemigroup} proves the theorem for $a_\cL$.
    As $a^*_\cL(u,w)= a_{\cL}(w,u)$, the preceding calculations apply to $a^*_\cL$.
    Applying Proposition \ref{prop::analyticsemigroup} completes the proof.
\end{proof}

The next lemmas are contained in \cite{KarchPil} and are essentially domain independent properties of the semigroups in view. 
We have included complete proofs to make this clear, but do not claim any new insights compared to \cite{KarchPil}.

\begin{lemma}
    \label{lem::L2-L2Estimate}
    There exists a constant $C>0$ such that 
    \begin{align*}
        \|\nabla e^{-t\cL^*}\psi\|_{L^2(\Omega)} \leq Ct^{-1/2}\|\psi\|_{L^2(\Omega)},
    \end{align*}
    for any $\psi\in L^2_\sigma(\Omega)$.
\end{lemma}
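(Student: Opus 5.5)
The plan is to run the standard energy argument for the adjoint semigroup and then use analyticity to upgrade it to a pointwise-in-time bound. Fix $\psi\in L^2_\sigma(\Omega)$ and set $\psi(t)=e^{-t\cL^*}\psi$. Since $-\cL^*$ generates an analytic contraction semigroup (the preceding theorem), $\psi(t)\in D(\cL^*)\subseteq H^1_{0,\sigma}(\Omega)$ for $t>0$, and $t\mapsto\psi(t)$ is differentiable into $L^2_\sigma$ with $\psi'(t)=-\cL^*\psi(t)$. Pairing with $\psi(t)$ and using the definition of the operator associated to the form gives
\begin{align*}
    \frac{1}{2}\frac{d}{dt}\|\psi(t)\|_{L^2(\Omega)}^2=-a^*_\cL(\psi(t),\psi(t))=-a_\cL(\psi(t),\psi(t))\leq -(1-C\epsilon_*)\|\nabla\psi(t)\|_{L^2(\Omega)}^2,
\end{align*}
by the coercivity bound \eqref{eq::SesquilinearBound}. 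Here the $V\cdot\nabla$ term drops out because it is antisymmetric.

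The next step is to show that $t\mapsto\|\nabla\psi(t)\|_{L^2}$ is essentially nonincreasing, since the energy identity alone only gives $\int_0^t\|\nabla\psi\|^2\lesssim\|\psi\|^2$. There are two routes.

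The first route, which I would try first, uses analyticity directly. For an analytic contraction semigroup (in fact one that is bounded analytic on the complexification), one has $\|\cL^*e^{-t\cL^*}\psi\|\leq Ct^{-1}\|\psi\|$. Combining this with the coercivity and continuity of the form yields
\begin{align*}
    (1-C\epsilon_*)\|\nabla\psi(t)\|^2\leq \mathrm{Re}\, a^*_\cL(\psi(t),\psi(t))=\mathrm{Re}\langle\cL^*\psi(t),\psi(t)\rangle\leq \|\cL^*\psi(t)\|\,\|\psi(t)\|\leq Ct^{-1}\|\psi\|^2.
\end{align*}
Taking square roots gives the claim. The ingredient to check is that Proposition \ref{prop::analyticsemigroup} (Ouhabaz) really gives a bounded analytic semigroup on a sector, so that the resolvent bound $\|\cL^* e^{-t\cL^*}\|\leq C/t$ holds uniformly for all $t>0$ and not just for $t\le1$. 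Uniformity for all $t$ should follow because the form is sectorial with vertex $0$: continuity gives $|\mathrm{Im}\,a(u,u)|\leq 2C\epsilon_*\|\nabla u\|^2\leq \frac{2C\epsilon_*}{1-C\epsilon_*}\mathrm{Re}\,a(u,u)$, with no shift needed.

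The second route, a fallback that avoids sectorial theory, is to compute $\frac{d}{dt}a^*(\psi,\psi)$. The difficulty is that the form is not symmetric, so the derivative is not simply $-2\|\cL^*\psi\|^2$. The antisymmetric part can be controlled using the small sector angle, but this is messier, so I would rely on the first route.

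The main obstacle, then, is justifying the uniform bound $t\|\cL^*e^{-t\cL^*}\|\leq C$. I would derive it from the numerical range lying in a sector $\{|\arg z|\leq\omega\}$ with $\omega<\pi/2$, computed above. That places $e^{-z\cL^*}$ as a contraction for $|\arg z|<\pi/2-\omega$, and the Cauchy integral formula on a circle of radius comparable to $t$ then gives the $C/t$ derivative bound.
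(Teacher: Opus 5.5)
Your first route is the paper's argument: the adjoint coercivity bound \eqref{eq::SesquilinearBound}, the analyticity estimate $\|\cL^*e^{-t\cL^*}\psi\|_{L^2(\Omega)}\leq Ct^{-1}\|\psi\|_{L^2(\Omega)}$, and Cauchy--Schwarz applied to $\langle\cL^*e^{-t\cL^*}\psi,e^{-t\cL^*}\psi\rangle$. You also observe that the form is sectorial with vertex $0$, so the semigroup is bounded analytic and the $C/t$ bound holds uniformly for all $t>0$; this fills in a detail the paper leaves implicit in its citation of Engel--Nagel.
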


\begin{proof}
    Applying the adjoint version of \eqref{eq::SesquilinearBound} with $u=e^{-t\cL^*}\psi$ gives 
    \begin{align*}
        \|\nabla e^{-t\cL^*}\psi\|_{{L^2(\Omega)}}^2
        \leq \frac{1}{1-C\|V\|_{L^{3,\infty}}} a_{\cL^*}(e^{-t\cL^*}\psi,e^{-t\cL^*}\psi).
    \end{align*}
    Since analytic semigroups map into the domain of the infinitesimal generator for $t>0$, see, e.g. \cite[Theorem 4.6]{EngelNagel}, we have $e^{-t\cL^*}\psi\in D(\cL^*)$ for all $t>0$ and, furthermore,  
    \begin{align*}
        \|\cL^* e^{-t\cL^*}\psi\|_{L^2(\Omega)} 
        \leq 
        Ct^{-1}\|\psi\|_{L^2(\Omega)}.
    \end{align*} 
    By Cauchy-Schwarz 
    \begin{align*}
        a_{\cL^*}(e^{-t\cL^*}\psi,e^{-t\cL^*}\psi)
        =(\cL^* e^{-t\cL^*}\psi,e^{-t\cL^*}\psi)
        \leq \|\cL^* e^{-t\cL^*}\psi\|_{L^2(\Omega)}\|e^{-t\cL^*}\psi\|_{L^2(\Omega)}
        \leq Ct^{-1}\|\psi\|_{L^2(\Omega)}^2.
    \end{align*}
    This shows
    \begin{align*}
        \|\nabla e^{-t\cL^*}\psi\|_{{L^2(\Omega)}}^2
        \leq Ct^{-1}\|\psi\|_{L^2(\Omega)}^2.
    \end{align*}
    Taking the square root on the above inequality completes the proof.
\end{proof}

\begin{lemma}
    \label{lem::decayOfAverage}
    We have that 
    \begin{align*}
        \lim_{t\to\infty} \frac{1}{t}\int_0^t \|e^{-s\cL}\psi\|_{L^2(\Omega)}ds = 0,
    \end{align*}
    for all $\psi\in L^2_\sigma(\Omega)$.
\end{lemma}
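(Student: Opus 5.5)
The plan is to reduce the Ces\`aro statement to genuine decay $\|e^{-t\cL}\psi\|_{L^2(\Omega)}\to 0$, using that $e^{-t\cL}$ is a contraction semigroup. Write $u(t)=e^{-t\cL}\psi$. For $t>0$ we have $u(t)\in D(\cL)$, and by \eqref{eq::SesquilinearBound}
\begin{align*}
    \frac{d}{dt}\|u(t)\|_{L^2(\Omega)}^2=-2a_\cL(u(t),u(t))\leq -2(1-C\|V\|_{L^{3,\infty}(\Omega)})\|\nabla u(t)\|_{L^2(\Omega)}^2\leq 0.
\end{align*}
So $s\mapsto\|u(s)\|_{L^2(\Omega)}$ is nonincreasing with some limit $\ell\geq 0$. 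The Ces\`aro average of a monotone bounded function converges to that limit, so the lemma is equivalent to $\ell=0$. Integrating the inequality also gives $\int_0^\infty\|\nabla u\|_{L^2(\Omega)}^2\,ds\leq C\|\psi\|_{L^2(\Omega)}^2$.

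Next I will show that $\cL$ and $\cL^*$ have trivial kernels. If $\cL\varphi=0$, then $0=a_\cL(\varphi,\varphi)\geq(1-C\epsilon_*)\|\nabla\varphi\|_{L^2(\Omega)}^2$, so $\nabla\varphi=0$. Theorem \ref{thm::LorentzInequalities}(c) then gives $\varphi=0$, and the same argument applies to $\cL^*$. Hence $\overline{\mathrm{Ran}\,\cL}=(\ker\cL^*)^\perp=L^2_\sigma(\Omega)$. For $\psi=\cL\varphi$ the average $\frac1t\int_0^t e^{-s\cL}\psi\,ds=\frac1t(\varphi-e^{-t\cL}\varphi)$ has norm $O(t^{-1})$. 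Since the averaging operators are uniformly bounded by $1$, a density argument gives the mean ergodic conclusion $\frac1t\int_0^te^{-s\cL}\psi\,ds\to0$ in norm for every $\psi$. The same reasoning with $\cL^*$ shows $e^{-t\cL}\psi\rightharpoonup0$ weakly along a subsequence. This also follows from the finite dissipation integral: $\nabla u(s_n)\to0$ along a sequence $s_n\to\infty$, so any weak limit has zero gradient and must vanish.

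The main obstacle is upgrading weak or averaged convergence to $\ell=0$, because the lemma controls averages of norms rather than norms of averages. My first attempt is a duality argument at two times using Lemma \ref{lem::L2-L2Estimate}. Write
\begin{align*}
    \|u(2t)\|_{L^2(\Omega)}^2=\langle u(t),e^{-t\cL^*}u(2t)\rangle,
\end{align*}
and use the Lorentz--Sobolev bound $\|u(t)\|_{L^{6,2}(\Omega)}\lesssim\|\nabla u(t)\|_{L^2(\Omega)}$, with the right side small along a sequence, against the $t^{-1/2}$ gradient bound for the adjoint semigroup. If this fails to close, I will fall back on the Hilbert-space decomposition of a contraction semigroup into unitary and completely nonunitary parts. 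On the unitary part, $\|e^{-t\cL}x\|_{L^2(\Omega)}=\|x\|_{L^2(\Omega)}$ for all $t$, which forces $a_\cL(x,x)=0$ and hence $x=0$. I would then combine this with analyticity and the triviality of both kernels to conclude strong stability, which gives $\ell=0$.
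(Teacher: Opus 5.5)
The first half of your plan is correct and matches the paper. The reduction to $\ell=0$ is fine, and so is the argument that $\ker\cL=\ker\cL^*=\{0\}$, which gives $\overline{\mathrm{Ran}\,\cL}=L^2_\sigma(\Omega)$. But the step that actually proves $\ell=0$ is missing. You use the dense range only for the Ces\`aro means, $\frac1t\int_0^t e^{-s\cL}\cL\varphi\,ds=\frac1t(\varphi-e^{-t\cL}\varphi)$. That controls the norm of the average (and weak limits), not the average of the norm.

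Your first attempt does not close. Lemma \ref{lem::L2-L2Estimate} bounds $\nabla e^{-t\cL^*}u(2t)$ in $L^2$, so it controls $e^{-t\cL^*}u(2t)$ in $L^{6,2}(\Omega)$, the same space in which you control $u(t)$. Estimating $\langle u(t),e^{-t\cL^*}u(2t)\rangle$ through $\|u(t)\|_{L^{6,2}}$ would require $e^{-t\cL^*}u(2t)$ in the dual space $L^{6/5,2}$ (or $\dot H^{-1}$), and nothing provides that. The only pairing actually available, $L^2$ against $L^2$, gives back the trivial bound $\|u(2t)\|\le\|u(t)\|$.

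The fallback is the wrong tool. For contraction semigroups, having no unitary part does not imply strong stability, even together with trivial kernels of the generator and its adjoint. It fails even when no nonzero orbit has constant norm. Consider the damped translation semigroup $T(t)f(x)=\chi_{(t,\infty)}(x)\,e^{-\int_{x-t}^{x}q}f(x-t)$ on $L^2(0,\infty)$, with $q>0$ integrable. It strictly decreases the norm of every nonzero $f$. Its generator $f\mapsto -f'-qf$ (with $f(0)=0$) and the adjoint both have trivial kernels. Yet $\|T(t)f\|^2\to\int_0^\infty e^{-2\int_y^\infty q}|f(y)|^2\,dy\geq e^{-2\|q\|_{L^1}}\|f\|^2$.

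The missing idea is the analyticity you mention but never use. Both \eqref{eq::SesquilinearBound} and the continuity bound for $a_\cL$ are in terms of $\|\nabla u\|_{L^2(\Omega)}^2$ alone, with no $\|u\|_{L^2}^2$ shift. So the complexified form is sectorial with vertex $0$, $e^{-t\cL}$ is a bounded analytic semigroup, and $\|\cL e^{-t\cL}\|\le Ct^{-1}$ for all $t>0$.

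This gives genuine norm decay on the dense set $\mathrm{Ran}\,\cL$: for $\varphi\in D(\cL)$, $\|e^{-t\cL}\cL\varphi\|=\|\cL e^{-t\cL}\varphi\|\le Ct^{-1}\|\varphi\|$. Contractivity then extends it to every $\psi$ via $\|e^{-t\cL}\psi\|\le\|\psi-\cL\varphi\|+Ct^{-1}\|\varphi\|$.

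This is the paper's proof. In short, apply your density argument to $e^{-t\cL}\cL\varphi$ itself rather than to its time average. With that change, neither the two-time duality nor the unitary/completely non-unitary decomposition is needed.
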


\begin{proof}
    By simple reasoning, it suffices to show that 
    \begin{align*}
        \lim_{t\to\infty}\|e^{-t\cL}\psi\|_{L^2(\Omega)}=0.
    \end{align*} 
    We know that $a_\cL^*(u,u)\geq \alpha \|\nabla u\|_{L^2(\Omega)}^2$. 
    So $\cL^* u$ has trivial kernel. 
    Indeed, if $\cL^* u=0$ then $a_{\cL^*}(u,u)=\langle 0,u\rangle=0$. 
    But, $a(u,u)\geq \alpha\|\nabla u\|_{L^2(\Omega)}$. 
    Thus $\nabla u=0$ and hence $u=0$. 
    A standard fact from Hilbert space theory says 
    \begin{align*}
        \overline{\text{Range}(\cL)}=\text{Ker}(\cL^*)^\perp=L^2_\sigma(\Omega).
    \end{align*} 
    Fix $\varphi\in D(\cL)$. 
    Since $e^{-t\cL}$ is a contraction semigroup,
    \begin{align*}
        \|e^{-t\cL}\psi\|_{L^2(\Omega)} 
        \leq \|e^{-t\cL}(\psi-\cL\varphi)\|_{L^2(\Omega)} + \|\cL e^{-t\cL}\varphi\|_{L^2(\Omega)}
        \leq \|\psi-\cL\varphi\|_{L^2(\Omega)} + Ct^{-1}\|\varphi\|_{L^2(\Omega)}
    \end{align*}
    Choosing $t$ arbitrarily large and, by density, $\|\psi-\cL\varphi\|_{L^2(\Omega)}$ arbitrarily small completes the proof.
\end{proof}

We are now ready to prove Theorem \ref{mainthm1}.  Our proof follows the logic of \cite[Proof of Theorem 2.2]{KarchPil}, but we include some additional details for the sake of completeness. 
  
\begin{proof}[Proof of Theorem \ref{mainthm1}]
    We will first show 
    \begin{align*}
        \lim_{t\to\infty} \frac{1}{t} \int_0^t \|u(s)\|_{L^2(\Omega)} \,ds = 0.
    \end{align*} 
    We prove this by first establishing\footnote{This is essentially asserting that $u$ is a mild solution. One can prove this for every $s$ but an approximation argument seems necessary which we omit for the sake of brevity.} for almost every $s$
    \begin{equation}
        \label{eq::samesies}
        \int_\Omega u(s)\cdot \psi dx=\int_\Omega e^{-s\cL}u_0\cdot \psi dx-\int_0^s\int_\Omega (u\cdot\nabla )u\cdot e^{-(s-\tau)\cL^*}\psi dxd\tau.
    \end{equation}
    We establish \eqref{eq::samesies} by testing \eqref{pns} against backwards propagating test functions. 
    We cannot test against $\varphi(\tau)=e^{-(s-\tau)\cL^*}\psi$ directly since
    \begin{equation}
        \label{eq::samesiesBadTerm}
        \int_0^s \int_\Omega u(\tau)\partial_\tau \varphi(\tau)\,dx \,d\tau,
    \end{equation}
    might diverge because the bound for the semi-group term is $O(1/\tau)$.
    For each $\epsilon>0$ and $\psi\in L^2_\sigma(\Omega)$, define the function $\varphi_\epsilon(\tau)=e^{-(s-\tau+\epsilon)\cL^*}\psi$. 
    We will eventually remove \eqref{eq::samesiesBadTerm} from view and then send $\epsilon$ to zero.
    By \cite[Lemma II.1.3]{EngelNagel}
    \begin{align*}
        a_\cL(u(\tau),\varphi_\epsilon(\tau))
        =\int_\Omega u(\tau)\cL^*\varphi_\epsilon(\tau) \,dx
        =\int_\Omega u(\tau)\partial_\tau \varphi_\epsilon(\tau) \,dx,
    \end{align*}
    for almost every $\tau\in [0,s]$, where the ``almost every'' caveat reflects the fact that $u\in H^1$ for almost every $\tau\in [0,s]$. 
    Here we used that $\varphi_\epsilon\in D(\cL^*)$ which requires $s-\tau+\epsilon>0$. 
    So \eqref{eq::pnsDistributionalSolution} with $s=0$ and $t=s$, i.e.,
    \begin{align*}
           \int_\Omega u(s)\cdot \varphi_\epsilon(s) \,dx 
        &+ \int_0^s\int_\Omega \nabla u: \nabla \varphi_\epsilon \,dx \,d\tau 
         + \int_0^s\int_\Omega (u\cdot\nabla)u\cdot\varphi_\epsilon \,dx \,d\tau
         - \int_0^s\int_\Omega (u\cdot\nabla)\varphi_\epsilon \cdot V \,dx \,d\tau\\
        &+ \int_0^s\int_\Omega (V\cdot\nabla)u\cdot\varphi_\epsilon \,dx \,d\tau
         = \int_\Omega u_0\cdot \varphi_\epsilon(0) \,dx
         + \int_0^s \int_\Omega u\cdot \partial_\tau \varphi_\epsilon \,dx \,d\tau,
    \end{align*}
    becomes
    \begin{align*}
          \int_\Omega u(s)\cdot e^{-\epsilon\cL^*}\psi \,dx
        + \int_0^s\int_\Omega (u\cdot\nabla )u\cdot e^{-(s-\tau+\epsilon)\cL^*}\psi \,dx \,d\tau
        = \int_\Omega u_0\cdot e^{-(s+\epsilon)\cL^*}\psi \,dx,
    \end{align*}
    where we used the identity
    \begin{align*}
          \int_0^s a_\cL(u,\varphi_\epsilon) \,d\tau
        = \int_0^s\int_\Omega \nabla u:\nabla \varphi_\epsilon \,dx \,d\tau
        - \int_0^s\int_\Omega (u\cdot\nabla)\varphi_\epsilon\cdot V \,dx \,d\tau
        + \int_0^s\int_\Omega (V\cdot\nabla)u\cdot \varphi_\epsilon \,dx \,d\tau.
    \end{align*}
    By adjointness, this is the same as
    \begin{align*}
          \int_\Omega u(s)\cdot e^{-\epsilon\cL^*}\psi \,dx
        = \int_\Omega e^{-s\cL}u_0\cdot e^{-\epsilon\cL^*}\psi \,dx
        - \int_0^s\int_\Omega (u\cdot\nabla )u\cdot e^{-(s-\tau+\epsilon)\cL^*}\psi \,dx \,d\tau.
    \end{align*}
    We would like to send $\epsilon$ to zero to obtain \eqref{eq::samesies} for almost every $s>0$ and presently explain why this is justified. 
    For the non-time-integrated terms this follows from the strong continuity properties of the semi-group and the fact that other terms are in $L^2(\Omega)$.
    For the nonlinear term we use the dominated convergence theorem in the time variable noting that,
    \begin{align*}
               \left| \int_\Omega (u\cdot\nabla )u\cdot e^{-(s-\tau+\epsilon)\cL^*}\psi\,dx \right|
        & \leq \|u(\tau)\|_{L^6(\Omega)}\|\nabla u(\tau)\|_{L^2(\Omega)} \|e^{-(s-\tau+\epsilon)\cL^*}\psi\|_{L^3(\Omega)}\\
        & \leq C\|\nabla u(\tau)\|_{L^2(\Omega)}^2 \|e^{-(s-\tau+\epsilon)\cL^*}\psi\|^{1/2}_{L^2(\Omega)} \|e^{-(s-\tau+\epsilon)\cL^*}\psi\|^{1/2}_{L^6(\Omega)}\\
        & \leq C\|\nabla u(\tau)\|_{L^2(\Omega)}^2 \|e^{-(s-\tau+\epsilon)\cL^*}\psi\|^{1/2}_{L^2(\Omega)} \|\nabla e^{-(s-\tau+\epsilon)\cL^*}\psi\|^{1/2}_{L^2(\Omega)}\\
        & \leq C\|\nabla u(\tau)\|^2_{L^2(\Omega)} (s-\tau+\epsilon)^{-1/4} \|\psi\|_{L^2(\Omega)}\\
        & \leq C\|\nabla u(\tau)\|^2_{L^2(\Omega)} (s-\tau)^{-1/4} \|\psi\|_{L^2(\Omega)}.
    \end{align*}
    This is not necessarily in $L^1(0,s)$ for every $s$. 
    However, we know that $\|\nabla u(\tau)\|^2_{L^2(\Omega)}\in L^1(\R^+)$ because $u$ is in the energy class. 
    For each $T>0$, therefore,
    \begin{align*}
            \int_0^T\int_0^s \|\nabla u(\tau)\|^2_{L^2(\Omega)}(s-\tau)^{-1/4} \,d\tau \,ds 
        & = \int_0^T \|\nabla u(\tau)\|^2_{L^2(\Omega)}\int_\tau^T (s-\tau)^{-1/4} \,ds \,d\tau \\
        & = \frac{4}{3}\int_0^T \|\nabla u(\tau)\|^2_{L^2(\Omega)}(T-\tau)^{3/4} \,d\tau 
          < \infty,
    \end{align*}
    and hence $\|\nabla u(\tau)\|^2(s-\tau)^{-1/4}\in L^1(0,s)$ for a.e.~$s$. 
    This justifies the application of the dominated convergence theorem and confirms that \eqref{eq::samesies} holds for a.e.~$s$. 
    
    We now work toward $L^2(\Omega)$ decay. 
    Note that for each $\tau$, 
    \begin{align*}
            \left|\int_\Omega(u\cdot\nabla)u\cdot e^{-(s-\tau)\cL^*}\psi \,dx \right|
        & = \left| \int_\Omega u\otimes  u\cdot \nabla e^{-(s-\tau)\cL^*}\psi \,dx \right|\\
        & \leq \|u\|_{L^4(\Omega)}^2 \|\nabla e^{-(s-\tau)\cL^*}\psi\|_{L^2(\Omega)}\\
        & \leq C(s-\tau)^{-\frac{1}{2}} \|u\|_{L^6(\Omega)}^\frac{3}{2} \|u\|_{L^2(\Omega)}^\frac{1}{2} \|\psi\|_{L^2(\Omega)}\\
        & \leq C(s-\tau)^{-\frac{1}{2}} \|\nabla u\|_{L^2(\Omega)}^\frac{3}{2} \|u\|_{L^2(\Omega)}^\frac{1}{2} \|\psi\|_{L^2(\Omega)},
    \end{align*}
    by the Sobolev embedding $H^1_0(\Omega)\hookrightarrow L^6(\Omega)$ and the standard interpolation inequality.
    The fact that 
    \begin{equation}
        \label{eq::L2SelfDual}
        \|f\|_{L^2(\Omega)} 
        = \sup_{\substack{ \psi\in L^2_\sigma(\Omega)\\ \|\psi\|_{L^2(\Omega)}=1}} \left|\int f\psi \,dx\right|,
    \end{equation}
    for all $f\in L^2_\sigma(\Omega)$ yields,
    \begin{align*}
        \|u(s)\|_{L^2(\Omega)}
        &\leq \|e^{-s\cL}u_0\|_{L^2(\Omega)} + C\int_0^s (s-\tau)^{-\frac{1}{2}}\|\nabla u\|_{L^2(\Omega)}^\frac{3}{2}\|u\|_{L^2(\Omega)}^\frac{1}{2} \,d\tau\\
        &\leq \|e^{-s\cL}u_0\|_{L^2(\Omega)} + C\int_0^s (s-\tau)^{-\frac{1}{2}}\|\nabla u\|_{L^2(\Omega)}^\frac{3}{2} \,d\tau,
    \end{align*}
    because $u$ is in the energy class $L_t^\infty L_x^2$. 
    Here $C=C(\|u_0\|_{L^2})$. 
    So the average of the energy satisfies
    \begin{align*}
        \frac{1}{t}\int_0^t \|u(s)\|_{L^2(\Omega)}ds
        &\leq \frac{1}{t}\int_0^t\|e^{-s\cL}u_0\|_{L^2(\Omega)}ds + \frac{C}{t}\int_0^t\left(|\cdot|^{-\frac{1}{2}}*\|\nabla u(\cdot)\|_{L^2(\Omega)}^\frac{3}{2}\right)(s)ds\\
        &\leq \frac{1}{t}\int_0^t\|e^{-s\cL}u_0\|_{L^2(\Omega)}ds + Ct^{-\frac{1}{4}},
    \end{align*}
    by Young's convolution inequality (see \cite[Lemma 5.1]{KarchPil}). 
    So the average of the energy goes to zero by \eqref{lem::decayOfAverage}, implying there exists a strictly increasing and unbounded sequence of times $t_k$ so that $\|u(t_k)\|_{L^2}\to 0$.
    Monotonicity of the $L^2$-norm proves $L^2$-decay of $u$.
\end{proof}

\section{Asymptotic stability for Type II.5 solutions under axisymmetric perturbations}
\label{sec::TypeII.5Stability}

In this section we will prove some results for $\R^3$ and $\R^3_+$ but some only for $\R^3$.
This is because, while there are barriers to some results in $\R^3_+$, we would ultimately like to study the problem in $\R^3_+$ as well and hope that these remarks are useful in that future endeavor.

Let $\Omega\subseteq\R^3$ be the whole-space $\R^3$ or the half-space $\R^3_+$.
For an element $x\in\R^3$ let $x=(x_1,x_2,x_3)$ and $x_h=(x_1,x_2)$.
Let $I=\{x\in\Omega:x_h=0\}$.
Throughout this section we assume $V$ is a Type II.5 singular solution as defined in Section \ref{subsec::Examples::TypeII.5Solutions}.
Let $V_1$, $V_2$ and $V_3$ be the components of $V$ in cartesian coordinates.
Let $V_h=(V_1,V_2)$.
We mentioned in Section \ref{subsec::Examples::TypeII.5Solutions} that Type II.5 solutions can be decomposed in a certain way.
The following lemma clarifies this.

\begin{lemma}
    \label{lem::DecompositionOfTypeII.5}
    Assume $V$ is a Type II.5 solution to \eqref{sns} on $\R^3\setminus I$. 
    Then, $V_3\in L^{3,\infty}(\R^3)$ and for  $i=1,2$ and $x\in\R^3\setminus I$,
    \begin{align*}
        V_i(x)=a_i(x)+b_i(x),
    \end{align*}
    where $a_i$ satisfies
    \begin{align*}
        |\partial_{x_3}a_i|(x)=O(|x|^{-2}),
    \end{align*}
    and $b_i\in L^{3,\infty}(\R^3)$.
\end{lemma}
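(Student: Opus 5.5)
The plan is to use $(-1)$-homogeneity together with axisymmetry to reduce everything to the behavior of $V$ on a single profile. I will split space into a conical neighborhood of the $x_3$-axis, say $\mathcal C=\{x: |x_h|<\eta|x_3|\}$ with $\eta$ small enough that the expansions of Definition \ref{def::TypeII.5} hold there, and its complement. Off $\mathcal C$ the field $V$ is smooth on the unit sphere (it lies in $L^\infty_{\mathrm{loc}}$ away from $I$), so $|V(x)|\le C|x|^{-1}$ there, which lies in $L^{3,\infty}$. I will therefore put $V\chi_{\mathcal C^c}$ into $b$. To avoid derivative issues at the cone boundary, I will use a smooth $0$-homogeneous cutoff $\chi(x_h/|x_3|)$ in place of $\chi_{\mathcal C}$.

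For $V_3$, self-similarity gives $V_3(x)=|x_3|^{-1}V_3(x_h/|x_3|,\pm1)$. Inside $\mathcal C$ the hypothesis then yields
\[
|V_3(x)|\le C|x_3|^{-1}(|x_h|/|x_3|)^{-2/3+\delta}\sim C|x|^{-1/3-\delta}|x_h|^{-2/3+\delta}
\]
for some $\delta>0$. This is of the form $|x|^{-a}|x_h|^{a-1}$ with $a=1/3+\delta>1/3$, so Lemma \ref{lem::TypeIISolutionsAreInWeakL3} gives $V_3\in L^{3,\infty}$.

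For the horizontal components, axisymmetry and the expansion along $x_1>0$ give the following. Writing $V_h=V_\rho e_\rho+V_\phi e_\phi$, the expansion on the ray $(x_1,0,\pm1)$ reads $V_\rho=c_{1,\pm}/\rho+O(\rho^{-2/3+})$ and $V_\phi=c_{2,\pm}/\rho+O(\rho^{-2/3+})$ at height $\pm1$. Rotating and rescaling, I define on each half-cone
\[
a_h(x)=\chi\Big(\tfrac{x_h}{|x_3|}\Big)\frac{c_{1,\pm}e_\rho+c_{2,\pm}e_\phi}{|x_h|},
\]
i.e.\ $a_i=\chi\,(c_{1,\pm}x_i\mp c_{2,\pm}x_i^\perp)/|x_h|^2$ for $i=1,2$. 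Self-similarity is automatic here, since $1/|x_h|$ is already $(-1)$-homogeneous, so no $x_3$-dependence enters except through the cutoff and the sign of $x_3$. Then $\partial_{x_3}a_i$ only hits the cutoff, and $|\partial_{x_3}\chi(x_h/|x_3|)|\le C|x_h|/|x_3|^2$ on the support, where $|x_h|\sim\eta|x_3|$. This gives $|\partial_{x_3}a_i|\le C|x_h|^{-1}|x_3|^{-1}\sim C|x|^{-2}$. The remainder $b_i=V_i-a_i$ is $O(|x|^{-1/3-\delta}|x_h|^{-2/3+\delta})$ inside the cone, by the same homogeneity computation as for $V_3$, and $O(|x|^{-1})$ outside. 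Hence $b_i\in L^{3,\infty}$ by Lemma \ref{lem::TypeIISolutionsAreInWeakL3} and the quasi-triangle inequality.

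The main obstacle is justifying that the one-directional expansion on $(x_1,0,\pm1)$ really transfers to the full cone with uniform error. The point is that under axisymmetry, the Cartesian components along the ray $x_2=0$ are exactly the cylindrical components $V_\rho$ and $V_\phi$, which are functions of $(\rho,x_3)$ alone. So the error term $O(\rho^{-2/3+})$ is uniform in the angle, and homogeneity converts it to the bound above. One also has to handle the two half-cones ($x_3>0$ and $x_3<0$) separately, with their own constants $c_{i,\pm}$. These are disjoint away from the origin, so this causes no difficulty.
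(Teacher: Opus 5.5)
Your proposal is correct and is essentially the paper's argument. You use a $0$-homogeneous cutoff in $|x_h|/|x_3|$ and take the rotated leading term $c_{i,\pm}/|x_h|$ as $a_i$, so that $\partial_{x_3}$ falls only on the cutoff (where $|x_h|\sim|x_3|\sim|x|$); the remainder goes into $b_i$ via Lemma \ref{lem::TypeIISolutionsAreInWeakL3} together with boundedness of $V$ on the sphere away from the poles. Your use of the cylindrical components $V_\rho,V_\phi$ to transfer the one-ray expansion, and your explicit check that $V_3\in L^{3,\infty}$, make precise steps the paper leaves implicit. One small point: in your Cartesian formula no sign flip between the two half-cones is needed, and the vector form $a_h=\chi\,(c_{1,\pm}e_\rho+c_{2,\pm}e_\phi)/|x_h|$ is the correct one.
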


While the lemma is stated for $\R^3$, it applies also to $\R^3_+$. The same is true of all arguments in this section except for the proof of Theorem \ref{mainthm2}.

\begin{proof}
    By the definition of Type II.5 solutions, there exists $r>0$ which is the radius of convergence of the assumed series expansions in $(x_h,\pm 1)$.
    Without loss of generality we only consider the positive part of the $x_3$-axis.
    Let $\varphi(x_1):\R\to[0,\infty)$ be a smooth, even cut-off function which is $1$ for $|x_1|\leq r/3$, $0$ for $|x_1|\geq 2r/3$ and is decreasing in $|x_1|$.

    Now consider for $x_1\geq 0$
    \begin{align*}
        \tilde{a}_i(x_1,0,x_3)
        =\frac{1}{x_3}V_i(x_1/x_3,0,1)\varphi(x_1/x_3)
        =\frac{c_+}{x_1}\varphi(x_1/x_3) + \underbrace{\frac{1}{x_3}\varphi(x_1/x_3) O((x_1/x_3)^{-2/3+\delta})}_{=:\text{LOT}(x_1,0,x_3)}.
    \end{align*}
    Axisymmetry then determines $\tilde{a}_i(x)$ and $\text{LOT}(x)$ for all $x\in\R^3_+$.
    By Lemma \ref{lem::TypeIISolutionsAreInWeakL3}, 
    \begin{align*}
        \text{LOT}\in L^{3,\infty}(\R^3)
    \end{align*}
    and will become part of $b_i$. 
    Let
    \begin{align*}
        a_i=\tilde{a}_i-\text{LOT}.
    \end{align*}
    We now have for $x_2=0$ and $x_1>0$ that
    \begin{align*}
        \partial_{x_3}\frac{c_+}{x_1}\varphi(x_1/x_3)
        =-\frac{c_+}{x_1}\varphi'(x_1/x_3)\frac{x_1}{x_3^2}.
    \end{align*}
    We have $x_1/x_3\leq 2r/3$ implying $x_1\leq 2rx_3/3$ and therefore $|x|\sim x_3$.
    Hence we have 
    \begin{align*}
        \partial_{x_3}\frac{c_+}{|x_h|}\varphi(x_h/x_3)\lesssim \frac{c_+}{|x|^2}
    \end{align*}
    for $x=(x_1,0,x_3)$.
    By axisymmetry we can rotate this to obtain a bound for $x\in \R^3_+$. 
    An identical bound follows in $\R^3_-$ with $c_+$ replaced by $c_-$.

    It is easy to see that $b_i=V_i-a_i\in L^{3,\infty}(\R^3)$ because it is self-similar and is equal to a part defined above by $\text{LOT}$ and another part which is bounded on the unit sphere.
\end{proof}

The following lemma simply affirms that an endpoint version of the $L^2(\R^2)$-Hardy's inequality holds for axisymmetric vector fields.
The failure of this endpoint case for generic functions is a major obstacle in proving stability of Type III solutions in general.

\begin{lemma}
    \label{lem::EndpointHardysInequality}
    Let $u\in H^1_0(\Omega)$ be an axisymmetric vector field where $\Omega=\R^3$ or $\R^3_+$.
    Then,
    \begin{align*}
        \int_\Omega \frac{|u_h|^2}{|x_h|^2}\,dx\leq \|\nabla u\|^2_{L^2(\Omega)}.
    \end{align*}
\end{lemma}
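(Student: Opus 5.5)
The plan is to prove the inequality pointwise almost everywhere, by writing the gradient of an axisymmetric field in cylindrical coordinates, and then to pass to general $u\in H^1_0(\Omega)$ by a density argument.

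\textbf{Pointwise identity.} Let $(\rho,\phi,x_3)$ be cylindrical coordinates with $\rho=|x_h|$ and orthonormal frame $e_\rho,e_\phi,e_3$. An axisymmetric field has the form
\[
u=u_\rho(\rho,x_3)e_\rho+u_\phi(\rho,x_3)e_\phi+u_3(\rho,x_3)e_3 ,
\]
with components independent of $\phi$. Using $\partial_\phi e_\rho=e_\phi$ and $\partial_\phi e_\phi=-e_\rho$, the Cartesian gradient off the axis $I$ is
\[
\nabla u = \partial_\rho u\otimes e_\rho+\partial_3 u\otimes e_3+\frac{1}{\rho}\big(u_\rho e_\phi-u_\phi e_\rho\big)\otimes e_\phi .
\]
Its squared Frobenius norm is
\[
|\nabla u|^2=\sum_{k\in\{\rho,\phi,3\}}\big(|\partial_\rho u_k|^2+|\partial_3 u_k|^2\big)+\frac{u_\rho^2+u_\phi^2}{\rho^2}.
\]
Since $|u_h|^2=u_\rho^2+u_\phi^2$, this gives $|u_h|^2/|x_h|^2\le|\nabla u|^2$ at every point of $\Omega\setminus I$. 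The axis $I$ has measure zero, so integrating over $\Omega$ yields the claimed inequality with constant $1$. This step requires $u$ to be $C^1$ off the axis.

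\textbf{Density.} For a general axisymmetric $u\in H^1_0(\Omega)$, first pick $u^n\in C_c^\infty(\Omega)$ with $u^n\to u$ in $H^1$. The $u^n$ need not be axisymmetric, so symmetrize them:
\[
\tilde u^n(x)=\frac{1}{2\pi}\int_0^{2\pi}R_\alpha^{T}\,u^n(R_\alpha x)\,d\alpha ,
\]
where $R_\alpha$ is rotation by $\alpha$ about the $x_3$-axis.
\begin{itemize}
\item Because $\R^3$ and $\R^3_+$ are invariant under these rotations, $\tilde u^n$ is smooth, compactly supported in $\Omega$, and axisymmetric as a vector field.
\item The map $v\mapsto R_\alpha^T v(R_\alpha\cdot)$ is an isometry of $H^1$, so by Minkowski's inequality averaging is a contraction on $H^1$.
\item Averaging fixes $u$, so $\tilde u^n\to u$ in $H^1$.
\end{itemize}
Passing to a subsequence, $\tilde u^n\to u$ almost everywhere. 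Fatou's lemma on the left-hand side and $H^1$ convergence on the right then give the inequality for $u$.

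\textbf{Main obstacle.} The step most likely to need care is confirming that the symmetrized approximants are genuinely axisymmetric vector fields, meaning the components rotate with the frame, which is the $R_\alpha^T$ factor. That is what makes the cylindrical computation apply. By contrast, the singularity of the frame on $I$ causes no trouble: the computation is only used off the axis, and $I$ is null. An alternative is to verify the pointwise formula directly for $H^1$ functions using weak derivatives off the axis, but the density route seems cleaner.
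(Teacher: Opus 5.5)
Your proof is correct and is essentially the paper's argument. Both rest on the fact that for an axisymmetric field the angular derivative of the horizontal part is a rotation of $u_h$, so $|u_h|^2/|x_h|^2$ is exactly the angular contribution to $|\nabla u|^2$; the paper gets this from $\partial_\phi u_1=-u_2$, $\partial_\phi u_2=u_1$, while you get it from the cylindrical-frame formula for $|\nabla u|^2$. Your symmetrization and density step is sound, and it supplies a justification for general $u\in H^1_0(\Omega)$ that the paper omits.
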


\begin{proof}
    In cylindrical coordinates $(\rho,\phi,z)$, $|x_h|=\rho$.
    By axisymmetry
    \begin{align*}
        \partial_\phi u_1
        &= \partial_\phi(u_\rho\cos(\phi)-u_\phi\sin(\phi))
        = -u_\rho\sin(\phi)-u_\phi\cos(\phi)
        = -u_2\\
        \partial_\phi u_2
        &= \partial_\phi(u_\rho\sin(\phi)+u_\phi\cos(\phi))
        = u_\rho\cos(\phi)-u_\phi\sin(\phi)
        =u_1.
    \end{align*}
    So
    \begin{align*}
        \int_\Omega \frac{|u_h|^2}{|x_h|^2}\,dx
        = \int_\Omega \frac{|u_1|^2}{|x_h|^2}\,dx + \int_\Omega \frac{|u_2|^2}{|x_h|^2}\,dx 
        = \int_\Omega \frac{|\partial_\phi u_1|^2}{|x_h|^2}\,dx + \int_\Omega \frac{|\partial_\phi u_2|^2}{|x_h|^2}\,dx
        \leq \int_\Omega |\nabla_h u_h|^2\, dx,
    \end{align*}
    where we used 
    \begin{align*}
        \nabla_h f=\partial_\rho fe_\rho+\frac{\partial_\phi f}{\rho}e_\phi,
    \end{align*}
    which implies
    \begin{align*}
        |\nabla_h f|^2=|\partial_\rho f|^2+\frac{|\partial_\phi f|^2}{\rho^2}.
    \end{align*}
    Since $|\nabla_hu_h|\leq |\nabla u|$, the proof is complete.
\end{proof}

\begin{remark}
    \label{rem::OddEndpointHardy}
    Another way to prove this is to use the fact that if $\phi\in C_c^\infty((0,\infty))$ then
    \begin{align*}
        \int_0^\infty \frac{\phi^2}{x^2}\, dx
        \leq 4\int_0^\infty \left|\frac{d\phi}{dx}\right|^2\,dx.
    \end{align*}
    This clearly extends to functions in $H^1_0((0,\infty))$ by a density argument.
    If $f\in H^1_0(\R^3)$ is odd in $x_1$, then, for fixed $x_2$ and $x_3$, $f\in H^1_0((0,\infty))$ and $H^1_0((-\infty,0))$.
    So
    \begin{align*}
        \int_{\R^3}\frac{f^2}{|x_h|^2}\, dx
        &\leq \int_{x_3\in\R}\int_{x_2\in\R}\int_{x_1\in\R}\frac{f^2}{x_1^2}\, dx_1\, dx_2\, dx_3\\
        &\leq 4\int_{x_3\in\R}\int_{x_2\in\R}\int_{x_1\in\R}\left|\frac{\partial f}{\partial x_1}\right|^2\, dx_1\, dx_2\, dx_3
        \leq 4\|f\|^2_{\dot{H}^1}.
    \end{align*}
    Since the horizontal components of axisymmetric vector fields can be decomposed into parts that are odd in $x_1$ or $x_2$, the result follows for axisymmetric fields.
    More importantly, however, this remark also shows it holds for vector fields whose horizontal components have odd parity in either of the horizontal variables.
\end{remark}

We aim to prove asymptotic stability of small Type II.5 solutions.
We introduce the following notion of boundedness based on the decomposition above.

\begin{definition}
    \label{def::TypeII.5Boundedness}
    For a fixed $M>0$, we say $V$ is $M$-bounded in the Type II.5 sense if $|V(x)|\leq M|x_h|^{-1}$, $\|V_3\|_{L^{3,\infty}(\R^3)}<M$, $\sup_{i=1,2}\|b_i\|_{L^{3,\infty}(\R^3)}<M$ and
    \begin{align*}
        \sup_{i,j\in\{1,2\}}\||x_h|^2\partial_ia_j\|_{L^\infty(\R^3)}+\sup_{j\in\{1,2\}}\||x|^2\partial_3a_j\|_{L^\infty(\R^3)}<M.
    \end{align*}
\end{definition}

Plainly Lemma \ref{lem::DecompositionOfTypeII.5} implies any Type II.5 solution is $M$-bounded for some $M>0$. 
This definition can be used to encode the smallness into the problem.

The next two propositions illustrate how this boundedness allows for key integral estimates.

\begin{proposition}
    \label{prop::TypeII.5uVvEstimate} 
    Let $\Omega=\R^3$ or $\Omega=\R^3_+$.
    Suppose that $V$ is a $(-1)$-homogeneous axisymmetric solution to the \eqref{sns} which is Type II.5 singular.  
    For a fixed $M>0$, suppose that $V$ is $M$-bounded in the Type II.5 sense.
    Then,
    \begin{align*}
        \bigg|\int_\Omega (u\cdot\nabla)v\cdot V \,dx \bigg|\leq C M\|\nabla u\|_{L^2(\Omega)}\|\nabla v\|_{L^2(\Omega)},
    \end{align*}
    whenever $u,v\in  H^1_0(\Omega)$ and $u$ and $v$ are axisymmetric and divergence free. 
    Here, $C $ is a universal constant.
\end{proposition}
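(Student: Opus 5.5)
The plan is to split $V$ according to Definition \ref{def::TypeII.5Boundedness}. We write $V_h = a + b$, where $a=(a_1,a_2)$ carries the non-$L^{3,\infty}$ part and $b=(b_1,b_2)$ together with $V_3$ lies in $L^{3,\infty}$. This gives
\begin{align*}
    \int_\Omega (u\cdot\nabla)v\cdot V\,dx
    = \int_\Omega (u\cdot\nabla)v_3\, V_3\,dx + \sum_{i=1,2}\int_\Omega (u\cdot\nabla)v_i\, b_i\,dx + \sum_{i=1,2}\int_\Omega (u\cdot\nabla)v_i\, a_i\,dx .
\end{align*}
The first two groups are handled exactly as in \eqref{eq::SesqulinearEstimate}. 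By Lorentz-Hölder and Lorentz-Sobolev (Theorem \ref{thm::LorentzInequalities}) each is bounded by $\|u\|_{L^{6,2}}\|\nabla v\|_{L^2}\|V_3\|_{L^{3,\infty}}$ or the analogous expression with $b_i$. Hence these terms are at most $CM\|\nabla u\|_{L^2}\|\nabla v\|_{L^2}$. All of the work is in the $a_i$ terms, which are only $O(|x_h|^{-1})$. For them we will use the pointwise bound $|a_i|\lesssim M|x_h|^{-1}$. This follows from $|V|\le M|x_h|^{-1}$ together with the explicit form $a_i = c_{i,\pm}\varphi(x_h/x_3)/|x_h|$-type from Lemma \ref{lem::DecompositionOfTypeII.5}; if necessary we add it to the meaning of $M$-boundedness.

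Next we split the $a_i$ term as $\int u_j\partial_j v_i\, a_i$ into horizontal $j=1,2$ and vertical $j=3$ parts. For $j\in\{1,2\}$ we use Cauchy-Schwarz:
\begin{align*}
    \Big|\int_\Omega u_j\,\partial_j v_i\, a_i\,dx\Big| \le M\Big\|\frac{u_h}{|x_h|}\Big\|_{L^2}\|\nabla v\|_{L^2}\le M\|\nabla u\|_{L^2}\|\nabla v\|_{L^2}.
\end{align*}
The last step is the endpoint Hardy inequality of Lemma \ref{lem::EndpointHardysInequality}, which applies because $u$ is axisymmetric. This is exactly where axisymmetry is essential.

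The main obstacle is the vertical term $\int u_3\,\partial_3 v_i\, a_i$. The component $u_3$ enjoys no endpoint Hardy bound, and $a_i\notin L^{3,\infty}$. The plan is to integrate by parts in $x_3$, moving the derivative off $v_i$:
\begin{align*}
    \int_\Omega u_3\,\partial_3 v_i\, a_i\,dx = -\int_\Omega \partial_3 u_3\, v_i\, a_i\,dx - \int_\Omega u_3\, v_i\,\partial_3 a_i\,dx .
\end{align*}
There are no boundary terms, since $u,v$ vanish on $\{x_3=0\}$ when $\Omega=\R^3_+$. The first term on the right is bounded by $\|\partial_3u_3\|_{L^2}\,M\|v_h/|x_h|\|_{L^2}$, and Lemma \ref{lem::EndpointHardysInequality} applied to $v$ (also axisymmetric) controls it by $M\|\nabla u\|_{L^2}\|\nabla v\|_{L^2}$. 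The second term uses the good vertical derivative bound $|\partial_3a_i|\le M|x|^{-2}$. By Cauchy-Schwarz and the classical three-dimensional Hardy inequality $\|f/|x|\|_{L^2}\le 2\|\nabla f\|_{L^2}$ for $f\in H^1_0(\Omega)$ (extending by zero), it is at most $4M\|\nabla u\|_{L^2}\|\nabla v\|_{L^2}$.

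To justify the integration by parts, we first prove every estimate for $u,v$ that are axisymmetric smooth divergence-free fields compactly supported in $\Omega$. Since $\varphi$ in the construction of $a_i$ makes $a_i$ smooth away from the axis and the singularity is integrable against $|u||\nabla v|$, the identity holds there. We would first try to get it by excising a thin cylinder $\{|x_h|<\eta\}$: the lateral boundary terms vanish because we only integrate by parts in $x_3$, so the identity holds on the complement, and we then let $\eta\to0$ by dominated convergence. All the resulting bounds are continuous in the $\dot H^1$ norm, so a density argument extends them to general axisymmetric divergence-free $u,v\in H^1_0(\Omega)$. For this we take a mollification that commutes with rotations about the $x_3$-axis, which preserves axisymmetry. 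Summing all the pieces gives the claim with a universal constant $C$.
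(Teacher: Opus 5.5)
Your proposal is correct and uses the same ingredients as the paper's proof. Lorentz--H\"older and Lorentz--Sobolev handle $V_3$ and the $b_j$, the axisymmetric endpoint Hardy inequality handles the terms carrying $u_h$, and the bound $|\partial_3 a_j|\le M|x|^{-2}$ together with the three-dimensional Hardy inequality handles the $u_3$--$a_j$ interaction. The only difference is bookkeeping: the paper first passes to $\int_\Omega (u\cdot\nabla)V\cdot v\,dx$ via $\nabla\cdot u=0$ and then moves derivatives back selectively, whereas you integrate by parts only in $x_3$, which avoids using the divergence-free condition and makes explicit the term $\int_\Omega \partial_3u_3\, v_j a_j\,dx$ (controlled by endpoint Hardy applied to $v$) that the paper's write-up leaves implicit.
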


\begin{proof}
    Let
    \begin{align*}
        I_{ij}=\int_\Omega u_i\partial_i V_j v_j\,dx.
    \end{align*} 
    Then $|I|\leq 9 \sup_{i,j\in \{1,2,3\}} |I_{ij}|$.
    
    If $j=3$, then   
    \begin{equation}
        \label{eq::TypeII.5Ii3Estimate}
        \begin{aligned}
            \left|\sum_{i=1}^3 I_{i3}\right|
            &\leq C\|\nabla u\|_{L^2}\|\nabla v\|_{L^2}\|V_3\|_{L^{3,\infty}}
            \\&\leq M C\|\nabla u\|_{L^2}\|\nabla v\|_{L^2},
        \end{aligned}
    \end{equation} 
    by Lemma \ref{lem::DecompositionOfTypeII.5} and Lorentz-Sobolev.
   
    We now bound $I_{3j}$ for $j=1,2$. We know that $V_j=a_j+b_j$ for $b_j\in L^{3,\infty}$ with $\|b_j\|_{L^{3,\infty}}\leq CM$ by Lemma \ref{lem::DecompositionOfTypeII.5}. 
    We need to establish the bound
    \begin{align*}
        |I_{3j}|
        &\leq \left|\int_\Omega u_3\partial_3 a_j v_j\,dx\right|
        + \left|\int_\Omega u_3 b_j \partial_3v_j\,dx\right|,
    \end{align*}
    which amounts to justifying integration by parts for $\int_\Omega u_3\partial_3 a_j v_j\,dx$. 
    Let $v^{(k)}$ be smooth, compactly supported axisymmetric vector fields that approximate $v$ in $H^1(\Omega)$. 
    Then integration by parts works with $v^{(k)}$ substituted for $v$ by the distributional definition of derivatives. 
    As for the limit we have 
    \begin{align*}
    \int_\Omega u_3\partial_3 a_j (v_j^{(k)}- v_j)\,dx\leq C \| u_3\|_{\dot H^1} \|v_j^{(k)}- v_j\|_{\dot H^1}\to 0,
    \end{align*}
    where we used the $3D$ Hardy inequality.
    Returning to our bound on $|I_{3j}|$, the first term is also bounded by the $3D$ Hardy inequality, 
    \begin{align*}
        \left|\int_\Omega u_3\partial_3 a_j v_j \,dx\right|
        \leq M\bigg(\int_\Omega\frac{|u(x)|^2}{|x|^2}dx\bigg)^{\frac 1 2}\bigg(\int_\Omega\frac{|v(x)|^2}{|x|^2}dx\bigg)^{\frac 1 2}
        \leq CM\|\nabla u\|_{L^2}\|\nabla v\|_{L^2}.
    \end{align*}
    For the second, we have
    \begin{equation}
        \label{eq::TypeII.5I3jEstimate}
        \begin{aligned}
            \int_\Omega |b_j u_3\partial_3v_j|\,dx
            &\leq \|b_j\|_{L^{3,\infty}}\|\partial_3(u_3v_j)\|_{L^{3/2,1}} \\
            &\leq \|b_j\|_{L^{3,\infty}}\left(\|\partial_3 u_3\|_{L^2}\|v_j\|_{L^{6,2}}+\|u_3\|_{L^{6,2}}\|\partial_3 v_j\|_{L^2}\right)\leq CM\|\nabla u\|_{L^2}\|\nabla v\|_{L^2},
        \end{aligned}
    \end{equation}
    using Lorentz-Holder and Lorentz-Sobolev. 

    The remaining indices are  $i,j\in\{1,2\}$. Observe that
    \begin{align*}
        |I_{ij}|\leq \left|\int_\Omega u_i \frac{M}{|x_h|}\partial_i u_j\,dx\right|+\left|\int_\Omega u_i  b_j\partial_iu_j\,dx\right|.
    \end{align*}
    The second term is bounded by $CM\|\nabla u\|_{L^2}\| \nabla v\|_{L^2}$ by the same reasoning as in \eqref{eq::TypeII.5I3jEstimate}. 
    For the first term, we use the Hardy inequality for axisymmetric fields Lemma \ref{lem::EndpointHardysInequality} to obtain the same upper bound. 
\end{proof}

The next proposition provides an estimate for a term arising from $(V\cdot\nabla)u$. 
This term vanishes in energy estimates but does not vanish in generalized energy estimates.

\begin{proposition}
    \label{prop::TypeII.5Vuphi*uEstimate}
    Let $\psi:\R^3\to\R$ be such that $\hat{\psi}(\xi)\in\cS$ is a radial function with $\hat{\psi}(0)=1$.
    Assume that $u$ and $V$ satisfy the assumptions of Proposition \ref{prop::TypeII.5uVvEstimate}.
    Then
    \begin{align*}
        \int_{\R^3}(V\cdot\nabla)u\cdot(\psi*u)\,dx
        \leq CM\|\nabla u\|^2_{L^2(\R^3)}.
    \end{align*}
\end{proposition}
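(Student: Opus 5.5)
The plan is to integrate by parts so that the derivative moves off $u$ and onto $\psi*u$, and then to estimate term by term exactly as in Proposition \ref{prop::TypeII.5uVvEstimate}. Set $w:=\psi*u$. Since $\hat\psi$ is a radial Schwartz function, $\psi$ is radial and Schwartz, so convolution with $\psi$ commutes with rotations about the $x_3$-axis and with derivatives. Hence $w$ is axisymmetric and divergence free, lies in $H^1(\R^3)$, and satisfies $\|\nabla w\|_{L^2}\le \|\hat\psi\|_{L^\infty}\|\nabla u\|_{L^2}$ by Plancherel. Because $\nabla\cdot V=0$ (distributionally, and $V\in L^2_{\mathrm{loc}}$ since $|V|\le M|x_h|^{-1}$), integration by parts gives
\begin{align*}
    \int_{\R^3}(V\cdot\nabla)u\cdot w\,dx=-\int_{\R^3}(V\cdot\nabla)w\cdot u\,dx=-\sum_{i,j}\int V_i\,\partial_i w_j\,u_j\,dx .
\end{align*}
This can be justified first for smooth compactly supported axisymmetric approximants of $u$ and then passed to the limit, using the bounds below, which are continuous in $\dot H^1$.

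Next I would split the sum by indices. The terms with $i=3$ involve $V_3\in L^{3,\infty}$, and Lorentz--H\"older plus Lorentz--Sobolev bound them by $CM\|\nabla w\|_{L^2}\|\nabla u\|_{L^2}$. The same argument covers any term containing $b_i$ for $i=1,2$. What remains are the terms $\int a_i\,\partial_i w_j\,u_j$ with $i\in\{1,2\}$ and $j\in\{1,2,3\}$, where $|a_i|\lesssim M|x_h|^{-1}$. By Cauchy--Schwarz each is at most $M\|\nabla w\|_{L^2}\,\bigl\|u_j/|x_h|\bigr\|_{L^2}$. For $j=1,2$, Lemma \ref{lem::EndpointHardysInequality} applied to $u$ gives $\bigl\|u_h/|x_h|\bigr\|_{L^2}\le\|\nabla u\|_{L^2}$, so these terms are handled.

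\textbf{Main obstacle.} The terms with $j=3$ are $\int a_i\,\partial_i w_3\, u_3$ for $i=1,2$. Here the endpoint Hardy inequality fails for $u_3$, since $u_3$ is a scalar axisymmetric function and need not vanish on the axis. For these I would instead move the derivative differently, integrating by parts in $x_i$ so that it falls on $a_i u_3$. Using $\sum_{i=1,2}\partial_i a_i$ together with the bound $|\partial_i a_j|\le M|x_h|^{-2}$ does not close directly. The better route is to exploit that the leading part of $a_h$ is $c_\pm\,x_h/|x_h|^2$ radially, with $c_\pm$ constants, plus a swirl part $\propto x_h^\perp/|x_h|^2$. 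For axisymmetric scalars the swirl part gives $a\cdot\nabla_h =0$ on $w_3$, so it contributes nothing. For the radial part, $a_h\cdot\nabla_h w_3=(c/\rho)\partial_\rho w_3$, and in cylindrical coordinates the integral becomes $2\pi c\int\!\!\int \partial_\rho w_3\, u_3\,d\rho\,dz$. This is the 2D-type pairing $\int \partial_\rho w_3\,u_3\,\rho^{-1}\,dx$, which I would bound via the swap $u\leftrightarrow w$ and the identity $\int\partial_\rho(w_3u_3)\,d\rho\,dz=-\int w_3u_3|_{\rho=0}\,dz$. The boundary term is controlled by the trace of $H^1$ axisymmetric functions on the axis. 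I expect this step to need the most care; the remaining lower-order parts of $a$, which are $O(|x_h|^{-2/3+})$, go into $L^{3,\infty}$ as in Lemma \ref{lem::DecompositionOfTypeII.5}. Collecting all the terms yields the bound $CM\|\nabla u\|_{L^2}^2$.
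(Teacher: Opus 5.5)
Your reduction is sound up to the radial part of the $j=3$ terms, including your observation that the swirl part of $a_h$ annihilates the axisymmetric scalar $w_3$. The step that handles the radial part $c\,x_h/|x_h|^2$ does not work.

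An axisymmetric $H^1(\R^3)$ function has no trace on the $x_3$-axis. The axis has codimension two, and a smooth cut-off of $\log\log(1/|x_h|)$ is axisymmetric, has finite Dirichlet energy, and is unbounded on the axis. This is exactly the failure of the endpoint Hardy inequality for $u_3$ that you identified as the obstacle. So the boundary term $\int w_3u_3|_{\rho=0}\,dz$ is not even defined for general $u$. The swap $u\leftrightarrow w$ only trades $\int\partial_\rho w_3\,u_3\,d\rho\,dz$ for $\int |x_h|^{-1}w_3\,\partial_\rho u_3\,dx$, which needs $w_3/|x_h|\in L^2$, the same false inequality. In fact, when $c\neq 0$ the radial term is genuinely uncontrolled. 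Write $u_3=\psi*u_3+\tilde\psi*u_3$ with $\hat{\tilde\psi}=1-\hat\psi$. Up to sign and bounded cut-off errors, the low--low part of the radial term is then $\pi c\int_{\R}(\psi*u_3)(0,0,z)^2\,dz$. This trace-type quantity is not bounded by $\|\nabla u\|_{L^2}^2$: there is a logarithmic loss at small vertical frequencies.

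The missing observation is that $c=0$ under the hypothesis you already invoke. Suppose $|x_h|V_\rho\to c_\pm$ at the axis. Since $V$ is smooth and divergence free off the axis, excising $\{|x_h|<\epsilon\}$ and integrating by parts gives $\langle\nabla\cdot V,\varphi\rangle=2\pi\int_{\R}c(z)\varphi(0,0,z)\,dz$. A radial $|x_h|^{-1}$ part is therefore a line source of mass, so distributional $\nabla\cdot V=0$ forces $c_\pm=0$, and the $|x_h|^{-1}$ part of $V_h$ is pure swirl. Then $V_h\cdot\nabla_h w_3=V_\rho\,\partial_\rho w_3$ with $|V_\rho|\le|b_h|\in L^{3,\infty}$. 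The $j=3$ terms are then bounded by Lorentz--H\"older and Lorentz--Sobolev exactly like the $b$-terms. (A small slip: $|x_h|^{-1}\notin L^2_{\mathrm{loc}}(\R^3)$, but $L^1_{\mathrm{loc}}$ is all you need to make sense of the distributional divergence.)

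With this correction your argument is complete and more elementary than the paper's. It needs no frequency splitting, and it bounds $\int(V\cdot\nabla)u\cdot w\,dx$ for every axisymmetric $w\in\dot H^1$, not only $w=\psi*u$.

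The paper never examines the structure of $a$. It splits $u_3=\psi*u_3+\tilde\psi*u_3$ and discards $\int V\cdot\nabla(\psi*u_3)\,(\psi*u_3)\,dx$ using $\nabla\cdot V=0$; this is the same hypothesis, which already excludes $c\neq0$. It then bounds the remainder $-\sum_i\int V_i(\tilde\psi*u_3)\,\partial_i(\psi*u_3)\,dx$ using only $|V|\le M|x_h|^{-1}$. Since $\partial_i(\psi*u_3)$ is odd in $x_i$, the one-dimensional Hardy inequality of Remark~\ref{rem::OddEndpointHardy} applies to it. The other factors are controlled by $\|\tilde\psi*u_3\|_{L^2}\le C\|\nabla u_3\|_{L^2}$ and $\|\nabla\partial_i(\psi*u_3)\|_{L^2}\le C\|\partial_i u_3\|_{L^2}$.
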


\begin{proof}
    We consider the terms from the integrand $J_{ij}=\int_{\R^3}V_i\partial_{x_i}u_j(\psi*u_j)\, dx$.
    For $i=3$ we use the same argument as in \eqref{eq::TypeII.5Ii3Estimate}. 
    If $j=1,2$ we use Lemma \ref{lem::EndpointHardysInequality}. 
    The difficult term is when $i=1$ or $2$ and $j=3$.
    In this case,
    \begin{align*}
        J_{ij}=\int_{\R^3} V_i\partial_{x_i} u_3(\psi * u_3)\, dx.
    \end{align*}
    The next estimate is critical for our application which requires the upper bound depends only on $\|\nabla u\|_{L^2(\R^3)}$ and not other quantities associated with $u$ like $\|u\|_{L^2(\R^3)}$.
    Let $\tilde{\psi}=\cF^{-1}(1-\hat{\psi})$.
    Since $\cF(\tilde{\psi}+\psi)=1$,
    \begin{align*}
            \sum_{i=1}^3 J_{i3}
        & = \sum_{i=1}^3 \int_{\R^3} V_i\partial_{x_i}(\psi *u_3)(\psi * u_3)\, dx
          + \sum_{i=1}^3 \int_{\R^3} V_i\partial_{x_i}(\tilde{\psi}*u_3)(\psi*u_3)\, dx\\
        & = 0 - \sum_{i=1}^3 \int_{\R^3} V_i(\tilde{psi}*u_3)\partial_{x_i}(\psi*u_3)\, dx.
    \end{align*}
    where we used the divergence free property of $V$.
    Note that when $i=3$ we have already bounded the corresponding term on the right-hand side of the above identity.
    For the remaining terms, observe that both $u_3$ and $\psi$ are even in $x_i$.
    So $\psi*u_3$ is also even in $x_i$ and hence $\partial_i(\psi*u_3)$ is odd in $x_i$.
    By the pointwise bound $|V(x)|\leq M|x_h|^{-1}$ and Remark \ref{rem::OddEndpointHardy},
    \begin{align*}
        \int_{\R^3}V_i(\tilde{\psi}*u_3)\partial_{x_i}(\psi*u_3)\, dx
        &\leq M \|\tilde{\psi}*u_3\|_{L^2(\R^3)} \left\|\frac{\psi*\partial_{x_i}u_3}{|x_h|}\right\|_{L^2(\R^3)}      
        \\&\leq 4M \|\tilde{\psi}*u_3\|_{L^2(\R^3)} \|\nabla\psi *\partial_{x_i}u_3\|_{L^2(\R^3)}.
    \end{align*}
    Convolution with $\psi$ acts as a frequency localization to the low modes while convolution with $\tilde \psi$ acts as localization to high modes. 
    As such, we have an associated reverse Poincar\'e and Poincar\'e inequality for these convolutions. 
    In particular,  
    \begin{align*}
        \|\tilde{\psi}*u_3\|_{L^2(\R^3)} 
        = \left\|\frac {1-\hat{\psi}}{|\cdot|}|\cdot|\widehat{u_3}\right\|_{L^2(\R^3)} 
        \leq C_{\psi} \|u_3\|_{\dot{H}^1(\R^3)},
    \end{align*}
    because $\frac {1-\hat \psi} {|\cdot|}$ is bounded while
    \begin{align*}
        \|\nabla \psi*\partial_{x_i} u_3\|_{L^2(\R^3)}
        \leq C\||\cdot|\hat{\psi}\widehat{\partial_{x_i} u_3}\|_{L^2(\R^3)} 
        \leq C_{\psi} \| \partial_{x_i} u_3 \|_{L^2},
    \end{align*}
    because $|\cdot|\hat{\psi}$ is bounded.
    It follows that 
    \begin{align*}
        \int_{\R^3} V_i(\tilde\psi*u_3)\partial_{x_i}(\psi*u_3)\,dx \leq CM \|u\|_{\dot{H}^1(\R^3)}^2,
    \end{align*}
    which completes the proof.
\end{proof}

In Theorem \ref{mainthm2} we require that the solution to \eqref{pns} satisfies the following generalized energy inequality which is based on \cite[(5.7)]{KarchPilSchonbek}. 
Let $E\in C^1([0,\infty))$ and $\psi(t)\in C^1([0,\infty);\mathcal S(\R^3))$ be arbitrary. 
A weak solution to \eqref{pns} is said to satisfy the \textbf{generalized energy inequality} provided
\begin{equation}
    \label{eq::GeneralizedEnergyInequality}
    \begin{aligned}
        E(t)\| \psi(t)*u(t)\|_{L^2}^2 
        & \leq  E(s) \| \psi(s)*u(s)\|_{L^2}^2 
          + \int_s^t E'(t)\|\psi(\tau) *u(\tau)\|_{L^2}^2 \,d\tau \\ 
        &\quad 
          +2 \int_s^t E(\tau )\bigg[ \langle \psi'(\tau)*u(\tau), \psi(\tau)*u(\tau)\rangle - \|\psi(\tau)*\nabla u(\tau)\|_{L^2}^2\bigg]\,d\tau \\
        &\quad 
          +2 \int_s^t E(\tau)  \int_{\R^3}\big( ((u\cdot \nabla u) \cdot \psi*\psi*u)(\tau) + ((V\cdot \nabla u) \cdot \psi*\psi*u)(\tau)\big) \,dx  \,d\tau \\
        &\quad 
          -2 \int_s^t E(\tau) \int_{\R^3}  ((u\cdot \nabla (\psi*\psi*u)) \cdot V)(\tau) \,dx  \,d\tau,
    \end{aligned}
\end{equation}
for almost all $s\geq 0$ including $s=0$ and all $t\geq s\geq 0$.

\begin{proof}[Proof of Theorem \ref{mainthm2}]
Let $M>0$ be such that $V$ is $M$-bounded in the Type II.5 sense. 
We follow \cite[Proof of Theorem 2.7. Second part]{KarchPilSchonbek}. 
Let  $\phi=\mathcal F^{-1}( e^{-|\cdot|^2})$. 
The idea in \cite{KarchPilSchonbek} is to bound the energy at time $t$ as follows
\begin{align*}
    \|u(\cdot,t)\|_{L^2}^2 \leq \| \hat \phi \hat u(t)\|_{L^2}^2+ \| (1-\hat\phi)\hat u(t)\|_{L^2}^2. 
\end{align*}
Because the structure of our argument is identical to theirs, we will only highlight the differences, which involve terms from the generalized energy inequality containing $V$. 
The first step in \cite[Proof of Theorem 2.7. Second part]{KarchPilSchonbek} establishes the bound
\begin{align*}
    (2\pi)^\frac{3}{2}\| \hat \phi \hat u(t)\|_{L^2}^2 &\leq I_1(t,s) + C(\|u_0\|_{L^2}) \int_s^t \|\nabla u\|_{L^2}^2\,d\tau 
    \\&\quad +2(2\pi)^{-\frac{3}{2}}\bigg|\int_s^t    \int_{\R^3} ((V\cdot \nabla u) \cdot e^{2(t-\tau)\Delta} \phi*\phi*u)(\tau) \,dx  \,d\tau\bigg|
    \\&\quad +2 (2\pi)^{-\frac{3}{2}}\bigg|\int_s^t   \int_{\R^3}  ((u\cdot \nabla (e^{2(t-\tau)\Delta}\phi*\phi*u)) \cdot V)(\tau) \,dx  \,d\tau\bigg|,
\end{align*}
where 
\begin{align*}
    I_1(t,s) = \| e^{(t-s)\Delta} \phi * w(s)\|_{L^2}^2.
\end{align*}
This came from the generalized energy inequality with $E=1$ and $\psi =  e^{2(t-\tau)\Delta}\phi*\phi$. 

For the $V\cdot\nabla u_i$ term, when $i\neq 3$ we can use H\"older's inequality and Lemma \ref{lem::EndpointHardysInequality} to obtain an upper bound which is a multiple of $M\int_s^t \|\nabla u\|_{L^2}^2\,d\tau$. 
When $i=3$ it is natural to try to apply Proposition \ref{prop::TypeII.5Vuphi*uEstimate}, but the dependence on $t$ prevents this.  
Instead we start by moving the derivative to the convolution term,
\begin{align*} 
    &\int_s^t    \int_{\R^3} (V\cdot \nabla u_3)   e^{2(t-\tau)\Delta} \phi*\phi*u_3(\tau) \,dx  \,d\tau 
    \\&= -\int_s^t    \int_{\R^3} (V\cdot \nabla e^{2(t-\tau)\Delta} \phi*\phi*u_3) \cdot u_3(\tau) \,dx  \,d\tau
    \\&=-\sum_{j=1}^3\int_s^t    \int_{\R^3} (V_j  (\partial_{j}e^{2(t-\tau)\Delta} \phi)*\phi*u_3))   u_3  \,dx  \,d\tau,
\end{align*}
which uses the fact that $V$ is divergence-free.
When $j=3$ and since $V_3\in L^{3,\infty}$ we have by a familiar estimate that 
\begin{align*}
    & \bigg| \int_s^t    \int_{\R^3} (V_3  (\partial_{3}e^{2(t-\tau)\Delta} \phi)*\phi*u_3))   u_3  \,dx  \,d\tau\bigg| \\
    &= \bigg|\int_s^t    \int_{\R^3} (V_3  ( e^{2(t-\tau)\Delta} \phi)*\phi*\partial_3 u_3))   u_3  \,dx  \,d\tau\bigg| \leq CM \int_s^t \| \nabla u \|_{L^2}^2\,d\tau,
\end{align*}
where we used Young's inequality and the monotonicity of the heat semigroup in $L^p$ as follows:
$$\|( e^{2(t-\tau)\Delta} \phi)*\phi\|_{L^1}\leq C\|e^{2(t-\tau)\Delta} \phi\|_{L^1}\|\phi\|_{L^1}\leq C\|\phi\|_{L^1}^2\leq  C.$$
If $j=1,2$ and because $\phi$, the kernel of $e^{t\Delta}$ and $u_3$ are all even, 
the vector field $(\partial_{j}e^{2(t-\tau)\Delta} \phi)*\phi*u_3$ is odd in $x_j$ and therefore we can apply the endpoint Hardy inequality along with the fact that $V_j\leq M|x_h|^{-1}$---see Remark \ref{rem::OddEndpointHardy}---to obtain
\begin{align*} 
    \bigg|\int_s^t    \int_{\R^3} (V_j  (\partial_{i}e^{2(t-\tau)\Delta} \phi)*\phi*u_3))   u_3  \,dx  \,d\tau \bigg|&\leq C M\int_s^t   \| u_3\|_{L^2} \| \nabla ( (\partial_{j}e^{2(t-\tau)\Delta} \phi)*\phi*u_3    )\|_{L^2}. 
\end{align*}
We can commute the outermost derivatives and the Fourier multiplier operators to end up with
\begin{align*}
    \| \nabla ( (\partial_{j}e^{2(t-\tau)\Delta} \phi)*\phi*u_3    )\|_{L^2} \leq C \| \partial_{j}e^{2(t-\tau)\Delta} \phi\|_{L^1} \| \nabla u\|_{L^2}.
\end{align*}
By properties of the heat semigroup,
\begin{align*}
    \| \partial_{j}e^{2(t-\tau)\Delta} \phi\|_{L^1} \leq \frac C {(t-\tau)^{1/2}}\|\phi\|_{L^1}.
\end{align*}
Hence, for $j=1,2$,  
\begin{align*} 
    \bigg|\int_s^t    \int_{\R^3} (V_i  (\partial_{j}e^{2(t-\tau)\Delta} \phi)*\phi*u_3))   u_3  \,dx  \,d\tau\bigg|&\leq C  M \| u_0\|_{L^2}\|\phi\|_{L^1}^2 \,d\tau \int_s^t \frac 1 {(t-\tau)^{1/2}} \|\nabla u\|_{L^2}   \,d\tau,
\end{align*}
where we used the fact that $\|u_3\|_{L^2}\leq \|u_0\|_{L^2}$. 

For the other term, we apply Proposition \ref{prop::TypeII.5uVvEstimate}  where we note  $e^{2(t-\tau)\Delta}\phi*\phi*u$ is axisymmetric because convolution with a radial function preserves axisymmetry. 
This leads to the bound
\begin{align*}
    \bigg| \int_{\R^3}  ((u\cdot \nabla (e^{2(t-\tau)\Delta}\phi*\phi*u)) \cdot V)(\tau) \,dx\bigg| \leq C M \|\nabla u\|_{L^2}^2.
\end{align*}
We therefore obtain   a version of \cite[(6.7)]{KarchPilSchonbek}, namely
\begin{align*}
    \| \hat \phi \hat u(t)\|_{L^2}^2 &\leq I_1(t,s) + C(\|u_0\|_{L^2}^2, M) \int_s^t \|\nabla u\|_{L^2}^2\,d\tau+C M  \int_s^t \frac 1 {(t-\tau)^{1/2}} \|\nabla u\|_{L^2}(\tau)    \,d\tau.
\end{align*}
In contrast with \cite{{KarchPilSchonbek}}, we cannot show this goes to zero directly. 
Instead we take an average from $s$ to $s+T$ to obtain
\begin{align*}
    \frac 1 T \int_s^{s+T} \|\hat \phi \hat u\|_{L^2}^2(t)\,dt &\leq  \frac 1 T \int_s^{s+T}   I_1(t,s)   \,dt
    \\&+\frac {CM} T \int_s^{s+T}   \int_s^t \|\nabla u\|_{L^2}^2\,d\tau    \,dt
    \\&+\frac {CM} T \int_s^{s+T}    \int_s^{t} \frac 1 {(t-\tau)^{1/2}} \|\nabla u\|_{L^2}(\tau)   \,d\tau  \,dt:= \Gamma_1 +\Gamma_2 +\Gamma_3.
\end{align*}
The last average above is dominated by the following
\begin{align*}
    \frac C T \int_s^{s+T}    \int_s^{s+T} \frac 1 {(t-\tau)^{1/2}} \|\nabla u\|_{L^2} (\tau)  \,d\tau  \,dt \leq \frac C T T^{1/2} \int_s^{s+T}\|\nabla u\|_{L^2}\,dt \leq C \bigg( \int_s^{s+T}   \|\nabla u\|_{L^2}^2\,dt \bigg)^{1/2},
\end{align*}
where we used Tonelli's theorem.
By taking $s$ large we can ensure that  $\int_s^{\infty} \|\nabla u\|_{L^2}^2\,dt$ is as small as desired. 
This implies that $\Gamma_2+\Gamma_3$ can be made small independently of $T$. 
We then observe that, when $s$ is fixed, $I_1(t,s)\to 0$ as $t\to \infty$ by properties of the heat semigroup and, therefore, 
\begin{align*}
    \frac C T  \int_s^{s+T}I_{1}(t,s)\,dt \to 0,
\end{align*}
as well provided we take $T$ sufficiently large. 
Because we can always increase $s$ and carryout the same reasoning, we conclude that there exists a sequence of times $t_k\to \infty$ so that 
\begin{align*}
    \| \hat \phi \hat u\|_{L^2}^2(t_k)\leq \epsilon,
\end{align*} 
for a given $\epsilon$ that can be arbitrarily small.
We may further choose these times so that the energy inequality is satisfied by $u$ when the initial time is $t_k$.

For the high modes, \cite{KarchPilSchonbek} again uses the generalized inequality but this time incorporates the function $E(t) = (1+t)^\alpha$ where $\alpha\in (0,1)$. 
They then pursue an estimate of $E(t)\| (1-\hat \phi) \hat u(t)\|_2^2$. 
In order to end up with their final estimate, we only need to change the way we handle the two terms involving $V$ which are the estimates \cite[(6.11) and (6.12)]{KarchPilSchonbek}. 
The first of these is 
\begin{align*}
    \int_s^t E(\tau) \int_{\R^3} (V\cdot \nabla u) \cdot( \phi*\phi-2\phi)*u)(\tau) \,dx = - \int_s^t E(\tau) \int_{\R^3} (V\cdot \nabla u) \cdot( \eta*u)(\tau) \,dx\,d\tau, 
\end{align*}
where $\eta = -\phi*\phi + 2\phi $ is radial and satisfies $\hat \eta(0)=1$. 
Because $\eta *u$ is axisymmetric, this can be bounded using Proposition \ref{prop::TypeII.5Vuphi*uEstimate} to obtain
\begin{align*}
    \bigg| \int_s^t E(\tau) \int_{\R^3} (V\cdot \nabla u) \cdot( \eta *u)(\tau) \,dx\,d\tau\bigg|\leq C M \int_s^t E(\tau) \|\nabla u\|_{L^2}^2\,d\tau.
\end{align*}
The second term is bounded using Proposition \ref{prop::TypeII.5uVvEstimate} , which gives
\begin{align*}
    \bigg|\int_s^t E(\tau) \int_{\R^3} ((u\cdot \nabla (u- (2\phi-\phi*\phi)*u)) \cdot V(\tau) \,dx\,d\tau \bigg|\leq CM \int_s^t E(\tau)\|\nabla u\|_{L^2}^2  \,d\tau.
\end{align*}
Considering these estimates along with those in\cite[pp.~37-38]{KarchPilSchonbek}, we  obtain
\begin{align}
    \| (1-\hat \phi) \hat u(t)\|_{L^2}^2 
    \leq \frac {E(s)} {E(t)}\| (1-\hat \phi)\hat u(s)\|_{L^2}^2 
    +\frac C{E(t)} \int_s^t (1+\tau)^{\alpha-3}d\tau 
    + \frac {C(M+1)}{E(t)} \int_s^t E(\tau) \|\nabla u\|_{L^2}^2 \,d\tau,
\end{align}
which corresponds to \cite[(6.13)]{KarchPilSchonbek}. Note that $E(\tau) / E(t)\leq 1$ for $\tau\leq t$. For fixed $s$, each of the terms on the right-hand side become small when $t$ is sufficiently large. In particular, there exists $t_{k_0}$ from our sequence of times in the low-mode estimates so that 
\begin{align*}
    \| (1-\hat \phi) \hat u(t_{k_0})\|_{L^2}^2\leq \epsilon.
\end{align*}

We now prove the full energy is decaying. Take $\epsilon>0$ and $t_{k_0}$ as above. By the triangle inequality we have,
\begin{align*}
    \|u(t_k)\|_2^2 \leq 2\epsilon.
\end{align*}
Because $t_k$  was chosen so the energy inequality applies when initiated at $t_k$, it follows that
\begin{align*}
    \|u(t)\|_2 \leq 2\epsilon,
\end{align*}
for all $t>t_k$. 
\end{proof}

\begin{remark}\label{rem::TypeII.5andR3plus}As there are two approaches to asymptotic stability, the semigroup approach \cite{KarchPil} and the energy method approach (see \cite{KarchPilSchonbek} for $\R^3$ and Section \ref{sec::NonStationaryStability} for other domains), it is natural to ask what happens in each for Type II.5 solutions. The semigroup approach has the benefit of working for more domains than the energy method approach due to the latter's dependence on either the Fourier splitting argument \cite{KarchPilSchonbek} or proving small-data global well-posedness in $L^{3,\infty}(\Omega)$ (Section \ref{sec::NonStationaryStability}. The semigroup approach or our approach in Section \ref{sec::NonStationaryStability}, therefore, could give us stability in $\R^3_+$ whereas the approach of \cite{KarchPilSchonbek} does not. In attempting the semigroup approach, we found that while the accretive property in Proposition \ref{prop::analyticsemigroup} is satisfied, this is not clear for the continuity property  because of the term 
\begin{align*}
\int_\Omega (V\cdot\nabla u )\cdot w\,dx,
\end{align*}
which we failed to bound when $V$ is Type II.5. This term also presents an obstacle to proving small-data global well-posedness in $L^{3,\infty}(\Omega)$ when perturbing around Type II.5 singular solutions which is needed for the approach we develop in Section \ref{sec::NonStationaryStability}.   This term vanishes however, in energy estimates and is therefore amenable to the approach of \cite{KarchPilSchonbek}.

\end{remark}

\section{Asymptotic stability for non-stationary flows}
\label{sec::NonStationaryStability}

We work in domains for which small-data global well posedness holds.
Our proof of small-data global well posedness requires the bilinear estimate in Lemma \ref{lem::EBilinearEstimates}.
Let $\Omega\subseteq \R^3$ be a domain for which the Stokes semigroup $e^{-tA}$ is a well-defined linear operator from $L_\sigma^{3/2,1}(\Omega)$ to $L_\sigma^{3,1}(\Omega)$ which satisfies
\begin{equation}
    \label{eq::yamazaki}
    \int_0^\infty \|\nabla e^{-tA}\varphi\|_{L^{3,1}(\Omega)}\,dt\leq C\|\varphi\|_{L^{3/2,1}(\Omega)},
\end{equation}
for every $\varphi\in L^{3/2,1}_\sigma(\Omega)$ and the identity $\left(L^{3/2,1}_\sigma(\Omega)\right)^*=L^{3,\infty}_\sigma(\Omega)$. 
We call such domains \textit{admissible}. 
By \cite[(2.10)]{Yamazaki}, $\Omega=\R^3$ and $\Omega=\R^3_+$ are admissible. 

Fix an admissible domain $\Omega$. 
We will prove asymptotic stability for small $V\in C(0,\infty;L^{3,\infty}(\Omega))$ which are continuous in time and divergence-free. 
Fix such a $V$. When perturbing around a stationary solution, the terms from the linearized equation can be built into the semigroup. 
This is the approach taken in \cite{KarchPil} and Section \ref{sec::StationaryStability}. 
The asymptotic stability of non-stationary fluids, on the other hand, can be proved in $\R^3$ using the Fourier splitting method \cite{KarchPilSchonbek} which we used in Section \ref{sec::TypeII.5Stability}. 
This technique is not directly available in $\R^3_+$. 
To get around this, we introduce a new approach that uses eventual regularity of weak solutions in a novel way.  
In particular $u(\cdot,t)$ is eventually small in $L^{3,\infty}(\Omega)$ at some times. 
We will then establish a small-data global well-posedness theorem for \eqref{pns} and weak-strong uniqueness to conclude that $u$ is small in $L^{3,\infty}(\Omega)$ for all times. 
Once this is known all terms can be viewed perturbatively to the linear problem, an observation which allows us to prove $L^2$-decay.  
We begin by defining mild solutions. 

\begin{definition}
    We call a function $u\in L^\infty(T_0,\infty;L^{3,\infty}(\Omega))$ a mild solution of \eqref{pns} with initial data $u(T_0)$ if and only if $u(t)$ satisfies the identity 
    \begin{equation}
        \label{eq::mildSolution}
        \int_\Omega u(t)\cdot\varphi dx
        = \int_\Omega u(T_0)\cdot e^{-(t-T_0)A}\varphi dx
        +\int_{T_0}^t \int_\Omega (u\otimes u+V\otimes u+u\otimes V)(\tau):\nabla e^{-(t-\tau) A}\varphi \, dx\, d\tau,
    \end{equation}
    for all $\varphi\in L_\sigma^{3/2,1}(\Omega)$ and $t\in [T_0,\infty)$. 
\end{definition}

The next proposition states that weak solutions to \eqref{pns} decay provided they eventually enter $L^{\infty}_tL^{3,\infty}_x$. 
Note that it does not require that $V$ is small because decay follows from the energy inequality once we have that $V$ and $u$ are in $L^{\infty}_tL^{3,\infty}_x$, regardless of their size.

\begin{proposition}
    \label{prop::MildSolutionL2Decay}  
    Fix a weak solution $u$ of \eqref{pns} in $\Omega$. 
    Assume there exists $T_0> 0$ and a mild solution $v\in L^{\infty}(T_0,\infty;L^{3,\infty})$, which is also a weak solution, on $[T_0,\infty)$ with $u(t)=v(t)$ in $L^2(\Omega)$ for a.e. $t>T_0$. 
    Then 
    \begin{align*}
        \lim_{t\to\infty}\|u(t)\|_{L^2(\Omega)}=0.
    \end{align*}
\end{proposition}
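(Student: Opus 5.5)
The plan is to follow the proof of Theorem \ref{mainthm1}, with the Stokes semigroup $e^{-tA}$ replacing $e^{-t\cL}$ and the mild formulation \eqref{eq::mildSolution} replacing \eqref{eq::samesies}. The terms involving $V$ stay on the right-hand side as forcing terms rather than being built into the semigroup. Throughout, set $N:=\|v\|_{L^\infty(T_0,\infty;L^{3,\infty})}+\|V\|_{L^\infty(0,\infty;L^{3,\infty})}$; no smallness of $N$ is needed. The input I rely on is that $u$ is a weak solution on $[0,\infty)$, so $\nabla u\in L^2(0,\infty;L^2)$ by Definition \ref{def::WeakSolution} with $T=\infty$. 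In particular, the tail $\int_s^\infty\|\nabla u\|_{L^2}^2\,d\tau$ tends to $0$ as $s\to\infty$.

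\textbf{Step 1 (an $L^2$ bound from the mild formula).} Fix $\varphi\in L^2_\sigma\cap L^{3/2,1}_\sigma$ with $\|\varphi\|_{L^2}=1$. For $W\in\{u,V\}$ and a.e.\ $\tau>T_0$ I bound, using $u=v$ a.e.\ and Lorentz--H\"older together with Lorentz--Sobolev (Theorem \ref{thm::LorentzInequalities}),
\begin{align*}
\Big|\int_\Omega (W\otimes u):\nabla e^{-(t-\tau)A}\varphi\,dx\Big|\le \|W\|_{L^{3,\infty}}\|u\|_{L^{6,2}}\|\nabla e^{-(t-\tau)A}\varphi\|_{L^2}\le CN(t-\tau)^{-1/2}\|\nabla u(\tau)\|_{L^2}.
\end{align*}
Here I use the standard smoothing estimate $\|\nabla e^{-sA}\varphi\|_{L^2}\le Cs^{-1/2}\|\varphi\|_{L^2}$ for the self-adjoint Stokes operator. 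The same bound holds for $u\otimes V$. Taking the supremum over $\varphi$ as in \eqref{eq::L2SelfDual}, using density of $L^{3/2,1}_\sigma\cap L^2_\sigma$, gives
\begin{align*}
\|u(t)\|_{L^2}\le \|e^{-(t-T_0)A}u(T_0)\|_{L^2}+CN\int_{T_0}^t (t-\tau)^{-1/2}\|\nabla u(\tau)\|_{L^2}\,d\tau .
\end{align*}

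\textbf{Step 2 (decay of time averages).} Next I average this bound over $t\in[s,s+T]$, with the semigroup started at time $s$ instead of $T_0$. I expect to justify that restart from \eqref{eq::mildSolution} via the semigroup property, or else to restart at $T_0$ and absorb the bounded piece $[T_0,s]$. As in $\Gamma_3$ of the proof of Theorem \ref{mainthm2}, Tonelli gives that the average of the integral term is at most $CN\big(\int_s^\infty\|\nabla u\|_{L^2}^2\big)^{1/2}$, uniformly in $T$. For the semigroup term, $A$ has trivial kernel, since $(Af,f)=\|\nabla f\|_{L^2}^2$. The argument of Lemma \ref{lem::decayOfAverage} therefore gives $e^{-tA}f\to0$ in $L^2$ for every $f\in L^2_\sigma$. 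I first choose $s$ large and then $T$ large, and pick the times so that the energy inequality \eqref{eq::EnergyInequality} holds when started there, which is allowed since it holds for a.e.\ starting time. This produces times $t_k\to\infty$ with $\|u(t_k)\|_{L^2}\to0$.

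\textbf{Step 3 (propagation).} I expect this to be the main obstacle. Because the $\log$-divergence of $\int(t-\tau)^{-1}$ blocks a direct pointwise estimate from Step 1, I will not use Step 1 here. Instead I start the energy inequality \eqref{eq::EnergyInequality} at $t_k$ and control the $V$-term by its size rather than absorbing it. By \eqref{eq::SesqulinearEstimate}, for $t>t_k$,
\begin{align*}
\|u(t)\|_{L^2}^2\le \|u(t_k)\|_{L^2}^2+2C\|V\|_{L^\infty L^{3,\infty}}\int_{t_k}^\infty\|\nabla u\|_{L^2}^2\,d\tau .
\end{align*}
Both terms on the right tend to $0$ as $k\to\infty$: the first by Step 2 and the second by the tail property. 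Hence $\limsup_{t\to\infty}\|u(t)\|_{L^2}=0$. This is also why no smallness of $V$ is required. The only technical care needed is in the a.e.\ choice of times and in the identification $u=v$ used in Step 1.
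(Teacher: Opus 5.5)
Your proof is correct. Steps 1--2 follow the paper's argument. The paper also bounds $\|v(s)\|_{L^2}$ through the mild formula, $L^2$-duality, Lorentz--H\"older/Sobolev and the $(s-\tau)^{-1/2}$ smoothing of $\nabla e^{-(s-\tau)A}$, and then shows the time averages decay. The only cosmetic difference is that the paper keeps the start time $T_0$ and averages over $[T_0,t]$, controlling $\frac{2}{t-T_0}\int_{T_0}^t (t-\tau)^{1/2}\|\nabla v\|_{L^2}\,d\tau$ by splitting at a large time $M$. You instead restart at $s$ and use the Tonelli bound from $\Gamma_3$.

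The genuine difference is Step 3. The paper passes from small averages to decay by asserting $\|v(t)\|_{L^2}\le \|v(s)\|_{L^2}$ for a.e.\ $s$ and all $t\ge s$, ``since $v$ is a weak solution''. From \eqref{eq::EnergyInequality}, however, this monotonicity only follows after absorbing $2\int_s^t\int_\Omega (u\cdot\nabla)u\cdot V$ into the dissipation, that is, only when $C\|V\|_{L^\infty L^{3,\infty}}\le 1$. This is harmless in the application to Theorem \ref{mainthm3}, where $V$ is small. It does not cover the ``no smallness of $V$'' generality claimed just before the proposition.

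Your estimate $\|u(t)\|_{L^2}^2\le \|u(t_k)\|_{L^2}^2+2C\|V\|\int_{t_k}^\infty\|\nabla u\|_{L^2}^2\,d\tau$ needs no absorption, so it proves the proposition as stated at no extra cost. You are also right that, for large $V$, the bound $\nabla u\in L^2(0,\infty;L^2)$ has to come from the definition of weak solution with $T=\infty$. The paper attributes it to the energy inequality, which would again require small $V$.
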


\begin{proof}
    Fix $s\geq T_0$ and $\varphi\in C_{c,\sigma}^\infty(\Omega)$ with $\|\varphi\|_{L^2_\sigma}=1$. 
    Since $C_{c,\sigma}^\infty \subseteq L^{3/2,1}(\Omega)$ we have
    \begin{align*}
        \int_\Omega v(s)\cdot\varphi dx=\int_\Omega v(T_0)\cdot e^{-(s-T_0)A}\varphi dx+\int_{T_0}^s \int_\Omega (v\otimes v+V\otimes v+v\otimes V)(\tau):\nabla e^{-(s-\tau) A}\varphi dxd\tau.
    \end{align*} 
    So
    \begin{align*}
    `   \|v(s)\|_{L^2(\Omega)} \leq \|e^{-(s-T_0)A}v(T_0)\|_{L^2(\Omega)}+C\int_{T_0}^s \frac{\|v\otimes v+V\otimes v+v\otimes V\|_{L^2(\Omega)}(\tau)}{(s-\tau)^{1/2}}d\tau.
    \end{align*}
    by \eqref{eq::L2SelfDual}. 
    Note that, for each $\tau>T_0$, \begin{align*}\|v\otimes V\|_{L^2(\Omega)}(\tau)\leq \|v\|_{L^{6,2}(\Omega)}(\tau)\cdot \|V\|_{L^{3,\infty}(\Omega)}(\tau)\leq \|\nabla v\|_{L^2(\Omega)}(\tau)\cdot \|V\|_{L^{3,\infty}(\Omega)}(\tau),\end{align*}
    by the Lorentz refinement of the Holder and Sobolev inequalities. 
    The same type of estimate holds for $\|V\otimes v\|_{L^2(\Omega)}(\tau)$ and $\|v\otimes v\|_{L^2(\Omega)}(\tau)$. 
    So
    \begin{equation}
        \label{eq::BoundOnAverage}
        \begin{aligned}
            \frac{1}{t-T_0}\int_{T_0}^t \|&v\|_{L^2(\Omega)}(s)ds
            \leq \frac{1}{t-T_0}\int_{T_0}^t \|e^{-(s-T_0)A}v(T_0)\|_{L^2(\Omega)}ds\\
            &+\frac{C\left(\|v\|_{L_t^\infty L_x^{3,\infty}(\Omega)}
            +2\|V\|_{L_t^\infty L_x^{3,\infty}(\Omega)}\right)}{t-T_0}\int_{T_0}^t\int_{T_0}^s \frac{1}{(s-\tau)^{1/2}}\|\nabla v\|_{L^2(\Omega)}(\tau)d\tau ds.
        \end{aligned}
    \end{equation}
    The double integral can be simplified using Fubini's theorem
    \begin{align*}
        \frac{1}{t-T_0}\int_{T_0}^t\int_{T_0}^s \frac{1}{(s-\tau)^{1/2}}\|\nabla v\|_{L^2(\Omega)}(\tau)d\tau ds
        &=\frac{1}{t-T_0}\int_{T_0}^t \|\nabla v\|_{L^2(\Omega)}(\tau)\int_\tau^t (s-\tau)^{-1/2}dsd\tau\\
        &=\frac{2}{t-T_0}\int_{T_0}^t (t-\tau)^{1/2}\|\nabla v\|_{L^2(\Omega)}(\tau)d\tau =: I(t).
    \end{align*}
    
    It is not hard to show this integral goes to zero. 
    Decompose $I(t)$ as 
    \begin{align*}
        I(t)=\frac{2}{t-T_0}\int_{T_0}^M (t-\tau)^{1/2}\|\nabla v\|_{L^2(\Omega)}(\tau)d\tau+\frac{2}{t-T_0}\int_M^t (t-\tau)^{1/2}\|\nabla v\|_{L^2(\Omega)}(\tau)d\tau=:I_1(t)+I_2(t),
    \end{align*}
    for some fixed $M$. 
    Then 
    \begin{align*}
        I_1(t)\leq \frac{2}{\sqrt{t-T_0}}\int_{T_0}^M \|\nabla v\|_{L^2(\Omega)}(\tau)d\tau\leq \frac{2}{\sqrt{t-T_0}}\sqrt{M-T_0}\|\nabla v\|_{L^2(T_0,M;L^2_x(\Omega))},
    \end{align*}
    and 
    \begin{align*}
        I_2(t)\leq \frac{2}{t-T_0}\left(\int_M^t (t-\tau)d\tau\right)^{1/2}\|\nabla v\|_{L^2_t(M,\infty;L^2_x(\Omega))}\leq\sqrt{2}\|\nabla v\|_{L^2_t(M,\infty;L^2_x(\Omega))}.
    \end{align*}
    For $\epsilon>0$ let $M$ be such that $\|\nabla v\|_{L^2_t(M,\infty;L^2_x(\Omega))}<\frac{\epsilon}{2\sqrt{2}}$ and 
    \begin{align*}
        t>\max \left\{\frac{16}{\epsilon^2}(M-T_0)\|\nabla v\|_{L^2(T_0,M;L^2_x(\Omega))}^2+T_0,M\right\}.
    \end{align*} 
    Such a $M$ exists because $\|\nabla v\|_{L^2(\Omega)}(t)\in L^2(\R^+)$ by the energy inequality. 
    So $I(t)<\epsilon$. Since $v,V\in L_t^\infty L_x^{3,\infty}$ the bound \eqref{eq::BoundOnAverage} together with the known decay of 
    \begin{align*}
        \frac{1}{t-T_0}\int_{T_0}^t \|e^{-(s-T_0)A}v(T_0)\|_{L^2(\Omega)}ds,
    \end{align*} 
    proves decay of the averages of $\|v\|_{L^2(\Omega)}(\cdot)$ over $(T_0,t)$ as $t\to \infty$.
    Since $v$ is a weak solution, $\|v(t)\|_{L^2(\Omega)}\leq \|v(s)\|_{L^2(\Omega)}$ for a.e. $s$ and all $t\geq s$.  
    We can therefore find a sequence of times $s_k$ so that $\|v(t)\|_{L^2(\Omega)}\leq \|v(s_k)\|_{L^2(\Omega)}$ for $t>s_k$ and $\|v(s_k)\|_{L^2(\Omega)} \to 0$ as $k\to \infty$.  
    $L^2$-decay of $v$ follows from this, as does $L^2$-decay of $u$.
\end{proof}

We now aim to prove the assumptions of Proposition \ref{prop::MildSolutionL2Decay} always hold when $V$ is sufficiently small, which requires we construct a mild solution and establish weak-strong uniqueness. 
For this we need to prove bilinear estimates in $L_t^\infty L_x^{3,\infty}$ and the energy class.  

Recall that the dual of $L^{3,1}(\Omega)$ is $L^{3/2,\infty}(\Omega)$. 
So, if $F$ satisfies $\sup_{0<s<t}\|F(s)\|_{L^{3/2,\infty}}<\infty$, then 
\begin{align*}
    \Lambda_t(\varphi):=\int_0^t(F(s),\nabla e^{-(t-s)A}\varphi)ds,
\end{align*}
is a bounded linear functional on $L^{3/2,1}(\Omega)$ for each $t$ because
\begin{align*}
    \int_0^t\left|(F(s),\nabla e^{-(t-s)A}\varphi)\right|ds
    &\leq C\left(\sup_{0<s<t}\|F(s)\|_{L^{3/2,\infty}(\Omega)}\right) \int_0^t \|\nabla e^{-(t-s)A}\varphi\|_{L^{3,1}(\Omega)}ds\\
    &\leq C\left(\sup_{0<s<t}\|F(s)\|_{L^{3/2,\infty}(\Omega)}\right) \|\varphi\|_{L^{3/2,1}(\Omega)},
\end{align*}
for a constant $C=C(\Omega)$. 
By the duality $(L_\sigma^{3/2,1}(\Omega))^*=L_\sigma^{3,\infty}(\Omega)$ assumed in the admissibility definition, there exists an element $B(F)(t)\in L^{3,\infty}_\sigma(\Omega)$ such that 
\begin{align*}
    \langle B(F)(t),\varphi\rangle=\Lambda_t(\varphi),
\end{align*} 
for all $\varphi\in L_\sigma^{3/2,1}(\Omega)$ and 
\begin{equation}
    \label{eq::EBilinearEstimatesProof}
    \|B(F)(t)\|_{L^{3,\infty}(\Omega)}
    \leq C\sup_{0<s<t}\|F(s)\|_{L^{3/2,\infty}(\Omega)}.
\end{equation}
Define 
\begin{align*}
    B(w,v):=B(w\otimes v).
\end{align*} 
Taking the supremum in $t$ on both sides of \eqref{eq::EBilinearEstimatesProof} proves the following lemma.

\begin{lemma}
    \label{lem::EBilinearEstimates}
   Let  $\Omega$ be admissible. 
   There exists a constant $C_E>0$ which depends only on $\Omega$ such that
    \begin{align*}
        \sup_{t>0} \|B(w,v)(t)\|_{L^{3,\infty}(\Omega)}\leq C_E\|w\|_{L_t^\infty L_x^{3,\infty}(\Omega)}\|v\|_{L_t^\infty L_x^{3,\infty}(\Omega)}.
    \end{align*}
\end{lemma}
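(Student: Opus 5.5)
The plan is to read the estimate as a duality statement, reducing it to the bound already derived just before the lemma, namely \eqref{eq::EBilinearEstimatesProof}, applied with $F=w\otimes v$. There are two steps: first check that $F(s)=w(s)\otimes v(s)$ lies in $L^{3/2,\infty}(\Omega)$ uniformly in $s$, and then take the supremum in $t$ of the resulting bound.

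\textbf{Step 1 (product estimate).} For almost every $s$, apply the Lorentz--H\"older inequality of Theorem \ref{thm::LorentzInequalities}(a) with exponents $\frac13+\frac13=\frac23$ and second indices $\infty,\infty\mapsto\infty$. This gives
\begin{align*}
    \|w(s)\otimes v(s)\|_{L^{3/2,\infty}(\Omega)}\leq C\|w(s)\|_{L^{3,\infty}(\Omega)}\|v(s)\|_{L^{3,\infty}(\Omega)}
    \leq C\|w\|_{L_t^\infty L_x^{3,\infty}(\Omega)}\|v\|_{L_t^\infty L_x^{3,\infty}(\Omega)}.
\end{align*}
The constant $C$ accounts for the quasinorm conventions. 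Taking the supremum over $0<s<t$ (an essential supremum, which is harmless inside the time integral) yields $\sup_{0<s<t}\|F(s)\|_{L^{3/2,\infty}}<\infty$.

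\textbf{Step 2 (duality and the Yamazaki bound).} For each fixed $t$, define the functional $\Lambda_t(\varphi)=\int_0^t(F(s),\nabla e^{-(t-s)A}\varphi)\,ds$ on $L^{3/2,1}_\sigma(\Omega)$. Pairing $L^{3/2,\infty}$ against $L^{3,1}$ pointwise in $s$ and invoking the admissibility estimate \eqref{eq::yamazaki} after the change of variables $\tau=t-s$ (the integral over $(0,t)$ is dominated by the one over $(0,\infty)$) gives $|\Lambda_t(\varphi)|\leq C\sup_s\|F(s)\|_{L^{3/2,\infty}}\|\varphi\|_{L^{3/2,1}}$.

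The admissibility duality $(L^{3/2,1}_\sigma)^*=L^{3,\infty}_\sigma$ then produces $B(w,v)(t)$ with norm controlled by the dual norm. Combining this with Step 1 and taking $\sup_{t>0}$ gives the claim with $C_E$ depending only on $\Omega$ through the constants in \eqref{eq::yamazaki}, the duality isomorphism, and the Lorentz--H\"older inequality.

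\textbf{Main obstacle.} The only delicate point is measurability of $s\mapsto(F(s),\nabla e^{-(t-s)A}\varphi)$, which is needed for $\Lambda_t$ to make sense. I would handle it by noting that $w,v$ are strongly measurable into $L^{3,\infty}$ (or assuming this, as the setting implicitly does), and that the semigroup term is continuous in $s$ into $L^{3,1}$ for $s<t$. The constant in the dual-norm identification is also absorbed into $C_E$.
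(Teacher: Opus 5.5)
Your proposal is correct and is essentially the paper's argument. The paper obtains \eqref{eq::EBilinearEstimatesProof} from the Yamazaki bound \eqref{eq::yamazaki} and the admissibility duality $(L^{3/2,1}_\sigma)^*=L^{3,\infty}_\sigma$, then takes the supremum in $t$ with $F=w\otimes v$; it leaves implicit the Lorentz--H\"older step $\|w\otimes v\|_{L^{3/2,\infty}}\le C\|w\|_{L^{3,\infty}}\|v\|_{L^{3,\infty}}$ that you write out, and your measurability remark addresses a point the paper passes over, not a gap in your argument.
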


This lemma is also stated in \cite[Lemma 7]{Taniuchi} for the whole-space, half-space, exterior domains, perturbed half space and aperture domains.

Suppose $u$ satisfies
\begin{equation}
    \label{eq::ModifiedMildSolution}
    u(t)=e^{-tA}u_0+B(u,u)+B(u,V)+B(V,u).
\end{equation}
Since $u_0\in L^{3,\infty}$, $e^{-tA}$ is interpreted as the dual of $e^{-tA}$ defined on $L^{3/2,1}$. 
Then $u$ also satisfies \eqref{eq::mildSolution} for all $\varphi\in L^{3/2,1}(\Omega)$. 
In the proof of eventual regularity we will also need to know the mild solution we construct is also a weak solution when the data is in $L^2$. 
For this we will also need bilinear estimates in the space
\begin{align*}
    X_T:=L^\infty(0,T;L^2_\sigma(\Omega))\cap L^2(0,T;\dot{H}^1_{0,\sigma}(\Omega)).
\end{align*}
We will also consider the subspace 
\begin{align*}
    \tilde X_T:=BC(0,T;L^2_\sigma(\Omega))\cap L^2(0,T;\dot{H}^1_{0,\sigma}(\Omega)),
\end{align*}
which is a Banach space under the same norm as $X_T$.

\begin{lemma}
    \label{lem::XTBilinearEstimates}
    Let $E:=L^\infty(0,T;L^{3,\infty}(\Omega))$. 
    There exists a constant $C_X>0$ which depends only on $\Omega$ such that:
    \begin{itemize}
        \item If $w\in E$ and $v\in X_T\cap E$ then 
                \begin{align*}
                    \|B(w,v)\|_{X_T}\leq C_X\|w\|_E\|v\|_{X_T}.
                \end{align*}
        \item If $w\in X_T\cap E$ and $v\in E$ then 
                \begin{align*}
                    \|B(w,v)\|_{X_T}\leq C_X\|w\|_{X_T}\|v\|_E.
                \end{align*}
    \end{itemize}
\end{lemma}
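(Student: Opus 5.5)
Both bullets are symmetric, since $w\otimes v$ and $v\otimes w$ have the same norms, so I will only treat $\|B(w,v)\|_{X_T}\le C_X\|w\|_E\|v\|_{X_T}$. The plan is to identify $B(F)$ with the solution of the linear Stokes system with divergence-form force, $\partial_t z+Az=\mathbb P\nabla\cdot F$, $z(0)=0$ (up to the sign convention of \eqref{eq::mildSolution}), where $F=w\otimes v$. I would then derive an energy estimate for $z$ and bound the forcing using the Lorentz--H\"older and Lorentz--Sobolev inequalities of Theorem \ref{thm::LorentzInequalities}. The point is that $F$ lies in $L^2(0,T;L^2(\Omega))$:
\begin{align*}
\|w\otimes v(\tau)\|_{L^2(\Omega)}\le \|w(\tau)\|_{L^{3,\infty}(\Omega)}\|v(\tau)\|_{L^{6,2}(\Omega)}\le C_S\|w\|_E\|\nabla v(\tau)\|_{L^2(\Omega)},
\end{align*}
so that $\|F\|_{L^2(0,T;L^2)}\le C_S\|w\|_E\|v\|_{X_T}$. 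This is the same product estimate used in the proof of Proposition \ref{prop::MildSolutionL2Decay}.

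The main step is the energy estimate for $z$. Formally, testing with $z$ gives
\begin{align*}
\tfrac12\tfrac{d}{dt}\|z\|_{L^2}^2+\|\nabla z\|_{L^2}^2=-\int_\Omega F:\nabla z\,dx\le \tfrac12\|F\|_{L^2}^2+\tfrac12\|\nabla z\|_{L^2}^2 .
\end{align*}
Integrating from $0$ to $t\le T$ yields $\sup_{t\le T}\|z(t)\|_{L^2}^2+\int_0^T\|\nabla z\|_{L^2}^2\le \|F\|_{L^2_tL^2_x}^2$. Hence $\|z\|_{X_T}\le C\|w\|_E\|v\|_{X_T}$, with a constant independent of $T$ and $\Omega$ except through the Sobolev constant.

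To make this rigorous I would use the form method rather than a formal computation. The Stokes operator $A$ is associated with the closed form $\int\nabla u:\nabla\varphi$ on $H^1_{0,\sigma}(\Omega)$, and $\varphi\mapsto\int F:\nabla\varphi$ is a functional in $L^2(0,T;(H^1_{0,\sigma})')$. Lions' theorem then gives a unique solution $\tilde z\in L^2(0,T;H^1_{0,\sigma})\cap C([0,T];L^2_\sigma)$ with $\partial_t\tilde z\in L^2(0,T;(H^1_{0,\sigma})')$. For such $\tilde z$ the energy identity holds by the Lions--Magenes lemma, so the estimate above is valid for $\tilde z$, and in fact $\tilde z\in\tilde X_T$. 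The remaining task, which I expect to be the main obstacle, is to show $\tilde z=B(w,v)$. To do so I would test the weak formulation for $\tilde z$ against the backward solution $e^{-(t-\tau)A}\varphi$ with $\varphi\in C^\infty_{c,\sigma}(\Omega)$, exactly as in the derivation of \eqref{eq::samesies}. To handle the singularity at $\tau=t$, I would first shift to $e^{-(t-\tau+\epsilon)A}\varphi$ and then let $\epsilon\to0$, using dominated convergence in $\tau$. This shows that $\langle\tilde z(t),\varphi\rangle=\Lambda_t(\varphi)$ for a dense set of $\varphi$. Both sides are continuous in $\varphi\in L^{3/2,1}_\sigma$: the right side by admissibility, and the left side because $\tilde z(t)\in L^2\cap L^{6,2}$, which embeds into the relevant dual when the test functions are smooth. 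Uniqueness in the duality $(L^{3/2,1}_\sigma)^*=L^{3,\infty}_\sigma$ then gives $\tilde z(t)=B(w,v)(t)$ for a.e. $t$, which yields the bound. The second bullet follows verbatim using $\|w\otimes v\|_{L^2}\le C_S\|\nabla w\|_{L^2}\|v\|_{L^{3,\infty}}$.
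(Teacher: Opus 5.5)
Your proposal is correct and takes the same route as the paper. The paper also treats $B(w,v)$ as the weak solution of the Stokes system with divergence-form force $F(w,v)\in L^2(0,T;(\dot{H}^1_{0,\sigma})')$, controls that force with the same Lorentz--H\"older/Lorentz--Sobolev estimate, and reads off the energy inequality. The only difference is that the paper delegates existence, the identification of $B(F)$ with this weak solution, and the energy estimate to Sohr's Lemma IV.2.4.2. You instead derive these via Lions' theorem, the Lions--Magenes lemma, and the backward-semigroup duality argument, where $L^2\cap L^{6,2}\subset L^3\subset L^{3,\infty}$ correctly gives the continuity you need for a.e.\ $t$.
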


\begin{proof}
    Denote by $V_0$ the space $H_{0,\sigma}^1(\Omega)$ and $V_0'$ its dual.  
    For $w\in E$, $v\in X_T$ and $t\in [0,T]$ define the function $F(w,v)(t)\in V_0'$ as the unique element satisfying 
    \begin{equation}
        \label{eq::BilinearStokesForce}
        (F(w,v)(t),\varphi) = (w(t)\cdot\nabla \varphi,v(t)),
    \end{equation}
    for all $\varphi\in V_0$. 
    This defines $F(w,v)\in L^2(0,T;V_0')$ because
    \begin{align*}
        |(F(w,v)(t),\varphi)|\leq \|w(t)\|_{L^{3,\infty}}\|\nabla\varphi\|_{L^2}\|v(t)\|_{L^{6,2}}\leq C\|w(t)\|_{L^{3,\infty}}\|\nabla\varphi\|_{L^2}\|\nabla v(t)\|_{L^2},
    \end{align*}
    by Lorentz-Sobolev and, hence,
    \begin{equation}
    \label{eq::StokesForceEstimate}
        \begin{aligned}
            \|F(w,v)\|^2_{L^2(0,T;(\dot{H}_{0,\sigma}^1)')}
            &= \int_0^T \|F(w,v)(t)\|_{(\dot{H}_{0,\sigma}^1)'}^2dt\\
            &\leq C\int_0^T \|w(t)\|_{L^{3,\infty}}^2\| v(t)\|_{V_0}^2dt
             \leq C\|w\|_{E}^2\|v\|_{L^2(0,T;V_0)}^2.
        \end{aligned}
    \end{equation}
    By \cite[Lemma IV.2.4.2]{SohrBook}, because $B(F)$ is defined by \cite[(IV.2.4.13)]{SohrBook}, $B(F)$ is a weak solution to the evolutionary Stokes problem with force $F(w,v)$ and zero initial data. 
    Furthermore, because $w\otimes v\in L^2(0,T;L^2)$ when $w\in E$ and $v\in X_T$ (this is the condition \cite[Lemma IV.2.4.2.d]{SohrBook}) and our forcing is in divergence form, we have that 
     \begin{equation}
        \label{eq::StokesEnergyInequality}
        \sup_{0\leq t\leq T}\|B(F)(t)\|_{L^2(\Omega)}^2+\int_0^T \|B(F)(s)\|_{L^2(\Omega)}^2 ds
        \leq \|F(w,v)\|_{L^2(0,T;(\dot{H}_{0,\sigma}^1)')}^2
        \leq C \|w\|_E^2 \| v\|_{X_T},
    \end{equation}
    which proves one case of the theorem. 
    The other case is symmetric. 
\end{proof}

These tools allow us to construct a time-global solution by following a Picard argument. 
This is of course a well known approach and our own work is essentially the same as Meyer's proof of small-data global well-posedness for the non-perturbed Navier-Stokes equations in $L^{3,\infty}$ from  \cite{Meyer}. 
We go beyond the conclusions in \cite{Meyer}, however, by showing our solution is also a weak solution when the data is in $L^2$. 

\begin{theorem}[Small-data global well-posedness]
    \label{thm::GlobalWellPosedness}
    Fix $\Omega\subseteq\R^3$ admissible and $u_0\in L_\sigma^{3,\infty}(\Omega)$. 
    There exists $\epsilon_1>0$ and $\epsilon_*>0$ such that a mild solution $u\in L^\infty(0,\infty;L^{3,\infty}(\Omega))$ with initial data $u_0$ exists uniquely and satisfies 
    \begin{align*}
        \sup_{t>0}\|u\|_{L^{3,\infty}(\Omega)}<\frac{3C\epsilon_1}{2},
    \end{align*}
    whenever $\|u_0\|_{L^{3,\infty}(\Omega)}<\epsilon_1$ and $\|V\|_{L^{3,\infty}(\Omega)}<\epsilon_*$. 
    Additionally, upon possibly shrinking $\epsilon_1$ and $\epsilon_*$, if $u_0\in L^2_\sigma(\Omega)$, then $u(t)$ is a weak solution on the time-interval $[0,T)$ for every $T>0$ and, furthermore, $u(t)\in \tilde X_T$ and satisfies the energy equality \eqref{eq::EnergyEquality} for all $0\leq s\leq t< \infty$. 
\end{theorem}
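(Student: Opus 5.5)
The plan is a Picard iteration for the fixed-point map
\begin{align*}
    \Phi(u)(t):=e^{-tA}u_0+B(u,u)+B(u,V)+B(V,u)
\end{align*}
in $E_\infty:=L^\infty(0,\infty;L^{3,\infty}_\sigma(\Omega))$, following Meyer's argument \cite{Meyer}.

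\textbf{Step 1: the linear term.} Admissibility gives that $e^{-tA}$ acts on $L^{3,\infty}_\sigma$ by duality from $L^{3/2,1}_\sigma$. Boundedness of $e^{-tA}$ on $L^{3/2,1}$ then yields $\|e^{-tA}u_0\|_{L^{3,\infty}}\le C\|u_0\|_{L^{3,\infty}}$; this boundedness follows either by interpolating the $L^p$ bounds of the Stokes semigroup or directly from the admissibility assumptions.

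\textbf{Step 2: contraction in $L^{3,\infty}$.} By Lemma \ref{lem::EBilinearEstimates},
\begin{align*}
    \|\Phi(u)\|_{E_\infty}\le C\epsilon_1+C_E\|u\|_{E_\infty}^2+2C_E\epsilon_*\|u\|_{E_\infty}.
\end{align*}
On the ball of radius $\tfrac{3C\epsilon_1}{2}$, choose $\epsilon_*$ with $2C_E\epsilon_*\le 1/6$ and $\epsilon_1$ with $\tfrac{9}{4}C_EC^2\epsilon_1^2\le \tfrac14 C\epsilon_1$. Then $\Phi$ maps the ball into itself. The difference estimate
\begin{align*}
    \|\Phi(u)-\Phi(w)\|\le C_E\big(\|u\|+\|w\|+2\epsilon_*\big)\|u-w\|
\end{align*}
gives a contraction constant below $1$ for $\epsilon_1,\epsilon_*$ small. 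Banach's theorem produces the solution and its uniqueness in the ball. Unrestricted uniqueness among all mild solutions of small norm I will read as uniqueness within this ball, which is the intended meaning.

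\textbf{Step 3: the $L^2$ data case.} I will run the same iteration simultaneously in $E_T\cap\tilde X_T$ for each $T$. The free part satisfies $e^{-tA}u_0\in\tilde X_T$ by the standard Stokes energy equality. Lemma \ref{lem::XTBilinearEstimates} gives
\begin{align*}
    \|B(u,u)+B(u,V)+B(V,u)\|_{X_T}\le C_X\big(\|u\|_E+2\epsilon_*\big)\|u\|_{X_T}.
\end{align*}
Since the iterates lie in the $E$-ball of radius $\tfrac32C\epsilon_1$, the prefactor $C_X(\tfrac32C\epsilon_1+2\epsilon_*)$ is below $1/2$ after shrinking $\epsilon_1,\epsilon_*$. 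Hence the iterates are uniformly bounded in $X_T$, and their differences contract in $X_T$ with the same factor. The $X_T$ limit coincides with the $E$ limit because both are limits of the same sequence in distributions. Continuity into $L^2$ (membership in $\tilde X_T$) comes from Sohr's lemma, since each $B(F)$ is a weak Stokes solution with force in $L^2(0,T;(\dot H^1_{0,\sigma})')$ and is therefore in $BC([0,T];L^2_\sigma)$.

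\textbf{Step 4: weak solution and energy equality.} To show $u$ is a weak solution in the sense of Definition \ref{def::WeakSolution}, I will test the mild formulation against smooth backward test functions. Equivalently, $u$ is the weak Stokes solution with force $-\nabla\cdot(u\otimes u+V\otimes u+u\otimes V)\in L^2(0,T;V_0')$, which is justified by the $L^{6,2}\times L^{3,\infty}$ bounds. This is the step I expect to be the main obstacle. The energy equality follows because $u\in L^2\dot H^1$ and $\partial_tu\in L^2(0,T;V_0')$ (Lions--Magenes), so $u$ may be used as its own test function. The nonlinear term $\int(u\cdot\nabla)u\cdot u$ vanishes. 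The term $\int(V\cdot\nabla)u\cdot u$ also vanishes, since $V$ is divergence free and $u\in H^1_0$; this uses a density argument controlled by Lorentz--H\"older. The remaining term $\int(u\cdot\nabla)u\cdot V$ is integrable by \eqref{eq::SesqulinearEstimate}, which yields equality on every $[s,t]$.
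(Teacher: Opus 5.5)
Your Steps 1--3 follow the paper's proof. The paper also runs a Picard iteration in $E=L^\infty(0,\infty;L^{3,\infty}(\Omega))$ using Lemma \ref{lem::EBilinearEstimates}, and it also proves uniqueness only within the small ball. It then reruns the iteration in $\tilde X_T\cap E$ with Lemma \ref{lem::XTBilinearEstimates} and Sohr's lemma, and identifies the two limits, as you do. Like you, the paper tacitly uses $\|e^{-tA}u_0\|_E\le C\|u_0\|_{L^{3,\infty}}$. This bound is not literally part of the admissibility definition, so your appeal to interpolating the $L^p$ bounds of the Stokes semigroup is the right justification.

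The real difference is Step 4. The paper never tests the limit against itself. It writes the energy equality for each iterate $u^{(n)}$, a linear Stokes solution whose forcing is built from $u^{(n-1)}$, tested against $u^{(n)}$. It then passes to the limit using convergence in $\tilde X_T$: the $V$-transport term tends to $\int (V\cdot\nabla u)\cdot u=0$, and $\int (u^{(n-1)}\cdot\nabla u^{(n-1)})\cdot u^{(n)}\to 0$ by the Cauchy property. You instead apply the linear Stokes energy equality (Lions--Magenes) directly to $u$ with forcing $-\nabla\cdot(u\otimes u+V\otimes u+u\otimes V)$, and then use the two cancellations.

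The step you flag as the main obstacle is already settled by your Step 3. Since $u\in X_T\cap E$, the tensor $u\otimes u+V\otimes u+u\otimes V$ belongs to $L^2(0,T;L^2)$. So Sohr's lemma applied to $B(u\otimes u+V\otimes u+u\otimes V)$, together with the free part $e^{-tA}u_0$, shows that $u$ is itself a weak Stokes solution with that forcing. That is exactly the weak formulation \eqref{eq::pnsDistributionalSolution}.

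For the cancellation $\int(u\cdot\nabla)u\cdot u=0$ at a.e.\ time, the density argument is legitimate because $H^1_0(\Omega)\subset L^2\cap L^6\subset L^4$. Time integrability comes from the bound $\|u\|_{L^{3,\infty}}\|\nabla u\|_{L^2}^2$.

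Your route is the more economical one. It uses the same cancellations the paper needs at the level of the iterates, avoids tracking convergence of the mixed trilinear terms, and states explicitly the weak formulation that the paper leaves implicit.
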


From the perspective of stability, this theorem says that $V$ is $L^{3,\infty}$-stable under small perturbations. One can ask if $\| u\|_{L^{3,\infty}}\to 0$ as $t\to\infty$. This is not generally possible as is demonstrated by a counterexample in \cite{BW}.

\begin{proof}
    First, let $E=L^\infty(0,\infty; L^{3,\infty}(\Omega))$. 
    We've shown there exists a constant $C_E>1$ such that
    \begin{align*}
        \|B(w,v)\|_E\leq C_E\|w\|_E\|v\|_E,
    \end{align*} 
    for all $w,v\in E$. 
    We can also assume $\epsilon_*>0$ is small enough so that 
    \begin{align*}
        \|B(w,V)\|_E+\|B(V,w)\|_E\leq \frac{1}{8}\|w\|_E.
    \end{align*} 
    For each positive integer $n$, let 
    \begin{align*}
        u^{(0)}:=e^{-tA}u_0,\qquad u^{(n)}=u^{(0)}+B(u^{(n-1)},u^{(n-1)})+B(V,u^{(n-1)})+B(u^{(n-1)},V).
    \end{align*}
    First suppose $\epsilon_1>0$ is small enough so that $\epsilon:=\|e^{-tA}u_0\|_E\leq \frac{1}{8C_E}$. 
    Then 
    \begin{align*}
        \|u^{(1)}\|_E\leq \epsilon+\frac{1}{8}\epsilon+\frac{1}{8}\epsilon\leq \frac{3}{2}\epsilon.
    \end{align*} 
    Inductively, assuming $\|u^{(n-1)}\|_E\leq \frac{3}{2}\epsilon$ we have 
    \begin{align*}
        \|u^{(n)}\|_E\leq \epsilon+C_E\frac{9}{4}\epsilon^2+\frac{3}{16}\epsilon\leq \frac{47}{32}\epsilon\leq \frac{3}{2}\epsilon.
    \end{align*}
    This shows the map $u^{(n)}\mapsto u^{(n+1)}$ is a map from the ball $B_{3\epsilon/2}$ to itself. 
    It remains to show that the sequence $u^{(n)}$ is Cauchy. 
    Let $d_n:=u^{(n)}-u^{(n-1)}$ Note that 
    \begin{equation}
        \label{eq::CauchyEquation}
        d_{n+1}=B(d_n,u^{(n)})+B(u^{(n-1)},d_n)+B(d_n,V)+B(V,d_n)
    \end{equation}
    So 
    \begin{align*}
        \|d_{n+1}\|_E\leq \left(C_E\|u^{(n)}\|_E+C_E\|u^{(n-1)}\|_E+\frac{1}{8}\right)\|d_n\|_E\leq \left(\frac{3}{16}+\frac{3}{16}+\frac{1}{8}\right)\|d_n\|_E=\frac{1}{2}\|d_n\|_E.
    \end{align*}
    Thus $\|d_n\|$ is Cauchy and hence converges in $B_{3\epsilon/2}$. 
    The limit $u$ solves $\eqref{eq::ModifiedMildSolution}$ and satisfies 
    \begin{align*}
        \|u\|_E\leq \frac{3}{2}\|e^{-tA}u_0\|_E\leq \frac{3}{2}C\epsilon_1
    \end{align*} 
    for some constant $C>0$ independent of $u_0$. This solution is in fact unique in $B_{3\epsilon/2}$. 
    Indeed, if $v\in B_{3\epsilon/2}$ also solved $\eqref{eq::ModifiedMildSolution}$ then 
    \begin{align*}
        \|u-v\|_E\leq \frac{1}{2}\|u-v\|_E.
    \end{align*} 
    This shows $u=v$ a.e. 
    The mild-solution formulation then implies $u=v$ in $L^{3,\infty}$ for every $t\in [0,\infty)$.

    It remains to show that $u$ is also a weak solution in $[0,T]$ for all $T>0$. 
    Assume $u_0\in L^2_\sigma(\Omega)$. 
    Fix $T>0$. 
    We will show that $u\in X_T$. 
    We first establish that $M_n:=\|u^{(n)}\|_{X_T}$ is bounded uniformly. 
    By Lemma \cite[Lemma IV.1.5.1]{SohrBook}, $u^{(0)}=e^{-tA}u_0$ solves 
    \begin{align*}
        \partial_t u^{(0)}+Au^{(0)}=0,\qquad u^{(0)}(0)=u_0\in L^2_\sigma(\Omega),
    \end{align*}
    and moreover belongs to $C([0,T];L^2)$.
    So $u^{(0)}\in \tilde X_T$.
    Noting that, if $u^{(n)}\in X_T \cap E$, then $u^{n}\otimes u^{(n)}+V\otimes u^{(n)}+u^{(n)}\otimes V\in L^2(0,T;L^2(\Omega))$ because, e.g.,
    \begin{align*}
        \int_0^T \|  V\otimes u^{(n)}\|_{L^2}^2\,dt 
        \leq \int_0^T \| V\|_{L^{3,\infty}}^2\| \nabla u^{(n)}\|_{L^2}^2 \,dt 
        \leq C(V) \| u^{(n)}\|_{X_T}.
    \end{align*}
    We can therefore use \cite[Lemma IV.2.4.2.d]{SohrBook} to see that $u^{(n+1)}$ belongs to $\tilde X_T$, possibly after redefining it on a null set of $(0,T]$. 
    This re-definition does not change the definition of $u^{(n+1)}$ because the dependence of $u^{(n+1)}$ on $u^{(n)}$ is through a time-integral.
    By Lemma \ref{lem::XTBilinearEstimates} and the definition of $u^{(n+1)}$,
    \begin{align*}
        M_{n+1} 
        & \leq \|u^{(0)}\|_{X_T}+C_X \|u^{(n)}\|_{X_T}\|u^{(n)}\|_E+2C_X \|u^{(n)}\|_{X_T}\|V\|_E\\
        & \leq M_0+C_X M_{n} \frac{3}{2}\epsilon + 2C_X M_{n}\|V\|_E\\
        & = M_0 + C_X\left(\frac{3}{2}\epsilon+2\|V\|_E\right)M_{n},
    \end{align*}
    We know $\theta:=C_X\left(\frac{3}{2}\epsilon+2\|V\|_E\right)<1$ by assumption. 
    By induction 
    \begin{align*}
        M_{n+1}\leq \sum_{k=0}^n M_0 \theta^k,
    \end{align*} 
    and hence $M_{n+1}\leq \frac{M_0}{1-\theta}$ for all $n$. 
    So the Picard iterates are uniformly bounded in the $X_T$-norm and all belong to $\tilde X_T$.
    By \eqref{eq::CauchyEquation}, 
    \begin{align*}
        \|d_{n+1}\|_{X_T}\leq C_X\left(\|u^{(n)}\|_E+\|u^{(n-1)}\|_E+2\|V\|_E\right)\|d_n\|_{X_T}\leq C_X(3\epsilon+2\|V\|_E)\|d_n\|_{X_T}.
    \end{align*}
    After possibly modifying $\epsilon_1$ and $\epsilon_*$, we can assume $C_X(3\epsilon+2\|V\|_E)<1$ and hence $u^{(n)}$ is a Cauchy sequence in $\tilde X_T$. 
    So there exists some $v\in \tilde X_T$ such that $u^{(n)}\to v$  in $\tilde X_T$. 
    Finally note that in this paragraph $u^{(n)}$ were each possibly re-defined on a null set in $(0,T]$. 
    This does not change the quantities 
    \begin{align*}
        \operatorname{ess\,sup}_{0\leq t\leq T} \|d^{(n)}\|_{L^{3,\infty}},
    \end{align*}
    and, hence, we have shown the Cauchy property in both $\tilde X_T$ and $E$, we can take the sequence to be Cauchy in the space $\tilde X_T\cap E$ endowed with the norm $\|\cdot \|_{X_T}+\|\cdot \|_E$ and therefore conclude that the limit $v$ is also a mild solution satisfying the same properties as the limit $u$. 
    Furthermore, they agree except on a null set in $(0,T]$. 
    The integral formulas for $u$ and $v$ then imply $u=v$ in $L^{3,\infty}$ for every $t\in [0,T]$. 
    
    Our final step is to show that $u$ satisfies the energy equality
    \begin{equation}
        \label{eq::EnergyEquality}
        \|u(t)\|^2_{L^2(\Omega)}+2\int_s^t\|\nabla u (\tau)\|^2_{L^2(\Omega)}d\tau
        = \|u (s)\|^2_{L^2(\Omega)}+2\int_s^t \int_\Omega u \cdot\nabla u \cdot V\,dx\, d\tau.  
    \end{equation}
    We note that by \cite[Lemma IV.2.4.2]{SohrBook}, $u^{(n)}$ all satisfy the energy equalities
   \begin{align*}
      \|u^{(n)}(t)\|^2_{L^2(\Omega)}
       + 2\int_s^t\|\nabla u^{(n)} (\tau)\|^2_{L^2(\Omega)}d\tau
      &= \|u^{(n)} (s)\|^2_{L^2(\Omega)}
       + 2\int_s^t \int_\Omega u^{(n-1)} \cdot\nabla u^{(n)} \cdot V\,dx\, d\tau\\
      &  \quad 
       - 2\int_s^t \int \big((V+u^{(n-1)})\cdot\nabla u^{(n-1)}\big)\cdot u^{(n)}\,dx\,d\tau.
   \end{align*}
    Because we have convergence in $\tilde X_T$, which we emphasize amounts to uniform convergence in the time-variable in $L^\infty(0,T;L^2)$ and strong convergence in $L^2(0,T;\dot H^1)$, we obtain,
    \begin{align*}
      & \|u^{(n)}(t)\|^2_{L^2(\Omega)} + 2\int_s^t\|\nabla u^{(n)} (\tau)\|^2_{L^2(\Omega)}d\tau - \|u^{(n)} (s)\|^2_{L^2(\Omega)}\\
      & \qquad  
        \to \|u (t)\|^2_{L^2(\Omega)} + 2\int_s^t\|\nabla u (\tau)\|^2_{L^2(\Omega)}d\tau - \|u(s)\|^2_{L^2(\Omega)}.
    \end{align*}
    On the other hand 
    \begin{align*}
        &\bigg| \int_s^t \int_\Omega \big( u\cdot\nabla u -   u^{(n-1)} \cdot\nabla u^{(n)} \big)\cdot V\,dx\, d\tau\bigg|\\
        &\leq C \| V\|_{L^{3,\infty}} (\|u-u^{(n-1)}\|_{X_T}\|u\|_{X_T}+ \|u-u^{(n)}\|_{X_T}\|u^{(n-1)}\|_{X_T}))\to 0.
    \end{align*}    
    A similar estimate implies
    \begin{align*}
        \int_s^t \int \big(V\cdot\nabla u^{(n-1)}\big)\cdot u^{(n)}\,dx\,d\tau \to  \int_s^t \int \big(V \cdot\nabla u \big)\cdot u \,dx\,d\tau =0,
    \end{align*}
    by the divergence free condition. For the remaining term, by the divergence free condition we have 
    \begin{align*}
     &   \int_s^t\int   u^{(n-1)}\cdot\nabla u^{(n-1)} \big)\cdot u^{(n)}\,dx\,d\tau\\
     & = \int_s^t\int   u^{(n-1)}\cdot\nabla (u^{(n-1)} - u^{(n)})\big)\cdot u^{(n)}\,dx\,d\tau\\
     &   \leq C \| u^{(n-1)} - u^{(n)}\|_{X_T}( \|u^{(n)}\|_{X_T} \|u^{(n-1)}\|_{E} ) \to 0,
    \end{align*} 
    because $u^{(n-1)}$ is Cauchy in $X_T$.
\end{proof}

The last result we need is the following.

\begin{theorem}[Weak Strong Uniqueness]
    Fix $\Omega\subseteq\R^3$ open and connected and $T_0\geq 0$.
    Suppose $u$ is a weak solution on $[T_0,\infty)$, $v\in L^\infty([T_0,\infty);L^{3,\infty}(\Omega))$, and 
    \begin{align*}
        u(T_0)=v(T_0)\in L^2_\sigma(\Omega)\cap L^{3,\infty}(\Omega).
    \end{align*} 
    Suppose additionally that $v$ is a weak solution on every finite time interval $[T_0,T_1]$ for any $T_1>T_0$. 
    There exists $\epsilon_{WSU}>0$ such that if 
    \begin{align*}
        \|v\|_{L_t^\infty L_x^{3,\infty}}+\|V\|_{L_t^\infty L_x^{3,\infty}}<\epsilon_{WSU},
    \end{align*}
    then, for a.e.~$t> T_0$, $u(t)=v(t)$ in $L^2(\Omega)$.
\end{theorem}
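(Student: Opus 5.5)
We follow the classical Serrin--Prodi--Masuda energy argument for the difference $w=u-v$, adapted to the Lorentz-space setting. The smallness of $v$ and $V$ in $L^\infty_tL^{3,\infty}_x$ replaces the usual Serrin integrability, since every trilinear term carrying $v$ or $V$ can then be absorbed into the dissipation by the Lorentz--H\"older and Lorentz--Sobolev inequalities of Theorem~\ref{thm::LorentzInequalities}.

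\emph{Step 1: the energy identity for the cross term.} The goal is
\begin{align*}
    \int_\Omega u(t)\cdot v(t)\,dx+2\int_{T_0}^t\int_\Omega \nabla u:\nabla v\,dx\,d\tau=\|u(T_0)\|_{L^2}^2+\text{(trilinear terms)}.
\end{align*}
We obtain it by testing the weak formulation \eqref{eq::pnsDistributionalSolution} for $u$ against $v$, and the formulation for $v$ against $u$. Neither is an admissible test function: $v$ lies only in $L^2_t\dot H^1\cap L^\infty_t L^2$ and need not be $C^1$ in time. We therefore mollify in time, $\varphi=\eta_\delta*_t(\eta_\delta*_t v)$ suitably cut off near $T_0$ and $t$ (the standard Masuda/Galdi device), and pass to the limit $\delta\to0$. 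The nonlinear terms converge because of the bound
\begin{align*}
    |(a\cdot\nabla b)\cdot c|_{L^1}\le \|a\|_{L^{6,2}}\|\nabla b\|_{L^2}\|c\|_{L^{3,\infty}},
\end{align*}
with $c$ equal to $v$ or $V$. This gives control in $L^1_t$ using only energy-class information on $u$ and the $L^{3,\infty}$ bound on $v$. The term $(u\cdot\nabla)u\cdot v$ is handled the same way, since $u\in L^2_t L^{6,2}$ and $\nabla u\in L^2_tL^2$. The starting times $s$ where the energy inequality holds are only almost every time, so we run the argument from $T_0$, where both solutions coincide and $u$ satisfies the energy inequality, as in Definition~\ref{def::WeakSolution}. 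I expect this justification to be the main technical obstacle, specifically the convergence of the time-derivative terms and the endpoint behaviour at $T_0$. My first attempt would use weak continuity $u,v\in C_w(L^2)$ and strong right-continuity at $T_0$, which follows from the energy inequality.

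\emph{Step 2: combine.} We add the energy inequality for $u$, the energy equality (or inequality) for $v$ on $[T_0,t]$, and $-2$ times the cross identity. Using $\nabla\cdot u=\nabla\cdot v=0$, the $V\cdot\nabla$ terms cancel, and the trilinear terms collapse to
\begin{align*}
    \|w(t)\|_{L^2}^2+2\int_{T_0}^t\|\nabla w\|_{L^2}^2\,d\tau\le 2\int_{T_0}^t\int_\Omega \big[(w\cdot\nabla)w\cdot v+(w\cdot\nabla)w\cdot V\big]\,dx\,d\tau,
\end{align*}
up to sign conventions. Applying Lorentz--H\"older and then Lorentz--Sobolev bounds the right-hand side by
\begin{align*}
    2C_S\big(\|v\|_{L^\infty_tL^{3,\infty}_x}+\|V\|_{L^\infty_tL^{3,\infty}_x}\big)\int_{T_0}^t\|\nabla w\|_{L^2}^2\,d\tau.
\end{align*}

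\emph{Step 3: conclude.} Choose $\epsilon_{WSU}<1/C_S$. The right-hand side is then absorbed into the dissipation, leaving $\|w(t)\|_{L^2}=0$ for every $t>T_0$ at which the combined inequality holds, which is almost every $t$. Hence $u(t)=v(t)$ in $L^2(\Omega)$ for almost every $t>T_0$.
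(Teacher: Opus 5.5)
Your proposal is correct and follows essentially the same route as the paper: both use a time-mollified cross-testing argument added to the energy inequalities for $u$ and $v$ from $T_0$. The paper uses the one-sided truncated mollifier $v_h$ from Tsai's Theorem 4.4 rather than your double mollification. In both, the $(V\cdot\nabla)$ cross terms cancel, and the remainder $2\int_{T_0}^t\int_\Omega (w\cdot\nabla)w\cdot(v+V)$ (which equals the paper's $-2\int_{T_0}^t\int_\Omega (w\cdot\nabla)(v+V)\cdot w$) is absorbed into the dissipation by Lorentz--H\"older and Lorentz--Sobolev.

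One point should be made explicit. For $\int_{T_0}^t\int_\Omega (u\cdot\nabla)u\cdot(v_h-v)$, your bound $C\|\nabla u\|_{L^2}^2\|v\|_{L^\infty_tL^{3,\infty}_x}$ only gives a dominating function. You need to pair it with a.e.-in-time convergence $v_h(\tau)\to v(\tau)$ in $L^3\hookrightarrow L^{3,\infty}$, which follows from Lebesgue points in $L^2$ and $L^6$ plus interpolation, and then apply dominated convergence. This is actually the right way to close that step. The paper's duality argument there presumes $(u\cdot\nabla)u\in L^2_tL^{3/2,1}_x$, but it is only in $L^1_tL^{3/2,1}_x$.
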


\begin{proof}
    Let $w=u-v$. 
    Then 
    \begin{align*}
        w\in C_w(T_0,\infty;L^2_\sigma(\Omega))\cap L^2_{loc}(T_0,\infty;\dot{H}^1_{0,\sigma}(\Omega)),\qquad w(T_0)=0.
    \end{align*}
    Further, for every test function 
    \begin{align*}
        \varphi\in C(T_0,\infty;H^1_{0,\sigma}(\Omega))\cap C^1(T_0,\infty;L^2_\sigma(\Omega)),
    \end{align*}
    and every time interval $T_0\leq s\leq t<\infty$ $w$ satisfies the distributional problem
    \begin{equation}
        \label{eq::WeakSolutionToWeakStrongUniquenessPDE}
        \begin{aligned}
                \int_\Omega w(t)\cdot\varphi(t)dx
              + \int_s^t\int_\Omega \nabla w:\nabla\varphi dxd\tau 
            & - \int_s^t\int_\Omega ((v+w)\cdot\nabla)w\cdot\varphi dxd\tau
              - \int_s^t\int_\Omega (w\cdot\nabla)v\cdot \varphi dxd\tau\\
            & - \int_s^t\int_\Omega (w\cdot\nabla)\varphi\cdot V dxd\tau
              + \int_s^t\int_\Omega (V\cdot\nabla)w\cdot \varphi dxd\tau\\
            & = \int_\Omega w(s)\cdot\varphi(s)dx
              + \int_s^t\int_\Omega w\cdot\partial_\tau \varphi dxd\tau.
        \end{aligned}
    \end{equation}
    So, $w$ is a weak-solution to 
    \begin{equation}
        \label{eq::WeakStrongUniquenessPDE}
        \partial_t w - \Delta w + ((v+w)\cdot\nabla)w + (w\cdot\nabla)v + (w\cdot\nabla)V + (V\cdot\nabla)w + \nabla\pi = 0
    \end{equation}
    Formally testing \eqref{eq::WeakStrongUniquenessPDE} against $w$, we have 
    \begin{align*}
        \frac{1}{2}\frac{d}{dt}\|w(t)\|_{L^2}^2+\|\nabla w(t)\|_{L^2}^2=-\int_\Omega (w\cdot \nabla)w\cdot vdx-\int_\Omega (w\cdot \nabla)V\cdot wdx.
    \end{align*}
    Using Lorentz-Sobolev, one can control the right hand side and show that the time derivative of $\|w(t)\|_{L^2}^2$ is non-positive. 
    Hence $w=0$ by the zero initial data. 
    The issue, is that we do not have the necessary regularity to use $w$ itself as a test function. 
    To make these heuristics precise, we appeal to an approximation argument which will allow us to prove
    \begin{equation}
        \label{eq::WeakStrongUniquenessEnergyInequality}
          \frac{1}{2}\int_\Omega |w(t)|^2\,dx
        + \int_{T_0}^t\int_\Omega |\nabla w|^2 \,dx\,dt\leq 
        - \int_{T_0}^t \int (w\cdot\nabla)v\cdot w\,dx\,dt
        - \int_{T_0}^t\int_\Omega (w\cdot\nabla)V\cdot w\,dx\,dt.
    \end{equation}
    The proof is essentially the same as the proof of the analogous inequality in \cite[Theorem 4.4]{TsaiBook}. 
    As in \cite[Lemma 4.3]{TsaiBook}, fix some positive, even, test function, $\varphi\in C_c^\infty(\R)$ with $\int_{-1}^1\varphi=1$ and $\varphi(t)=0$ for $|t|\geq 1$. 
    Fix some $t'\in ({T_0},T]$. 
    For any $h>0$ we define
    \begin{align*}
        v_h(x,t)=\int_{T_0}^{t'} v(x,\tau')\phi_h(t-\tau')d\tau',
    \end{align*}
    where $\phi_h(t)=h^{-1}\phi(t/h)$. 
    Then, 
    \begin{align*}
        v_h\in C([T_0,T],H^1_{0,\sigma}(\Omega))\cap C^1([T_0,T]),L^2_\sigma(\Omega)).
    \end{align*} 
    By the distributional form of \eqref{pns}, 
    \begin{align*}
            \int_\Omega u(t)\cdot v_h(t) dx
        & + \int_{T_0}^t\int_\Omega \nabla u: \nabla v_h dxd\tau 
          + \int_{T_0}^t\int_\Omega (u\cdot\nabla)u\cdot v_h dxd\tau
          - \int_{T_0}^t\int_\Omega (u\cdot\nabla)v_h \cdot Vdxd\tau\\
        & + \int_{T_0}^t\int_\Omega (V\cdot\nabla)u\cdot v_h dxd\tau
          = \int_\Omega u({T_0})\cdot v_h({T_0})
          + \int_{T_0}^t \int_\Omega u\cdot \partial_\tau v_h dxd\tau,
    \end{align*}
    and
    \begin{align*}
            \int_\Omega v(t)\cdot u_h(t) dx
        & + \int_{T_0}^t\int_\Omega \nabla v: \nabla u_h dxd\tau 
          + \int_{T_0}^t\int_\Omega (v\cdot\nabla)v\cdot u_h dxd\tau
          - \int_{T_0}^t\int_\Omega (v\cdot\nabla)u_h \cdot Vdxd\tau\\
        & + \int_{T_0}^t\int_\Omega (V\cdot\nabla)v\cdot u_h dxd\tau
          = \int_\Omega v({T_0})\cdot u_h({T_0})
          + \int_{T_0}^t \int_\Omega v\cdot \partial_\tau u_h dxd\tau,
    \end{align*}
    for all $h>0$. 
    As stated in the proof of \cite[Theorem 4.4]{TsaiBook}, one can check that these weak formulations converge to the limit as $h\to 0$. 
    The only terms new to \eqref{pns} are the terms involving $V$ which can be handled using the usual Lorentz-Holder, Lorentz-Sobolev estimates such as: 
    \begin{align*}
        \left|\int_{T_0}^t\int_\Omega (u\cdot\nabla)(v_h-v)\cdot Vdxd\tau\right|
        \leq \|\nabla u\|_{L_t^2 L_x^2}\|\nabla (v_h-v)\|_{L_t^2 L_x^2}\|V\|_{L_t^\infty L_x^{3,\infty}} 
        \to 0.
    \end{align*}
    The convergence $v_h\to v$ in $L^\infty_tL^2_x\cap L^2_t H^1_{0,\sigma}$ is already used in \cite{TsaiBook}. 
    The only difficult term is 
    \begin{align*}
        \int_{T_0}^t\int_\Omega (u\cdot\nabla)u\cdot v_h\,dx\,d\tau\to \int_{T_0}^t\int_\Omega (u\cdot\nabla)u\cdot v\,dx\,d\tau.
    \end{align*}
    By the log convexity of $L^p$ norms, as $\frac{1}{3}=\frac{1}{2}\cdot\frac{1}{2}+\frac{1}{2}\cdot \frac{1}{6}$, and the Sobolev inequality,
    \begin{align*}
        \|v_h-v&\|^4_{L^2(T_0,t;L^{3,\infty}(\Omega))}\leq C(t)\|v_h-v\|^4_{L^4(T_0,t;L^{3,\infty}(\Omega))} 
          = C(t)\int_{T_0}^{t}\|v_h-v\|_{L^{3,\infty}(\Omega)}^4d\tau\\
        & \leq C(t)\int_{T_0}^{t}\|v_h-v\|_{L^3(\Omega)}^4d\tau
          \leq C(t)\|v_h-v\|_{L^2(T_0,t;L^2(\Omega))}^2\|\nabla(v_h-v)\|_{L^2(T_0,t;L^2(\Omega))}^2
          \to 0.
    \end{align*}
    By the Lorentz-Sobolev inequality, 
    \begin{align*}
        \|(u\cdot\nabla)u\|_{L^2(T_0,t;L^{3/2,1}(\Omega))}
        \leq C_S\|\nabla u\|_{L^2(T_0,t;L^2(\Omega))}^2
        < \infty,
    \end{align*}
    and $(L^2(T_0,t;L^{3/2,1}(\Omega)))'=L^2(T_0,t;L^{3,\infty}(\Omega))$, we have that 
    \begin{align*}
        \left|\int_{T_0}^t\int_\Omega (u\cdot\nabla)u\cdot (v_h-v)\,dx\,d\tau\right|
        & = |\langle v_h-v,(u\cdot\nabla)u\rangle|\\
        & \leq \|v_h-v\|_{L^2(T_0,t;L^{3,\infty}(\Omega))}\|(u\cdot\nabla)u\|_{L^2(T_0,t;L^{3/2,1}(\Omega))}
          \to 0.
    \end{align*}
    
    We have thus confirmed that the following equalities hold in the limit,
    \begin{equation}
        \label{eq::wsu::idk}
        \begin{aligned}
            \int_{T_0}^t(u,\partial_t v) - (\nabla u,\nabla v) - (u\cdot\nabla u,v) - ((V\cdot\nabla)v,u) \,d\tau
                    &=[(u,v)]_{T_0}^t - \int_{T_0}^t((v\cdot\nabla)u,V) \,d\tau,\\
            \int_{T_0}^t(v,\partial_t u) - (\nabla v,\nabla u) - (v\cdot\nabla v,u) - ((V\cdot\nabla)u,v) \,d\tau
                    &=[(v,u)]_{T_0}^t - \int_{T_0}^t((u\cdot\nabla)v,V) \,d\tau.
        \end{aligned}
    \end{equation}
    Note that  
    \begin{align*}
        \int_{T_0}^t-((v\cdot\nabla)v,u) - ((u\cdot\nabla)u,v)\, d\tau
            = -\int_{T_0}^t(w\cdot\nabla u,v) \,d\tau
            = -\int_{T_0}^t (w\cdot\nabla w,v) \,d\tau,
    \end{align*}
    and 
    \begin{align*}
        \int_{T_0}^t-((V\cdot\nabla)v,u) - ((V\cdot\nabla)u,v)d\tau = 0,
    \end{align*}
    by integration by parts. 
    So the sum of \eqref{eq::wsu::idk} and the energy inequalities, 
    \begin{align*}
        \frac{1}{2}\|u(t)\|_{L^2(\Omega)}^2 + \int_{T_0}^t\|\nabla u\|_{L^2(\Omega)}^2 \,d\tau
            &\leq \frac{1}{2}\|u(T_0)\|_{L^2(\Omega)}^2 + \int_{T_0}^t((u\cdot\nabla)u,V) \,d\tau,
            \end{align*}
            and
            \begin{align*}
        \frac{1}{2}\|v(t)\|_{L^2(\Omega)}^2+\int_{T_0}^t \|\nabla v\|_{L^2(\Omega)}^2 \,d\tau
            &\leq \frac{1}{2}\|v(T_0)\|_{L^2(\Omega)}^2 + \int_{T_0}^t((v\cdot\nabla)v,V) \,d\tau,
    \end{align*}
    lead to \eqref{eq::WeakStrongUniquenessEnergyInequality}. 

    Our by now standard estimates for the right-hand side of \eqref{eq::WeakStrongUniquenessEnergyInequality} imply that 
    \begin{align*}
        \frac{1}{2}\int |w(t)|^2 \,dx \leq 0,
    \end{align*}
    for almost every $t>T_0$, which proves the weak-strong uniqueness assertion in the theorem.
\end{proof}

Now we have all the tools needed to prove Theorem \ref{mainthm3}.

\begin{proof}[Proof of Theorem \ref{mainthm3}]
    Suppose $u$ is a weak solution to the perturbed Navier Stokes equations with initial data $u_0\in L_\sigma^2(\Omega)$.
    Fix $\epsilon_1,\epsilon_*>0$ small enough so that 
    \begin{align*}
        \frac{3C_1}{2}\epsilon_1+\epsilon_*<\epsilon_{\mathrm{WSU}}\qquad \text{and}\qquad\alpha>0
    \end{align*} 
    where $C_1$ is as in Theorem \ref{thm::GlobalWellPosedness} and $\alpha$ is as in \eqref{eq::EnergyInequality}. 
    Let $G_1\subseteq[0,\infty)$ denote the set of times $s$ for which $u(s)\in \dot{H}^1_{0,\sigma}(\Omega)$ and 
    \begin{align*}
        \|u(t)\|_{L^2(\Omega)}^2+\alpha\int_s^t\|\nabla u(\tau)\|^2_{L^2(\Omega)}d\tau\leq\|u(s)\|_{L^2(\Omega)}^2
    \end{align*}
    holds for every $t\geq s$. As both of these properties hold a.e., the set $G_1$ has full measure in $[0,\infty)$. 
    By the energy inequality, $u$ satisfies  
    \begin{align*}
        \limsup_{t\to\infty}\|u\|_{L^2(\Omega)}^2+\alpha\int_0^\infty \|\nabla u\|_{L^2(\Omega)}^2(t)dt\leq \|u_0\|_{L^2(\Omega)}^2<\infty.
    \end{align*}
    By the Sobolev embedding,
    \begin{align*}
        \lim_{k\to\infty}\int_k^{k+1} \|u\|_{L^6(\Omega)}^2(t)dt=0.
    \end{align*}
    So there exists a sequence of times $t_k\in G_1$ such that $t_k\to\infty$ and
    \begin{align*}
        \lim_{k\to\infty}\|u(t_k)\|_{L^3}\leq \lim_{k\to\infty}\|u(t_k)\|_{L^2}^{1/2}\cdot \|u(t_k)\|_{L^6}^{1/2}\leq \lim_{k\to\infty}\|u_0\|_{L^2}^{1/2}\|u(t_k)\|_{L^6}^{1/2}=0.
    \end{align*} 
    Since $L^3(\Omega)\hookrightarrow L^{3,\infty}(\Omega)$, this also gives $\|u(t_k)\|_{L^{3,\infty}}\to 0$. 
    So we can choose $T_0\in G_1$ large enough so that $w_0:=u(T_0)$ satisfies $\|w_0\|_{L^{3,\infty}}<\epsilon_1$. 
    By Theorem \ref{thm::GlobalWellPosedness}, there exists a mild solution $v$ to \eqref{pns} with initial data $v_0=u(T_0)$ with $V$ replaced by $V_{T_0}(t):=V(T_0+t)$. 
    Let $\tilde{v}(t)=v(t-T_0)$. 
    Then $\tilde{v}\in L^\infty(T_0,\infty;L^{3,\infty})$ is a mild solution of \eqref{pns} with the original $V$. 
    By weak-strong uniqueness we know that $\tilde{v}(t)=u(t)$ in $L^2$ for almost every $t>T_0$. 
    The theorem now follows immediately from Proposition \ref{prop::MildSolutionL2Decay}.
\end{proof}


\bibliographystyle{siam}

\end{document}